\documentclass[11pt,a4paper,final]{amsart}
\usepackage[latin1]{inputenc}
\usepackage{amsmath}
\usepackage{amsfonts}
\usepackage{amssymb}
\usepackage{amsthm,mathtools,stmaryrd}
\usepackage{mathrsfs}
\usepackage{graphicx}
\usepackage{a4wide}
\usepackage{bbm}
\usepackage{todo}
\usepackage{latexsym}
\usepackage{cite}
\usepackage{mathrsfs}
\usepackage{bbm}
\usepackage{subfigure}
\usepackage{float}
\usepackage{epsfig}
\usepackage{epstopdf}
\usepackage{mathtools}
\usepackage{listings}
\usepackage{pgf}
\usepackage[T1]{fontenc}
\usepackage[latin1]{inputenc}
\usepackage[english]{babel}
\usepackage{pifont,tabularx}
\usepackage{stmaryrd}
\usepackage{dsfont}
\usepackage{theoremref}
\usepackage{nicefrac}
\usepackage{enumerate}
\usepackage{natbib}
\usepackage{subfigure}

\usepackage{tikz}
\usepackage{pgfplots}
\usepackage{pgfplotstable}
\pgfplotsset{compat=1.7}
\usepackage{caption}
\usepackage{hyperref}
\hypersetup{
	colorlinks,
	citecolor=blue,
	linkcolor=blue,
	urlcolor=teal}

\numberwithin{equation}{section}

\theoremstyle{plain} 
\newtheorem{thm}{Theorem}[section]
\newtheorem{prop}[thm]{Proposition}
\newtheorem{cor}[thm]{Corollary}
\newtheorem{lem}[thm]{Lemma}

\newtheorem{obs}[thm]{Remark}

\theoremstyle{remark}

\usepackage{graphicx} 

\title[Phase transition for collisions on combs]{On the phase transition for the number of collisions on comb graphs}

\author{Umberto de Ambroggio}
\address{Department of Mathematics, National University of Singapore}
\curraddr{10 Lower Kent Ridge Road, National University of Singapore}
\email{umberto@nus.edu.sg}
\thanks{}

\author{Jenson Ng}
\address{Department of Mathematics, The Hong Kong University}
\curraddr{Pokfulam Road, Hong Kong}
\email{u3014692@connect.hku.hk}
\thanks{}

\author{Maximilian Nitzschner}
\address{Department of Mathematics, The Hong Kong University of Science and Technology}
\curraddr{Clear Water Bay, Kowloon, Hong Kong}
\email{mnitzschner@ust.hk}
\thanks{}

\author{Carlo Scali}
\address{School of Computation, Information and Technology, Technische Universit{\"a}t M{\"u}nchen}
\curraddr{Boltzmannstra{\ss}e 3, 85748, Garching bei M{\"u}nchen}
\email{carlo.scali@tum.de}
\thanks{}

\begin{document}
\begin{abstract}
We consider collisions of simple random walks on comb graphs $\mathrm{Comb}(\mathbb{Z},H)$, which are obtained by attaching vertical segments of the form $[0,H_x] \cap \mathbb{Z}$ to any point $x$ of the integer axis. For $\mathrm{Comb}(\mathbb{Z},H)$ with profile $H_x(x) = |x| \log^\gamma(|x| \vee 1)$, we show that two independent simple random walks starting from the same site collide infinitely often almost surely if $\gamma \leq 2$. If the tooth profile is taken as a typical realization of i.i.d.~heavy-tailed random variables with $\textbf{P}(H_x > z) \sim Cz^{-\gamma}$ (with some $C > 0$) as $z$ tends to infinity, we show that infinitely many collisions occur almost surely for two independent random walks if $\gamma > 1/3$, whereas finitely many collisions occur almost surely if $\gamma \in (0,1/3)$, and for any $\gamma \in (0,1]$, three independent random walks only collide finitely many times, almost surely. 
\bigskip

\noindent
\textbf{MSC:} 60J10 (primary), 05C81, 60J35, 60J55. 

\end{abstract}
\maketitle

\section{Introduction}

In this article, we study the number of collisions of independent, discrete-time simple random walks on comb graphs defined over the integer line $\mathbb{Z}$ as a base graph. Given a sequence $H\coloneqq (H_x)_{x \in \mathbb{Z}} \in \mathbb{R}_{\geq 0}^{\mathbb{Z}}$, a comb graph $\mathrm{Comb}(\mathbb{Z},H)$ is constructed by attaching to each point $x$ of $\mathbb{Z}$ a vertical segment (called `tooth') of the form $\{0,1,...,\lfloor H_x\rfloor\}$ (with $\lfloor \cdot \rfloor$ denoting the integer part).
Our interest is both in comb graphs with deterministic, regularly growing tooth profiles, and in tooth profiles 
that appear as a realization of i.i.d.~heavy-tailed random variables. \medskip

The problem of collisions of random walks on comb graphs goes back at least to~\cite{KP}. There, it is proved that on a comb graph obtained by attaching \textit{infinite} teeth to each vertex of $\mathbb{Z}$, two independent random walks starting from the same vertex collide only finitely many times almost surely. Since then, several criteria on finite length tooth profiles $(H_x)_{x \in \mathbb{Z}}$ have been developed to distinguish between the (almost sure) occurrence or absence of infinitely many collisions of two or three random walks on $\mathrm{Comb}(\mathbb{Z},H)$, see~\cite{BPS,Chenchen,CroydonDeAmbroggio,CWZ08,Koops}. Collisions of two or multiple walks have furthermore been investigated on comb graphs with a radially symmetric tooth profile over other planar graphs including $\mathbb{Z}^2$ (see~\cite{de2025collisions}), as well as on several classes of random recurrent graphs (see~\cite{astoquillca2026collisions,BPS,ChenChen10,croydon2026collision,HP,Wat23}), or in certain random or directionally biased media (see~\cite{MR3542625,devulder2025infinitely,DGP1,DGP2,HH}). In a slightly different direction, the spatial distribution of the collisions has been studied extensively in~\cite{Nguyen,noda2025convergence,noda2026} for $\mathbb{Z}$ as well as certain random graphs. The graph $\mathrm{Comb}(\mathbb{Z},H)$ in which the tooth lengths $(H_x)_{x \in \mathbb{Z}}$ are a realization of a random i.i.d.~(bi-infinite) sequence is of special significance, since the horizontal coordinates of each simple random walk on the comb may be viewed as a random walk on $\mathbb{Z}$ with `transparent traps'. To that effect, the random walk on $\mathrm{Comb}(\mathbb{Z},H)$ has also received attention in the context of scaling limits of trapped walks, see, e.g.,~\cite{BCCR, Bertacchi}. Finally, we mention that the problem of collisions on comb graphs has appeared in the physics literature (see, e.g.,~\cite{agliari2016two,peng2019first} and references therein), as a natural model to study diffusion-limited reactions. The aim of this article is to prove the existence of several phase transitions for the number of collisions of random walks for both regularly growing combs, and combs with random tooth lengths, that manifestly go \textit{beyond} the criteria in~\cite{BPS,Chenchen}. In particular, our results answer a question in~\cite[Section 6]{BPS} concerning comb graphs with random tooth profiles. As a byproduct, we obtain precise heat kernel bounds for random walks on $\mathrm{Comb}(\mathbb{Z},H)$ in this case, which may be of independent interest. \medskip

We now state our set-up and our results in a more detailed way. Let $G = (V,E)$ be a connected, locally finite graph, and consider $j \in \mathbb{N}$ independent discrete-time simple random walks $(X^j_n)_{n \geq 0}$ starting from $x \in V$ under the measure $P_{x,...,x}^G$ (we refer to Section~\ref{sec:Notation} for further details on the notation). We say that the graph $G$ possesses the \textit{infinite collision property} resp.~\textit{infinite triple collision property}, if the set of collision times $\{j \in \mathbb{N} \, : \, X^1_j = X^2_j \}$ is infinite $P_{x,x}^G$-almost surely, resp.~if the set of triple collision times $\{j \in \mathbb{N} \, : \, X^1_j = X^2_j = X^3_j \}$ is infinite $P_{x,x,x}^G$-almost surely, for some $x \in V$. If the corresponding sets are finite $P_{x,x}^G$- (resp.~$P_{x,x,x}^G$-)almost surely for some $x \in V$, we say that $G$ has the \textit{finite collision property}, resp.~\textit{finite triple collision property}. One can show a $0$-$1$-law asserting that any graph has either the finite or the infinite collision property, and this does not depend on the choice of the point $x \in V$ (see \cite[Proposition 2.1]{BPS}). Moreover, we note in passing that while we have chosen to study the collision properties of discrete-time simple random walks, it was established very recently in~\cite{astoquillca2026-2} that for bounded degree graphs, the corresponding notion of infinitely many collisions of two continuous-time simple random walks is equivalent to the infinite collision property stated above. \smallskip 

The graphs studied in the present article are comb graphs $\mathrm{Comb}(\mathbb{Z},H)$, which are defined by specifying a (possibly random) profile function $H : \mathbb{Z} \rightarrow \mathbb{R}_{\geq 0}$, and have vertex set 
\begin{equation}
\label{eq:Vertex-set-comb}
    V = \{ (x,\ell) \in \mathbb{Z} \times  \mathbb{N}_0 \, : \, 0 \leq \ell \leq H_x \}
\end{equation}
and edge set
\begin{equation}
\label{eq:Edge-set-comb}
\begin{split}
    E & = \Big\{\{(x,0),(y,0) \} \, : \,  |x-y| = 1 \Big\}  \cup \Big\{\{(x,\ell),(x,k) \} \, : \, x \in \mathbb{Z}, |\ell-k| = 1 \Big\}.
    \end{split}
\end{equation}
Whether $\mathrm{Comb}(\mathbb{Z},H)$ has the infinite or finite (triple) collision property depends in a subtle way on the tooth profile $(H_x)_{x \in \mathbb{Z}}$. Although, at least intuitively, profiles with `shorter teeth' favor the infinite (triple) collision property, there is no simple general monotonicity property for the latter (see also~\cite[Remark 1.7]{BPS}). In~\cite{CWZ08}, it is established that $\mathrm{Comb}(\mathbb{Z},H)$ possesses the infinite collision property if $H_x \leq |x|^{1/5}$ holds for all $x \in \mathbb{Z}$. In~\cite{BPS}, a general criterion (valid for general recurrent, locally finite, connected graphs) based on effective resistances is established, which in particular implies that $\mathrm{Comb}(\mathbb{Z},H)$ has the infinite collision property for the profile $H_x = |x|^\alpha$, $x \in \mathbb{Z}$, when $\alpha \leq 1$. Moreover, it is proved in~\cite{BPS} that the finite collision property holds for the same comb graph with profile $H_x = |x|^\alpha$, $x \in \mathbb{Z}$, when $\alpha > 1$. In the context of comb graphs $\mathrm{Comb}(\mathbb{Z},H)$, a further improvement appeared in~\cite{Chenchen}, in which it is proved that the infinite collision property holds if
\begin{equation}
\label{eq:C-C-Criterion}
    \sum_{n = 1}^\infty \frac{1}{1 \vee \max_{-n \leq x \leq n} H_x} = \infty.
\end{equation}
The latter criterion is useful to investigate the growth rate of the tooth profile that leads to the phase transition between the infinite and finite collision property. Specifically, it is shown in~\cite{Chenchen} that
\begin{equation}
\label{eq:Infinite-collisions-Chen-Chen}
    \begin{minipage}{0.8\textwidth}for $H^{(\gamma)}_x = |x| \log^\gamma(|x| \vee 1)$, $\mathrm{Comb}(\mathbb{Z},H^{(\gamma)})$ has the infinite collision property if $\gamma \leq 1$, and the finite collision property if $\gamma > 2$. \end{minipage}
\end{equation}
Our first main result addresses the range $\gamma \in (1,2]$ for $H^{(\gamma)}_x = |x| \log^\gamma(|x| \vee 1)$. We show in Theorem~\ref{theo:Reg} that
\begin{equation}
\label{eq:Main-regular-comb-intro}
     \begin{minipage}{0.8\textwidth}for $H^{(\gamma)}_x = |x| \log^\gamma(|x| \vee 1)$, $\mathrm{Comb}(\mathbb{Z},H^{(\gamma)})$ has the infinite collision property if $\gamma \in (1,2]$. \end{minipage}
\end{equation}
This provides an example showing that the criterion~\eqref{eq:C-C-Criterion} obtained in~\cite{Chenchen} $-$ perhaps surprisingly $-$ does \textit{not} capture the precise growth rate of the tooth profile that governs the phase transition from infinitely many to finitely many collisions of two random walks on comb graphs. We further comment on this observation in Remark~\ref{rem:Sec-3}. \medskip 

We now turn to our results concerning comb graphs with a random tooth profile. Let $(H_x)_{x \in \mathbb{Z}}$ be i.i.d.~random variables defined on some probability space $(\Omega,\mathcal{G},\textbf{P})$, with values in $\mathbb{R}_{\geq 0}$. We denote the expectation under $\textbf{P}$ by $\textbf{E}$. It is proved in~\cite[Theorem 1.4]{Chenchen} that 
\begin{equation}
\label{eq:Chen-Chen-triple-coll}
    \begin{minipage}{0.8\textwidth}
 if $\textbf{E}[H_0] < \infty$, $\mathrm{Comb}(\mathbb{Z},H^\omega)$ has the infinite triple collision property for $\textbf{P}$-almost every (a.e.) realization $\omega \in \Omega$  
    \end{minipage}
\end{equation}
(thus, under the same condition, the infinite collision property holds for $\textbf{P}$-a.e.~$\omega \in \Omega$ as well).
Therefore, one can only expect a phase transition for the (triple) collision property in the case where the random variables $H_x$, $x \in \mathbb{Z}$, are sufficiently heavy-tailed. A convenient choice for our purposes is to assume the tail asymptotics as $z$ tends to infinity, 
\begin{equation}
\label{eq:Heavy-tailed-def}
    \textbf{P}(H_0 > z) \sim Cz^{-\gamma}, \qquad \gamma \in (0,1],
\end{equation}
for some $C > 0$ (with the notation $f(z) \sim g(z)$ for two functions $f,g : \mathbb{R} \to (0,\infty)$ meaning that $\lim_{n \rightarrow\infty} \frac{f(z)}{g(z)}=1$). 

In Theorem~\ref{theo:MainRandomComb}, we obtain the critical decay exponent in~\eqref{eq:Heavy-tailed-def} governing the phase transition between the infinite and finite collision property, and show that 
 \begin{equation}
 \label{eq:Main-Result-random-intro}
\begin{minipage}{0.8\textwidth}
    for $\textbf{P}$-a.e.~$\omega \in \Omega$, $\mathrm{Comb}(\mathbb{Z},H^\omega)$ has the infinite collision property if $\gamma > 1/3 $ and the finite collision property if $\gamma < 1/3$.
     \end{minipage}
 \end{equation}
The proof of the infinite collision property for $\gamma \in (1/3, 1]$ in Section~\ref{sec:Double-collisions-random} forms the core of the present article. To establish this result, we perform a precise analysis of the geometry of the random comb and its consequences on heat-kernel estimates. In the range $\gamma \in (0,1/3)$, the finite collision property in~\eqref{eq:Main-Result-random-intro} was proved in a more general context in~\cite{Koops}, requiring only a lower bound (and, for technical reasons, an upper bound) on the tail probability in~\eqref{eq:Heavy-tailed-def}. For completeness, we also provide a short proof of the finite collision property in this regime, assuming only a lower bound on the tail probabilities and removing the technical assumption in~\cite{Koops}, see Theorem~\ref{theo:FinitePrecise} for a precise statement in that regime.

In view of~\eqref{eq:Chen-Chen-triple-coll}, we also examine the phase transition for the infinite triple collision property for i.i.d.~random tooth lengths $(H_x)_{x \in \mathbb{Z}}$ fulfilling~\eqref{eq:Heavy-tailed-def}, and prove in Corollary~\ref{cor:Triple-coll-gamma-smaller-1} and Theorem~\ref{theo:MainRandomCombTrip} that
\begin{equation}
\label{eq:Result-Triple-collisions-intro}
    \begin{minipage}{0.8\textwidth}
    for $\textbf{P}$-a.e.~$\omega \in \Omega$, $\mathrm{Comb}(\mathbb{Z},H^\omega)$ has the finite triple collision property if $\gamma \in (0,1] $.
    \end{minipage}
\end{equation}
The latter result reveals a somewhat different mechanism for the phase transition of the infinite triple collision property of $\mathrm{Comb}(\mathbb{Z},H^\omega)$ compared to $\mathrm{Comb}(\mathbb{Z},J^{(\beta)})$ with the growing profile $J^{(\beta)}(x) = \log^\beta(|x| \vee 1)$, $x \in \mathbb{Z}$, for $\beta  \in (0,\infty)$, studied in~\cite{CroydonDeAmbroggio}. In the latter context, infinitely many triple collisions occur almost surely if $\beta \leq 1$, whereas finitely many triple collisions occur almost surely if $\beta > 1$. 
Another comparison in this direction is with~\cite{croydon2026collision}, where the infinite triple collision property is established for the trace of a simple random walk in dimension four. Our result highlights how the infinite triple collision property is sensitive not only to the heat kernel, but also to the \textit{volume distribution} on the corresponding graph. We refer to Section~\ref{sec:Triple-collisions-random} for more a more detailed comparison between these results. \medskip

We now comment on the strategies underpinning the proofs of our main results~\eqref{eq:Main-regular-comb-intro},~\eqref{eq:Main-Result-random-intro}, and~\eqref{eq:Result-Triple-collisions-intro}. Our first result~\eqref{eq:Main-regular-comb-intro} concerns two independent walks on the regularly growing comb $\mathrm{Comb}(\mathbb{Z},H^{(\gamma)})$ with $H^{(\gamma)}_x = |x| \log^\gamma(|x| \vee 1)$ and $\gamma \in (1,2]$. For that set-up, we utilize a notion of `strikes' introduced in~\cite{Chenchen}. The latter informally correspond to the times in which one of the random walks enters the base site of the tooth in which the other walk is located. The principal aim is to obtain precise controls on the expected number of strikes that occur before one of the walks exists a horizontal part of the comb (see~\eqref{eq:Number-of-strikes} for the definition of this quantity). The lower and upper bound on the number are derived in Lemma~\ref{lem:LBMomentReg} and Lemma~\ref{lem:UBMomentReg}, respectively. A key aspect (which is implicit in these bounds) is to establish that when a strike occurs, the position of the random walk inside the tooth is nearly uniformly distributed (we refer to Remark~\ref{rem:Sec-3} for details). 

Next, we turn to~\eqref{eq:Main-Result-random-intro}, which addresses the random comb $\mathrm{Comb}(\mathbb{Z},H^\omega)$ for a typical realization of $(H^\omega_x)_{x \in \mathbb{Z}}$. The main difficulty lies in obtaining quantitative bounds on the heat kernel or the killed Green's function on the comb graph that are uniform over large rectangular sets. These rectangular sets are defined in~\eqref{eq:Sets-on-Comb} and informally act as a substitute for a `metric/resistance ball' on $\mathrm{Comb}(\mathbb{Z},H^\omega)$, and we obtain controls on their random geometry in Subsection~\ref{sect:EnvEstimates}. Furthermore, in Subsection~\ref{sect:LocalTime}, we show that the local time of the walk in the set of points in $\mathbb{Z}$ which have a suitably long tooth attached is large. The corresponding control is contained in Proposition~\ref{prop:KeyLTEstimate}. This key estimate is then used to bound the probability of exiting horizontal segments quickly in Subsection~\ref{sect:Exit}. These preparations are pivotal to produce precise heat-kernel bounds for a typical realization of the random comb graph in Subsection~\ref{sect:HKRandom}, which enter a second moment method for the number of collisions in Subsection~\ref{sect:SecondMomentRandom}. In Subsection~\ref{subsec:gamma-greater-1/3}, we then conclude the proof of~\eqref{eq:Main-Result-random-intro} for $\gamma \in (1/3,1)$, and consider the remaining cases in the subsequent Subsections.

Finally, we briefly discuss the proof of~\eqref{eq:Result-Triple-collisions-intro} concerning triple collisions on $\mathrm{Comb}(\mathbb{Z},H^\omega)$ for a typical realization of $(H_x^\omega)_{x \in \mathbb{Z}}$. For $\gamma \in (0,1)$, the result readily follows from the upper bounds on the heat kernel obtained in Lemma~\ref{lem:OnDiagonalHKUB} in combination with a result from~\cite{CroydonDeAmbroggio}. On the other hand, the case $\gamma = 1$ is more challenging. First, we obtain an upper bound on the heat kernel which coincides with similar bounds in~\cite{CroydonDeAmbroggio, croydon2026collision}. However, the volume enclosed in the segment $[-N, N] \cap \mathbb{Z}$ up to height $\ell$ (with $\ell,N \geq 1$) is approximately $N \log(\ell)$ and this differs dramatically from the corresponding volume of the regularly growing comb graphs in~\cite{CroydonDeAmbroggio}. 
The counting of collisions is then split into two families of (convenient) space-time sets, and proceeds via a slightly enhanced first moment method. \medskip

We will now describe the organization of the present article. In Section~\ref{sec:Notation}, we collect further notation and useful results concerning comb graphs and random walks, as well as electric network techniques. Section~\ref{sec:Regularly-growing-comb} deals with regularly growing comb graphs, and includes the proof of~\eqref{eq:Main-regular-comb-intro} in Theorem~\ref{theo:Reg}. In Section~\ref{sec:Double-collisions-random} we prove the main result~\eqref{eq:Main-Result-random-intro} concerning the phase transition between the infinite and finite collision property for random combs in Theorem~\ref{theo:MainRandomComb}. In Section~\ref{sec:Triple-collisions-random}, we prove the finite triple collision property~\eqref{eq:Result-Triple-collisions-intro} for random combs with a tooth distribution having a sufficiently heavy tail in Corollary~\ref{cor:Triple-coll-gamma-smaller-1} and Theorem~\ref{theo:MainRandomCombTrip}. \medskip

Finally, we state our convention on constants that is used throughout the article. We denote by $C, c, c', \dots$ positive constants that change from place to place. Numbered constants such as $c_1,c_2, \dots$ refer to the value assigned at the first appearance in the text. 

\section{Notation and useful results}
\label{sec:Notation}

In this Section we introduce further notation and collect useful facts concerning random walks, heat kernels, Green's functions, and effective resistances. \medskip

We write $\mathbb{N} \coloneqq \{1,2,...\}$ for the set of natural numbers and $\mathbb{N}_0 \coloneqq \mathbb{N} \cup \{0\}$. When $a,b \in \mathbb{R}$, we denote the maximum between $a$ and $b$ by $a \vee b$, the minimum between $a$ and $b$ by $a \wedge b$, and set $\llbracket a,b \rrbracket \coloneqq [a,b] \cap \mathbb{Z}$. We use the notation $\lfloor a \rfloor \coloneqq \max\{k \in \mathbb{Z} \, : \, k \leq a \}$ and $\lceil a \rceil \coloneqq \min\{k \in \mathbb{Z} \, : \, k \geq a \} $ for the lower and upper integer part of $a$. The cardinality of any set $A$ will be denoted by $|A|$. For a connected, locally finite graph $G = (V,E)$, and $x \in V$ we write $\mathrm{deg}(x) \coloneqq |\{e \in E \, : \, x \in e \}|$ for the degree of $x$. For two vertices $x,y \in V$, we denote the graph distance, i.e.~the minimal number $n$ such that $\{x,x_1\},\{x_1,x_2\},...,\{x_{n-1},y\}\in E$ holds for some vertices $x_1,...,x_{n-1} \in V$, by $d(x,y)$. For $x \in V, n \in \mathbb{N}$, let $B(x, n) \coloneqq \{y \in V \colon d(x, y) \le n\}$ be the graph-distance ball of radius $n$ centered at $x$. For $A \subseteq V$, we denote by $\partial A \coloneqq \{z \in V \setminus A \, : \, \{z,w\} \in E \text{ for some }w\in A \}$ the outer (vertex) boundary of $A$. If $X$ and $Y$ are real-valued random variables (not necessarily defined on the same probability space), we write $X  \preceq Y $ if the law of $Y$ stochastically dominates the law of $X$. \medskip

Next, we introduce some useful set notation concerning the geometry of comb graphs. Recall the notation $\mathrm{Comb}(\mathbb{Z},H)$ for the comb over $\mathbb{Z}$ with profile $H  = (H_x)_{x \in \mathbb{Z}} \in \mathbb{N}_0^{\mathbb{Z}}$, i.e.~the graph $\mathrm{Comb}(\mathbb{Z},H) = (V,E)$ with $V$ and $E$ as in~\eqref{eq:Vertex-set-comb} and~\eqref{eq:Edge-set-comb}. We sometimes identify by slight abuse of notation the graph $\mathrm{Comb}(\mathbb{Z},H)$ with its vertex set and, when there is no risk of confusion, abbreviate the site $(0, 0)$ in a comb graph as $0$. For $z \in \mathbb{Z}$, $N, M \in \mathbb{R}_{\geq 0}$ and a fixed tooth profile $(H_x)_{x \in \mathbb{Z}}$, we introduce the following subsets of $\mathrm{Comb}(\mathbb{Z},H)$:
\begin{equation}
\label{eq:Sets-on-Comb}
    \begin{split}
    D^z_N & \coloneqq \{v = (x,\ell) \in \mathrm{Comb}(\mathbb{Z},H) \, : \, |z-x| \leq N \}, \\
\mathcal{R}_N^z(M) & \coloneqq\{v = (x,\ell) \in \mathrm{Comb}(\mathbb{Z},H) \, : \, |z-x| \leq N, \ell \leq M  \}, \\
\mathbb{V}_N^z(M) & \coloneqq \{v = (x,\ell) \in \mathrm{Comb}(\mathbb{Z},H) \, : \, |z-x| \leq N, \ell \geq M \}. 
        \end{split}
\end{equation}
We use the abbreviations $D_N = D^0_N$, $\mathcal{R}_N(M) = \mathcal{R}^0_N(M)$, and $\mathbb{V}_N(M) = \mathbb{V}_N^0(M)$. The set $D^z_N$ can be viewed as the \emph{disk} of radius $N$ around $(z,0)$ for the pseudo-metric $\overline{d}((x,\ell),(y,k)) \coloneqq |x-y|$, for $(x,\ell), (y,k) \in \mathrm{Comb}(\mathbb{Z},H)$, i.e.~the entire part of the comb graph over the interval $\llbracket  z-N,z+N\rrbracket$. For later use, we also introduce the sets
\begin{equation}
\label{eq:Comb-sets-part-2}
    \begin{split}
        \partial_s \mathcal{R}^z_N(M) & \coloneqq \{(z-\lfloor N\rfloor -1,0),(z+\lfloor N\rfloor+1,0) \}, \\
        \partial_u \mathcal{R}^z_N(M) & \coloneqq \{(x,\ell) \in \mathrm{Comb}(\mathbb{Z},H) \, : \, |x-z| \leq N, \ell = \lfloor M\rfloor+1 \},
    \end{split}
\end{equation}
which correspond to the `lateral' and `upper' boundary of the rectangle $\mathcal{R}^z_N(M)$ (note that the latter may be empty), and we again use the abbreviations $\partial_s \mathcal{R}_N(M)$ and $\partial_u \mathcal{R}_N(M)$ for the corresponding sets with $z= 0$. We refer to Figure~\ref{fig:Comb} for a visualization of the sets defined above.

\begin{figure}[H]
    \centering
    \includegraphics[width=0.9\linewidth]{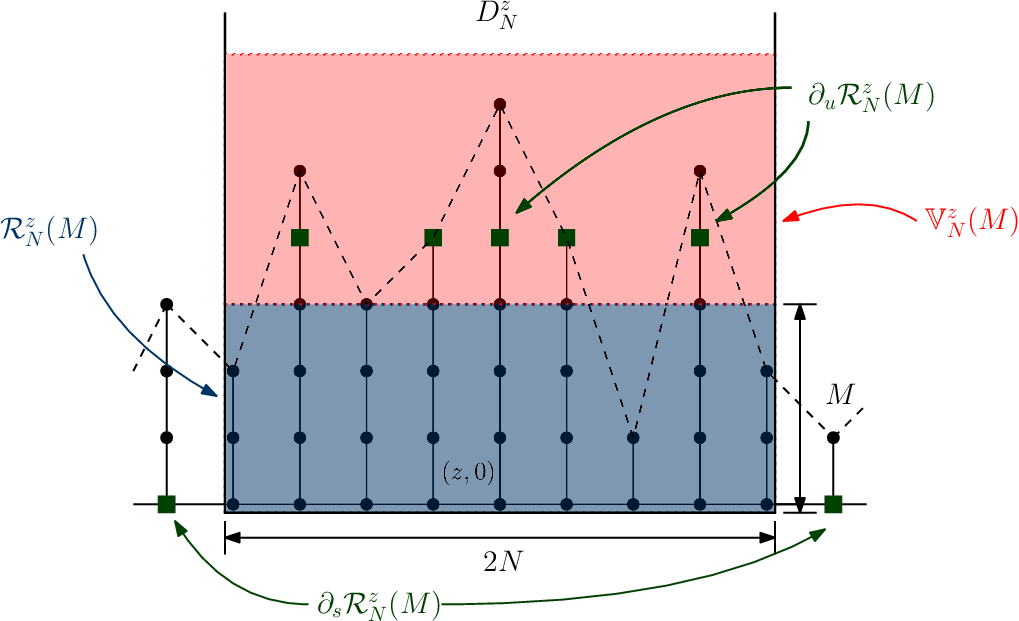}
    \caption{A visualization of the sets defined in~\eqref{eq:Sets-on-Comb} and~\eqref{eq:Comb-sets-part-2}. The dashed line corresponds to the graph of the profile $\{(x,H_x) \, : \, x \in \mathbb{Z}\}$, i.e.~the uppermost points in the comb.}
    \label{fig:Comb}
\end{figure}

We now turn to the random walk on (comb) graphs. 
 We fix a locally finite and connected graph $G = (V,E)$. For $j \in \mathbb{N}$, we denote the canonical process on $(V ^j)^{\mathbb{N}_0}$ by $(X^1_n,...,X^j_n)_{n \in \mathbb{N}_0}$, and abbreviate $(X_n^1)_{n \in \mathbb{N}_0}$ by  $(X_n)_{n \in \mathbb{N}_0}$. We write $\mathcal{F}^{j}_k \coloneqq \sigma(X^i_0,...,X_k^i\, : \, 1  \leq i \leq j)$ for the canonical filtration generated by all $j$ coordinate projections up to time $k$ and $\mathcal{F} \coloneqq \sigma(X_k \colon k \in \mathbb{N}_0)$. The family of canonical shift operators on the space of trajectories $(V^j)^{\mathbb{N}_0}$ is denoted by $(\vartheta^j_n)_{n \geq 0}$, with $\vartheta_n(w^1,...,w^j)(k) = (w^1(n+k),...,w^j(n+k))$ for $n,k \in \mathbb{N}_0$ and $w^1,...,w^j\in V^{\mathbb{N}_0}$ (and $\vartheta_n = \vartheta^1_n)$. For $x \in V$, we denote by $P^G_x$ the law of a (discrete-time) simple random walk starting from $x \in V$ on the canonical space $V^{\mathbb{N}_0}$, namely the discrete-time Markov chain with initial distribution $\delta_x$ (the Dirac measure supported in $x \in V$) and transition probabilities
 \begin{equation}
p_{x,y} = \frac{1}{\mathrm{deg}(x)}\mathds{1}_{\{\{x,y\} \in E \}}, \qquad x,y  \in V.
\end{equation}
We often abbreviate $P^G_x$ as $P_x$ if we work on a fixed graph. The expectation under $P^G_x$ will be written as $E^G_x$ (abbreviated by $E_x$ if we work on a fixed graph). For $x_1,...,x_j \in V$, we introduce a shorthand notation for the product measure $P^G_{x_1,..., x_j} = P^G_{x_1} \otimes ... \otimes P^G_{x_j}$ (again dropping $G$ from the notation if there is no risk of confusion). 

For any $A \subseteq V$, we write $\tau_A \coloneqq \inf\{n \geq 0 \, : \, X_n \in A \}$ for the entrance time of the walk in $A$ (we use the convention that $\inf \varnothing = \infty$). If $A = \{x\}$ for some $x \in V$, we abbreviate $\tau_{\{x\}}$ by $\tau_x$. The heat kernel of the walk $(X_n)_{n \geq 0}$ killed upon exiting $A \subseteq V$ is defined as
\begin{equation}
    p_n^A(x,y) \coloneqq \frac{1}{\mathrm{deg}(y)} P_x(X_n = y, \tau_{A^c} > n), \qquad x,y \in V, n \in \mathbb{N}_0,
\end{equation}
and we abbreviate $p_n(x,y) = p_n^V(x,y)$ for $x,y \in V$, $n \in \mathbb{N}_0$. 
We define the Green kernel of the walk killed upon exiting $A \subseteq V$ as
\begin{equation}
    g^A(x,y) \coloneqq \sum_{n = 0}^\infty p_n^A(x,y), \qquad x,y \in V.
\end{equation}
Both $p_n^A(\cdot,\cdot)$ and $g^A(\cdot,\cdot)$ are symmetric in their arguments, and for $G$ recurrent, $g^A(\cdot,\cdot)$ is finite for $A \neq V$. We now state the Green kernel criterion for the infinite collision property on a large class of graphs $G$ from~\cite[Theorem 3.1]{BPS}:
\begin{equation}
\label{eq:GKC}
    \begin{minipage}{0.8\linewidth}
        Suppose $G = (V,E)$ is a locally finite, connected, infinite, recurrent graph,  $o \in V$ is a fixed vertex and $(U_j)_{j \in \mathbb{N}}$ is an increasing sequence of finite subsets $U_j \subseteq V$ with $\bigcup_{j \in \mathbb{N}} U_j = V$. If for some $C_g > 0$,
        $$
        g^{U_j}(x,x) \leq C_gg^{U_j}(o,o) \qquad \text{for all }x\in U_j, \  j \in \mathbb{N},
        $$
        then $G$ has the infinite collision property.
    \end{minipage}
\end{equation}
Next, we record for a set $\varnothing \neq  A \subseteq V$, $x \in A$, and $n \in \mathbb{N}$ the useful elementary relation
\begin{equation}
\label{eq:Exit-time-estimate}
    P_x(\tau_A > n)^2 = \left(\sum_{y \in A} p_n^A(x,y) \mathrm{deg}(y)\right)^2 \leq \left(\max_{y \in A} \deg(y)\right) |A|p_{2n}^A(x,x),
\end{equation}
where we used the Cauchy-Schwarz inequality in the second step. We then collect another general bound from~\cite[Lemma 4.4]{BPS}, which states that for $\varnothing \neq  A \subseteq V$, one has
\begin{equation}
    \label{eq:Bound-HK-via-Green}
    p_n(x,x) \leq \frac{2g^A(x,x)}{nP_x(\tau_{A^c} \geq n)}, \qquad x\in A, n \in \mathbb{N}.
\end{equation}

We also need the notion of effective resistance, which we now briefly recall. For subsets $\varnothing \neq A, B \subseteq V$, we define define the effective resistance between $A$ and $B$ as 
\begin{equation}
\label{eq:Eff-res-def}
    \begin{split}R_{\mathrm{eff}}(A,B) & = \inf\{\mathcal{E}(h) \, : \, h\vert_A = 1, h\vert_B = 0, \mathcal{E}(h) < \infty \}^{-1}, \qquad \text{where} \\
    \mathcal{E}(h) & = \frac{1}{2}\sum_{x,y \in V \, : \, \{x,y\}\in E} (h(x)-h(y))^2 , \ h : V \rightarrow \mathbb{R}
    \end{split}
\end{equation}
(using again the convention that $\inf \varnothing = \infty$). If $A = \{a\}$, we abbreviate $R_{\mathrm{eff}}(\{a\},B)$ as $R_{\mathrm{eff}}(a,B)$, and use a similar notation if both $A$ and $B$ are singleton sets. It follows directly from the definition~\eqref{eq:Eff-res-def} that
\begin{equation}
\label{eq:Monotonicity}
R_{\mathrm{eff}}(A,B) \geq R_{\mathrm{eff}}(A',B), \qquad \text{for } \varnothing \neq A \subseteq A'  \subseteq V, \varnothing \neq B \subseteq V,
\end{equation}
and that $R_{\mathrm{eff}}(\cdot,\cdot)$ is symmetric in its arguments. Effective resistances can be calculated explicitly using network reduction techniques (see, for instance,~\cite[Chapter 2]{Barbook} or~\cite[Chapter 2]{LPBook}) which will be used at various stages throughout the article. We note that for any locally finite, connected graph $G$, we have
\begin{equation}
\label{eq:Effective-resistance-graph-distance-bound}
    R_{\mathrm{eff}}(x,y) \leq d(x,y), \qquad x,y \in V,
\end{equation}
and equality holds if the graph is a tree. 
The following bound for escape probabilities on  a recurrent, connected, locally finite graph $G$ will be useful: 
\begin{equation}
\label{eq:Escape-Prob-bound}
    P_x(\tau_A > \tau_{B^c} ) \leq \frac{R_{\mathrm{eff}}(x,A)}{R_{\mathrm{eff}}(x,B^c)}, \qquad \text{for }x \in B\setminus A, A \subseteq B \subseteq V,
\end{equation}
see, for instance,~\cite[Lemma 2.62]{Barbook} for a proof.
We also recall the commute time identity (see, for instance,~\cite[Proposition~10.7]{LPBook}), which states that for a finite connected graph $G = (V,E)$ and any $a,b\in V$, one has 
\begin{equation}
\label{eq:Commute-time}
    E_a[\tau_b] + E_b[\tau_a] = 2|E|R_{\mathrm{eff}}(a,b).
\end{equation}

\section{Phase transition for the number of collisions on regularly growing combs}
\label{sec:Regularly-growing-comb}

In this Section, we consider a regularly growing comb graph $\mathrm{Comb}(\mathbb{Z},H^{(\gamma)})$ with tooth profile
\begin{equation}\label{eqn:GrowthReg}
    H^{(\gamma)}_{x} \coloneqq |x| \log^\gamma(|x| \vee 1), \quad \text{for }x \in \mathbb{Z},
\end{equation}
where $\gamma \in (0, \infty)$. 
Our main result concerning these comb graphs is as follows.
\begin{thm}\label{theo:Reg}
The following statements hold:
	\begin{enumerate}
		\item If $\gamma \le 2$, then $\mathrm{Comb}(\mathbb{Z},H^{(\gamma)})$ has the infinite collision property.
        \item If $\gamma > 2$ then $\mathrm{Comb}(\mathbb{Z},H^{(\gamma)})$ has the finite collision property.
	\end{enumerate}
\end{thm}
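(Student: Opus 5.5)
Part (2) and the case $\gamma \le 1$ of part (1) are contained in~\eqref{eq:Infinite-collisions-Chen-Chen}, so only $\gamma \in (1,2]$ requires work. The plan is to use the strike approach of~\cite{Chenchen}. Fix a scale $N$ (say $N = 2^k$). Call a \emph{strike} a time at which one walk enters the base site $(x,0)$ of the tooth currently occupied by the other walk. Let $S_N$ be the number of strikes occurring before one of the walks, both started at $0$, exits the horizontal window $D_N$. At each strike, the walk entering the base of tooth $x$ has a uniformly positive chance of meeting the other walk inside that tooth. This holds provided the other walk's height is spread over $\llbracket 0,H_x \rrbracket$. A vertical walk started at $0$ and one started at height $\ell$, observed over a time of order $H_x^2$, then meet with probability bounded below. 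It therefore suffices to show $E_{0,0}[S_N]\to\infty$ along a subsequence with $E[S_N^2]\le C E[S_N]^2$. A second-moment (Paley--Zygmund) argument then gives that collisions occur with probability bounded below at infinitely many scales. The $0$-$1$ law~\cite[Proposition 2.1]{BPS}, combined with a L\'evy-type Borel--Cantelli argument across the scales $2^k$, yields infinitely many collisions almost surely.

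\textbf{Key steps.}
\begin{enumerate}
\item \emph{Time scales.} The horizontal projection is a time-changed simple random walk on $\mathbb{Z}$. The time spent in tooth $x$ per visit to its base has mean of order $H_x$, and the excursion heights have tail $P(\text{height}>h)\sim 1/h$. Hence crossing $D_N$ takes time of order $\sum_{|x|\le N} N H_x \approx N^3\log^\gamma N$. The commute time identity~\eqref{eq:Commute-time} on $D_N$ confirms this scale.
\item \emph{Lower bound on $E[S_N]$ (the analogue of Lemma~\ref{lem:LBMomentReg}).} While walk $1$ sits in tooth $x$ for time $t$, walk $2$ hits $(x,0)$ roughly $t/\big(N\log^\gamma N\cdot N\big)$ times. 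This comes from local time at $x$ of order $t/\mathrm{vol}(D_N)$ times the degree. Summing over the occupation of walk $1$ in teeth at distance $\approx N$ gives $E[S_N]$ of order $\sum_{\text{scales}} (\log N)^{-\gamma}\cdot(\text{number of dyadic scales})$. The logarithmic factors compensate exactly when $\gamma\le 2$, leading to a divergent harmonic-type sum $\sum_k k^{1-\gamma}$, truncated appropriately.
\item \emph{Upper bound (Lemma~\ref{lem:UBMomentReg}) and second moment.} Control $E[S_N^2]$ by the strong Markov property at strike times. Given a strike, the expected number of further strikes is comparable to $E[S_N]$ uniformly in the configuration. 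This uses the exit estimates~\eqref{eq:Escape-Prob-bound}.
\item \emph{Near-uniformity.} Show that at a strike time, the height of the walk inside tooth $x$ is close to uniform on $\llbracket 0,H_x\rrbracket$, with density bounded above and below on most of the tooth. This converts strikes into collisions with positive probability.
\end{enumerate}

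\textbf{Main obstacle.} I expect the main obstacle to be step 4 together with the sharp constants in step 2. The Chen--Chen criterion~\eqref{eq:C-C-Criterion} loses exactly a logarithm because it does not exploit uniformity of the height at strikes. My approach would be to condition on the entrance time into tooth $x$ and the departure time. Using reversibility, the law of the height at an independent time is then close to stationary, which on a tooth is uniform once the elapsed time exceeds $H_x^2$. Strikes occurring earlier than $H_x^2$ after entrance would be handled separately and shown to contribute negligibly to $E[S_N]$.
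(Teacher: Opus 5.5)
There is a genuine gap, and it is in your basic mechanism. A strike does not give a uniformly positive chance of collision, even when the struck walk's height is spread over the tooth. If that walk sits at height $h$, the probability of a collision before either walk returns to the backbone is of order $1/h$; see \eqref{eq:Aux-Gambler-ruin-Sec-3}, taken from \cite[Lemma 2.2]{Chenchen}. The picture of two vertical walks on an isolated segment meeting within time $H_x^2$ does not transfer to the comb. The walk at $(x,0)$ has degree $3$, leaves the tooth with probability $2/3$ at every visit to the base, and spends only a vanishing fraction of such a time window inside tooth $x$.

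Your claim also cannot be true, because it does not involve $\gamma$. Combine it with two facts that do hold: the height is near-uniform at strikes, and strikes are plentiful at every scale (of order $N$ in expectation at scale $N$). Together these would give infinitely many collisions for $\gamma>2$ as well, contradicting part (2). The correct weight of a strike is $1/h$. What near-uniformity buys is that the average weight becomes $\log H_x/H_x$ instead of the worst case $1/H_x$ used in \cite{Chenchen}; this is exactly the logarithm that is gained.

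With the correct weights, i.e.\ for the count $\Theta_N$ of strikes followed by a collision, your second-moment plan breaks down. What matters for collisions at infinitely many scales is the configuration at the restart time, where the walks are at distance of order $N$. From there one has $E_{x,y}[\Theta_N]\asymp \log^{1-\gamma}N\to 0$ for $\gamma>1$ (cf.\ Lemma~\ref{lem:LBMomentReg}). Hence $P_{x,y}(\Theta_N\ge 1)\le E_{x,y}[\Theta_N]\to 0$, and no per-scale bound bounded away from zero exists. For walks started at the same point the expectation is of order $\log^{2-\gamma}N$ (resp.\ $\log\log N$ for $\gamma=2$), cf.\ Lemma~\ref{lem:UBMomentReg}. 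This is precisely your sum $\sum_{k\le\log N}k^{1-\gamma}$, accumulated over all dyadic scales below $N$. So $E_{0,0}[\Theta_N]\to\infty$ only reflects small scales, and your step 3 fails by a factor of order $\log N$.

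What one does get is $P_{x,y}(\Theta_N\ge 1)\ge E_{x,y}[\Theta_N]/(1+\max_z E_{z,z}[\Theta_N])\gtrsim 1/\log N$ (resp.\ $1/(\log N\log\log N)$), which is Proposition~\ref{prop:MainReg}. The argument closes because along $N=b^m$ these bounds are of order $1/m$ (resp.\ $1/(m\log m)$). They are not summable, so the conditional Borel--Cantelli lemma still applies. The concluding display of the paper's proof is written with a constant $\rho$, but the non-summable bound is what Proposition~\ref{prop:MainReg} actually provides, and it suffices.

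This is the true role of $\gamma\le 2$. With $a_k=k^{1-\gamma}$, the bound of Proposition~\ref{prop:MainReg} is, up to constants, $a_m/\sum_{k\le m}a_k$, and these terms have divergent sum if and only if $\sum_k a_k=\infty$. Finally, the paper does not obtain near-uniformity by conditioning on entrance and departure times. It comes directly from the killed heat-kernel lower bound of Proposition~\ref{prop:LBHKReg}, which is uniform in the height, combined with the upper bounds of Lemma~\ref{prop:UBHKReg}.
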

 This result completes the picture for the phase transition from the infinite to finite collision property of $\mathrm{Comb}(\mathbb{Z},H^{(\gamma)})$ in~\cite{Chenchen}, see~\eqref{eq:Infinite-collisions-Chen-Chen}. There, the second statement and the first statement of Theorem~\ref{theo:Reg} restricted to the sub-regime $\gamma \le 1$ are obtained, using the criterion~\eqref{eq:Edge-set-comb} for the latter. Our Theorem~\ref{theo:Reg} addresses the regime $\gamma \in (1, 2]$ in which the criterion~\eqref{eq:Edge-set-comb} cannot be applied. Our strategy is essentially a variation of the approach in~\cite{Chenchen}, however more precise controls on the heat kernel are required, see Remark~\ref{rem:Sec-3} for a more thorough comparison to~\cite{Chenchen}. The remainder of this Section is organized as follows: In Subsection~\ref{subsec:HK-bounds}, we develop heat kernel bounds for the random walk on $\mathrm{Comb}(\mathbb{Z},H^{(\gamma)})$. In Subsection~\ref{subsec:Collisions-Strikes}, we introduce a notion of `strikes' following~\cite{Chenchen}, which is then used to organize the summation over the number of collisions. In Subsections~\ref{subsec:First-Moment-Lower} and~\ref{subsec:First-Moment-Upper}, lower and upper bounds on the number of strikes are proved. Finally, in the short Subsection~\ref{subsec:Denouement-Comment} we finish the proof of Theorem~\ref{theo:Reg} and provide some remarks on our approach. 

\subsection{Lower and upper bounds on the heat kernel}
\label{subsec:HK-bounds}
For $z\in \mathbb{Z}$ and $N \in \mathbb{R}_{\geq 0}$, we recall from~\eqref{eq:Sets-on-Comb} the notation $D^z_N$ for the `disk' of horizontal radius $N$ around the point $(z,0)$. We begin with some standard exit time estimates for random walks on comb graphs. Throughout, we use the abbreviation $\tau_{z,N} = \tau_{(D^{z}_N)^c}$ for the exit time of the disk $D_z^{N}$ with $z \in \mathbb{Z}, N \in \mathbb{R}_{\geq 0}$.

We now state two lemmas from~\cite{BPS}, the proof of which can be adjusted by changing the tooth profile to~\eqref{eqn:GrowthReg}. The following result corresponds to~\cite[Lemma~4.5]{BPS}.
\begin{lem}\label{lem:ExitLDPReg}
    There exist constants $C_1, c_1> 0 $ such that uniformly over all $x= (x_1, 0) \in \mathrm{Comb}(\mathbb{Z},H^{(\gamma)})$, $k \ge 1$ and $n \ge 1$
    \begin{equation}
        P_x \left(\tau_{x_1,k} \le n \right) \le C_1 \exp \left( - c_1 \left(\frac{k^3 \log^\gamma(k \vee 1)}{n}\right)^{1/3} \right).
    \end{equation}
\end{lem}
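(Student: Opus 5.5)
The plan is to follow the proof of \cite[Lemma~4.5]{BPS}: a single-block crossing estimate, iterated with the strong Markov property. The only new input is the profile \eqref{eqn:GrowthReg}, which enters through a lower bound on tooth heights along the relevant stretch of the base line.

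\emph{Step 1 (reduction to a stretch with long teeth).} To exit $D^{x_1}_k$, the walk must reach either $(x_1+k,0)$ or $(x_1-k,0)$. Fix one side, say $\llbracket x_1,x_1+k\rrbracket$. Its points with $|y|<k/4$ form an interval of length at most $k/2$. The remaining points therefore contain a sub-interval $I$ of length at least $k/4$. This holds uniformly in $x_1$, which is what gives uniformity over the starting point $x$.

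On $I$, every tooth has height at least $h\coloneqq (k/4)\log^\gamma(k/4)$. Teeth of height $\ge h$ can be truncated at height $h$: this only speeds the walk up in the sense relevant here (the time spent at heights $\le h$ is unchanged). Hence it suffices to bound the probability that the walk crosses $I$, of length $\asymp k$, within time $n$, on a comb whose teeth over $I$ all have height at least $h$. The other side is handled identically, and a union bound over the two sides costs only a factor $2$ in $C_1$.

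\emph{Step 2 (one block).} Split $I$ into $j$ consecutive blocks of length $m=\lfloor k/(4j)\rfloor$, with $j\ge 1$ chosen below. The key claim is that there exist $c,\delta>0$ such that, for every base point $y$ in $I$ and every $m\le k$,
\[
P_{(y,0)}\bigl(\tau_{y,m}\le c\, m^2 \min(h,m^2)\bigr)\le 1-\delta.
\]
I would prove this in two parts.
\begin{itemize}
\item First, the horizontal projection is a lazy simple random walk on $\mathbb{Z}$ time-changed by the excursions into the teeth. With probability bounded below it makes at least $c m^2$ visits to the base before moving horizontal distance $m$. This is a standard fact for simple random walk.
\item Second, each base visit starts an upward excursion with probability $1/3$. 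An excursion reaches height $r\le h$ with probability $\asymp 1/r$, and then lasts time $\gtrsim r^2$ with probability bounded below. With $\asymp m^2$ independent excursions, with probability bounded below at least one reaches height $\min(h,m)$. This already yields a time of order $m^2\min(h,m^2)$ up to constants. Alternatively, a crude second moment bound on the total excursion time gives the same conclusion.
\end{itemize}

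\emph{Step 3 (chaining).} Let $T_i$ be the time the walk needs to cross the $i$-th block after first entering it. By the strong Markov property, these are stochastically bounded below by i.i.d.\ variables, each exceeding $t_m\coloneqq c\,m^2\min(h,m^2)$ with probability at least $\delta$. If $\tau\le n$, then $\sum_i T_i\le n$. Once $n\le \delta j t_m/2$, this forces fewer than $\delta j/2$ of the $T_i$ to exceed $t_m$, an event of probability at most $e^{-c'j}$ by a Chernoff bound.

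We then choose $j$ as large as possible subject to $n\lesssim j\,t_{k/j}$. With $m^2\ge h$ one gets $j\asymp k^2h/n$, and the worse regime gives a smaller power of $j$. In either case one obtains $j\gtrsim (k^3\log^\gamma(k\vee1)/n)^{1/3}$ whenever this quantity exceeds a constant. Smaller values are absorbed by enlarging $C_1$. This gives the claimed bound, which is weaker than what the argument delivers, so we have room to be crude with constants and exponents.

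\emph{Main obstacle.} I expect the main obstacle to be Step 2: making the per-block lower bound on the crossing time uniform in $y$ and in $m$, including the regime where $m^2$ and $h$ are comparable. For this I would first try the effective-resistance escape bound \eqref{eq:Escape-Prob-bound} together with the commute time identity \eqref{eq:Commute-time}, applied on the finite graph $D^y_m$ with teeth truncated at $\min(h,m^2)$. This gives the expected crossing time as $\asymp m\cdot|E|\asymp m^2\min(h,m^2)$. A Paley--Zygmund type argument, using a matching second moment from the same network, would then turn this into a lower bound in probability.
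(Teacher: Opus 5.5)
Your proposal is correct and follows the paper's route. The paper gives no argument of its own beyond transferring \cite[Lemma~4.5]{BPS} to the profile \eqref{eqn:GrowthReg}, and your per-block crossing bound, chained via the strong Markov property, is that transfer; the only new input is a stretch of length $\asymp k$ carrying teeth of height $\gtrsim k\log^\gamma k$. One slip in Step~2: a single excursion reaching height $\min(h,m)$ (it should be $\min(h,m^2)$) only gives time $\min(h,m^2)^2$, which matches $m^2\min(h,m^2)$ only when $h\ge m^2$, so for $h<m^2$ (the regime you need once $n\gg kh^{3/2}$) you must use the $\asymp m^2/h$ excursions reaching height $h$, or your Paley--Zygmund argument on the network truncated at $\min(h,m^2)$, which works because there $\sup_w E_w[\tau_{y,m}]\lesssim \min(h,m^2)^2+m^2\min(h,m^2)\lesssim E_{(y,0)}[\tau_{y,m}]$.
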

The following result is analogous to \cite[Lemma~4.9]{BPS}, again adjusting for the different tooth profile.
\begin{lem}\label{prop:UBHKReg}
    There exist constants $C_2, c_2 > 0$ such that uniformly over all $x = (x_1, x_2) \in \mathrm{Comb}(\mathbb{Z},H^{(\gamma)})$ and all $y = (y_1, y_2) \in \mathrm{Comb}(\mathbb{Z},H^{(\gamma)})$ we have for all $n \ge 1$ setting $n = s^3 \log^\gamma(s\vee1)$,
    \begin{equation}
    \label{eq:HK-UB-general}
        p_n(x, y) \le \begin{cases}
            C_2 s^{-2}\log^{-\gamma}(s\vee1) e^{-c_2 \frac{(x_2 \vee y_2)^2}{n}}, & s \ge |x_1 - y_1|,\\
            C_2 |x_1 - y_1|^{-2}\log^{-\gamma}(|x_1 - y_1|\vee1) e^{-c_2 \frac{(x_2 \vee y_2)^2}{n}}, & s < |x_1 - y_1|.
        \end{cases} 
    \end{equation}
\end{lem}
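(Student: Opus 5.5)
We follow the strategy of \cite[Lemma~4.9]{BPS}: an on-diagonal bound via the Green kernel estimate~\eqref{eq:Bound-HK-via-Green}, then Cauchy--Schwarz and a separate treatment of the exponential factor. Throughout, $N(s) \coloneqq s^3\log^\gamma(s\vee 1)$ is the natural time scale for the walk to exit $D^{x_1}_s$. Indeed, by Lemma~\ref{lem:ExitLDPReg} with $k=s$ and $n=N(s)$, the walk started at $(x_1,0)$ stays in $D^{x_1}_{s}$ up to time $n$ with probability bounded below, after possibly shrinking $s$ by a constant factor. For a start point $(x_1,x_2)$ inside a tooth, we first let the walk descend to the base, or simply note that leaving $D^{x_1}_s$ requires passing through the base.

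\textbf{Step 1 (on-diagonal bound).} Take $A=D^{x_1}_{s'}$ with $s'=cs$. The Green kernel $g^A(x,x)$ is at most $\deg(x)^{-1}R_{\mathrm{eff}}(x,A^c)$ times a constant. Here $R_{\mathrm{eff}}(x,A^c)\le C(s+x_2)$, and in the relevant regime it is of order $s$. This is too crude: it would give $p_n(x,x)\lesssim s/n = s^{-2}\log^{-\gamma}$, which is exactly the claimed bound. So~\eqref{eq:Bound-HK-via-Green} together with $P_x(\tau_{A^c}\ge n)\ge c$ yields $p_n(x,x)\le C s^{-2}\log^{-\gamma}(s\vee1)$. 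For $x$ high in a tooth with $x_2\gg s$, we use instead the set $A$ equal to the tooth segment of radius $\sqrt n$ around $x$. This gives a one-dimensional bound $n^{-1/2}$, which is even smaller. Symmetry and Cauchy--Schwarz, $p_{2n}(x,y)\le \sqrt{p_{2n}(x,x)p_{2n}(y,y)}$, then give the off-diagonal bound without decay, and we handle odd times by a one-step adjustment.

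\textbf{Step 2 (horizontal decay, case $s<|x_1-y_1|$).} Set $k=|x_1-y_1|/2$. Decompose at time $n/2$: either $X$ leaves $D^{x_1}_k$ before $n/2$, or, by reversibility, the walk from $y$ leaves $D^{y_1}_k$ before $n/2$. By Lemma~\ref{lem:ExitLDPReg} each event costs $\exp(-c(N(k)/n)^{1/3})$. On such an event we apply the strong Markov property at the exit time together with the on-diagonal bound at time $\ge n/2$. The resulting product is $s^{-2}\log^{-\gamma}(s)\exp(-c(N(k)/N(s))^{1/3})$. Since $(N(k)/N(s))^{1/3}\approx k/s$ up to logarithmic factors, this is at most $Ck^{-2}\log^{-\gamma}(k)$, because $x^{2}\log^\gamma x\, e^{-cx}$ is bounded.

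\textbf{Step 3 (vertical Gaussian factor).} Suppose $y_2\ge x_2$; otherwise use symmetry. To reach height $y_2$, the walk must travel vertically a distance of order $y_2$ inside a single tooth, which is a one-dimensional simple random walk segment. The time spent in teeth only slows this down. A reflection or Azuma bound gives probability $e^{-cy_2^2/n}$ to gain height $y_2/2$ within time $n$. We combine this with the previous bounds by splitting at $n/2$ exactly as in Step 2, using the Markov property. The main obstacle is the bookkeeping needed to combine the vertical and horizontal decay simultaneously with uniform constants, in particular near the boundary where $x_2$ is close to $H_{x_1}$ or the tooth is short ($|x_1|$ small). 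This is handled by noting that reflection at the tooth top only helps the upper bound.
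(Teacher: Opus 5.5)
\textbf{The gap.} Step~1 breaks down at points inside teeth, and no repair is possible. Your claim that the one-dimensional bound $n^{-1/2}$ is ``even smaller'' is backwards. For $n=s^3\log^\gamma(s)$ one has $n^{-1/2}=s^{-3/2}\log^{-\gamma/2}(s)$, which exceeds $s^{-2}\log^{-\gamma}(s)$ by a factor $s^{1/2}\log^{\gamma/2}(s)$. Similarly, $g^A(x,x)=R_{\mathrm{eff}}(x,A^c)=x_2+O(s)$ is too large once $x_2\gg s$.

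In fact the on-diagonal bound you are aiming for is false at tooth points. Take $n$ even, $|x_1|\ge 2n$ (so the top of the tooth is out of reach), and $x=y=(x_1,h)$ with $h=\lfloor\sqrt n/2\rfloor$. The reflection principle gives
\[
p_n(x,x)\;\ge\;\tfrac12\big(P^{\mathbb{Z}}_0(S_n=0)-P^{\mathbb{Z}}_0(S_n=2h)\big)\;\ge\;c\,n^{-1/2}.
\]
The first case of \eqref{eq:HK-UB-general} gives at most $C_2s^{-2}\log^{-\gamma}(s)=o(n^{-1/2})$, and for $h\gg\sqrt n$ the bound is even exponentially small. So for two points in the same tooth the inequality is false.

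The same example refutes the premise of Step~3, that reaching height $y_2$ forces a vertical displacement of order $y_2$. That holds only when the walk must pass through the base, i.e.\ when one endpoint is on the backbone or $x_1\neq y_1$. Consequently the route ``on-diagonal bound at arbitrary points, then Cauchy--Schwarz'' cannot work. You also rely on it in Step~2 through $p_m(y,y)$ when $y$ lies in a tooth. The argument of \cite[Lemma~4.9]{BPS} that the paper invokes, as reproduced in its Lemma~\ref{lem:UBHKInsideAllPoints}, keeps one endpoint on the backbone.

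\textbf{What survives, and what is missing.} Step~1 is fine at base points $x=(x_1,0)$, provided you take $A=D^{x_1}_{Ks}$ with $K$ \emph{large}, not $cs$. This is what Lemma~\ref{lem:ExitLDPReg} needs to give $P_x(\tau_{A^c}\ge n)\ge\frac12$, and still $g^A(x,x)\le Ks$. Cauchy--Schwarz and Step~2 then give the base-to-base bound, since the exit point lies on the base.

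The missing idea is how to reach a point $(x_1,h)$ in a tooth without any on-diagonal information there. Let $T_0$ be the time the vertical coordinate first hits the base, and decompose on it:
\[
p_n\big((x_1,h),y\big)\;\lesssim\;\sum_{a\le n}P_h(T_0=a)\,p_{n-a}\big((x_1,0),y\big).
\]
Then use the ballot-type \emph{density} bound \eqref{eq:Ballot}, $P_h(T_0=a)\lesssim h\,a^{-3/2}e^{-h^2/(Ca)}$ plus the term reflected at the tooth top.
\begin{itemize}
\item For $a\le n/2$, the tail bound $P_h(T_0\le n/2)\lesssim e^{-ch^2/n}$ times the base estimate at time $\ge n/2$ suffices.
\item For $a>n/2$ a tail bound is not enough. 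The sum $\sum_{j\le n/2}p_j((x_1,0),y)$ is of order $s$, and $h\,n^{-3/2}s\lesssim s/n$ exactly when $h\lesssim\sqrt n$; larger $h$ is absorbed into $e^{-ch^2/n}$.
\end{itemize}
Decomposing in this way at both ends handles $x_1\neq y_1$ and produces the factor $e^{-c(x_2^2+y_2^2)/n}$. Your Step~3 (climb $y_2/2$, use a reflection/Azuma tail bound, split at $n/2$) does not recover the prefactor. After the climb you would need a heat-kernel bound between two points of the same tooth, and that is of order $n^{-1/2}$, not $s/n$.
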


We will also need a lower bound on the heat kernel of the random walk killed upon exiting $D_{bN}$ for sufficiently large $b \geq 13$, and $N \in \mathbb{N}$.  Its proof may be compared to that~\cite[Lemma 3.2]{CroydonDeAmbroggio}, adapted to the present case.

\begin{prop}\label{prop:LBHKReg}
    There exist constants $c_3, c_4, c_5 > 0$, $b \geq 13$ such that for all $x = (x_1, x_2) \in D_N$ and all $y = (y_1, y_2) \in D_N$ with $n- d(x, y) \in 2\mathbb{N}_0$, we have     \begin{equation}
    \label{eq:Killed-main-bound-regular-case}
        p_n^{D_{bN}}(x, y) \ge \frac{c_3}{N^2 \log^{\gamma}(N)} \qquad \text{ for } n \in \llbracket c_4 N^3 \log^\gamma(N), c_5 N^3 \log^\gamma(N) \rrbracket.
    \end{equation}
\end{prop}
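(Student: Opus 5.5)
We follow the classical route of \cite[Lemma 3.2]{CroydonDeAmbroggio}: an on-diagonal lower bound for the killed heat kernel, obtained from the exit time estimate and Cauchy--Schwarz, followed by a chaining argument that turns it into a near-diagonal lower bound. Throughout, write $T_N \coloneqq N^3\log^\gamma(N)$ and $V_N \coloneqq |D_{bN}|$. Since $H^{(\gamma)}_x \asymp N\log^\gamma N$ for $|x| \asymp N$, we have $V_N \asymp b^2 N^2 \log^\gamma(N)$.

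\emph{On-diagonal lower bound.} For $x \in D_N$ we apply~\eqref{eq:Exit-time-estimate} with $A = D_{bN}$; the maximal degree is at most $3$, so
\[
p^{D_{bN}}_{2n}(x,x) \geq \frac{P_x(\tau_{0,bN} > n)^2}{3V_N}.
\]
It remains to show that $P_x(\tau_{0,bN} > n) \geq 1/2$ for $n \leq c_5 T_N$. The walk first has to return from height $x_2$ to the base, which happens at $(x_1,0)$. From there it must travel horizontal distance at least $(b-1)N$ before leaving $D_{bN}$. By the strong Markov property and Lemma~\ref{lem:ExitLDPReg} with $k = (b-1)N$, this probability is at least $1 - C_1\exp(-c_1((b-1)^3 N^3\log^\gamma N / n)^{1/3})$, which exceeds $1/2$ once $c_5$ is small relative to $b^3$. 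Hence $p^{D_{bN}}_{2n}(x,x) \geq c/(N^2\log^\gamma N)$ for even times $2n \leq 2c_5 T_N$.

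\emph{From diagonal to off-diagonal.} We use the standard Hölder/regularity argument for heat kernels. The first ingredient is the semigroup identity $p^A_{2n}(x,y) = \sum_z p^A_n(x,z)p^A_n(z,y)\deg(z)$, which gives
\[
|p^A_{2n}(x,y)-p^A_{2n}(x,x)|^2 \leq p^A_{2n}(x,x)\,\bigl(p^A_{2n}(y,y)+p^A_{2n}(x,x)-2p^A_{2n}(x,y)\bigr).
\]
Bounding the bracket is the crux. Following \cite{CroydonDeAmbroggio}, we control it through the Dirichlet form $\mathcal{E}(p^A_n(\cdot,x)-p^A_n(\cdot,y))$. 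This form is at most $C\,p^A_{n}(x,x)/n$ by the monotonicity of $n\mapsto p_{2n}(x,x)$ and the spectral calculus. Combining this with the resistance bound~\eqref{eq:Effective-resistance-graph-distance-bound} yields
\[
|f(x)-f(y)|^2 \leq R_{\mathrm{eff}}(x,y)\,\mathcal{E}(f).
\]
This gives $|p^A_{2n}(x,y)-p^A_{2n}(x,x)| \leq \bigl(C R_{\mathrm{eff}}(x,y)\,p^A_{2n}(x,x)/n\bigr)^{1/2}$. We use the upper bound $p^A_{2n}(x,x) \leq p_{2n}(x,x) \leq C/(N^2\log^\gamma N)$ from Lemma~\ref{prop:UBHKReg}, valid for $n \asymp T_N$. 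We also have $R_{\mathrm{eff}}(x,y) \leq d(x,y) \leq CN\log^\gamma N$ for $x,y\in D_N$. The error is therefore at most
\[
\left(\frac{N\log^\gamma N}{N^2\log^\gamma N\cdot N^3\log^\gamma N}\right)^{1/2} = \frac{1}{N^2\log^{\gamma/2}N}.
\]
This is \emph{larger} than the main term $N^{-2}\log^{-\gamma}N$ by a factor $\log^{\gamma/2}N$, so a crude application fails. This resistance estimate is the expected main obstacle.

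To overcome it, we avoid using the full graph distance. Vertical resistance is large, but vertical displacement is not an issue, because the walk mixes inside a tooth quickly. We would split $p_n(x,y)$ via the strong Markov property at the hitting times of the base points $(x_1,0)$ and $(y_1,0)$. Equivalently, we write
\[
p^{D_{bN}}_n(x,y) \geq \sum_{m} P_x(\tau_{(x_1,0)} = m)\,p^{D_{bN}}_{n-m}((x_1,0),y).
\]
With probability bounded below, the hitting time is at most $c_4T_N/2$, since the height $x_2 \leq CN\log^\gamma N$ is crossed in time $O(N^2\log^{2\gamma}N) \ll T_N$. The same decomposition applied at the $y$ end reduces the problem to points $x,y$ on the base. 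For base points, $R_{\mathrm{eff}}(x,y) \leq |x_1-y_1| \leq 2N$, and the error becomes
\[
\left(\frac{N}{N^5\log^{2\gamma}N}\right)^{1/2} = \frac{1}{N^2\log^\gamma N},
\]
with a constant that can be made smaller than the diagonal constant by taking $c_4,c_5$ suitably: the diagonal bound improves as $n$ decreases while the error scales like $n^{-1/2}$. Parity is handled by the condition $n-d(x,y)\in 2\mathbb{N}_0$ together with one-step adjustments, using degree at most $3$. This gives the claim for $n \in \llbracket c_4T_N, c_5T_N \rrbracket$ after choosing $b \geq 13$ large and then $c_4 < c_5$ appropriately.
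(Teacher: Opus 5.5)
Your on-diagonal bound and your reduction of points high in a tooth to base points (hit $(x_1,0)$ and $(y_1,0)$ within $O(N^2\log^{2\gamma}N)\ll T_N$ steps, then use symmetry) are what the paper does. The off-diagonal step is where you depart from the paper, and as written it fails in two places.

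\textbf{First: the Dirichlet form bound is false on the comb.} The comb is a tree, hence bipartite, so $p^A_{2n+1}(x,x)=0$ and
\[
\mathcal{E}\bigl(p^A_n(x,\cdot)\bigr)=p^A_{2n}(x,x)-p^A_{2n+1}(x,x)=p^A_{2n}(x,x),
\]
with no factor $1/n$; the spectrum of the killed chain near $-1$ spoils the spectral argument. Monotonicity of $n\mapsto p^A_{2n}(x,x)$ controls $p^A_{2n}(x,x)-p^A_{2n+2}(x,x)$, which is the Dirichlet form of the two-step chain, not of $P$. You must run the H\"older argument for $\mathcal{E}^{(2)}(f)=\langle f,(I-P_A^2)f\rangle$ on the parity class of $x$. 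Then $\mathcal{E}^{(2)}(p^A_{2m}(x,\cdot))=p^A_{4m}(x,x)-p^A_{4m+2}(x,x)\le p^A_{2m}(x,x)/m$. Since degrees are at most $3$, the two-step resistance between base points at even distance is at most $\tfrac32 d(x,y)$. Your ``one-step adjustments'' do not address this.

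\textbf{Second, and more serious: the constants are chosen in the wrong direction.} Since the error and the main term are both of order $N^{-2}\log^{-\gamma}N$, this comparison is the whole point. Write $n=\lambda T_N$.
\begin{itemize}
\item Lemma~\ref{prop:UBHKReg} gives $p_{2n}(x,x)\le C\lambda^{-2/3}N^{-2}\log^{-\gamma}N$, so your error is $\lesssim\lambda^{-5/6}N^{-2}\log^{-\gamma}N$. Your ``$n^{-1/2}$'' scaling comes from treating this upper bound as independent of $n$.
\item Your diagonal lower bound $1/(12|D_{bN}|)\asymp b^{-2}N^{-2}\log^{-\gamma}N$ does not improve as $n$ decreases.
\item Even the best possible diagonal bound, $\asymp\lambda^{-2/3}N^{-2}\log^{-\gamma}N$, leaves a ratio error/main of order $\lambda^{-1/6}$, which blows up as $\lambda\to0$.
\end{itemize}
The argument closes only if you take $\lambda$ as large as Lemma~\ref{lem:ExitLDPReg} allows. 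Concretely, set $c_5=((b-1)/K)^3$ and $c_4=c_5/2$, with $K$ chosen so that $C_1e^{-c_1K}\le\tfrac12$. This gives ratio $\lesssim K^{5/2}b^{-1/2}$, and you then take $b$ large.

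With both corrections your route works.

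The paper's route avoids both issues. It uses
\[
p^B_n(v,w)\ge P_v(\tau_w\le n\wedge\tau_{B^c})\,p^B_{2\lfloor n/2\rfloor}(w,w),
\]
from \cite[(3.11)]{CroydonDeAmbroggio}. No two quantities of the same order are subtracted, so it only needs the hitting probability to be at least $\tfrac12$. That follows from two facts:
\begin{itemize}
\item the commute time identity plus Markov's inequality, which forces $c_4\gtrsim b^2$;
\item the escape bound $R_{\mathrm{eff}}(x,y)/R_{\mathrm{eff}}(x,B^c)\le 3/(b-1)$ for $y_2\le N$.
\end{itemize}
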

\begin{proof}
      By applying~\eqref{eq:Exit-time-estimate} to $A = D^{x_1}_{(b-1)N}$, we obtain the on-diagonal lower bound
      \begin{equation}
\label{eq:On-diagonal-bound}
        p_{2n}^{D^{x_1}_{bN}}(x, x) \ge \frac{P_x \left( \tau_{x_1,{(b-1)N}} > c_4 N^3 \log^\gamma(N) \right)^2}{3|D^{x_1}_{(b-1)N}|},
\end{equation} 
for $x = (x_1,x_2) \in D_N$, $N \in \mathbb{N}$, and $n \in \llbracket c_4 N^3 \log^\gamma(N), c_5 N^3 \log^\gamma(N) \rrbracket$.
    By Lemma~\ref{lem:ExitLDPReg}, the enumerator satisfies $P_x \left( \tau_{x_1,{(b-1)N}} > c_4 N^3 \log^\gamma(N) \right) \ge \frac{1}{2}$ by choosing $b$ large enough (depending on $C_1,c_1$, and $c_5$), showing that
    \begin{equation}
        \label{eq:Killed-main-bound-diagonal}
        p_{2n}^{D^{x_1}_{bN}}(x, x) \ge \frac{c}{N^2 \log^{\gamma}(N)}, \qquad \text{ for } 2n \in \llbracket c_4 N^3 \log^\gamma(N), c_5 N^3 \log^\gamma(N) \rrbracket,
    \end{equation}
    which corresponds to \eqref{eq:Killed-main-bound-regular-case} in the on-diagonal case $x = y \in D_N$.  We now extend the lower bound to the off-diagonal case $x \neq y \in D_N$, distinguishing into three (sub-)cases. 
    
    \textbf{Case 1:} $x = (x_1, 0) \in D_N$ and $y = (y_1, y_2) \in D_N$ with $y_2 \le N$. 
    By \cite[(3.11)]{CroydonDeAmbroggio} we have that for any set $B \subseteq \mathrm{Comb}(\mathbb{Z},H^{(\gamma)})$, any pair of vertices $v, w \in \mathrm{Comb}(\mathbb{Z},H^{(\gamma)})$, and $n \in \mathbb{N}$ such that $n- d(v, w) \in 2 \mathbb{N}_0$,
    \begin{equation}\label{eqn:TrickUmbiDave}
        p_{n}^{B}(v, w) \ge P_v\left( \tau_w \le n \wedge \tau_{B^c}\right)p_{2 \lfloor n/2\rfloor }^{B}(w,w).
    \end{equation}
     Applying this bound with $x = (x_1,0)$ and $y = (y_1,y_2)$ in place of $v$ and $w$ and $D_{bN}^{x_1}$ replacing $B$, we find that whenever $d(x, y) \in 2 \mathbb{N}_0$, we have
    \begin{equation}
    \label{eq:From-on-to-off-diagonal-Case-I}
        p_n^{D^{x_1}_{bN}}(x, y) \ge p_n^{D^{x_1}_{bN}}(y, y) P_x \left( \tau_{y} \le n \wedge \tau_{x_1,bN}\right).
    \end{equation}
    (the case $d(x,y) \in 2\mathbb{N}_0+1$ can be treated analogously, possibly by adjusting constants, and is omitted). We can bound $p_n^{D^{x_1}_{bN}}(y, y)$ from below using the on-diagonal estimate~\eqref{eq:Killed-main-bound-diagonal} and focus on the second member of the product on the right-hand side.  
    We use the decomposition
    \begin{equation}
    \label{eq:Decomposition-Proof-Case-I-b}
        P_x \left( \tau_{y} > n \wedge \tau_{x_1,bN}\right) \le P_x \left( \tau_{y} \wedge \tau_{x_1,bN} > n\right) + P_x \left( \tau_{y} > \tau_{x_1,bN}\right).
    \end{equation}
    Using the commute time identity~\eqref{eq:Commute-time},~\eqref{eq:Effective-resistance-graph-distance-bound}, and the Markov inequality, we obtain
    \begin{equation}
    \label{eq:Aux-1-Proof}
        P_x \left( \tau_{y} \wedge \tau_{x_1,bN} > n\right) \le \frac{3N N^{2}\log^\gamma(N)}{n} \le \frac{3}{c_4}.
    \end{equation}
        Moreover, since $D_{x_1,bN}$ is a tree, we can use~\eqref{eq:Escape-Prob-bound} and obtain
    \begin{equation}
        \label{eq:Aux-2-Proof}
        P_x \left( \tau_{y} > \tau_{x_1,bN}\right) \le \frac{3N}{(b-1)N}.
    \end{equation}
    By choosing $c_4 \geq 12$ and $b \geq 13$, 
    the expressions on the right-hand sides of~\eqref{eq:Aux-1-Proof} and~\eqref{eq:Aux-2-Proof} are each from bounded above by $\frac{1}{4}$. The claim then follows by inserting the bounds~\eqref{eq:Aux-1-Proof} and~\eqref{eq:Aux-2-Proof} into~\eqref{eq:Decomposition-Proof-Case-I-b}, and using the on-diagonal bound~\eqref{eq:Killed-main-bound-diagonal} in combination with~\eqref{eq:From-on-to-off-diagonal-Case-I}. \medskip

    \textbf{Case 2:} $x = (x_1, x_2) \in D_N$ and $y = (y_1,y_2) \in D_N$ with $x_2>0$ and $y_1 \leq N$, and let $n \in \mathbb{N}$ with $n - d(x,y) \in 2\mathbb{N}_0$. We bound $p_n^{D^{x_1}_{bN}}(x,y)$ from below by only retaining trajectories of length $n$ from $x$ to $y$ that enter $(x_1, 0)$ at some time in $\llbracket 1, n \rrbracket$. 
    Therefore, by decomposing according to the first entrance time, we see that \begin{equation}\label{eqn:HittingDecomp}
        p_n^{D^{x_1}_{bN}}(x, y) \ge \sum_{k = 1}^n P_x \left( \tau_{(x_1, 0)} = k\right) p_{n-k}^{D^{x_1}_{bN}}((x_1, 0), y).
    \end{equation}
     Since $H_{x_1}^{(\gamma)} \le N\log^\gamma(N)$, we obtain by the commute time identity~\eqref{eq:Commute-time} that 
     \begin{equation}
         E_x\left[\tau_{(x_1, 0)}\right] \le 2N^2\log^{2\gamma}(N).
     \end{equation}
    By the Markov inequality, we have 
    \begin{equation}\label{eqn:NegligibleHit}
        P_x \left( \tau_{(x_1, 0)} > N^2\log^{3\gamma}(N)\right) \le 2\log^{-\gamma}(N).
    \end{equation}
    Then (possibly after adjusting $c_4, c_5$) we obtain that
    \begin{equation}\label{eqn:Third}
        p_{n-k}^{D^{x_1}_{bN}}((x_1, 0), y) \ge \frac{c_3}{N^2 \log^{\gamma}(N)},
    \end{equation}
    holds uniformly over all $n \in \llbracket c_4 N^3 \log^\gamma(N), c_5 N^3 \log^\gamma(N) \rrbracket$ and $k \le N^2\log^{3\gamma}(N)$ if $d((x_1, 0), y) - (n-k) \in 2 \mathbb{N}_0$.

    \textbf{Case 3:} $x = (x_1, x_2) \in D_N$ and $y = (y_1,y_2) \in D_N$, with $y_1 > N$. We can decompose the trajectories according to their first hitting time of $(x_1, 0)$ as in \eqref{eqn:HittingDecomp} (but retaining the term for $k=0$). Using again \eqref{eqn:NegligibleHit} we can reduce the problem to proving a uniform lower bound on $p_n^{D^{x_1}_{bN}}((x_1, 0), y)$ and use the symmetry of $p_n^{D_{bN}^{x_1}}$, so that 
    \begin{equation}\label{eq:LowerCase3}
        p_n^{D^{x_1}_{bN}}((x_1, 0), y) = p_n^{D^{x_1}_{bN}}(y, (x_1, 0)).
    \end{equation}
    Finally the lower bound on the right-hand side is for a point $y \in D_N$ and a point $(x_1, 0)$, which falls under Case 2 and yields \eqref{eqn:Third}. We conclude by inserting the estimate \eqref{eqn:Third} into \eqref{eq:LowerCase3}.
\end{proof}

During the rest of the Section, we fix $b \geq 13$ so that the result of Proposition~\ref{prop:LBHKReg} holds.

\subsection{Collisions and strikes}
\label{subsec:Collisions-Strikes}

In this Subsection, we define a notion of a `strike', which informally corresponds to a time when one random walks visits the base of a tooth containing the other. To that end, we introduce further notation, following~\cite{Chenchen}. We let $X_n^1 = (U_n^1, V^1_n)$ and $X^2_n = (U_n^2, V_n^2)$, where we recall that $(X_n^1,X_n^2)_{n \geq 0}$ is the canonical process on $(V \times V)^{\mathbb{N}_0}$ (with $(U_n^j)_{n \geq 0}$ and $(V_n^j)_{n \geq 0}$, $j \in \{1,2\}$ referring to the horizontal and vertical coordinates of the walk $X^j$). We let $\tau^j_B$ denote the entrance times by $X^j$ into a set $B \subseteq \mathrm{Comb}(\mathbb{Z},H^{(\gamma)})$, and set for $N \in \mathbb{N}_0, b \geq 13$, and $x = (x_1,x_2) \in \mathrm{Comb}(\mathbb{Z},H^{(\gamma)})$,
\begin{equation}
    \theta_N \coloneqq \min\{\tau^1_{x_1,bN}, \tau^2_{x_1,bN}\}.
\end{equation}
Moreover, set $\sigma_0 = 0$ and for each $m \in \mathbb{N}_0$, define inductively
\begin{equation}
    \sigma_{m+1} \coloneqq \inf\left\{n > \sigma_m \colon U_n^1 = U_n^2, U_n^1 \neq U_{n-1}^1 \text{ or }U_n^2 \neq U_{n-1}^2 , |U_n^1|\le N\right\},
\end{equation}
where the index $m \ge 0$ counts the number of times one of the two walks is located in one of the teeth within $D_N$ and the other visits the unique vertex connecting the tooth to the backbone $\mathbb{Z}$. We call such an event a \textit{strike}. Note that this definition corresponds to a variant of a similar definition in~\cite[Section 2]{Chenchen} (our definition of $\sigma_{m+1}$ includes the additional restriction that $|U_n^1| \leq N$). Furthermore, we define the event
\begin{equation}
    \Psi_m \coloneqq \left\{X_n^1 = X_n^2 \text{ for some } \sigma_m \le n \le \inf\{k > \sigma_m \colon V^1_k=0 \text{ or } V^2_k = 0\}\right\}, m \in \mathbb{N}_0.
\end{equation}
Informally, $\Psi_m$ denotes the occurrence of at least one collision between $\sigma_m$ and the next time one of the walks hits height $0$ after $\sigma_m$. By~\cite[Lemma 2.2]{Chenchen}, for some $C,c > 0$,
\begin{equation}
\label{eq:Aux-Gambler-ruin-Sec-3}
    \frac{c}{h} \leq P_{(k,0),(k,h)}(\Psi_0) \leq \frac{C}{h}, \qquad k\in \mathbb{N}_0,h \in 2\mathbb{N} \text{ with }(k,h) \in \mathrm{Comb}(\mathbb{Z},H^{(\gamma)}).
\end{equation}
Finally, we are in a position to define
\begin{equation}
\label{eq:Number-of-strikes}
    \Theta_N \coloneqq \sum_{m = 1}^{\theta_{bN}} \mathds{1}_{\Psi_m}.
\end{equation}
The key result of this Section is the following lower bound on the probability of finding at least one collision after a strike, until either walk leaves $D_{bN}$. Its proof relies on two auxiliary results, Lemma~\ref{lem:LBMomentReg} and Lemma~\ref{lem:UBMomentReg}, which will be proved below.
\begin{prop}\label{prop:MainReg}
There exist constants $c_*, N_0 >0$ such that for all $N \geq N_0$ 
    \begin{equation}
    \label{eq:Prop-3.5-claim}
         \min_{y \in \partial D_N \cup \{0\}} \min_{\substack{x \in D_N \\
         d(x, y) \in 2 \mathbb{N}_0}}P_{x, y} \left( \Theta_N \ge 1 \right) \ge \begin{cases}
            c_{*} \frac{1}{\log(N)} & \gamma \in (1, 2),\\
            c_{*} \frac{1}{\log(N) \log\log(N)} & \gamma = 2.
        \end{cases}
    \end{equation}
\end{prop}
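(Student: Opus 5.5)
The plan is the second moment method (Paley--Zygmund) applied to $\Theta_N$:
\begin{equation*}
P_{x,y}(\Theta_N \ge 1) \ge \frac{E_{x,y}[\Theta_N]^2}{E_{x,y}[\Theta_N^2]}.
\end{equation*}
The two auxiliary results announced before the statement supply the two halves. Lemma~\ref{lem:LBMomentReg} should give a lower bound on the first moment $E_{x,y}[\Theta_N]$, uniformly over $y\in\partial D_N\cup\{0\}$ and $x\in D_N$ with the right parity. Lemma~\ref{lem:UBMomentReg} should give an upper bound on the second moment, or more conveniently on the conditional first moment $\max_{x',y'} E_{x',y'}[\Theta_N]$ started from a configuration right after a strike. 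Then the inequality
\begin{equation*}
E[\Theta_N^2]\le E[\Theta_N]\cdot\Bigl(1+2\max E[\Theta_N\text{ started at a strike}]\Bigr),
\end{equation*}
obtained from the strong Markov property at the strike times $\sigma_m$, reduces the claim to showing that the ratio (lower first moment)$/$(upper first moment from a strike) is of order $1/\log N$, respectively $1/(\log N\log\log N)$ when $\gamma=2$.

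For the first moment I would write
\begin{equation*}
E_{x,y}[\Theta_N]=\sum_m P(\sigma_m\le\theta_{bN},\Psi_m).
\end{equation*}
A strike at time $n$ means that one walk sits at $(z,0)$ while the other is at $(z,h)$ with $|z|\le N$. By \eqref{eq:Aux-Gambler-ruin-Sec-3} each strike contributes a collision with probability comparable to $1/h$. This gives
\begin{equation*}
E[\Theta_N]\asymp \sum_{n}\sum_{|z|\le N}\sum_{h\le H_z} p^{D_{bN}}_n(x,(z,0))\,p_n^{D_{bN}}(y,(z,h))\,\frac{1}{h+1},
\end{equation*}
up to degree factors and the "entering the base" restriction, which costs only a constant.

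For the lower bound I restrict to $n\in\llbracket c_4N^3\log^\gamma N,c_5N^3\log^\gamma N\rrbracket$ and use Proposition~\ref{prop:LBHKReg} for both kernels. This yields
\begin{equation*}
N^3\log^\gamma N\cdot N\cdot\frac{1}{(N^2\log^\gamma N)^2}\sum_{h\le N\log^\gamma N}\frac1h\asymp\frac{\log N}{\log^{\gamma}N}\cdot\log N .
\end{equation*}
Here I expect a careful count using the actual $H_z=|z|\log^\gamma|z|$ and the harmonic sum $\sum_h 1/h\approx\log N$.

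For the upper bound from a strike I use Lemma~\ref{prop:UBHKReg}. I sum over all times, decomposing dyadically in the time scale $s$ and exploiting the Gaussian factor $e^{-c_2h^2/n}$ to cut $h\lesssim\sqrt n$. The small-time, near-diagonal contributions (the walks still close together just after the strike) should be bounded by a constant. The large-scale contributions should produce a sum like $\sum_{k\le\log N}k^{1-\gamma}$ or $\sum_k k^{-\gamma/2}\cdots$, which is $O(\log^{2-\gamma}N)$ for $\gamma<2$ and $O(\log\log N)$ for $\gamma=2$. Taking the ratio of the first moment to this quantity gives the two cases in \eqref{eq:Prop-3.5-claim}.

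The main obstacle is the upper bound from a strike. Lemma~\ref{prop:UBHKReg} alone loses the fact that at strike times the tooth position is roughly uniform, and the crude bound must be sharpened. Specifically, the contribution $\sum_h p_n(\cdot,(z,h))/h$ must see the weight $1/H_z$ times $\log$, not the on-diagonal value times $\log$. I would first try to bound the time spent in tooth $z$ at height $h$ via the Green kernel of the tooth, i.e.\ gambler's ruin in a segment of length $H_z$, combined with the horizontal local time at $z$. Only the horizontal heat kernel $\sim(s^2\log^\gamma s)^{-1}$ would be used for the backbone visits, and the $1/h$ weight would be summed against the near-uniform height distribution. That is precisely where the extra $\log$ factors that separate $\gamma<2$ from $\gamma=2$ must be tracked.
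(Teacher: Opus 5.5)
Your top-level reduction is the paper's. The paper proves
$P_{x,y}(\Theta_N\ge1)\ge E_{x,y}[\Theta_N]/(1+\max_{x'\in D_N}E_{x',x'}[\Theta_N])$ by restarting at the first collision, where both walks sit at one site; this is why Lemma~\ref{lem:UBMomentReg} is stated for $E_{x,x}$. Your Paley--Zygmund bound combined with $E[\Theta_N^2]\le E[\Theta_N](1+2\max E[\,\cdot\,])$ gives the same inequality up to a factor $2$. If you restart right after a strike, as you propose, you need the upper bound for non-coinciding starting pairs. That bound does follow from Lemma~\ref{prop:UBHKReg} by the same computation, but it is not Lemma~\ref{lem:UBMomentReg} as stated.

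The real problem is an arithmetic slip in your lower bound, and as written it makes the last step fail. One has $N^3\log^\gamma N\cdot N\cdot(N^2\log^\gamma N)^{-2}\cdot\log N\asymp\log^{1-\gamma}N$, not $\frac{\log N}{\log^{\gamma}N}\cdot\log N=\log^{2-\gamma}N$: the harmonic sum contributes a single factor $\log N$. With your displayed value, the ratio to the upper bound $\log^{2-\gamma}N$ (resp.\ $\log\log N$) would be of order one (resp.\ $1/\log\log N$). It would not be the claimed $1/\log N$ (resp.\ $1/(\log N\log\log N)$). With the correct value $\log^{1-\gamma}N$, which is what Lemma~\ref{lem:LBMomentReg} gives, both cases of \eqref{eq:Prop-3.5-claim} follow at once.

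The ``main obstacle'' you describe does not arise, and no tooth Green-kernel or local-time argument is needed: the paper proves Lemma~\ref{lem:UBMomentReg} from Lemma~\ref{prop:UBHKReg} alone. That heat-kernel bound is flat in the height up to the Gaussian cutoff $\sqrt n$, so it already encodes the near-uniform position inside the tooth. For instance, for $n=F(s)$ with $s\ge|k-k'|$ it gives $\sum_{h'}p_n(x,(k',h'))/h'\lesssim s^{-2}\log^{-\gamma}(s)\,\bigl(1+\log(\sqrt n\wedge H_{k'})\bigr)$. The paper splits into $|k'|\le 2|k|$ versus $|k'|>2|k|$, and into $n\le F(|k-k'|)$ versus $n>F(|k-k'|)$. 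Using \eqref{Int:squuared} and \eqref{Int:xlog(x)}, everything reduces to $\sum_{u\le 4N}(u\log^{\gamma-1}u)^{-1}$. This is exactly your own dyadic heuristic $\sum_{j\le\log N}j^{1-\gamma}$, obtained directly from Lemma~\ref{prop:UBHKReg}.
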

\begin{proof}
    For $x\in D_N$, $y \in \partial D_N \cup \{0\}$ with $d(x, y) \in 2 \mathbb{N}_0$, and $N \geq 1$, we have
    \begin{equation}
        P_{x, y} \left( \Theta_N \ge 1 \right) E_{x, y} \left[ \Theta_N \mid  \Theta_N \ge 1\right] = E_{x, y} \left[ \Theta_N\right],
    \end{equation}
    which implies, using the Markov property,
    \begin{equation}
        P_{x, y} \left( \Theta_N \ge 1 \right) \ge \frac{E_{x, y} \left[ \Theta_N\right]}{E_{x, y} \left[ \Theta_N \mid  \Theta_N \ge 1\right]} \ge \frac{E_{x, y} \left[ \Theta_N\right]}{1 + \max_{x \in D_N} E_{x, x} \left[ \Theta_N \right]}.
    \end{equation}
    The claim~\eqref{eq:Prop-3.5-claim} readily follows upon using Lemma~\ref{lem:LBMomentReg} (for $N \geq N_0$) and Lemma~\ref{lem:UBMomentReg}.
\end{proof}

The remainder of this Section is concerned with the proofs of the auxiliary results Lemma~\ref{lem:LBMomentReg} and Lemma~\ref{lem:UBMomentReg}. At the end of the Section we conclude the proof of Theorem~\ref{theo:Reg} from Proposition~\ref{prop:MainReg}.
\subsection{Lower bound on the first moment of $\Theta_N$}
\label{subsec:First-Moment-Lower} In the present Subsection, we prove a uniform lower bound on the expectation of $\Theta_N$, defined in~\eqref{eq:Number-of-strikes}. Our main result is the following.
\begin{lem}\label{lem:LBMomentReg}
    There exist constants $c_{**}, N_0>0$ such that for all $N \geq N_0$, 
    \begin{equation}
        \min_{y \in\partial D_N \cup \{0\}} {\min_{\substack{x \in D_N \\
         d(x, y)  \in 2 \mathbb{N}_0 }}} E_{x, y} \left[ \Theta_N\right] \ge c_{**} \log^{1 - \gamma}(N).
    \end{equation}
\end{lem}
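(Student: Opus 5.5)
I will bound $E_{x,y}[\Theta_N]$ from below by summing, over time $n$ and tooth positions, the probability that a strike occurs at time $n$ and is followed by a collision. The heat-kernel lower bound of Proposition~\ref{prop:LBHKReg} controls where the walks are, and the gambler's-ruin estimate~\eqref{eq:Aux-Gambler-ruin-Sec-3} turns a strike at height $h$ into a collision with probability of order $1/h$.

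\textbf{Reduction to a double sum.} Restrict to the time window $I_N=\llbracket c_4N^3\log^\gamma N, c_5N^3\log^\gamma N\rrbracket$ with $c_5 \ge 2c_4$, and to the killed walks, so that $n\le\theta_{bN}$ automatically. For $|z|\le N$, $1\le h\le H^{(\gamma)}_z$ with $h$ even, and $n \in I_N$, consider the event that at time $n$ walk $X^2$ steps horizontally onto $(z,0)$ while $X^1=(z,h)$, both walks having stayed in $D_{bN}$. This is a strike, and the collision event $\Psi$ that follows has probability at least $c/h$ by the strong Markov property and~\eqref{eq:Aux-Gambler-ruin-Sec-3}. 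The collision may occur after exit from $D_{bN}$ only if a walk leaves the tooth, which cannot happen before one of them returns to height $0$; so $\Psi$ is counted in $\Theta_N$. The probability of stepping horizontally onto $(z,0)$ from $(z\pm1,0)$ equals $\tfrac13 p^{D_{bN}}_{n-1}(y,(z\pm1,0))\deg$, which is again of order $p_{n-1}$. Summing over $n$, $z$ and $h$ gives
\begin{equation*}
E_{x,y}[\Theta_N]\ \ge\ c\sum_{n\in I_N}\sum_{|z|\le N}\ \sum_{h\le H_z} p^{D_{bN}}_n(x,(z,h))\, p^{D_{bN}}_{n-1}(y,(z\pm1,0))\,\frac1h .
\end{equation*}
Strikes at distinct times give distinct indices $m$, so no overcounting occurs. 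Parity has to be handled by restricting to $n$ of the correct parity, which costs only a constant.

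\textbf{Plugging in the heat kernel.} Proposition~\ref{prop:LBHKReg} bounds each killed kernel below by $c_3/(N^2\log^\gamma N)$, uniformly over points of $D_N$; note $(z,h)\in D_N$ for every height $h$. This gives
\begin{equation*}
E_{x,y}[\Theta_N]\ \gtrsim\ |I_N|\cdot\frac{1}{N^4\log^{2\gamma}N}\sum_{|z|\le N}\log H_z \ \asymp\ N^3\log^\gamma N\cdot\frac{N\log N}{N^4\log^{2\gamma}N}=\log^{1-\gamma}N,
\end{equation*}
since $\sum_{h\le H_z}1/h\asymp\log(|z|\log^\gamma|z|)\asymp\log N$ for $|z|\ge N/2$. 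The starting point $y\in\partial D_N$ sits at horizontal distance $N+1$, just outside $D_N$. I handle this by rerunning the argument with $D_{N+1}$ in place of $D_N$ and $bN$ slightly enlarged, or by invoking Case 1 of the proof of Proposition~\ref{prop:LBHKReg}, which treats base points.

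\textbf{Main obstacle.} The delicate step is that Proposition~\ref{prop:LBHKReg} is uniform over all $(z,h)\in D_N$, including heights $h$ up to $N\log^\gamma N$. I must check that its proof genuinely covers high points, where the walk needs time of order $h^2\le N^2\log^{2\gamma}N\ll N^3\log^\gamma N$ to descend; Case 2 handles this through~\eqref{eqn:NegligibleHit}. This uniformity is exactly the "nearly uniform position in the tooth" that produces the full $\log N$ from the harmonic sum, rather than the smaller contribution from low heights alone. The remaining bookkeeping is the parity constraint and the requirement that strikes be counted only before $\theta_{bN}$; both are handled by the killing in the kernels.
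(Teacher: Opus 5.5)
Your proposal is correct and follows essentially the paper's route: restrict to the time window of Proposition~\ref{prop:LBHKReg}, bound $E_{x,y}[\Theta_N]$ below by $\sum_{n}\sum_{N/2\le|z|\le N}\sum_{h}\frac{c}{h}\,p^{D_{bN}}\,p^{D_{bN}}$ via the Markov property at the strike time and~\eqref{eq:Aux-Gambler-ruin-Sec-3}, insert the uniform bound $c_3/(N^2\log^\gamma N)$ for both kernels, and obtain the factor $\log N$ from the harmonic sum over the whole tooth. The differences are cosmetic. You manufacture the strike by a horizontal last step of one walk onto $(z,0)$, whereas the paper forces a return to the configuration $(k,0),(k,h)$ two steps later with probability at least $1/9$. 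Your restriction to even $h$ (needed both for~\eqref{eq:Aux-Gambler-ruin-Sec-3} and for the parity condition in Proposition~\ref{prop:LBHKReg}) and your observation that $y\in\partial D_N$ lies outside $D_N$ address points the paper's proof passes over; the latter is most simply handled by a one-step decomposition from $(\pm(N+1),0)$ to $(\pm N,0)$, since enlarging the killing region would go the wrong way for a lower bound.
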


\begin{proof}
    Let $y \in \partial D_N \cup \{0\}$, $x \in D_N$ with $d(x,y) \in 2 \mathbb{N}_0$, and $N \geq 10$. We write
    \begin{equation}
    \begin{split}
        E_{x, y} \left[ \Theta_N\right] \ge E_{x, y} \left[ \sum_{n = 0}^{\infty} \sum_{m = 1}^\infty \sum_{|k|\le N} \sum_{h = 1}^{\lfloor H_k \rfloor} \mathds{1}_{\Psi_m} \mathds{1}_{\{ \sigma_m = n\}} \mathds{1}_{\{\sigma_m < \theta_{bN}, X^1_{n} = (k, 0), X^2_{n} = (k, h)\}}\right].
    \end{split}
    \end{equation}
    We introduce the notation $f(N) = N^3 \log^\gamma(N)$, $N \in \mathbb{N}$, and $A_{m, n}(k, h) = \{ \sigma_m = n\}\cap \{X^1_{n} = (k, 0), X^2_{n} = (k, h)\}$, $m,n,k,h\in \mathbb{N}_0$. Note that $A_{m,n}(k,h)$ and $\{\theta_{bN} > n \}$ are $\mathcal{F}^2_n$-measurable, while on the event $\{ \sigma_m = n\}$, we have the identity $\mathds{1}_{\Psi_m} = \mathds{1}_{\Psi_0} \circ \vartheta^2_n$. Hence, by conditioning on $\mathcal{F}_n^2$ and applying the simple Markov property at time $n$ for the joint chain $(X^1_\ell,X^2_\ell)_{\ell \geq 0}$ in the second line, we obtain
    \begin{equation}
    \label{eq:First-Lower-Bound-Theta_N}
    \begin{split}
        E_{x, y} \left[ \Theta_N\right] &\ge \sum_{n = 0}^{\infty} \sum_{|k|\le N} \sum_{h = 1}^{\lfloor H_k \rfloor} \sum_{m = 1}^\infty E_{x, y} \left[  \mathds{1}_{ \Psi_m} \mathds{1}_{\{ \sigma_m = n\}} \mathds{1}_{\{\sigma_m < \theta_{bN}, X^1_{n} = (k, 0), X^2_{n} = (k, h)\}}\right]\\
        & = \sum_{n = 0}^{\infty} \sum_{|k|\le N} \sum_{h = 1}^{\lfloor H_k \rfloor} \sum_{m = 1}^\infty E_{x, y} \left[  P_{(k, 0), (k, h)}(\Psi_0)\mathds{1}_{\{ \sigma_m = n\}} \mathds{1}_{\{\sigma_m < \theta_{bN}, X^1_{n} = (k, 0), X^2_{n} = (k, h)\}}\right]\\
        &\stackrel{\eqref{eq:Aux-Gambler-ruin-Sec-3}}{\ge} \sum_{n = 0}^{\infty} \sum_{|k|\le N} \sum_{h = 1}^{\lfloor H_k \rfloor} \sum_{m = 1}^\infty \frac{c}{h}P_{x, y} \left( \sigma_m = n, \sigma_m < \theta_{bN}, X^1_{n} = (k, 0), X^2_{n} = (k, h)\right).
    \end{split}
    \end{equation}
    For the event under the probability in the last line of~\eqref{eq:First-Lower-Bound-Theta_N}, we observe that if the event $\{X^1_{n-2} = (k, 0), X^2_{n-2} = (k, h)\} \cap \{\theta_{bN}>n-2\}$ occurs, then with strictly positive probability (more precisely, with probability larger than $1/9$) $\{X^1_{n} = (k, 0), X^2_{n} = (k, h)\}$ is a strike. By restricting to the strikes with the property that at time $n-2$ the walks were in the same position we can bound $E_{x,y}[\Theta_N]$ from below by
\begin{equation}
\begin{split}
    E_{x, y} \left[ \Theta_N\right] &\ge \sum_{n = 2}^{\infty} \sum_{|k|\le N} \sum_{h = 1}^{\lfloor H_k \rfloor} \frac{c}{h}P_{x, y} (n-2 < \theta_{bN}, \\
    & \qquad \qquad \qquad \qquad  X^1_{n-2} = (k, 0), X^2_{n-2} = (k, h), X^1_{n} = (k, 0), X^2_{n} = (k, h) )\\
    & \ge \sum_{n = 2}^{\infty} \sum_{|k|\le N} \sum_{h = 1}^{\lfloor H_k \rfloor}  \frac{c}{9h} p_{n-2}^{D_{bN}}(x, (k, 0)) p_{n-2}^{D_{bN}}(y, (k, h))\\
    & \ge \sum_{\substack{n = \lfloor c_4 f(N) \rfloor+1\\ n \in 2\mathbb{N}}}^{\lfloor c_5f(N) \rfloor-2}\sum_{|k|\le N} \sum_{h = 1}^{\lfloor H_k \rfloor}  \frac{c}{9h} p_{n-2}^{D_{bN}}(x, (k, 0)) p_{n-2}^{D_{bN}}(y, (k, h))\\
    & \stackrel{\eqref{eq:Killed-main-bound-regular-case}}{\ge} \frac{c}{27} \sum_{n = \lfloor c_4 f(N) \rfloor+1}^{\lfloor c_5f(N) \rfloor-2} \sum_{N/2 \le |k|\le N}  \sum_{h = 1}^{\lfloor H_k \rfloor} \frac{1}{h} \frac{c_3^2}{N^4 \log^{2\gamma}(N)}\\
    & \ge \frac{c}{27} \sum_{n = \lfloor c_4 f(N) \rfloor+1}^{\lfloor c_5f(N) \rfloor-2} \sum_{N/2 \le |k|\le N}  \log(N/2) \frac{c_3^2}{N^4 \log^{2\gamma}(N)}\\
    & \ge \frac{c}{27}\sum_{n = \lfloor c_4 f(N) \rfloor+1}^{\lfloor c_5f(N) \rfloor-2} \frac{N}{2}  \log(N/2) \frac{c_3^2}{N^4 \log^{2\gamma}(N)} \ge c_{**} \log^{1-\gamma}(N),
\end{split}
\end{equation}
which concludes the proof.
 \end{proof}

\newpage

\subsection{Uniform upper bound on the first moment of $\Theta_N$}
\label{subsec:First-Moment-Upper}

\begin{lem}\label{lem:UBMomentReg}
    There exists a constant $C_{**}>0$ such that
    \begin{equation}
        \max_{x \in D_N} E_{x, x} \left[ \Theta_N \right] \le \begin{cases}
            C_{**} \log^{2 - \gamma}(N) & \gamma \in (1, 2),\\
            C_{**} \log\log(N) & \gamma = 2.
        \end{cases}
    \end{equation}
\end{lem}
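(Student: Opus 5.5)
The plan is to mirror the proof of Lemma~\ref{lem:LBMomentReg}, replacing each lower bound by an upper bound. Throughout, the killed heat kernel is dominated by the unkilled one, which is controlled by Lemma~\ref{prop:UBHKReg}.

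\emph{Step 1: reduction to heat kernels.} Fix $x \in D_N$. Decompose $\Theta_N$ according to the time $n=\sigma_m<\theta_{bN}$ of the strike and the configuration at that time. Up to exchanging the roles of the two walks (a factor $2$), this configuration is $X^1_n=(k,0)$, $X^2_n=(k,h)$ with $|k|\le N$ and $0\le h\le H_k$. By the Markov property at time $n$ and the upper bound in \eqref{eq:Aux-Gambler-ruin-Sec-3}, the contribution of such a configuration is at most $\frac{C}{h\vee 1}$ times its probability. Independence of the walks and $p^{D_{bN}}_n\le p_n$ then give
\begin{equation*}
E_{x,x}[\Theta_N]\le C\sum_{n\ge 0}\sum_{|k|\le N}\sum_{h=0}^{\lfloor H_k\rfloor}\frac{1}{h\vee 1}\,P_x\big(X_n=(k,0),\,n<\tau_{x_1,bN}\big)\,P_x\big(X_n=(k,h),\,n<\tau_{x_1,bN}\big),
\end{equation*}
where each probability is bounded by a constant times the corresponding heat kernel, since degrees are at most $3$. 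The time sum is then split at $f(N)=N^3\log^\gamma(N)$.

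\emph{Step 2: times $n\le f(N)$.} Write $n=s^3\log^\gamma(s\vee 1)$ with $s\le N$ and apply \eqref{eq:HK-UB-general} to both factors.
\begin{itemize}
\item For $|k-x_1|\le s$, both kernels are at most $Cs^{-2}\log^{-\gamma}(s)$, and the second carries the factor $e^{-c h^2/n}$.
\item For $|k-x_1|>s$, the bound $|k-x_1|^{-2}\log^{-\gamma}|k-x_1|$ squared, summed over $k$, is no larger than the near-diagonal term.
\item Summing $1/h$ against $e^{-ch^2/n}$ over $h\le H_k$ gives at most $C\log(s^{3/2})\le C\log s$. The vertical factor $e^{-c x_2^2/n}$ only helps when $x_2$ is large.
\item The $h=0$ terms carry no factor $\log s$ and give a summable series.
\end{itemize}
Hence the contribution of times $n\asymp s^3\log^\gamma s$ is at most $C\,s\cdot\log s\cdot s^{-4}\log^{-2\gamma}s$ per unit time. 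Since $dn\asymp s^2\log^\gamma(s)\,ds$, the total is
\begin{equation*}
C\int_{2}^{N}\frac{\log^{1-\gamma}(s)}{s}\,ds\;\le\;\begin{cases}C\log^{2-\gamma}(N),&\gamma\in(1,2),\\ C\log\log(N),&\gamma=2.\end{cases}
\end{equation*}
This produces exactly the claimed orders. Small $s$ (bounded $n$) contributes a constant.

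\emph{Step 3: times $n>f(N)$ (main obstacle).} Here the heat kernel alone is not summable, and the killing at $\theta_{bN}$ must be used. First, the commute time identity \eqref{eq:Commute-time} on the finite tree $D^{x_1}_{bN}$ gives $\sup_{v}E_v[\tau_{x_1,bN}]\le C|D_{bN}|\,bN\le Cf(N)$. Iterating the Markov property then gives $P_v(\tau_{x_1,bN}>j)\le 2^{-\lfloor j/(2Cf(N))\rfloor}$ uniformly in the starting point $v$.

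Next, set $m=\lfloor f(N)\rfloor$ and apply the Markov property at time $n-m$ to each walk. For a target point $w$ this gives $P_x(X_n=w,\,n<\tau)\le P_x(\tau>n-m)\,\sup_v P_v(X_m=w)$. In the joint term, the spatial sum is therefore bounded by the $s=N$ version of Step 2, namely $CN\log N\cdot N^{-4}\log^{-2\gamma}N$. This estimate is multiplied by $P_x(\theta_{bN}>n-m)^2$, which decays exponentially on the scale $f(N)$.

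Summing over $n>f(N)$ gives $C f(N)\cdot N^{-3}\log^{1-2\gamma}(N)=C\log^{1-\gamma}(N)$, which is negligible. The point needing care is that the supremum over the intermediate position $v$ is still covered by the uniform (in $x$, $y$) estimate \eqref{eq:HK-UB-general}. It is, because the horizontal sum over $|k|\le N$ uses only the first case of that bound, and the sum over $h$ of $\frac{1}{h\vee1}\min\{1,e^{-ch^2/m}\}$ is bounded by $C\log N$ regardless of $v$.
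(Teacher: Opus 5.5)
Your argument is correct in substance, but it organizes the estimate differently from the paper. Both proofs begin with the same reduction: strike configurations $(k,0),(k,h)$, the bound $C/h$ from \eqref{eq:Aux-Gambler-ruin-Sec-3}, summation over $m$, and the uniform bound \eqref{eq:HK-UB-general}. The paper then works column by column. For each $k'$ it splits the time sum at $F(|k-k'|)$, using the off-diagonal case of \eqref{eq:HK-UB-general} before that time and the on-diagonal case after it. This gives a total over all $n\ge 0$ of order $\log|k'|/(|k-k'|\log^{\gamma}|k-k'|)$ for that column. Summing over $|k'|\le N$, separately for $|k'|\le 2|k|$ and $|k'|>2|k|$, and using \eqref{Int:xlog(x)} yields the claim. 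You instead fix the time $n=s^3\log^\gamma s$, sum over columns, and integrate in $s$. This makes the source of the bound more transparent, since each time scale contributes $\log^{1-\gamma}(s)\,\mathrm{d}s/s$. It also avoids the case split between near and far columns.

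Your Step~3, however, is unnecessary, and your claim that the heat kernel alone is not summable for $n>f(N)$ is false. The paper sums the unkilled bound up to $n=\infty$. In your own parametrization, the constraint $|k|\le N$ already suffices: for $n>f(N)$ the spatial sum is at most $CN\log N\,s^{-4}\log^{-2\gamma}s$, and summing this over $n>f(N)$ gives $O(\log^{1-\gamma}N)$.

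Moreover, your justification of $\sup_v E_v[\tau_{x_1,bN}]\le Cf(N)$ is not right as stated. For $v$ near the top of a tooth of height of order $N\log^\gamma N$, the effective resistance to the exterior is of that order, not $bN$. So the commute time identity alone only gives $CN^3\log^{2\gamma}N$. On that time scale your Step~3 would produce a term of order $\log N$, which exceeds the claimed bound. The estimate on the scale $f(N)$ does hold: first reach the base of the tooth, in expected time at most $H_{v_1}^2=o(f(N))$, and only then apply the commute time identity from the backbone. The cleanest fix, though, is to drop the killing argument altogether.
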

\begin{proof}
    Let $x = (k, h) \in D_N$. Repeating the argument in~\eqref{eq:First-Lower-Bound-Theta_N}, using the upper bound in~\eqref{eq:Aux-Gambler-ruin-Sec-3}, and summing over $m$, we find
    \begin{equation}
        E_{x, x} \left[ \Theta_N\right] \le 2\sum_{n = 0}^{\infty} \sum_{|k'|\le N} \sum_{h' = 1}^{\lfloor H_{k'} \rfloor} p_{n}^{D_{bN}}(x, (k', 0)) p_{n}^{D_{bN}}(x, (k', h')) \frac{C}{h'},
    \end{equation}
    using for the last step the fact that for a strike to occur at time $n$, the event $\{X^1_n = (k', h'), X_n^2 = (k', 0)\} \cup \{X_n^1 = (k', 0), X_n^2 = (k', h')\}$ must necessarily occur. We split the summand on the right hand side into
    \begin{equation}
    \begin{split}
    \label{eqn:InnerUBReg} 
        E_{x, x} \left[ \Theta_N\right] &\le \underbrace{2\sum_{n = 0}^{\infty} \sum_{|k'|\le 2|k|} \sum_{h' = 1}^{\lfloor H_k \rfloor} p_{n}^{D_{bN}}(x, (k', 0)) p_{n}^{D_{bN}}(x, (k', h')) \frac{C}{h'}}_{\eqqcolon \, \eqref{eqn:InnerUBReg}\text{-(a)}} \\
        & +  \underbrace{2\sum_{n = 0}^{\infty} \sum_{|k'|=2|k|}^N \sum_{h' = 1}^{\lfloor H_{k'} \rfloor} p_{n}^{D_{bN}}(x, (k', 0)) p_{n}^{D_{bN}}(x, (k', h')) \frac{C}{h'}}_{\, \eqqcolon \eqref{eqn:InnerUBReg}\text{-(b)}}
            \end{split}
    \end{equation}
    We will begin by bounding the term \eqref{eqn:InnerUBReg}\text{-(b)}. We will apply Proposition~\ref{prop:UBHKReg} repeatedly. We define the function $F(s) = s^3 \log^\gamma(s \vee 1)$ for $s \in \mathbb{R}_{\geq 0}$, and further decompose \eqref{eqn:InnerUBReg}\text{-(b)} as
    \begin{equation}
    \label{Int1}
    \begin{split}
      \eqref{eqn:InnerUBReg}\text{-(b)}=\, & \underbrace{2\sum_{n = 0}^{\lfloor F(|k - k'| )\rfloor} \sum_{|k'|=2|k|}^N \sum_{h' = 1}^{\lfloor H_{k'} \rfloor} p_{n}^{D_{bN}}(x, (k', 0)) p_{n}^{D_{bN}}(x, (k', h')) \frac{c}{h'}}_{\eqqcolon \, \eqref{Int1}\text{-(a)}}\\ 
        &+ \underbrace{ 2\sum_{n = \lfloor F(|k - k'|) \rfloor+1}^{ \infty} \sum_{|k'|=2|k|}^N \sum_{h' = 1}^{\lfloor H_{k'} \rfloor} p_{n}^{D_{bN}}(x, (k', 0)) p_{n}^{D_{bN}}(x, (k', h')) \frac{c}{h'}}_{\eqqcolon \,\eqref{Int1}\text{-(b)}}.
            \end{split}
    \end{equation}
    The first term can be dealt with by integrating $h'$ and applying routine upper bounds on the heat kernel,~\eqref{eq:HK-UB-general}, giving
    \begin{equation}
        \eqref{Int1}\text{-(a)}\le 2C \sum_{|k'|=2|k|}^N \frac{\log(|k'|) |k - k'|^3 \log^\gamma(|k-k'|)}{|k - k'|^4 \log^{2\gamma}(|k-k'|)}.
    \end{equation}
    Using that $|k'| \le N$ and applying \eqref{Int:xlog(x)} gives the bound
    \begin{equation}
    \label{eq:First-Int-Part-Sum-1}
        \eqref{Int1}\text{-(a)} \le \begin{cases}
            C_{**} \log^{2 - \gamma}(N), & \gamma \in (1, 2),\\
            C_{**} \log\log(N), & \gamma = 2,
        \end{cases}
    \end{equation}
    as required. Moving to \eqref{Int1}\text{-(b)} we obtain (using in the application of Proposition~\ref{prop:UBHKReg} that $F(s) = n$ implies $F^{-1}(n) n^{-1} = (s^2\log^\gamma(s))^{-1}$)
    \begin{equation}
    \begin{split}
        \eqref{Int1}\text{-(b)} &\le 2\sum_{|k'|=2|k|}^N \sum_{n = \lfloor F(|k - k'|) \rfloor + 1}^{ \infty} \frac{\log(|k'|) (F^{-1}(n))^2}{n^2} \\
        & \le 4C  \sum_{|k'|=2|k|}^N \log(|k'|) \int_{|k - k'|}^\infty \frac{s^2}{s^6 \log^{2\gamma}(s)}\mathrm{d} s^3 \log^\gamma(s),
            \end{split}
    \end{equation}
    applying \eqref{Int:squuared} and \eqref{Int:xlog(x)} we obtain again
    \begin{equation}
    \label{eq:First-Int-Part-Sum-2}
        \eqref{Int1}\text{-(b)} \le \begin{cases}
            C_{**} \log^{2 - \gamma}(N), & \gamma \in (1, 2),\\
            C_{**} \log\log(N), & \gamma = 2.
        \end{cases}
    \end{equation}
    We now move to the term \eqref{eqn:InnerUBReg}\text{-(a)}. We split it similarly to the term \eqref{eqn:InnerUBReg}\text{-(b)} and write
    \begin{equation}
    \label{Int3}
    \begin{split}
       \eqref{eqn:InnerUBReg}\text{-(a)} = &\, \underbrace{2\sum_{n = 0}^{\lfloor F(|k'-k|) \rfloor} \sum_{|k'|\le 2|k|} \sum_{h' = 1}^{\lfloor H_{k'} \rfloor} p_{n}^{D_{bN}}(x, (k', 0)) p_{n}^{D_{bN}}(x, (k', h')) \frac{c}{h'}}_{ \eqqcolon\, \eqref{Int3}\text{-(a)}}\\ 
        &+ \underbrace{2\sum_{n = \lfloor F(|k'-k|) \rfloor +1}^{  \infty} \sum_{|k'|\le 2|k|} \sum_{h' = 1}^{\lfloor H_{k'} \rfloor} p_{n}^{D_{bN}}(x, (k', 0)) p_{n}^{D_{bN}}(x, (k', h')) \frac{c}{h'}}_{\eqqcolon\, \eqref{Int3}\text{-(b)}}.
        \end{split}
    \end{equation}
    We now proceed to bound \eqref{Int3}\text{-(a)}, obtaining
    \begin{equation}
    \begin{split}
         \eqref{Int3}\text{-(a)} &\le 2 C \sum_{|k'|\le 2|k|} \sum_{1 \leq h' \leq \sqrt{F(|k'-k|)}} \frac{1}{|k - k'|\log^{\gamma}(|k - k'| + 1)} \frac{c}{h'} \\
        &+ 2 C \sum_{|k'|\le 2|k|} \sum_{h' > \sqrt{F(|k'-k|)}} \frac{e^{-2\frac{(h')^2}{F(|k - k'|)}}}{|k - k'|\log^{\gamma}(|k - k'| + 1)} \frac{c}{\sqrt{F(|k'-k|)}}\\
        & \le 2 C \sum_{|k'|\le 2|k|} \frac{\log(|k - k'| + 1)}{|k - k'|\log^{\gamma}(|k - k'| + 1)} + 2 C \sum_{|k'|\le 2|k|} \frac{1}{|k - k'|\log^{\gamma}(|k - k'| + 1)}.
    \end{split}
    \end{equation}
    Here, the first term is the dominant term. Indeed, noticing that $|k - k'| \le 4|k|$ over the range of parameters considered, applying the change of variable $s = |k - k'|$, using that $|k| \le N$ and applying \eqref{Int:xlog(x)} we obtain the bound
    \begin{equation}
    \label{eq:Second-bound-part-1}
        \eqref{Int3}\text{-(a)} \le \begin{cases}
            C_{**} \log^{2 - \gamma}(N) & \gamma \in (1, 2),\\
            C_{**} \log\log(N) & \gamma = 2.
        \end{cases}
    \end{equation}
    We finally bound the remaining term \eqref{Int3}\text{-(b)}. Proceeding as in the last steps we obtain, using $n = F(s)$, that
    \begin{equation}
        \begin{split}
        \eqref{Int3}\text{-(b)} &\le 2 \sum_{|k'|\le 2|k|} \sum_{h' = 1}^{\lfloor H_{k'} \rfloor} \sum_{n = \lfloor F(|k'-k|) \rfloor+1}^{ \infty} \frac{1}{s^2\log^\gamma(s+1) s^2\log^\gamma(s+1)} \frac{c}{h'}\\
        &\le 2C \sum_{|k'|\le 2|k|} \sum_{h' = 1}^{\lfloor H_{2k} \rfloor}\frac{c}{h'} \int_{s = |k - k'|} \frac{1}{s^2\log^\gamma(s+1) s^2\log^\gamma(s+1)} \mathrm{d}s^3\log^\gamma(s).
        \end{split}
    \end{equation}
    Applying \eqref{Int:squuared} and using again the fact that $|k - k'|\le 4k$, we obtain
    \begin{equation}
    \begin{split}
        \eqref{Int3}\text{-(b)} &\le 2C \sum_{|k'|\le 2|k|} \sum_{h' = 1}^{\lfloor H_{2k} \rfloor}\frac{c}{h'} \frac{1}{|k-k'|\log^{\gamma}(|k - k'|+1)}\\
        & \le 2C \sum_{|u|\le 4|k|} \frac{1}{|u|\log^{\gamma - 1}(|u|+1)}.
        \end{split}
    \end{equation}
    Finally, we apply \eqref{Int:xlog(x)} and $k \le N$ to obtain the bound
    \begin{equation}
        \label{eq:Second-bound-part-2}
        \eqref{Int3}\text{-(b)} \le \begin{cases}
            C_{**} \log^{2 - \gamma}(N) & \gamma \in (1, 2),\\
            C_{**} \log\log(N) & \gamma = 2.
        \end{cases}
    \end{equation}
   Upon inserting~\eqref{eq:First-Int-Part-Sum-1} and~\eqref{eq:First-Int-Part-Sum-2} into~\eqref{Int1},~\eqref{eq:Second-bound-part-1} and~\eqref{eq:Second-bound-part-2} into~\eqref{Int3}, and combining~\eqref{Int1} and~\eqref{Int3}, the claim follows in view of~\eqref{eqn:InnerUBReg}.
\end{proof}

\subsection{Conclusion}
\label{subsec:Denouement-Comment}

\begin{proof}[Proof of Theorem~\ref{theo:Reg}]
    Finishing the proof is now routine, let us describe the argument briefly. We define the events 
\begin{equation}
    \mathrm{Col}_m \coloneqq \left\{ \text{there exists } n \in [\theta_{b^m}, \theta_{b^{m+1}}) \cap \mathbb{Z} \colon X_n^1 = X_n^2\right\}, \qquad m \in \mathbb{N}_0.
\end{equation}
By the strong Markov property and the uniformity over the starting points in Proposition~\ref{prop:MainReg},  
we have
\begin{equation}
    P_{0, 0} \left(\mathrm{Col}_m \mid \mathcal{F}^2_{\theta_{b^m}}\right) \ge \min_{y \in \partial D_{b^m} \cup \{0\}} \min_{\substack{x \in D_{b^m}  \\
         d(x, y) \in 2 \mathbb{N}_0 }}P_{x, y} \left(\Theta_{b^{m}} \ge 1\right) \ge \rho ( > 0), \ P_{0,0}\text{-a.s.,}
\end{equation}
where $\mathcal{F}^2_{\theta_{b^m}}$ is the sigma field generated by the two random walks up to the random time $\theta_{b^m}$. By a standard conditional Borel-Cantelli argument (see \cite[Corollary 7.20]{Kall}) this yields
\begin{equation}
    P_{0, 0} \left(\mathrm{Col}_m \text{ infinitely often}\right) = 1. \qedhere
\end{equation}
\end{proof}

\begin{obs}
\label{rem:Sec-3}
    \begin{enumerate}
        \item We compare our proof of the infinite collision property of $\mathrm{Comb}(\mathbb{Z},H^{(\gamma)})$ for $\gamma \in (1,2]$ to the proof for $\gamma > 2$ in~\cite{Chenchen}. We have set up a similar argument of defining strikes and estimating the number of collisions using them. However, our key improvement is to show that, when a strike takes place, the height of the walk which is not on the horizontal axis is essentially uniformly distributed over the tooth. This allows us to avoid the worst-case scenario used in \cite{Chenchen}, which uses a bound in which the second walk is at the top of the tooth when the other strikes. 
    \item We briefly comment on the proof of the finite collision property of $\mathrm{Comb}(\mathbb{Z},H^{(\gamma)})$ for $\gamma \leq 1$ in~\cite{Chenchen}. 
    The final display  in~\cite[p.~1355]{Chenchen} contains an `additional' logarithmic term without which the proof would extend to $\gamma \in (1,2]$.
    In essence, our proof shows that this logarithmic term is \textnormal{not} an artifact of the proof of the upper bound in the latter reference, but actually correctly quantifies the number of collisions.
    \end{enumerate}
\end{obs}

\section{Phase transition for the number of collisions on random combs}
\label{sec:Double-collisions-random}

In this Section, we consider the comb graph $\mathrm{Comb}(\mathbb{Z}, H^\omega)$ with an i.i.d.~random height assignment $(H^\omega_z)_{z \in \mathbb{Z}}$ defined on some probability space $(\Omega,\mathcal{G},\textbf{P})$, where the height distribution satisfies~\eqref{eq:Heavy-tailed-def} for some $\gamma \in (0, 1]$. Our main result is the following.

\begin{thm}\label{theo:MainRandomComb}
	The following statements hold for $\mathbf{P}$-a.e.~$\omega \in \Omega$.
	\begin{enumerate}
        \item If $\gamma > 1/3$ then  $\mathrm{Comb}(\mathbb{Z}, H^\omega)$ has the infinite collision property.
		\item If $\gamma < 1/3$ then $\mathrm{Comb}(\mathbb{Z}, H^\omega)$ has the finite collision property.
	\end{enumerate}
\end{thm}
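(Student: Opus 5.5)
I would prove the two statements separately. Part (1) goes through a multi-scale second moment argument, as the introduction indicates. Part (2) goes through a first moment bound on the expected number of collisions, controlled by the on-diagonal heat kernel.

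\emph{Scaling heuristics.} Under~\eqref{eq:Heavy-tailed-def}, the largest tooth over $\llbracket -N,N\rrbracket$ has height of order $N^{1/\gamma}$. Since $\gamma\le 1$, the volume of $\mathcal{R}_N(M)$ is of order $N M^{1-\gamma}$ for $M\le N^{1/\gamma}$, up to slowly varying corrections when $\gamma=1$. For the horizontal displacement to reach $N$, the walk must accumulate local time of order $N$ on the backbone. Each visit to a base point is followed by a vertical excursion whose duration in a tooth of height $H$ is comparable to $H\wedge$(time), so the exit time of $D_N$ is of order $T_N\approx N\cdot N^{(1-\gamma)/\gamma}\cdot\ldots$. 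I would first compute this scale on the rectangles $\mathcal{R}_N(M)$, with $M$ the natural height at time $T_N$, namely $M\approx\sqrt{T_N}$. Matching $T_N\approx N\cdot \mathrm{vol}$-type relations yields an on-diagonal heat kernel $p_{T}(0,0)\approx 1/|\mathcal{R}_N(\sqrt{T})|$.

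\emph{Part (1), $\gamma>1/3$.} I would follow the scheme of Section~\ref{sec:Regularly-growing-comb}, replacing strikes with a plain count of collisions inside space-time boxes. Fix scales $N_m=b^m$. The first step establishes environment estimates: for $\mathbf{P}$-a.e.\ $\omega$ and all large $m$, uniformly over centres $z$ with $|z|\le bN_m$, the volume of $\mathcal{R}^z_{N_m}(M)$ is comparable to its typical value. This is proved by Borel--Cantelli, using concentration of sums of truncated heavy-tailed variables.

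The second step is a lower bound on the local time the walk spends on the backbone sites carrying teeth of length at least $M$. From it I would derive a bound, uniform in the starting point, for the probability that the walk exits $D^z_{N}$ before time $cT_N$. The argument copies Lemma~\ref{lem:ExitLDPReg}: each base visit costs a geometric number of excursions, and long teeth absorb time.

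The third step turns these into heat kernel bounds. The on-diagonal lower bound follows from~\eqref{eq:Exit-time-estimate}, and the off-diagonal lower bound on the killed kernel comes via~\eqref{eqn:TrickUmbiDave}, exactly as in Proposition~\ref{prop:LBHKReg}. The upper bound comes from~\eqref{eq:Bound-HK-via-Green}, with the Green function bounded by the resistance ($\approx N$) times hitting estimates.

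Let $Z_m$ be the number of collisions in the window $[c_4T_{N_m},c_5T_{N_m}]$ before either walk exits $D_{bN_m}$. Then
\[
E[Z_m]\gtrsim \sum_n\sum_v p_n(x,v)p_n(y,v)\deg(v)\approx T_{N_m}/|\mathcal{R}_{N_m}(\sqrt{T_{N_m}})|,
\]
while $E[Z_m^2]\lesssim E[Z_m]\,\max_{v}E_{v,v}[Z_m]$, where the second factor is a sum over times $n$ of $\sum_v p_n(v,\cdot)^2$ bounded via the upper heat kernel. The Paley--Zygmund inequality gives $P(Z_m\ge1)\ge E[Z_m]^2/E[Z_m^2]\ge\rho>0$, uniformly in starting points in $D_{N_m}$. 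The conditional Borel--Cantelli lemma then concludes as in the proof of Theorem~\ref{theo:Reg}.

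The condition $\gamma>1/3$ enters when showing that $\max_v E_{v,v}[Z_m]$ is bounded. For the sum $\sum_n p_{2n}(v,v)$ over $n\le T_{N}$, the small-time contributions, when $v$ sits in a long tooth (which locally looks like $\mathbb{Z}$, so $p_n\approx n^{-1/2}$), must sum to $O(E[Z_m])$. This requires the relevant exponent to be summable exactly when $\gamma>1/3$. I expect this uniform control of $E_{v,v}[Z_m]$ over atypical starting points, together with the local-time estimate, to be the main obstacle. I would first try to handle it by splitting the time sum at the tooth-height scale $H_{v_1}^2$.

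\emph{Part (2), $\gamma<1/3$.} I would use the on-diagonal upper bound to show
\[
\sum_n \sum_v p_n(0,v)^2\deg(v)\le \sum_n C p_{2n}(0,0)<\infty
\]
for a.e.\ $\omega$. By Borel--Cantelli, the expected number of collisions is then finite, which gives finitely many collisions almost surely. More carefully, I would do this on the dyadic time blocks $[T_{N_m},T_{N_{m+1}}]$, where the environment is typical, and show that each block contributes $\lesssim N_m^{-\delta}$ with $\delta=\delta(\gamma)>0$ precisely when $\gamma<1/3$. Alternatively, I would invoke the general criterion of~\cite{Koops}, which the introduction notes applies under the tail lower bound.
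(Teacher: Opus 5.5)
\textbf{Part (1): the second-moment step fails.} Your $Z_m$ counts collisions at \emph{all} vertices, including those inside teeth, and this breaks the argument.

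With the scales your heuristic produces, $T_N\asymp N^{4/(1+\gamma)}$ and $|\mathcal{R}_N(\sqrt{T_N})|\asymp N^{(3-\gamma)/(1+\gamma)}$. Hence $E[Z_m]\asymp T_N/|\mathcal{R}_N(\sqrt{T_N})|\asymp N$ for every $\gamma\in(0,1)$, so this quantity does not see the threshold $1/3$ at all.

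The claim $\max_v E_{v,v}[Z_m]=O(E[Z_m])$ is false. Take $v$ at height $h\asymp\sqrt{T_N}=N^{2/(1+\gamma)}$ in a tooth of length at least $2h$. Two walks started at $v$ collide, in expectation, of order $h$ times before either leaves $\llbracket 1,2h-1\rrbracket$ in that tooth, since their difference is a recurrent walk on $2\mathbb{Z}$ run for time $\asymp h^2$. So $E_{v,v}[Z_m]\gtrsim N^{2/(1+\gamma)}\gg N$. Your small-time sum $\sum_{n\le h^2}n^{-1/2}\asymp h$ is $O(N)$ only if $\gamma\ge 1$, not $\gamma>1/3$.

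This is not just a lossy bound. There are $\asymp NT_N^{-\gamma/2}$ teeth longer than $\sqrt{T_N}$ in $D_N$, so such points make up a positive fraction of $\mathcal{R}_N(\sqrt{T_N})$. Under your own heuristic $p_n\approx|\mathcal{R}_N(\sqrt{T_N})|^{-1}$, the true second moment therefore satisfies $E[Z_m^2]\gtrsim E[Z_m]\,N^{2/(1+\gamma)}$. Paley--Zygmund then gives only $E[Z_m]^2/E[Z_m^2]\lesssim N^{-(1-\gamma)/(1+\gamma)}\to 0$.

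The missing idea is the paper's restriction to collisions on the backbone $\llbracket -N,N\rrbracket\times\{0\}$ up to time $cN^{4/(1+\gamma)}$.
\begin{itemize}
\item The mean of this count is $\asymp T_N\cdot N\cdot|\mathcal{R}_N(N^{2/(1+\gamma)})|^{-2}\asymp N^{(3\gamma-1)/(\gamma+1)}$. This is exactly where $\gamma>1/3$ enters.
\item After a backbone collision at $(z,0)$, the future count is at most $\sum_{k\le T}\sum_y p_k(z,(y,0))^2\lesssim\sum_{k\le T}k^{(3\gamma-5)/4}$.
\item That bound uses only the on-diagonal estimate $k^{-(3-\gamma)/4}$ at backbone points, which is the kind~\eqref{eq:Bound-HK-via-Green} delivers, plus off-diagonal decay for $|y-z|\ge k^{(1+\gamma)/4}$. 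Points at height $h$ instead have $p_k\asymp k^{-1/2}$ for $k\lesssim h^2$.
\end{itemize}

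Two smaller problems in Part (1):
\begin{itemize}
\item With disks $D_{bN_m}$, at the stopping time the other walk may sit at height up to $N_m^{1/\gamma}\gg\sqrt{T_{N_{m+1}}}$. No lower bound uniform over $D_{N_m}$ can hold; the paper stops at exits of $\mathcal{R}_N(N^{2/(1+\gamma)})$ instead.
\item At $\gamma=1$ the environment concentration degenerates. This is why the paper handles that case separately via Lemma~\ref{lem:MaxSmall} and~\eqref{eq:GKC}.
\end{itemize}

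\textbf{Part (2): the first-moment bound cannot hold.} The left side of your display equals $\sum_n p_{2n}(0,0)$, which is infinite because every $\mathrm{Comb}(\mathbb{Z},H)$ is recurrent. Since $P_{0,0}(X^1_n=X^2_n)\ge p_{2n}(0,0)$, the expected number of collisions is always infinite. In particular, no block-wise bound $\lesssim N_m^{-\delta}$ on it is possible.

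A first-moment proof must account for the clustering of collisions. The paper bounds
\[
P^\omega_{0,0}(A_{n,\ell})\le E^\omega_{0,0}[W_{n,\ell}]\big/E^\omega_{0,0}[W_{n,\ell}\mid A_{n,\ell}]
\]
for space-time boxes with times in $[n,2n]$ and heights in $[\ell,2\ell]$.
\begin{itemize}
\item The numerator satisfies $E^\omega_{0,0}[W_{n,\ell}]\lesssim \ell n^{-\alpha_3}$, from $p_n(0,(k,h))\lesssim n^{-(2/3+\alpha_1)}$ plus off-diagonal decay in $k$.
\item The denominator satisfies $E^\omega_{0,0}[W_{n,\ell}\mid A_{n,\ell}]\gtrsim\ell^{\eta}$, because a meeting at height $\ell$ is followed by order $\ell$ further collisions in that tooth.
\item One then sums over dyadic $(n,\ell)$ and discards heights above $n^{1/(2\eta)}$ using the Gaussian factor $e^{-ch^2/n}$.
\end{itemize}

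Your fallback of citing~\cite{Koops} is acceptable for statement (2) as posed. However, that reference needs more than the tail lower bound: it also requires a technical upper tail bound. The citation works only because~\eqref{eq:Heavy-tailed-def} also supplies $\mathbf{P}(H_0>t)\le C't^{-\gamma}$.
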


The two statements in Theorem~\ref{theo:MainRandomComb} are established separately. 
Concerning Theorem~\ref{theo:MainRandomComb}-\emph{(2)}, we will state and prove a stronger result in Theorem~\ref{theo:FinitePrecise} implying the latter, which only requires a lower bound on the tail distribution of $H_0$ (see~\eqref{eq:Tail-LB-Assumption}; a similar statement with an additional technical condition appeared in~\cite{Koops}). 
The argument in this regime follows from a slightly modified first moment method.\medskip

The proof of Theorem~\ref{theo:MainRandomComb}-\emph{(1)} concerning the infinite collision property in the regime $\gamma \in (1/3,1]$ is more challenging, and constitutes the principal part of the present Section. We will set up a relatively standard second moment method counting collisions between certain Markovian times. However, both the lower bound for the first moment and the upper bound for the second moment will require \textit{uniform estimates} over large disks. This substantially increases the technical aspects of the proof and is the main reason behind the absence of the boundary case $\gamma = 1/3$ in our analysis, which we conjecture to fall into the regime of Theorem~\ref{theo:MainRandomComb}-\emph{(1)}. \medskip 

 We outline the `collision mechanism' on $\mathrm{Comb}(\mathbb{Z},H^\omega)$ effective in the regime $\gamma \in (1/3,1]$ that gives rise to the infinite collision property in Theorem~\ref{theo:MainRandomComb}-\emph{(1)}. To establish the infinite collision property we will restrict our attention to the collisions on increasingly large segments of the horizontal axis $\mathbb{Z}$ (viewed as a subset of $\mathrm{Comb}(\mathbb{Z},H^\omega)$). This is perhaps surprising as this ostensibly neglects a considerable amount of vertices that may potentially contribute to the total number of collisions. In fact, we show that the bulk of the local time of a simple random walk on $\mathrm{Comb}(\mathbb{Z},H^\omega)$ is spent `far away from the backbone', since such vertices constitute most of the volume of the graph (visible by the walk). However, collisions that occur far away from the backbone are highly correlated and hence not suitable for a second moment method. 
Informally, our estimates suggest that the number of collisions occurring at the \textit{typical} height are not the main contribution to the infinite collision property. 
The collision mechanism is therefore not that one random walk gets trapped in a long tooth and then `waits' for the other (as in \cite{DGP1, DGP2, devulder2025infinitely}), but that when both walks move between long excursions their paths typically intersect on the horizontal axis. This is associated to the fact that it is extremely unlikely for two walks to choose the same long tooth for a very long excursion. At the same time, long excursions are the driving force behind the slow movement of the random walk on $\mathrm{Comb}(\mathbb{Z},H^\omega)$. These observations are related to the scaling limit of the walk projected onto the horizontal axis derived in \cite{BCCR}. Indeed, the Fractional Kinetics limit observed there encodes the fact that traps are transparent (in the sense of~\cite[Section 3]{BCCR}) and their location is unpredictable (a non-transparent trapping mechanism would lead to a limiting process that `remembers' the environment, such as the FIN diffusion in~\cite{FIN}).

We briefly introduce further convention on notation. Throughout this Section, we use $P^\omega_x$ and $E^\omega_x$ as abbreviations for $P^{\mathrm{Comb}(\mathbb{Z},H^{\omega})}_x$ and $E^{\mathrm{Comb}(\mathbb{Z},H^{\omega})}_x$ for $\omega \in \Omega$, $x \in \mathrm{Comb}(\mathbb{Z},H^\omega)$. Similarly, we use $p^\omega_n(\cdot,\cdot)$ as a shorthand notation for $p^{\mathrm{Comb}(\mathbb{Z},H^{\omega})}_n(\cdot,\cdot)$ for $\omega \in \Omega$, $n \in \mathbb{N}_0$. Finally, for a subset $B \subseteq \mathrm{Comb}(\mathbb{Z},H^\omega)$, with $\omega \in \Omega$, we use $p^{\omega,B}_n(\cdot,\cdot)$ as a notation for the killed heat kernel $p^B_n(\cdot,\cdot)$, explicitly highlighting the dependence on $\omega$.

\subsection{Environment estimates}\label{sect:EnvEstimates}

In this Section we show several useful estimates that hold for a typical realization of the tooth profile $(H^\omega_z)_{z \in \mathbb{Z}}$. Since we will deal with the case $\gamma = 1$ separately in Subsection~\ref{Sect:Gamma1}, we assume (until then) that $\gamma \in (0, 1)$.

For $M,N \in \mathbb{R}_{\geq 0}$, $z \in \mathbb{Z}$, we recall the notation in~\eqref{eq:Sets-on-Comb} for the set of vertices $\mathbb{V}_{N}^z(M) = \left\{v = (x, \ell) \in \mathrm{Comb}(\mathbb{Z},H^\omega) \colon |z - x| \le N, \ell  \ge M \right\}$ and $D^z_N = \mathbb{V}^z_N(0)$ (see Figure~\ref{fig:Comb} for a visualization). We say that for $\ell \in \mathbb{N}_0$ a disk $D^z_{N}$ is \textit{$\ell$-good} if
\begin{equation}\label{eqn:GoodBox}
    \frac{|\{x \in \llbracket z-N, z+N \rrbracket \colon H_x \in [2^{\ell}, 2^{\ell+1})\} |}{\mathbf{E}\left[|\{x \in \llbracket z-N, z+N \rrbracket \colon H_x \in [2^{\ell}, 2^{\ell+1})\}|\right]} \in [1/4, 4],
\end{equation}
and 
\begin{equation}\label{eqn:GoodBox2}
    \frac{|\{x \in \llbracket z-N, z+N \rrbracket \colon H_x \ge 2^{\ell+1}\}|}{\mathbf{E}\left[|\{x \in \llbracket z-N, z+N \rrbracket \colon H_x \ge 2^{\ell+1}\}|\right]} \in [1/4, 4].
\end{equation}
We define the convenient abbreviation
\begin{equation}
\label{eq:convenient-abbrev}
    u(k)\coloneqq \lfloor \log_2(k^{2/(1 + \gamma)})\rfloor+1, \qquad k \in \mathbb{N},
\end{equation}
which enters the proofs below.
\begin{lem}\label{lem:GoodBoxes}
		For any $\delta_1>0$ and $\mathbf{P}$-a.e.\ $\omega \in \Omega$, there exists $N_0(\omega) \in \mathbb{N}$ such that, for all $N \ge N_0(\omega)$, all disks $D_k^z \subseteq D_N$ (i.e.~such that $\llbracket z - k, z+ k \rrbracket \subseteq \llbracket -N,N \rrbracket$), with $k\geq N^{\delta_1}$ 
        are $\ell$-good, for all $\ell \in \llbracket 0, u(k) \rrbracket$.
\end{lem}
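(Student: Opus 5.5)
The plan is to use a Chernoff bound for binomial counts together with a union bound over all relevant disks and levels, and then apply Borel--Cantelli. Fix a disk $D^z_k$ and a level $\ell$. The count $|\{x \in \llbracket z-k,z+k\rrbracket : H_x \in [2^\ell,2^{\ell+1})\}|$ is Binomial$(2k+1,q_\ell)$, where $q_\ell = \mathbf{P}(H_0\in[2^\ell,2^{\ell+1}))$. Likewise, the count in~\eqref{eqn:GoodBox2} is Binomial$(2k+1,r_\ell)$ with $r_\ell = \mathbf{P}(H_0 \geq 2^{\ell+1})$.

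The first step is a lower bound on the means, uniformly in $\ell \le u(k)$. By~\eqref{eq:Heavy-tailed-def}, $r_\ell \sim C 2^{-(\ell+1)\gamma}$ and $q_\ell \sim C(1-2^{-\gamma})2^{-\ell\gamma}$ as $\ell\to\infty$. For the finitely many small $\ell$ with possibly $q_\ell = 0$, I would treat these separately: both the count and its expectation vanish, so we adopt the convention that the ratio condition is void there. Alternatively, I would note that $q_\ell>0$ for all large $\ell$ and handle small $\ell$ trivially, since the mean is then of order $k$ whenever it is nonzero. Since $2^{u(k)} \le 2k^{2/(1+\gamma)}$, we get for $\ell\le u(k)$
\[
(2k+1)\,q_\ell \;\ge\; c\,k\,k^{-2\gamma/(1+\gamma)} = c\,k^{(1-\gamma)/(1+\gamma)},
\]
and the same holds for $r_\ell$. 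The exponent $\eta := (1-\gamma)/(1+\gamma)$ is positive because $\gamma<1$. This is the one place where the choice of $u(k)$ and the restriction $\gamma<1$ enter.

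The second step is the standard multiplicative Chernoff bound: for $S\sim$ Binomial with mean $\mu$, $\mathbf{P}(S/\mu\notin[1/4,4]) \le 2e^{-c\mu}$. Hence a fixed pair $(D^z_k,\ell)$ fails to be good with probability at most $4\exp(-c k^{\eta}) \le 4\exp(-c N^{\delta_1\eta})$ whenever $k\ge N^{\delta_1}$.

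The third step is the union bound. For fixed $N$, the number of disks $D^z_k\subseteq D_N$ is at most $(2N+1)^2$, and the number of levels is at most $u(N)+1\le C\log N$. Thus the probability that some disk fails for some level is at most $CN^2\log N\, e^{-cN^{\delta_1\eta}}$, which is summable in $N$. Borel--Cantelli then gives an $N_0(\omega)$ beyond which the event never occurs, which is the claim. I expect the main obstacle to be only the uniformity of the tail asymptotics over $\ell$, including small $\ell$ where $q_\ell$ could vanish. This is handled by the remark above: the asymptotics give $q_\ell \ge c2^{-\ell\gamma}$ for all $\ell$ with $q_\ell>0$ once $\ell$ exceeds a fixed threshold.
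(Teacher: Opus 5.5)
Your proposal is correct and follows the paper's proof essentially step by step. Both arguments bound the binomial counts in \eqref{eqn:GoodBox} and \eqref{eqn:GoodBox2} via a Chernoff bound with mean at least of order $k^{(1-\gamma)/(1+\gamma)}\ge N^{\delta_1(1-\gamma)/(1+\gamma)}$ for $\ell\le u(k)$, take a union bound over polynomially many pairs $(z,k)$ and $O(\log N)$ levels, and conclude with the first Borel--Cantelli lemma. Your extra care with small $\ell$, where \eqref{eq:Heavy-tailed-def} gives no information and $q_\ell$ may even vanish, is a minor but genuine tightening: the paper treats the counts as exactly binomial with parameter $C2^{-\gamma\ell}(1-2^{-\gamma})$, which is only asymptotically correct.
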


\begin{proof}
	We observe that for a fixed box of width $k\ge 0$, we have
	\begin{equation}
		|\{x \in \llbracket -k, k\rrbracket \colon H_x \in [2^{\ell}, 2^{\ell+1})\}| \sim \mathrm{Binom}\left( 2k+1,C2^{-\gamma \ell }(1 -2^{-\gamma}) \right)
	\end{equation}
    (here and in the following, we denote by $\mathrm{Binom}(M,q)$ a generic binomially distributed random variable with parameters $\lfloor M \rfloor$, where $M \in \mathbb{R}_{\geq 0}$, and $q \in [0,1]$, on some auxiliary probability space $(S,\mathcal{S},\textbf{Q})$).
    We record the basic exponential bound
    \begin{equation}
        \mathbf{Q}\left(\mathrm{Binom}(m, q)>4 m q\right) \le e^{-2mq}, \qquad m \in \mathbb{N}, q \in [0,1].
    \end{equation}
    Hence, with $K(\gamma) = 2C(1 -2^{-\gamma})$, for any $k \in \mathbb{N}_0$ and $\ell \in \llbracket 0, u(k) \rrbracket$, we have
    \begin{equation}
    \label{eq:Environment-estimate-first-step}
    \begin{split}
    	\mathbf{P}\Big(|\{x \in \llbracket -k, k\rrbracket \colon H_x \in [2^{\ell}, 2^{\ell+1})\}| & > 4 \mathbf{E}\left[|\{x \in \llbracket -k, k\rrbracket \colon H_x \in [2^{\ell}, 2^{\ell+1})\}|\right] \Big)  \\
           & \qquad \qquad  \le e^{- (2k+1)K(\gamma) 2^{-\ell \gamma}},
            \end{split}
    \end{equation}
    with a similar bound for the lower deviation (with $1/4$ replacing $4$). Let $N \in \mathbb{N}$. For $k \geq N^{\delta_1}$, $\ell \leq u(k)$, we see upon using the definition~\eqref{eq:convenient-abbrev} that $ k 2^{-\ell \gamma} \geq 4^{-\gamma} k^{1 - \frac{2\gamma}{1+\gamma}} 
    \geq 4^{-\gamma} N^{\delta_1 \frac{1-\gamma}{1+\gamma} }$. 
    Therefore, we have 
    \begin{equation}
    \label{eq:Box-not-good-bound}
    \begin{split}
       \sup_{k \geq N^{\delta_1}} \sup_{\ell \in \llbracket 0, u(k) \rrbracket}	\mathbf{P}\Big(|\{x \in \llbracket -k, k \rrbracket  \colon H_x \in [2^{\ell}, 2^{\ell+1})]\}|  &  > 4 \mathbf{E}\left[|\{x \in \llbracket -k, k \rrbracket \colon H_x \in [2^{\ell}, 2^{\ell+1})\}|\right] \Big) \\ &   \qquad \qquad \stackrel{\eqref{eq:Environment-estimate-first-step}}{\le} e^{-\frac{2}{4^\gamma}K(\gamma) N^{\delta_1\frac{1-\gamma}{1+\gamma}}},
        \end{split}
    \end{equation}
    again with a similar bound for the lower deviation. By taking a union bound and using that the laws of $|\{x \in \llbracket z-k,z+k \rrbracket \colon H_x \in [2^\ell,2^{\ell+1}) \}|$ coincide for all $z \in \mathbb{Z}$, we find that
    \begin{equation}
    \begin{split}
        \textbf{P}\Big( \bigcup_{k = \lfloor N^{\delta_1} \rfloor + 1}^N \bigcup_{ \llbracket z -k , z+k \rrbracket \subseteq \llbracket -N,N\rrbracket }\bigcup_{\ell  = 0}^{u(k)} & \{ D^z_k \text{ does not fulfill~\eqref{eqn:GoodBox} for $\ell$} \} \Big) \\
        & \stackrel{\eqref{eq:Box-not-good-bound}}{\leq} 4N^{2-\delta_1}(u(N)+1) e^{-C(\gamma) N^{\delta_1\frac{1-\gamma}{1+\gamma}}}, 
            \end{split}
    \end{equation}
    for some $C(\gamma) > 0$.    
    By the first Borel-Cantelli lemma, we obtain that on a set $\Omega_0'$ of $\textbf{P}$-probability one, there exists $N_0'(\omega) \in \mathbb{N}$ such that all disks $D_k^z \subseteq D_N$ fulfill~\eqref{eqn:GoodBox} for all $k \geq N^{\delta_1}$, $\ell \in \llbracket 0,u(k)\rrbracket$, for all $N \geq N'_{0}(\omega)$. By repeating the argument for the random variables $|\{x \in \llbracket -k, k \rrbracket \colon H_x \ge 2^{\ell+1}\}|$, $\lfloor N^{\delta_1} \rfloor +1 \leq k \leq N $, $\ell \in \llbracket 0, u(k) \rrbracket$ we obtain the that on another set $\Omega_0''$ of $\textbf{P}$-probability one, there exists $N_0''(\omega) \in \mathbb{N}$ such that all disks $D_k^z \subseteq D_N$ fulfill~\eqref{eqn:GoodBox2} for all $k \geq N^{\delta_1}$, $\ell \in \llbracket 0,u(k) \rrbracket$, for all $N \geq N''_0(\omega)$. The statement of the Lemma follows by setting $N_0(\omega) = N_0'(\omega) \vee N_0''(\omega
    )$ for $ \omega \in \Omega_0' \cap \Omega_0''$.
\end{proof}
\begin{cor}\label{cor:GoodVolumes}
    For any $\delta_1>0$, there exists $c_{\mathrm{vol}} > 1$ such that for $\mathbf{P}$-a.e.\ $\omega \in \Omega$, with $N_0(\omega)$ as in the statement of Lemma~\ref{lem:GoodBoxes}, for all $N \ge N_0(\omega)$, $k \geq N^{\delta_1} $, $z \in \llbracket -N,N \rrbracket$ with $\llbracket z - k,z+k \rrbracket \subseteq \llbracket -N,N \rrbracket$ 
    and for all $\ell \in \llbracket 0, u(k) \rrbracket$, we have
    \begin{equation}
	 c_{\mathrm{vol}}^{-1}\leq \frac{\big|\mathbb{V}_{k}^z(2^{\ell}) \setminus \mathbb{V}_{k}^z(2^{\ell + 1})\big|}{\mathbf{E}\left[|\mathbb{V}_{k}^z(2^{\ell}) \setminus \mathbb{V}_{k}^z(2^{\ell + 1})|\right]} \leq c_{\mathrm{vol}}.
\end{equation}
\end{cor}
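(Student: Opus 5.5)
The corollary should follow directly from Lemma~\ref{lem:GoodBoxes} once we write the volume of a dyadic height layer as a sum over columns. For $\ell \in \llbracket 0, u(k)\rrbracket$ and $x \in \llbracket z-k, z+k\rrbracket$, the column over $x$ meets $\mathbb{V}_{k}^z(2^{\ell}) \setminus \mathbb{V}_{k}^z(2^{\ell + 1})$ in the heights $\ell' \in [2^\ell, 2^{\ell+1}) \cap \llbracket 0, H_x\rrbracket$. This gives three cases:
\begin{itemize}
\item if $H_x < 2^\ell$, the column contributes nothing;
\item if $H_x \in [2^\ell, 2^{\ell+1})$, it contributes $\lfloor H_x\rfloor - 2^\ell + 1 \in [1, 2^\ell]$ vertices;
\item if $H_x \geq 2^{\ell+1}$, it contributes exactly $2^\ell$ vertices.
\end{itemize}
Writing $A_\ell$ and $B_\ell$ for the counts in~\eqref{eqn:GoodBox} and~\eqref{eqn:GoodBox2}, this yields the deterministic sandwich
\begin{equation*}
2^\ell B_\ell \leq \big|\mathbb{V}_{k}^z(2^{\ell}) \setminus \mathbb{V}_{k}^z(2^{\ell + 1})\big| \leq 2^\ell (A_\ell + B_\ell).
\end{equation*}
Taking expectations gives the same sandwich for $\mathbf{E}[|\mathbb{V}_{k}^z(2^{\ell}) \setminus \mathbb{V}_{k}^z(2^{\ell + 1})|]$, namely between $2^\ell\mathbf{E}[B_\ell]$ and $2^\ell(\mathbf{E}[A_\ell]+\mathbf{E}[B_\ell])$.

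Next we compare $\mathbf{E}[A_\ell]$ and $\mathbf{E}[B_\ell]$ uniformly in $\ell$. From~\eqref{eq:Heavy-tailed-def}, $\mathbf{E}[A_\ell] = (2k+1)\mathbf{P}(H_0 \in [2^\ell,2^{\ell+1}))$ and $\mathbf{E}[B_\ell] = (2k+1)\mathbf{P}(H_0 \geq 2^{\ell+1})$. For $\ell$ large, both are comparable to $(2k+1)2^{-\gamma\ell}$, with ratio tending to $(2^\gamma - 1)$. For the finitely many small $\ell$, $\mathbf{P}(H_0 \geq 2^{\ell+1}) > 0$ by the tail assumption, so the ratio is bounded by a positive constant. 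Hence there is $c(\gamma)$ with $\mathbf{E}[A_\ell] \leq c(\gamma)\mathbf{E}[B_\ell]$ for all $\ell \ge 0$.

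A minor technicality arises if $H_0$ has an atom making $\mathbf{P}(H_0 \in [2^\ell, 2^{\ell+1}))=0$ for some small $\ell$. In that case~\eqref{eqn:GoodBox} is understood with $A_\ell = 0 = \mathbf{E}[A_\ell]$, and the argument above is unaffected because only the upper bound on $A_\ell$ is used.

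Now invoke Lemma~\ref{lem:GoodBoxes} on the full-measure event, with $N \geq N_0(\omega)$, $k \ge N^{\delta_1}$, $D^z_k \subseteq D_N$ and $\ell \le u(k)$. It gives $A_\ell \leq 4\mathbf{E}[A_\ell]$ and $\frac14 \mathbf{E}[B_\ell] \leq B_\ell \leq 4\mathbf{E}[B_\ell]$. Therefore
\begin{equation*}
\frac{|\mathbb{V}\setminus\mathbb{V}|}{\mathbf{E}|\mathbb{V}\setminus\mathbb{V}|} \geq \frac{2^\ell B_\ell}{2^\ell(1+c(\gamma))\mathbf{E}[B_\ell]} \geq \frac{1}{4(1+c(\gamma))},
\end{equation*}
and
\begin{equation*}
\frac{|\mathbb{V}\setminus\mathbb{V}|}{\mathbf{E}|\mathbb{V}\setminus\mathbb{V}|} \leq \frac{4\cdot 2^\ell(\mathbf{E}[A_\ell]+\mathbf{E}[B_\ell])}{2^\ell \mathbf{E}[B_\ell]} \leq 4(1+c(\gamma)).
\end{equation*}
Setting $c_{\mathrm{vol}} = 4(1+c(\gamma))$ proves the corollary. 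This constant is independent of $\omega$, $\delta_1$, $k$, $z$ and $\ell$.

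The only step needing care is the uniform comparability of $\mathbf{E}[A_\ell]$ and $\mathbf{E}[B_\ell]$ over all $\ell \ge 0$, including small $\ell$ where the asymptotic~\eqref{eq:Heavy-tailed-def} is not yet in force. Everything else is the deterministic sandwich combined with Lemma~\ref{lem:GoodBoxes}.
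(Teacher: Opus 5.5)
Your proposal is correct and follows the paper's own proof. Both arguments sandwich the layer volume column by column between $2^\ell|\{H_x\ge 2^{\ell+1}\}|$ and $2^\ell|\{H_x\ge 2^{\ell}\}|$, then combine Lemma~\ref{lem:GoodBoxes} with the comparability of the two expected counts. Your uniform-in-$\ell$ treatment of the ratio $\mathbf{P}(H_0\ge 2^\ell)/\mathbf{P}(H_0\ge 2^{\ell+1})$ (tail asymptotics for large $\ell$, positivity for the finitely many small $\ell$) makes rigorous a step the paper states only loosely.
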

\begin{proof}
    We note that for all $z \in \mathbb{Z}$ with $\llbracket z - k,z+k \rrbracket \subseteq \llbracket -N,N \rrbracket$
    \begin{equation}
    \begin{split}
        2^{\ell} |\{x \in \llbracket z-k, z+k \rrbracket \colon H_x \ge 2^{\ell + 1}\}|  & \le \Big|\mathbb{V}_{k}^z(2^{\ell}) \setminus \mathbb{V}_{k}^z(2^{\ell+1})\Big|\\
        & \le 2^{\ell} |\{x \in \llbracket z-k, z+k \rrbracket \colon H_x \ge 2^{\ell}\}|.
            \end{split}
    \end{equation}
    The same inequality holds for the corresponding expectations. Hence, the result follows directly from Lemma~\ref{lem:GoodBoxes} and the fact that by~\eqref{eq:Heavy-tailed-def} the expected numbers of teeth of length at least $2^\ell$ and at least $2^{\ell+1}$ differ by a multiplicative factor $C2^{-\gamma }$, for any $\ell \geq 0$.
\end{proof}

We recall for $M,N \in \mathbb{R}_{\geq 0}$ and $z \in \mathbb{Z}$ the notion of a \textit{rectangle} $\mathcal{R}^z_{N}(M) = \{v = (x, y) \in \mathrm{Comb}(\mathbb{Z},H^\omega) \colon |z-x| \le N, y \le M\}$ defined in \eqref{eq:Sets-on-Comb}  (see Figure~\ref{fig:Comb} for a visualization). We will be particularly interested in the rectangles $\mathcal{R}^z_{k}(k^{2/(1+\gamma)})$, $k \geq 1$. Roughly speaking, these play the role of resistance balls in graphs with regular growth and are fundamentally linked to heat-kernel estimates. The following corollary shows that the effective resistance between a point $x \in \mathbb{Z}$ on the horizontal axis and the outside of a rectangle around $x$ is controlled by its expectation, uniformly over a large class of rectangles contained in $D_N$. 
\begin{cor}\label{cor:GoodRes}
    For any $\delta_1>0$ there exists $\widetilde{c}_{\mathrm{eff}}>1$ such that for $\mathbf{P}$-a.e.\ $\omega \in \Omega$, with $N_0(\omega)$ as in the statement of Lemma~\ref{lem:GoodBoxes}, for all $N \ge N_0(\omega)$, $k \geq N^{\delta_1} $, and $x \in \llbracket -N,N \rrbracket$ with $\llbracket x - k,x+k \rrbracket \subseteq \llbracket -N,N \rrbracket$, we have 
    \begin{equation}
    \label{eq:Resistance-concentration-eq}
	 \widetilde{c}_{\mathrm{eff}}^{-1} \leq \frac{R_{\mathrm{eff}}((x,0), \mathcal{R}^x_{k}(k^{2/(\gamma+1)})^c)}{\mathbf{E}\left[R_{\mathrm{eff}}((x,0), \mathcal{R}^x_{k}(k^{2/(\gamma+1)})^c)\right]} \leq \widetilde{c}_{\mathrm{eff}}.
\end{equation}
Moreover, there exists $c_{\mathrm{eff}}>1$ such that for all $x \in \mathbb{Z}$ and $k \in \mathbb{N}$,
\begin{equation}
\label{eq:Effective-res-exp}
    c_{\mathrm{eff}}^{-1} k \le \mathbf{E}\left[R_{\mathrm{eff}}((x,0), \mathcal{R}_{k}(k^{2/(\gamma+1)})^c)\right] \le c_{\mathrm{eff}} k.
\end{equation}
\end{cor}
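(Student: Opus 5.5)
The rectangle $\mathcal{R}^x_k(M)$ with $M = k^{2/(1+\gamma)}$ is a tree, so I would compute the effective resistance by network reduction. It has two sides, $[x+1,x+k]$ and $[x-k,x-1]$, each with the same structure. Consider the right side and write $s_j$ for the backbone site $(x+j,0)$, $j=1,\dots,k$. Each tooth at $s_j$ is a dead end unless $H_{x+j} > M$; in that case it is a path of $\lfloor M\rfloor+1$ edges ending at the exit set $\partial_u\mathcal{R}^x_k(M)$. So each side is a ladder: backbone unit resistors in series, with a shunt of resistance of order $M$ to the exit at every site $j$ with $H_{x+j} > M$, and the lateral exit after $k+1$ steps. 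Dead-end teeth carry no current and can be deleted. The effective resistance is then
\[
R = \big(R_+^{-1} + R_-^{-1}\big)^{-1},
\]
where $R_\pm$ is the resistance of the corresponding side.

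The deterministic estimate I would prove is the following. Let $J$ be the set of shunt locations on one side and $j_1 = \min J$. Then
\[
\min(j_1, k+1) \wedge M \;\lesssim\; R_+ \;\le\; \min(j_1 + M,\, k+1).
\]
The upper bound comes from a single path: either straight to the lateral exit, or to $s_{j_1}$ and up its tooth. For the lower bound I would use Nash--Williams. The edges $\{s_{i-1},s_i\}$ for $i \le j_1$ are disjoint cutsets, giving $R_+ \ge \min(j_1,k+1)$ in the absence of any earlier shunt. A shunt at $j_1$ costs at least $M \ge k \ge j_1$. Any path to the exit either traverses $j_1$ backbone edges in series or uses a tooth of length $M$. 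So $R_+ \ge c\min(j_1,M)$ plus the contribution from the parallel teeth, and since $k \le M$ (for $\gamma \le 1$) this gives $R_+ \asymp \min(j_1,k)$ up to the effect of several teeth in parallel. For that effect, note that with $m$ shunts within distance $j$ the resistance is at least of order $\min(j, M/m)$. Combining these, $R \asymp \min(j_1^+, j_1^-, k) \vee$ (a term controlled by $M/\#\text{shunts}$).

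Next, the expectation, i.e.~\eqref{eq:Effective-res-exp}. The shunt probability per site is $q = \mathbf{P}(H_0 > M) \sim C k^{-2\gamma/(1+\gamma)}$. The expected number of shunts on a side is $kq \asymp k^{(1-\gamma)/(1+\gamma)}$, which is large when $\gamma<1$. So $j_1$ is geometric with mean $1/q$. Since $1/q \ll k$, this naively gives $\mathbf{E}[R] \asymp 1/q$, not $k$.

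I would now re-examine this mismatch, because the claimed order $k$ requires the shunts to be ineffective. They are: each shunt has resistance $M$, and after $j$ steps the conductance of the shunts passed is about $jq/M$. Compared with $1/j$ for the backbone, the shunts matter only once $j^2 q \gtrsim M$, i.e.~$j \gtrsim \sqrt{M/q} = k^{1/(1+\gamma)}k^{\gamma/(1+\gamma)} = k$. So the correct lower bound is via Nash--Williams combined with the ladder structure. I would set the potential $h(s_j) = 1 - j/(2k)$ on the backbone, let each open tooth decay linearly to $0$ at the top, and bound its energy by
\[
\sum_{j \le k} \frac{1}{k^2} + \#J \cdot \frac{1}{M}.
\]
With $\#J \lesssim kq$, this is of order $1/k + k^{(1-\gamma)/(1+\gamma) - 2/(1+\gamma)} = O(1/k)$, so $R \ge ck$. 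The upper bound $R \le k+1$ is trivial. This makes the expectation bound \eqref{eq:Effective-res-exp} hold even deterministically, provided $\#J \le 4\mathbf{E}[\#J]$.

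Finally, for \eqref{eq:Resistance-concentration-eq}: by Lemma~\ref{lem:GoodBoxes} (the \eqref{eqn:GoodBox2} part with $\ell = u(k)-1$, so that $2^{\ell+1}$ is comparable to $M$), for $\mathbf{P}$-a.e.~$\omega$ and $N \ge N_0(\omega)$ every admissible box has $\#J \le 4\mathbf{E}[\#J] \lesssim kq$. The energy bound then yields $R \ge ck$ uniformly, and $R \le k+1$ always holds. Dividing by the expectation gives the ratio bounds. The main obstacle is the lower bound on $R$, i.e.~making the test-function energy estimate clean, including the threshold mismatch between $M$ and $2^{u(k)}$, which only costs constants.
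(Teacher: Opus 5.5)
Your final argument is correct and is essentially the paper's proof: your test function bounds the conductance by $2/(k+1)+\#J/M$, which is the Dirichlet-principle form of the paper's bound $R_{\mathrm{eff}}\ge\frac12\bigl(\frac{k}{2}\wedge\frac{k^{2/(\gamma+1)}}{\mathcal{N}_{k,x}}\bigr)$ (obtained there via the parallel law and fusing the long teeth), and the quenched step is the same control of the number of long teeth via Lemma~\ref{lem:GoodBoxes} together with $R_{\mathrm{eff}}\le k+1$, so the $j_1$ detour can simply be dropped. Only small repairs are needed: take $h(s_j)=1-|j|/(k+1)$, since your $1-j/(2k)$ does not vanish on $\partial_s\mathcal{R}^x_k(k^{2/(\gamma+1)})$ and so is not admissible; bound $\#J\le|\{y: H_y\ge 2^{u(k)-1}\}|$ using both \eqref{eqn:GoodBox} and \eqref{eqn:GoodBox2} at $\ell=u(k)-1$, because \eqref{eqn:GoodBox2} alone at that $\ell$ counts teeth of height $\ge 2^{u(k)}>k^{2/(\gamma+1)}$, which bounds the wrong quantity; and pass to $\mathbf{E}[R_{\mathrm{eff}}]$ for every $k\in\mathbb{N}$ by Jensen's inequality (the map $y\mapsto(2/(k+1)+y/M)^{-1}$ is convex) or by Markov's inequality, rather than asserting that the bound holds ``deterministically''.
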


\begin{proof}
We recall for $M,N \in \mathbb{R}_{\geq 0}$ the notation $\partial_s \mathcal{R}_N(M)$ and $\partial_u \mathcal{R}_N(M)$ from~\eqref{eq:Comb-sets-part-2}. We first establish the control~\eqref{eq:Effective-res-exp} on the expected effective resistance. Throughout, we fix $x \in \mathbb{Z}$.

The upper bound is immediate by~\eqref{eq:Effective-resistance-graph-distance-bound}, since  $d(x,\mathcal{R}^x_{k}(M)) \leq k$ for any $k \in \mathbb{N}$, $M \in \mathbb{R}_{\geq 0}$.  
We now turn to the lower bound, and let $k \in \mathbb{N}$. By the parallel network reduction formula \cite[Section~9.4]{LPBook}, we have  
\begin{equation}
\label{eq:After-network-reduction-A}
    R_{\mathrm{eff}}(x, \mathcal{R}^x_{k}(k^{2/(\gamma+1)})^c) \ge \frac{1}{2} \left(R_{\mathrm{eff}}(x, \partial_u \mathcal{R}^x_{k}(k^{2/(\gamma+1)})) \wedge R_{\mathrm{eff}}(x, \partial_s \mathcal{R}^x_{k}(k^{2/(\gamma+1)}))\right).
\end{equation}
 We obtain directly from the definition~\eqref{eq:Eff-res-def} of the effective resistance that
\begin{equation}
    R_{\mathrm{eff}}((x,0), \partial_s \mathcal{R}^x_{k}(k^{2/(\gamma+1)})) \ge \frac{k}{2}.
\end{equation}
One obtains a lower bound for $R_{\mathrm{eff}}((x,0), \partial_u \mathcal{R}^x_{k}(k^{2/(\gamma+1)}))$ by fusing all the teeth attached to vertices in $\llbracket x-k,x+k\rrbracket$ that exceed height $k^{2/(\gamma+1)}$, see Figure~\ref{fig:network-reduction} for a visual representation of this procedure. 

\begin{figure}[htbp]
\centering
\includegraphics[width=1\linewidth]{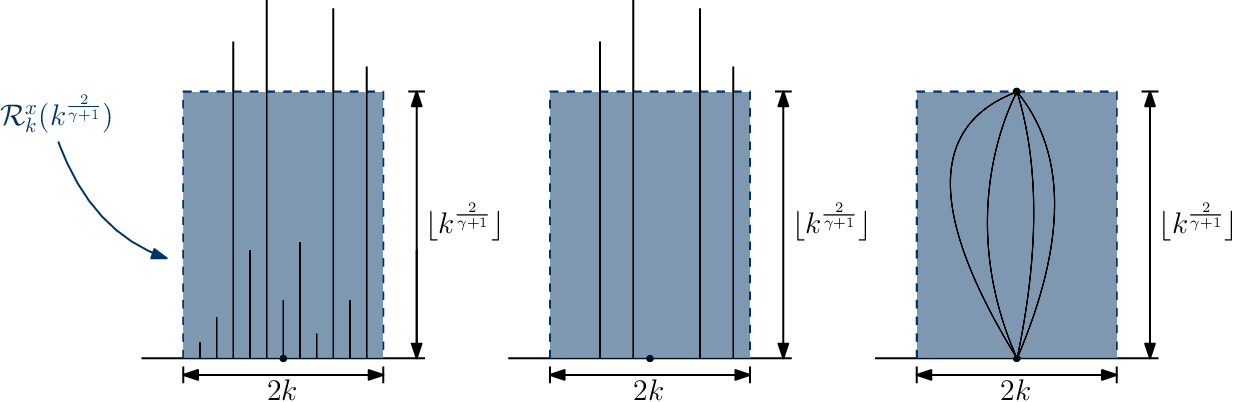}

\caption{A portion of the random comb, with the rectangle $\mathcal{R}_k^x(k^{2/(\gamma+1)})$ highlighted. The two steps of network reduction are indicated in the second and third panel: The short teeth do not affect the effective resistance and are erased in the first step. The remaining long teeth are put in parallel to bound the effective resistance from below.}
\label{fig:network-reduction}

\end{figure}

We note that the resistance along one tooth is the distance and hence bounded below by $k^{2/(\gamma+1)}$. Thus, defining $\mathcal{N}_{k,x} \coloneqq |\{y \in \llbracket x-k,x+k \rrbracket \colon H_y \geq k^{2/(\gamma+1)} \}|$, 
we obtain (again by the parallel law) that
\begin{equation}\label{eqn:LowerRes}
    R_{\mathrm{eff}}((x,0), \partial_u \mathcal{R}^x_{k}(k^{2/(\gamma+1)})) \ge \frac{1}{\mathcal{N}_{k,x}} k^{2/(\gamma+1)}.
\end{equation}
Hence, we see that
\begin{equation}
\label{eq:Expected-eff-res-LB}
   \mathbf{E}\left[ R_{\mathrm{eff}}((x,0), \mathcal{R}^x_{k}(k^{2/(\gamma+1)})^c) \right] \stackrel{\eqref{eq:After-network-reduction-A}}{\ge} \frac{1}{2}  \left( \mathbf{E} \left[\left( \frac{1}{\mathcal{N}_{k,x}} k^{2/(\gamma+1)} \right) \wedge \frac{k}{2} \right] \right).
\end{equation}
Note that whenever $\mathcal{N}_{k,x} \ge 2k^{\frac{1-\gamma}{1+\gamma}}$ we have that $\frac{1}{\mathcal{N}_{k,x}} k^{2/(\gamma+1)} \le k/2$, and therefore
\begin{equation}
\label{eq:Expected-eff-res-LB2}
\begin{split}
       \mathbf{E}\left[ R_{\mathrm{eff}}((x,0), \mathcal{R}^x_{k}(k^{2/(\gamma+1)})^c) \right] \ge \frac{1}{2}  \left( \mathbf{E} \left[\left( \frac{1}{\mathcal{N}_{k,x} + 2k^{\frac{1-\gamma}{1+\gamma}}} k^{2/(\gamma+1)} \right)\right] \right).
\end{split}
\end{equation}
We use that $\textbf{E}[\mathcal{N}_{k,x}] \leq C(2k+1) k^{-2\gamma/(\gamma+1)}$. Thus, applying Jensen's inequality,
\begin{equation}
\label{eq:Expected-eff-res-LB-aux}
    \mathbf{E}\left[R_{\mathrm{eff}}(x, \partial_u \mathcal{R}^x_{k}(k^{2/(\gamma+1)}))\right] \ge \frac{k^{\frac{2}{\gamma + 1}}}{\mathbf{E}[\mathcal{N}_{k,x}] + 2k^{\frac{1-\gamma}{1+\gamma}}} \ge \frac{1}{4C + 2} k^{\frac{2\gamma}{\gamma+1} - 1} k^{\frac{2}{\gamma + 1}} \geq \frac{1}{8C} k.
\end{equation}
Inserting~\eqref{eq:Expected-eff-res-LB-aux} into~\eqref{eq:Expected-eff-res-LB2} yields the lower bound in~\eqref{eq:Effective-res-exp}.

We now turn to the proof of~\eqref{eq:Resistance-concentration-eq}. The upper bound is again immediate by~\eqref{eq:Effective-resistance-graph-distance-bound}, in combination with the lower bound on the expected effective resistance in~\eqref{eq:Effective-res-exp}. On the other hand, we note that $|\{ y \in \llbracket x-k,x+k \rrbracket \colon H_y \geq 2^{u(k)} \}| \leq  \mathcal{N}_{k,x} \leq |\{ y \in \llbracket x-k,x+k \rrbracket \colon H_y \geq 2^{u(k)-1} \}| $, so by Lemma~\ref{lem:GoodBoxes}, we obtain that for $\textbf{P}$-a.e.~$\omega \in \Omega$, there exists $N_0(\omega) \in \mathbb{N}$ such that for all $N \geq N_0(\omega)$, $k \ge N^{\delta_1}$ and all disks $D_k^x \subseteq D_N$
\begin{equation}
    \mathcal{N}_{k,x} \le 2^{2+\gamma} \mathbf{E}[\mathcal{N}_{k,x}].
\end{equation}
Inserting this estimate into \eqref{eqn:LowerRes}, we obtain the claim.
\end{proof}

In the next lemma we obtain a consequence from Corollary~\ref{cor:GoodRes} for the random walk on $\mathrm{Comb}(\mathbb{Z},H^\omega)$ concerning entrance probabilities for sites in the rectangle $ \mathcal{R}_{N}(N^{2/(\gamma+1)})$. This estimate will be fundamental for proving a lower bound on the heat kernel of the random walk killed upon leaving a rectangle, see Lemma~\ref{lem:LBHKOffDiag}.

\begin{lem} \label{lem:ProbandRes}
    There exists $\varrho : (0,\infty) \rightarrow (0,\infty)$ with $\lim_{b \to \infty} \varrho(b) = \infty$ such that for $\mathbf{P}$-a.e.\ $\omega \in \Omega$, with $N_0(\omega)$ as in the statement of Lemma~\ref{lem:GoodBoxes}, for all $N \ge N_0(\omega)$,
	\begin{equation}
	   \max_{v \in \mathcal{R}_N(N^{2/(\gamma+1)})} \max_{w \in \mathcal{R}_N(N)} P_{v}^\omega \left( \tau_w > \tau_{\mathcal{R}_{bN}((bN)^{2/(\gamma+1)})^c} \right) \le \frac{1}{\varrho(b)}.
	\end{equation}
\end{lem}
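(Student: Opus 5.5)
The plan is to use the escape probability bound~\eqref{eq:Escape-Prob-bound} with $A=\{w\}$ and $B=\mathcal{R}_{bN}((bN)^{2/(\gamma+1)})$:
\[
P^\omega_v\big(\tau_w > \tau_{B^c}\big) \le \frac{R_{\mathrm{eff}}(v,w)}{R_{\mathrm{eff}}(v,B^c)} .
\]
This reduces the claim to two estimates, both uniform over $v\in\mathcal{R}_N(N^{2/(\gamma+1)})$ and $w\in\mathcal{R}_N(N)$. First, an upper bound on the numerator comparable to $N$ (times a constant independent of $b$). Second, a lower bound on the denominator of order at least $\varrho'(b)N$ with $\varrho'(b)\to\infty$.

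For the numerator I use~\eqref{eq:Effective-resistance-graph-distance-bound}, i.e.\ $R_{\mathrm{eff}}(v,w)\le d(v,w)$. Since $w=(w_1,w_2)$ has $w_2\le N$, the path from $v=(v_1,v_2)$ down its tooth, along the backbone and up to $w$ has length $\le v_2+2N+N$. However, $v_2$ may be as large as $N^{2/(\gamma+1)}\gg N$, so the graph distance is too crude when $v$ sits high in a tooth. I handle this with the series law: the tooth of $v_1$ is a path, so the resistance from $v$ to $(v_1,0)$ is $v_2$ exactly. Then I split the event at the first hitting of $(v_1,0)$: the walk started at $v$ either reaches $(v_1,0)$ before exiting $B$, or it exits $B$ through the top of that tooth. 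If $H_{v_1}$ exceeds $(bN)^{2/(\gamma+1)}$, the gambler's ruin probability of exiting upwards first is $v_2/(bN)^{2/(\gamma+1)}\le b^{-2/(\gamma+1)}$. By the strong Markov property it therefore suffices to treat $v=(v_1,0)$ on the backbone, and then $R_{\mathrm{eff}}(v,w)\le 3N$.

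For the denominator I pass to the rectangle centred at $v_1$ with horizontal radius $(b-1)N$. By monotonicity~\eqref{eq:Monotonicity},
\[
R_{\mathrm{eff}}\big((v_1,0),B^c\big)\ge R_{\mathrm{eff}}\big((v_1,0),\mathcal{R}^{v_1}_{k}(M)^c\big)
\]
for any $k\le(b-1)N$ and $M\le (bN)^{2/(\gamma+1)}$ with $\mathcal{R}^{v_1}_{k}(M)\subseteq B$. I take $k=(b-1)N$, so that $M=k^{2/(\gamma+1)}\le(bN)^{2/(\gamma+1)}$. Corollary~\ref{cor:GoodRes}, applied with $N$ replaced by $bN$ (the almost-sure statement holds for all large $N$, hence also along $bN$ for fixed $b$), together with~\eqref{eq:Effective-res-exp}, gives the lower bound $(\widetilde c_{\mathrm{eff}}c_{\mathrm{eff}})^{-1}(b-1)N$. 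Combining the two estimates yields the bound $\frac{3}{(b-1)}\widetilde c_{\mathrm{eff}}c_{\mathrm{eff}}+b^{-2/(\gamma+1)}$, and I define $1/\varrho(b)$ to be this quantity.

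The main obstacle is the bookkeeping for starting points $v$ high in a tooth whose top lies inside $B$. In that case the walk cannot exit through that tooth at all, so it must return to $(v_1,0)$ before exiting. The general case is therefore fine, but I need to verify it carefully. A second point to check is that the corollary's requirement that the good event hold on $D_{bN}$ is compatible with the choice of $N_0(\omega)$. This is resolved by enlarging $N_0$ by a factor depending on $b$, or by invoking Lemma~\ref{lem:GoodBoxes} at scale $bN$.
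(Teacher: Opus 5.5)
Your proposal is correct and follows essentially the paper's proof. You reduce to the backbone point $(v_1,0)$ via the strong Markov property and a gambler's ruin bound $b^{-2/(\gamma+1)}$. You then apply the escape-probability bound, controlling the numerator by the graph distance and the denominator by monotonicity together with Corollary~\ref{cor:GoodRes} on $\mathcal{R}^{v_1}_{(b-1)N}(((b-1)N)^{2/(\gamma+1)})$.

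Three of your details are, if anything, cleaner than the paper's. Your numerator bound $3N$ is the accurate one; the paper writes $2N$. Your single decomposition also covers the case where $w$ lies below $v$ in the same tooth, which the paper treats separately. Finally, invoking Corollary~\ref{cor:GoodRes} at scale $bN \ge N \ge N_0(\omega)$ already works, so $N_0$ need not be enlarged.
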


\begin{proof}
    Let $v = (x,\ell) \in \mathcal{R}_N(N^{2/(\gamma+1)})$ and $w = (x',\ell') \in \mathcal{R}_N(N)$. If $x = x'$ and $\ell > \ell'$, a gambler's ruin estimate shows that \begin{equation}
    \label{eq:Gambler-ruin-estimate}
        P_v^\omega\left(\tau_w > \tau_{\mathcal{R}_{bN}((bN)^{2/(\gamma+1)})^c} \right) \leq  P_v^\omega\left(\tau_w > \tau_{(x,\lfloor (bN)^{2/(\gamma+1)}\rfloor + 1 } \right) \leq \frac{1}{b^{2/(\gamma+1)}}
    \end{equation}
    Therefore, we can assume without loss of generality that $x \neq x'$, or $x = x'$ and $\ell \leq \ell'$. Upon a decomposition into the events $\{\tau_{(x,0)} \leq \tau_{(x, \lfloor (bN)^{2/(\gamma+1) }\rfloor + 1)} \}$ and $\{ \tau_{(x,0)} > \tau_{(x, \lfloor (bN)^{2/(\gamma+1) }\rfloor + 1)}  \}$ 
    and using the strong Markov property for the former event 
    one has 
    \begin{equation}
    \label{eq:Decomposition-Exit-time-proof}
    \begin{split}
        P^\omega_{v} \left(\tau_w > \tau_{\mathcal{R}_{bN}((bN)^{2/(\gamma+1)})^c}\right) & \le  P_{(x, 0)}^\omega \left( \tau_{w} > \tau_{\mathcal{R}_{bN}((bN)^{2/(\gamma+1)})^c} \right)  \\
        & + P_v^\omega \left( \tau_{(x, 0)} > \tau_{(x, \lfloor (bN)^{2/(\gamma+1) }\rfloor + 1)} \right).
            \end{split}
    \end{equation}
    The second summand on the right-hand side of~\eqref{eq:Decomposition-Exit-time-proof} is bounded above by $\frac{1}{b^{2/(\gamma+1)}}$, by a gambler's ruin estimate analogous to~\eqref{eq:Gambler-ruin-estimate}.
    We now estimate the first summand on the right-hand side of~\eqref{eq:Decomposition-Exit-time-proof}. By~\eqref{eq:Escape-Prob-bound}, we have
    \begin{equation}
    \label{eq:Bound-via-resistances}
        P_{(x, 0)}^\omega \left( \tau_{w} > \tau_{\mathcal{R}_{bN}((bN)^{2/(\gamma+1)})^c} \right) \le \frac{R_{\mathrm{eff}}((x, 0), w)}{R_{\mathrm{eff}}((x, 0), \mathcal{R}_{bN}((bN)^{2/(\gamma+1)})^c)}.
    \end{equation}
    By~\eqref{eq:Effective-resistance-graph-distance-bound} and using that $|x| \leq N$, $|\ell'| \leq N$, we first bound
    \begin{equation}\label{eq:Res2N}
        R_{\mathrm{eff}}((x, 0), w) \le d((x, 0), w) \le 2N.
    \end{equation}
    On the other hand, we have for $b > 1$ by~\eqref{eq:Monotonicity} that
    \begin{equation}
    \label{eq:Upper-bound-resistance-on-comb-to-outside}
        R_{\mathrm{eff}}((x, 0), \mathcal{R}_{bN}((bN)^{2/(\gamma+1)})^c) \ge R_{\mathrm{eff}}((x, 0), \mathcal{R}^x_{(b-1)N}(((b-1)N)^{2/(\gamma+1)})^c).
    \end{equation}
    By Corollary~\ref{cor:GoodRes}, we see that for every $N \geq N_0(\omega)$, we have 
    \begin{equation}
    \label{eq:Lower-bound-a.s.-on-comb-resistance}
        R_{\mathrm{eff}}((x, 0), \mathcal{R}^x_{(b-1)N}(((b-1)N)^{2/(\gamma+1)})^c) \ge \frac{1}{c_{\mathrm{eff}}\widetilde{c}_{\mathrm{eff}}} (b-1)N.
    \end{equation}
    Setting $\varrho(b) =  \frac{ 2 c_{\mathrm{eff}}\widetilde{c}_{\mathrm{eff}}}{b-1} + \frac{1}{b^{2/(1+\gamma)}}$ yields the required result upon inserting~\eqref{eq:Res2N} and~\eqref{eq:Lower-bound-a.s.-on-comb-resistance} into~\eqref{eq:Bound-via-resistances}.
    \end{proof}

\subsection{Local time estimates}\label{sect:LocalTime}
In this Subsection, we develop bounds on the local times of a random walk on $\mathbb{Z}$, which will be useful in the subsequent Subsection for the proof of certain exit time estimates for the random walk on a typical realization of $\mathrm{Comb}(\mathbb{Z},H^\omega)$. We start with some further notation. Let $(X_n)_{n \in \mathbb{N}_0}$ be the simple random walk on $\mathbb{Z}$, starting from $x \in \mathbb{Z}$, governed by $P_x^{\mathbb{Z}}$. We denote by $\{L_x(t)\}_{x \in \mathbb{Z}}$, the field of local times, namely
\begin{equation}
\label{eq:Local-time-field-SRW}
    L_x(t) \coloneqq \sum_{n = 0}^{\lfloor t \rfloor} \mathds{1}_{\{X_n = x \}}, \qquad t \geq 0.
\end{equation}
We write $T_{L, x}^{\mathrm{ex}} = \tau_{\llbracket x-L, x+L \rrbracket^c}$ for the exit time of $\llbracket x-L, x+L \rrbracket$, $x \in \mathbb{Z}$, $L \in \mathbb{N}_0$, and further abbreviate $T_{L}^{\mathrm{ex}} = T_{L, 0}^{\mathrm{ex}}$. 
We now introduce the local time spent at the roots of deep teeth.
We introduce for $k \in \mathbb{N}_0$, $s, t \in \mathbb{R}_{\geq 0}$, the random variable (defined on $(\Omega \times \mathbb{Z}, \mathcal{G} \otimes \mathcal{F}, \textbf{P} \otimes P_0^{\mathbb{Z}})$)
\begin{equation}
\label{eq:L-deep-def}
	L^{\mathrm{deep}}_k(t, s) \coloneqq \sum_{|x|\le k} L_{x}(t) \mathds{1}_{\{H_x \ge s\}}.
\end{equation}
Informally, $L^{\mathrm{deep}}_k(t, s)$ will be related to the time that a simple random walk spends at the bases of teeth in $\llbracket -k,k \rrbracket$ that are longer than $s$. The next large deviation-type estimate on this quantity will be instrumental in the proof of the main result, Theorem~\ref{theo:MainRandomComb}.

\begin{prop}\label{prop:KeyLTEstimate}
	For all $\delta, \alpha>0$ small enough, there exists $\beta>0$, $C_6, c_6 > 0 $ such that for $\mathbf{P}$-a.e.\ $\omega \in \Omega$ there is $N_1(\omega) \in \mathbb{N}$ such that for $N \geq N_1(\omega)$, $n \in \llbracket N^\delta, N \rrbracket$, for all $t \in (0,1]$, one has
	\begin{equation}
		\sup_{x \in \llbracket -N, N \rrbracket} P_x^{\mathbb{Z}} \left( L^{\mathrm{deep}}_N(T^{\mathrm{ex}}_{n, x}, \sqrt{t}n^{2/(\gamma + 1)}) \le t^{\frac{1}{2} - \alpha}  n^{2/(\gamma + 1)} \right) \le C_6\exp\left\{- c_6 \left( \frac{1}{t}\right)^\beta\right\}.
	\end{equation}
\end{prop}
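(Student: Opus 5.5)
The plan is to reduce the local-time event to a large-deviation estimate for a sum of many nearly independent contributions, one from each crossing of a mesoscopic block. Throughout, set $s \coloneqq \sqrt{t}\, n^{2/(\gamma+1)}$ and call $y$ \emph{deep} if $H_y \ge s$.

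\textbf{Heuristic.} The walk needs time of order $n^2$ to exit $\llbracket x-n,x+n \rrbracket$. A fraction of order $s^{-\gamma}$ of the sites is deep. So $L^{\mathrm{deep}}_N(T^{\mathrm{ex}}_{n,x},s)$ should be of order
\[
n^2 s^{-\gamma} = t^{-\gamma/2} n^{2/(\gamma+1)},
\]
which is much larger than the threshold $t^{\frac12-\alpha} n^{2/(\gamma+1)}$ when $t$ is small. The event in the statement is therefore a large deviation. We cannot control it via the local time at the starting point: $P^{\mathbb{Z}}_x(L_x(T^{\mathrm{ex}}_{n,x}) \le \lambda n)\asymp \lambda$ is only polynomial. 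We must instead use the whole occupation measure.

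\textbf{Step 1 (environment).} Introduce a mesoscopic scale $m \coloneqq t^{a} n$, with $a>0$ small, chosen so that $s \le 2^{u(m)}$. Since $2^{u(m)} \ge m^{2/(1+\gamma)}$, it suffices that $m^{2/(1+\gamma)} \ge s$, i.e.\ $a \le (1+\gamma)/4$. We also require $m \ge N^{\delta_1}$, which holds whenever $t \ge N^{-c}$. For smaller $t$, the right-hand side $C_6\exp\{-c_6 t^{-\beta}\}$ is summable in $N$, and we handle that range by a cruder bound with Borel--Cantelli.

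Partition $\llbracket x-n,x+n\rrbracket$ into blocks of length $m$. Apply Lemma~\ref{lem:GoodBoxes}, together with its bound \eqref{eqn:GoodBox2} summed over the dyadic levels between $s$ and $2^{u(m)}$. Since $\delta_1$ is arbitrary, we may take $\delta_1 \le \delta$. For $\mathbf{P}$-a.e.\ $\omega$ and all $N \ge N_1(\omega)$, every block $I$ in $\llbracket -N,N\rrbracket$ then contains $K \ge c\, m s^{-\gamma}$ deep sites. The same argument applied to the halves of a block, or to sub-blocks, shows that these deep sites are not all crowded near one end.

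\textbf{Step 2 (one crossing).} Consider a crossing of a block $I$ by the simple random walk on $\mathbb{Z}$, meaning a passage from one end to the other. Denote by $Y_I$ the local time it accumulates at deep sites of $I$. For a deep site $y$ in the middle part of $I$, the local time at $y$ during such a crossing is, up to constants, geometric with mean of order $m$. Summing over the $K$ deep sites gives $E[Y_I] \asymp mK \asymp m^2 s^{-\gamma}$. By a Paley--Zygmund or second-moment bound, using Green's function estimates for the interval, we get
\[
P(Y_I \ge c\, m^2 s^{-\gamma}) \ge p > 0,
\]
uniformly over blocks and over the starting end.

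\textbf{Step 3 (number of crossings and Chernoff).} By diffusive scaling, before exiting $\llbracket x-n,x+n\rrbracket$ the walk completes at least $J \coloneqq \varepsilon (n/m)^2$ block crossings. More precisely, $P(\#\text{crossings} < \varepsilon (n/m)^2) \le e^{-c (n/m)^2}$, which follows from the exponential lower tail $P(T^{\mathrm{ex}}_{n,x} \le \varepsilon n^2) \le e^{-c/\varepsilon}$ together with the fact that each crossing takes time at least of order $m^2$ with exponential tails. By the strong Markov property at the successive crossing times, the events $\{Y_{I_j} \ge c m^2 s^{-\gamma}\}$ dominate independent Bernoulli$(p)$ variables. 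Chernoff's bound then gives at least $pJ/2$ successes, except on an event of probability $e^{-cJ}$.

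On the good event,
\[
L^{\mathrm{deep}}_N \ge c\, J\, m^2 s^{-\gamma} = c\, n^2 s^{-\gamma} = c\, t^{-\gamma/2} n^{2/(\gamma+1)} \ge t^{\frac12-\alpha} n^{2/(\gamma+1)}
\]
for $t$ small. For $t$ bounded away from $0$ the claim holds trivially after adjusting $C_6$. The failure probability is at most $2e^{-c(n/m)^2} = 2e^{-c t^{-2a}}$, which yields the statement with $\beta = 2a$.

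\textbf{Main obstacle.} The delicate step is Step 2: making the per-crossing lower bound uniform. This requires uniform control of where deep sites sit inside each block, so that they are not all at the block boundary. I would handle it by applying Lemma~\ref{lem:GoodBoxes} to sub-blocks of length $m/4$ as well. A second point to check is the compatibility of the constraints $m \ge N^{\delta_1}$ and $s \le 2^{u(m)}$ across the full range $n \in \llbracket N^\delta, N\rrbracket$, $t \in (0,1]$, which dictates the choice of $a$ and hence of $\beta$.
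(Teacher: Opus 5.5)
Your route differs from the paper's and can be completed, but Step 3 as written gives no decay in $t$.

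\textbf{The crossing-count bound.} The bound $P(\#\text{crossings}<\varepsilon(n/m)^2)\le e^{-c(n/m)^2}$ is false. The crossings are steps of a coarse-grained simple random walk that must travel distance $R=n/m$ before $T^{\mathrm{ex}}_{n,x}$. By the central limit theorem, the probability that it does so within $\varepsilon R^2$ steps is bounded below by a positive constant depending only on $\varepsilon$. This is the same diffusive event as $\{T^{\mathrm{ex}}_{n,x}\le \varepsilon n^2\}$, whose probability you yourself quote as $e^{-c/\varepsilon}$, not $e^{-cR^2}$. So with $\varepsilon$ fixed, your failure probability is only bounded by a constant. Your justification also runs the wrong way: a \emph{lower} bound on crossing durations bounds the number of crossings from \emph{above}.

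The repair is to let the fraction shrink with $t$. Take $J=t^{b}(n/m)^2=t^{b-2a}$ with $0<b<2a$.
\begin{itemize}
\item The reflection principle for the coarse walk gives $P(\#\text{crossings}<J)\le 4e^{-ct^{-b}}$.
\item Chernoff costs $e^{-ct^{b-2a}}$.
\item On the good event, $L^{\mathrm{deep}}_N\ge c\,t^{b}n^2s^{-\gamma}=c\,t^{b-\gamma/2}n^{2/(\gamma+1)}$. This still exceeds $t^{\frac12-\alpha}n^{2/(\gamma+1)}$ for small $t$, because $b<\frac{1+\gamma}{2}-\alpha$.
\end{itemize}
The result is $\beta=\min(b,2a-b)$, not $2a$.

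\textbf{The range of very small $t$.} Your remark for $t<N^{-c}$ is not an argument. The estimate must hold for each fixed $N$ and every such $t$, and the right-hand side tends to $0$ as $t\to 0$, so summability in $N$ is beside the point. What works is the following.
\begin{itemize}
\item Fix $C$ with $C(\frac12-\alpha)>\frac{2}{\gamma+1}$. For $t<n^{-C}$, both $s$ and the threshold $t^{\frac12-\alpha}n^{2/(\gamma+1)}$ are below $1$, so the event forces $L^{\mathrm{deep}}_N=0$.
\item Lemma~\ref{lem:GoodBoxes} at level $\ell=0$ places a site with $H_y\ge 1$ in each half of $\llbracket x-n,x+n\rrbracket$. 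The walk sweeps one whole half before exiting, so it visits a deep site and the probability is zero.
\item For $t\in[n^{-C},1]$, choose $a\le 1/(2C)$ and $\delta_1\le \delta/4$. Then $m=t^a n\ge n^{1/2}$, so your blocks and sub-blocks have length at least $N^{\delta_1}$. Step 1 then applies on the whole remaining range, and $a\le(1+\gamma)/4$ holds automatically.
\end{itemize}

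\textbf{Comparison with the paper.} The paper splits at $t=n^{-\varepsilon}$.
\begin{itemize}
\item For moderate $t$, it conditions on the local-time field and applies Bernstein's inequality to $\sum_z L_z(T^{\mathrm{ex}}_n)b_z$ under the annealed measure $\mathbf{P}\otimes P^{\mathbb{Z}}_0$. It then makes the bound quenched via Markov's inequality and Borel--Cantelli.
\item For small $t$, it counts excursions through deep teeth of a coarse-grained walk at the fixed scale $\ell(n)$. This is essentially your mechanism, but it yields a bound $3e^{-n^{\varepsilon\gamma/36}}$ that does not depend on $t$. Turning that into $e^{-ct^{-\beta}}$ requires the same polynomial cutoff described above.
\end{itemize}
Your $t$-adapted block scale, combined with the deterministic goodness from Lemma~\ref{lem:GoodBoxes}, treats all $t$ simultaneously on one environment event. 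It needs no annealed-to-quenched step and gives genuine $t$-dependence. The price is the uniform per-crossing second-moment bound of Step 2, including the sub-block control that keeps deep sites away from block ends.
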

\begin{proof}
We split the proof in two cases: $t \in [n^{-\varepsilon}, 1]$, and $t < n^{-\varepsilon}$, for some $\varepsilon \in (0,1)$ which will be fixed later. 

\noindent \textbf{Case 1:} $t \in [n^{-\varepsilon}, 1]$. We will employ a martingale concentration bound using the product measure $\textbf{P} \otimes P^{\mathbb{Z}}_0$ and Markov's inequality to obtain a quenched bound. We fix $\eta\in (0, 1/2 -\alpha)$, and obtain for $x \in \llbracket -N,N \rrbracket$, $n \geq N^{\delta}$, and any $\omega \in \Omega$, the bound
\begin{equation}
\label{eq:Prop-4.6-step1}
	\begin{split}
			P_x^{\mathbb{Z}} &\left( L^{\mathrm{deep}}_N(T^{\mathrm{ex}}_{n, x}, \sqrt{t}n^{2/(\gamma + 1)}) \le t^{\frac{1}{2} - \alpha} n^{2/(\gamma + 1)} \right) \\ 
			& \le P_x^{\mathbb{Z}} \left(T^{\mathrm{ex}}_{n, x} \le t^{\eta} n^2 \right) + P_x^{\mathbb{Z}} \left( L^{\mathrm{deep}}_N(T^{\mathrm{ex}}_{n, x}, \sqrt{t}n^{2/(\gamma + 1)}) \le t^{\frac{1}{2} - \alpha} n^{2/(\gamma + 1)}, T^{\mathrm{ex}}_{n, x} \ge t^{\eta} n^2 \right)\\
			& \le Ce^{-ct^{-\eta}} + P_x^{\mathbb{Z}} \left( L^{\mathrm{deep}}_N(T^{\mathrm{ex}}_{n, x}, \sqrt{t}n^{2/(\gamma + 1)}) \le t^{\frac{1}{2} - \alpha} n^{2/(\gamma + 1)}, T^{\mathrm{ex}}_{n, x} \ge t^{\eta} n^2 \right),
	\end{split}
\end{equation}
having used a standard bound on exit times for the simple random walk on $\mathbb{Z}$ in the second step (see, e.g.,~\cite[Proposition 4.33]{Barbook}). We further decompose the second summand in the last line of~\eqref{eq:Prop-4.6-step1} as 
\begin{equation}
\label{eq:Prop-4.6-further-split}
	\begin{split}
		P_x^{\mathbb{Z}} &\left( L^{\mathrm{deep}}_N(T^{\mathrm{ex}}_{n, x}, \sqrt{t}n^{2/(\gamma + 1)}) \le t^{\frac{1}{2} - \alpha} n^{2/(\gamma + 1)}, T^{\mathrm{ex}}_{n, x} \ge t^{\eta} n^2 \right) \\ &\le P_x^{\mathbb{Z}} \left( L^{\mathrm{deep}}_N(T^{\mathrm{ex}}_{n, x}, \sqrt{t}n^{2/(\gamma + 1)}) \le t^{\frac{1}{2} - \alpha} n^{2/(\gamma + 1)}, T^{\mathrm{ex}}_{n, x} \ge t^{\eta} n^2, \sup_{y \in \llbracket x-n, x+n\rrbracket} L_y(T^{\mathrm{ex}}_{n, x}) \le n^{1+\varepsilon} \right) \\ &+ P_x^{\mathbb{Z}} \left(\sup_{y \in \llbracket x-n, x+n\rrbracket} L_y(T^{\mathrm{ex}}_{n, x}) > n^{1+\varepsilon} \right). 
	\end{split}
\end{equation}
We first control the second summand in~\eqref{eq:Prop-4.6-further-split}, and claim that
\begin{equation}
\label{eq:Claim-Prop-4.6-second-term}
	P_x^{\mathbb{Z}} \left(\sup_{y \in \llbracket x-n, x+n\rrbracket } L_y(T^{\mathrm{ex}}_{n, x}) > n^{1+\varepsilon} \right) \le e^{-cn^{\varepsilon}}.
\end{equation} 
To prove~\eqref{eq:Claim-Prop-4.6-second-term}, we first observe that $L_y(T^{\mathrm{ex}}_{n, x}) \preceq L_x(T^{\mathrm{ex}}_{n, x})$ for all $y \in \llbracket x-n,x+n \rrbracket$ (where the laws of both random variables are taken under $P_x^{\mathbb{Z}}$). 
By a standard gambler's ruin calculation, one has that 
\begin{equation}
\label{eq:Local-time-is-geom}
	L_x(T^{\mathrm{ex}}_{n, x}) \sim \mathrm{Geom}(1/n)
\end{equation}
(with $\mathrm{Geom}(p)$ denoting a geometric distribution on $\mathbb{N}$ with probability mass function $p(1-p)^{k-1}$, $k \in \mathbb{N}$),
and therefore (with $c' > 0$)
\begin{equation}
	P_x^{\mathbb{Z}}\left(L_x(T^{\mathrm{ex}}_{n, x}) > n^{1+\varepsilon}\right) \le e^{- \frac{c n^{1+\varepsilon}}{n}}. 
\end{equation}
A union bound over the $2n+1$ sites $y \in \llbracket x-n, x+n \rrbracket$ gives \eqref{eq:Claim-Prop-4.6-second-term}.

We proceed with the first summand in~\eqref{eq:Prop-4.6-further-split}.
We introduce an abbreviation for the product measure $\mathsf{P} = \textbf{P} \otimes P^{\mathbb{Z}}_0 $, and denote the corresponding expectation by $\mathsf{E}$. Furthermore, we define $b_z = \mathds{1}_{\{H_z \ge \sqrt{t} n^{2/(\gamma + 1)} \}}$, and note under $\mathsf{P}$, $\{b_z \}_{z \in \llbracket -n,n\rrbracket}$ are Bernoulli random variables with success parameter $\bar{b} \in [Ct^{-\gamma/2} n^{-2\gamma/(\gamma + 1)}/2, 2Ct^{-\gamma/2} n^{-2\gamma/(\gamma + 1)}]$, independent of $\{L_z\}_{z \in \llbracket -n,n\rrbracket}$.
We rewrite~\eqref{eq:L-deep-def} as
\begin{equation}
	\label{eq:L-deep-special-case}
    L^{\mathrm{deep}}_N(T^{\mathrm{ex}}_{n}, \sqrt{t}n^{2/(\gamma + 1)}) = \sum_{z \in \llbracket n, n\rrbracket} L_z(T^{\mathrm{ex}}_{n}) b_z.
\end{equation}
We define the event (depending only on $(X_n)_{n \in \mathbb{N}_0}$)
\begin{equation}
\label{eq:A-n-t-event}
    \mathcal{A}(n,t) \coloneqq \left\{T^{\mathrm{ex}}_{n} \ge t^{\eta} n^2, \sup_{y \in \llbracket -n, n\rrbracket} L_y(T^{\mathrm{ex}}_{n}) \le n^{1+\varepsilon}\right\}.
\end{equation}
The conditional expectation of~\eqref{eq:L-deep-special-case} given the occupation-time field of the walk up to $T^{\mathrm{ex}}_n$ is
\begin{equation}
\label{eq:For-Bernstein-1}
\begin{split}
	\mathsf{E}\Big[\sum_{z \in \llbracket -n, n\rrbracket} L_z(T^{\mathrm{ex}}_{n}) b_z \, & \Big|\, \{L_z(T^{\mathrm{ex}}_{n})\}_{z \in \llbracket -n, n \rrbracket} \Big] \ge \sum_{z \in \llbracket -n, n\rrbracket} L_z(T^{\mathrm{ex}}_{n}) \frac{C}{2}t^{-\gamma/2} n^{-\frac{2\gamma}{\gamma + 1}}  \\
    & \stackrel{\eqref{eq:A-n-t-event}}{\ge} \frac{C}{2}n^{2}t^{\eta} n^{-\frac{2\gamma}{\gamma + 1}} = \frac{C}{2}n^{\frac{2}{\gamma + 1}}t^{\eta}, \ \mathsf{P}\text{-a.s.~on $\mathcal{A}(n,t)$.}
    \end{split}
\end{equation}
We also have 
\begin{equation}
\label{eq:For-Bernstein-2}
\begin{split}
	\mathsf{E}\Big[\sum_{z \in \llbracket n, n\rrbracket} L_z(T^{\mathrm{ex}}_{n})^2 b_z^2 \, & \Big|\, \{L_z(T^{\mathrm{ex}}_{n})\}_{z \in \llbracket -n, n \rrbracket}\Big] \le 2C\sum_{z \in \llbracket -n, n\rrbracket} L_z(T^{\mathrm{ex}}_{n})^2 t^{-\gamma/2} n^{-\frac{2\gamma}{\gamma + 1}} \\
    & \stackrel{\eqref{eq:A-n-t-event}, \, n^{-\varepsilon} \leq t}{\le} 2Cn^{3+2\varepsilon}t^{-\gamma/2} n^{-\frac{2\gamma}{\gamma + 1}}= 2Cn^{3-\frac{2\gamma}{\gamma + 1}+3\varepsilon}, \ \mathsf{P}\text{-a.s.~on $\mathcal{A}(n,t)$.}
    \end{split}
\end{equation}
We further observe that for $\gamma \in (0,1)$
\begin{equation}
\label{eq:gamma-observation}
	3-\frac{2\gamma}{\gamma + 1} < 4 - \frac{4\gamma}{\gamma + 1} = \frac{4}{\gamma+1}.
\end{equation}
We are now in a position to apply the Bernstein inequality (see, e.g.~\cite[Theorem~2.9.1]{HighDim}) to the sum of independent centered increments 
\begin{equation}
	S_n \coloneqq \sum_{z \in \llbracket -n, n\rrbracket} L_z(T^{\mathrm{ex}}_{n}) \left( b_z - \mathsf{E}[b_z] \right), \qquad n \in \mathbb{N}.
\end{equation}
Using~\eqref{eq:For-Bernstein-1} and~\eqref{eq:For-Bernstein-2}, 
we can choose $\eta > 0$ appropriately such that
\begin{equation}
\begin{split}
	\mathsf{P}\Big( |S_n| > \frac{1}{2} \sum_{z \in \llbracket -n, n\rrbracket} & L_z(T^{\mathrm{ex}}_{n}) \mathsf{E}[b_z]  \Big\vert \{L_z(T^{\mathrm{ex}}_{n})\}_{z \in \llbracket -n, n\rrbracket} \Big) \\
    & \le \exp\left( - c\frac{n^{\frac{4}{\gamma+1} - 2\varepsilon}}{n^{3 - \frac{2\gamma}{\gamma+1} + 3\varepsilon} + n^{3 - \frac{2\gamma}{\gamma+1}+ \varepsilon}}\right), \ \mathsf{P}\text{-a.s.~on $\mathcal{A}(n,t)$.}
\end{split}
\end{equation}
By~\eqref{eq:gamma-observation}, we can choose $\varepsilon \in (0,1)$ small enough in the previous display so that for $\varepsilon_1>0$,
\begin{equation}
\label{eq:After-Bernstein}
\begin{split}
	\mathsf{P}\Big( |S_n| > \frac{1}{2} \sum_{z \in \llbracket -n, n\rrbracket} L_z(T^{\mathrm{ex}}_{n}) \mathsf{E}[b_z] & \mid \{L_z(T^{\mathrm{ex}}_{n})\}_{z \in \llbracket -n, n\rrbracket} \Big)  \\
    & \le \exp\left( - cn^{\varepsilon_1}\right), \ \mathsf{P}\text{-a.s.~on $\mathcal{A}(n,t)$.}
    \end{split}
\end{equation}
Taking the expectation with respect to $\mathsf{E}$ in~\eqref{eq:After-Bernstein}, and observing that $t^{\eta} > t^{\frac{1}{2} - \alpha}$, we obtain for $t \in [n^\varepsilon, 1]$
\begin{equation}
	\mathsf{P}\left( L^{\mathrm{deep}}_N(T^{\mathrm{ex}}_{n}, \sqrt{t}n^{2/(\gamma + 1)}) \le t^{\frac{1}{2} - \alpha} n^{2/(\gamma + 1)},\mathcal{A}(n,t) \right) \le \exp\left( - cn^{\varepsilon_1}\right).
\end{equation}
Finally, by Markov inequality we obtain (possibly adjusting the constant $c>0$) that 
\begin{equation}
	\textbf{P}\left( P_0^{\mathbb{Z}} \left( L^{\mathrm{deep}}_N(T^{\mathrm{ex}}_{n}, \sqrt{t}n^{2/(\gamma + 1)}) \le t^{\frac{1}{2} - \alpha} n^{2/(\gamma + 1)},\mathcal{A}(n,t) \right) \le \exp\left( - cn^{\varepsilon_1/2}\right)  \right) \geq 1 - \exp(-cn^{\varepsilon_1/2}).
\end{equation}
 Taking a union bound over all intervals $\llbracket x-n, x+n \rrbracket$ inside $\llbracket -N, N \rrbracket$ and all $n \ge N^{\delta}$ (the number of those being polynomially large) one obtains the required result by applying the first Borel-Cantelli lemma.

\vspace{8pt}

\noindent \textbf{Case 2: }$t < n^{-\varepsilon}$.
We use the choice of the parameter $\varepsilon \in (0, 1)$ that was fixed above~\eqref{eq:After-Bernstein}. For our bounds, $t = n^{-\varepsilon}$ will be shown to be the worst case scenario. We obtain that in that case the probability of interest is bounded  above by $\exp(-cn^{c \varepsilon})$ which can be written as $\exp(-ct^{-\beta})$ for all $t < n^{-\varepsilon}$ simultaneously. Hence, we can restrict our proof to the case $t = n^{-\varepsilon}$.

\noindent \textbf{First estimate: Empty intervals.} We define the (random) set
\begin{equation}
\label{eq:Deep-n}
    \mathrm{Deep}(n) \coloneqq \left\{z \in \llbracket -N, N \rrbracket \colon H_z \ge n^{\frac{2}{\gamma+1} - \frac{\varepsilon}{6}}\right\}.
\end{equation}
We observe that the typical distance (in $\llbracket -N, N\rrbracket$) between teeth of height at least $n^{\frac{2}{\gamma+1} - \frac{\varepsilon}{6}}$ is of the order $n^{\frac{2\gamma}{\gamma+1} - \frac{\varepsilon\gamma}{6}}$. For any $x \in \mathbb{Z}$ with $\llbracket x -n,x+ n \rrbracket \subseteq \llbracket  -N, N \rrbracket$, we decompose the interval $\llbracket x-n, x+n\rrbracket$ into sub-intervals of length $\ell(n) = \lfloor \frac{1}{3}n^{\frac{2\gamma}{\gamma+1} - \frac{\varepsilon\gamma}{12}}\rfloor$. For $k \in \{- \lfloor n/\ell(n)\rfloor, \dots, \lfloor n/\ell(n)\rfloor\}$, and $\llbracket x-n,x+n \rrbracket \subseteq \llbracket -N,N\rrbracket$ we set
\begin{equation}
    I_{k,x} \coloneqq \llbracket x+(k-1) \ell(n),x+ k\ell(n) -1 \rrbracket.
\end{equation}
Then, for any $n \in \mathbb{N}$
\begin{equation}
	\mathbf{P}\left(I_{1,0} \cap \mathrm{Deep}(n) = \varnothing\right) \le \left(1 - Cn^{-\frac{2\gamma}{\gamma+1} + \frac{\varepsilon\gamma}{6}}\right)^{\lfloor \frac{1}{3}n^{\frac{2\gamma}{\gamma+1} - \frac{\varepsilon\gamma}{12}}\rfloor} \le \exp\left( - \frac{C}{6}  n^{\frac{\varepsilon\gamma}{12}}\right).
\end{equation}
Moreover, for any fixed $n \in \llbracket N^{\delta},N\rrbracket $, the random variables $( \mathds{1}_{I_{k,x} \cap \mathrm{Deep}(n) \neq \varnothing} )_{k,x}$ for $k \in \{- \lfloor n/\ell(n)\rfloor, \dots, \lfloor n/\ell(n)\rfloor\}$, and $\llbracket x-n,x+n \rrbracket \subseteq \llbracket -N,N\rrbracket$ are equal in distribution. We define the events
\begin{equation}
\label{eq:O-n-def}
    O_n(x) \coloneqq \bigcap_{k = - \lfloor n/\ell(n)\rfloor}^{ \lfloor n/\ell(n)\rfloor} \{I_{k,x} \cap \mathrm{Deep}(n) \neq \varnothing \}, \qquad n \in \llbracket N^{\delta}, N\rrbracket, \llbracket x-n,x+n\rrbracket \subseteq \llbracket-N,N\rrbracket.
\end{equation}
By taking a union bound over all $N^{\delta} \le n \le N$ and all intervals $\llbracket x-n, x+n\rrbracket \subseteq \llbracket -N, N\rrbracket$ we obtain that each interval $I_{k,x}, k \in \{- \lfloor n/\ell(n)\rfloor, \dots, \lfloor n/\ell(n)\rfloor\}$, contains at least one element of $\mathrm{Deep}(n)$ for all $N^\delta \leq n \leq N$ with probability
\begin{equation}
\label{eq:O-n-bound}
    \mathbf{P}\Big( \bigcap_{n = \lfloor N^{\delta}\rfloor + 1} ^{N} \bigcap_{\llbracket x-n,x+n \rrbracket \subseteq \llbracket -N, N\rrbracket} O_n(x) \Big) \ge 1 - \exp\left( - \frac{C}{12} N^{\frac{\delta\varepsilon\gamma}{12}}\right).
\end{equation}
For $t \in \mathbb{N}$ and $A\subseteq \mathbb{Z}$, let $L(t, A) \coloneqq \sum_{x \in A} L_x(t)$ (recall~\eqref{eq:Local-time-field-SRW}). We observe that for any $t \le n^{-\varepsilon}$, and therefore for any $x \in \mathrm{Deep}(n)$ we have $H_x > \sqrt{t}n^{2/(\gamma+1)}$. Hence, we obtain
\begin{equation}
    P_x^{\mathbb{Z}}\left( L^{\mathrm{deep}}_N(T^{\mathrm{ex}}_{n, x}, \sqrt{t}n^{2/(\gamma + 1)}) \le t^{\frac{1}{2} - \alpha} n^{2/(\gamma + 1)}\right)\le  P_x^{\mathbb{Z}}\left( L(T^{\mathrm{ex}}_{n, x}, \mathrm{Deep}(n)) \le t^{\frac{1}{2} - \alpha} n^{2/(\gamma + 1)}\right).
\end{equation}
We now provide an upper bound on the right-hand side of the previous display, assuming that $O_n(x)$ holds for all $\llbracket x-n,x+n\rrbracket \subseteq \llbracket -N,N\rrbracket$, $N^\delta \leq n \leq N$. Since the bound will not depend on $x$, we now assume $x = 0$ without loss of generality. On the event $O_n \coloneqq O_n(0)$, we define the set $\mathcal{D} = \{v_k\}_{k \in \{- \lfloor n/\ell(n)\rfloor, \dots, \lfloor n/\ell(n)\rfloor\}}$ where $v_k$ is the leftmost point in $I_k \cap \mathrm{Deep}(n)$ (any deterministic choice would suffice here). 

\noindent \textbf{Second estimate: Distinct visits to traps.} We consider the set
\begin{equation}
     F \coloneqq 3\ell(n) \mathbb{Z} = \left\{l 3\lfloor \frac{1}{3} n^{\frac{2\gamma}{\gamma+1} - \frac{\gamma \varepsilon}{12}}\rfloor \, \colon \, l \in \mathbb{Z}\right\} (\subseteq \mathbb{Z})
\end{equation}
and define a `coarse-grained' random walk $(Z_k)_{k \in \mathbb{Z}}$ as by setting
\begin{equation}
    Z_k = X_{W_k},
\end{equation}
where $(W_k)_{k \in \mathbb{N}_0}$ are the successive hitting times of the set $F$ by the random walk $(X_n)_{n \in \mathbb{N}_0}$, i.e.
\begin{equation}
\begin{cases}
    W_0 \coloneqq 0, \\
    W_{k+1} \coloneqq  \inf\{j > W_k \colon X_j \in F\}, \qquad k \in \mathbb{N}.
\end{cases}
\end{equation}
By the strong Markov property, under $P_0^{\mathbb{Z}}$, $(Z_k)_{k \in \mathbb{N}_0}$ is a nearest-neighbor symmetric simple random walk on $F$ and therefore the rescaled walk $(\frac{Z_k}{3\ell(n)})_{k \in \mathbb{N}_0}$ has again the law $P^{\mathbb{Z}}_0$. Setting $k(n) = \lfloor n^{1 - \frac{2\gamma}{\gamma+1} + \frac{\gamma \varepsilon}{12}}\rfloor$, we therefore see that
\begin{equation}
\label{eq:Escape-from-F-bound}
\begin{split}
    P_0^{\mathbb{Z}}\left(\inf\{k \geq 0 \colon Z_k \notin F \cap \llbracket -n,n \rrbracket \}  \le n^{2 (1 - \frac{2\gamma}{\gamma+1} + \frac{\gamma \varepsilon}{18})}\right) & \le  P_0^{\mathbb{Z}}(T^{\mathrm{ex}}_{k(n)/2} \leq n^{2 (1 - \frac{2\gamma}{\gamma+1} + \frac{\gamma \varepsilon}{18})} ) \\
    &\le \exp(-n^{\varepsilon\gamma/36}).
    \end{split}
\end{equation}
We define the subset
\begin{equation}
    \widetilde{\mathcal{D}} \coloneqq \{z \in \mathcal{D} \colon |x-z| \geq \ell(n) \text{ for all }x \in F \} (\subseteq \mathcal{D}),
\end{equation}
which corresponds to all points in $\mathcal{D}$ with a distance greater than or equal to $\ell(n)$ to all points in $F$ (see Figure~\ref{fig:FigureInterval} for a visualization). Recall the definition of the event $O_n$ in~\eqref{eq:O-n-def}. If $\omega \in O_n$, then between any two consecutive steps of $Z$, the walk $X$ must enter $\widetilde{\mathcal{D}}$ at some point $x \in \widetilde{\mathcal{D}}$ and leave $\widetilde{I}_x \coloneqq \llbracket x-\ell(n)/2, x + \ell(n)/2\rrbracket$ after. 
More formally, we set $\mathcal{I} \coloneqq \bigcup_{x \in \widetilde{\mathcal{D}}} \widetilde{I}_x$ and iteratively define the following random times:

\begin{equation}
\begin{cases}
    & \rho_0 \coloneqq 0, \\
    & \sigma_0 \coloneqq \inf\{k \ge 0 \colon X_k \not \in \mathcal{I}\}, \\
    & \rho_1 \coloneqq \inf\{k \ge 0 \colon X_k  \in  \widetilde{\mathcal{D}}\}, \\
    & \sigma_{j+1} \coloneqq \sigma_0 \circ \vartheta_{\rho_j} + \rho_j, \qquad j \geq 1, \\
    & \rho_{j+1} \coloneqq \rho_1 \circ \vartheta_{\sigma_j} + \sigma_j, \qquad j \geq 1, \\
\end{cases}
\end{equation}

(again using the convention $\inf \varnothing = \infty)$. On the event $O_n$ (and using recurrence of the simple random walk on $\mathbb{Z}$) we see that all random times defined above are $P_0^{\mathbb{Z}}$-a.s.~finite. 
We now define the random variable $\widetilde{L}(T^{\mathrm{ex}}_{n}, \widetilde{\mathcal{D}}) \coloneqq \sup\{k \ge 0 \colon \sigma_k \le  T^{\mathrm{ex}}_{n}\}$, which is the number of excursions of $X$ from $\widetilde{\mathcal{D}}$ to $\mathcal{I}^c$ before $T_n^{\mathrm{ex}}$. 
The observations above imply that 
\begin{equation}
\begin{split}
    	P_0^{\mathbb{Z}}\bigg(\widetilde{L}(T^{\mathrm{ex}}_{n}, \widetilde{\mathcal{D}}) &\le n^{2 (1 - \frac{2\gamma}{\gamma+1} + \frac{\gamma \varepsilon}{18})}\bigg)  \mathds{1}_{O_n}
    \\ &\le  P_0^{\mathbb{Z}}\left(\inf\{k \geq 0 \colon Z_k \notin F \cap \llbracket -n,n \rrbracket \}  \le n^{2 (1 - \frac{2\gamma}{\gamma+1} + \frac{\gamma \varepsilon}{18})}\right) \mathds{1}_{O_n}\\
    & \stackrel{\eqref{eq:Escape-from-F-bound}}{\le} 2 \exp(-n^{\varepsilon\gamma/36}).
    \end{split}
\end{equation}
\begin{figure}[H]
    \centering
    \includegraphics[width=0.5\linewidth]{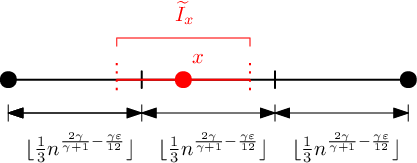}
\caption{Visualization of a the base point of a `deep' tooth in the middle interval. Upon hitting the red point within an excursion between the two black points, the random walk exits the red interval around it.}
\label{fig:FigureInterval}
\end{figure}
\noindent \textbf{Third estimate: Local times between excursions.}
 
 Consider a simple symmetric random walk $(X_n)_{n \geq 0}$ under $P_0^{\mathbb{Z}}$ and recall the notation~\eqref{eq:Local-time-field-SRW} for its local time field. For $\nu > 0$, we have $ L_0(T^{\mathrm{ex}}_{\lfloor n^\nu \rfloor}) \sim \mathrm{Geom}( \lfloor n^\nu \rfloor)$ by~\eqref{eq:Local-time-is-geom}. It follows that for any $n \geq c(\nu)$, 
 \begin{equation}
     P_0^{\mathbb{Z}}\big( L_0(T^{\mathrm{ex}}_{\lfloor n^\nu \rfloor}) \geq n^{\nu - \frac{\gamma\varepsilon}{36}} \big) \geq \frac{1}{2}.
 \end{equation}

 On the event $\widetilde{O}_n \coloneqq \{\widetilde{L}(T^{\mathrm{ex}}_{n}, \widetilde{\mathcal{D}}) > n^{2 (1 - \frac{2\gamma}{\gamma+1} + \frac{\gamma \varepsilon}{18})}\}$ there are $M_n \coloneqq \lfloor n^{2 (1 - \frac{2\gamma}{\gamma+1} + \frac{\gamma \varepsilon}{18})}\rfloor$ times $\{\rho_j\}_{j \ge 1}$ in which $X$ is in some $x \in \widetilde{\mathcal{D}}\subseteq \mathrm{Deep}(n)$ and then exits the interval $\widetilde{I}_x$ of width $\lfloor
 \frac{1}{3}n^{\frac{2\gamma}{\gamma+1}-\frac{\gamma\varepsilon}{12}}\rfloor$ around $x$. 

 \begin{equation}
     A_0 \coloneqq \left\{ L_{X_0}(T^{\mathrm{ex}}_{\lfloor \ell(n)/2 \rfloor, X_0}) \ge \frac{1}{3}n^{\frac{2\gamma}{\gamma+1} - \frac{\varepsilon\gamma}{12}-\frac{\gamma \varepsilon}{36}}\right\},
 \end{equation}
 and for $j \in \mathbb{N}$, we define the events $(A_j)_{j \in \mathbb{N}}$ by setting $\mathds{1}_{A_j} \coloneqq \mathds{1}_{A_0} \circ \vartheta_{\rho_j}$. We now set $\nu = \frac{2\gamma}{\gamma+1} - \frac{\varepsilon\gamma}{12}$. By definition, on $\widetilde{O}_n \cap O_n$, one has
 \begin{equation}
      \frac{1}{3}n^{\frac{2\gamma}{\gamma+1} - \frac{\varepsilon\gamma}{12}-\frac{\gamma \varepsilon}{36}} \sum_{j = 1}^{M_n} \mathds{1}_{A_j} \le L(T^{\mathrm{ex}}_{n, x}, \mathrm{Deep}(n)) .
 \end{equation}
We observe that on the event that 
\begin{equation}
    \sum_{j = 1}^{M_n} \mathds{1}_{A_j} \ge \frac{1}{4} M_n,
\end{equation}
we obtain that the total local time spent in teeth deeper than $n^{\frac{2\gamma}{\gamma+1} - \frac{\varepsilon}{6}}$ is bounded from below (up to a multiplicative constant) by
\begin{equation}
	n^{2 (1 - \frac{2\gamma}{\gamma+1} + \frac{\varepsilon\gamma}{18}) + \frac{2\gamma}{\gamma+1} - \frac{\gamma \varepsilon}{36} - \frac{\gamma \varepsilon}{12}} \ge n^{2 - \frac{2 \gamma}{\gamma+1}} = n^{ \frac{2}{\gamma+1}} \ge t^{\frac{1}{2} - \alpha} n^{ \frac{2}{\gamma+1}},
\end{equation}
where in the last inequality we used that $t^{\frac{1}{2} - \alpha} \le 1$.
 
Using the strong Markov property at the times $\rho_j$, we obtain
 \begin{equation}
     \begin{split} P_0^{\mathbb{Z}}\Bigg(&L^{\mathrm{deep}}_N(T^{\mathrm{ex}}_{n}, \sqrt{t}n^{2/(\gamma + 1)}) \le t^{\frac{1}{2} - \alpha} n^{2/(\gamma + 1)}\Bigg) \mathds{1}_{O_n}\\ &\le  P_0^{\mathbb{Z}}\left(L(T^{\mathrm{ex}}_{n, x}, \mathrm{Deep}(n)) \le t^{\frac{1}{2} - \alpha} n^{2/(\gamma + 1)}, \widetilde{O}_n\right) \mathds{1}_{O_n} + P_0^{\mathbb{Z}}\left(\widetilde{O}_n^c\right)\mathds{1}_{O_n}\\
         &\le P_0^{\mathbb{Z}}\left( \sum_{j = 1}^{M_n} \mathds{1}_{A_j} \le \frac{1}{4}M_n\right) +2 \exp(-n^{\varepsilon\gamma/36})\\
         & \le \mathbf{Q}\left(\mathrm{Binom}\left(M_n, \frac{1}{2}\right) \le \frac{1}{4}M_n\right) +2 \exp(-n^{\varepsilon\gamma/36}) \\
         &\le 3\exp(-n^{\varepsilon\gamma/36}).
     \end{split}
 \end{equation}

The proof is concluded as $O_n(x)$ holds almost surely for all $N \ge N_1(\omega)$ and $n \in \llbracket  N^\delta , N \rrbracket$ and $\llbracket x-n, x+n \rrbracket \subseteq \llbracket -N, N \rrbracket$.
\end{proof}

\subsection{Exit time estimates}\label{sect:Exit}

The key result of this Subsection is an exponential bound on the probability of a fast exit from a disk for the random walk on a typical realization of $\mathrm{Comb}(\mathbb{Z},H^\omega)$. We recall the notation $\tau_{x_1,N} = \tau_{(D^{x_1}_N)^c}$ for $x_1 \in \mathbb{Z}$, $N \in \mathbb{R}_{\geq 0}$.

\begin{lem}\label{lemma:ExitCombExp}
	For all $\delta>0$ small enough, there exist $\widetilde{\beta} >0$, $C_7, c_7 > 0 $ such that for $\mathbf{P}$-a.e.\ $\omega \in \Omega$, $N_1(\omega)$ as in the statement of Proposition~\ref{prop:KeyLTEstimate}, for $N \geq N_1(\omega)$, for all $n \in \llbracket N^\delta, N \rrbracket$, for all $t \in (0,1]$, for any $x = (x_1,\ell) \in D_N$, one has
	\begin{equation}
    \label{eq:Main-exit-bound}
	 P_x^\omega \left( \tau_{x_1,n} < t n^{4/(1+\gamma)} \right) \le C_7\exp\left\{- c_7 \left( \frac{1}{t} \right)^{\widetilde{\beta}} \right\}.
	\end{equation}
\end{lem}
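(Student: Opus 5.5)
The strategy is to reduce the exit time of the comb walk to the horizontal projection, which is a time change of a simple random walk on $\mathbb{Z}$, and then use Proposition~\ref{prop:KeyLTEstimate} to show that the time spent inside deep teeth is large.

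\textbf{Reduction to the backbone.} If $x=(x_1,\ell)$ with $\ell>0$, the walk must first return to $(x_1,0)$ before it can leave $D^{x_1}_n$. This only delays the exit, so it suffices to treat starting points $(x_1,0)$, replacing $n$ by $n-1$ if needed. Let $(Y_k)_{k\ge0}$ be the sequence of distinct horizontal positions visited successively on the backbone. This is a simple random walk on $\mathbb{Z}$ under $P^{\mathbb{Z}}_{x_1}$. Each visit of $Y$ to a site $z$ corresponds to a stay at $(z,0)$. From there the comb walk makes a geometric number of vertical excursions into the tooth at $z$ before taking its next horizontal step. Given $Y$, the excursion durations are independent and depend only on $H_z$. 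The exit time $\tau_{x_1,n}$ is then at least the total duration of vertical excursions made during the first $T^{\mathrm{ex}}_{n,x_1}$ steps of $Y$.

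\textbf{Lower bound on the time spent in teeth.} Set $s=\sqrt{t}\,n^{2/(\gamma+1)}$. By Proposition~\ref{prop:KeyLTEstimate}, outside an event of probability at most $C_6\exp(-c_6t^{-\beta})$, we have $L^{\mathrm{deep}}_N(T^{\mathrm{ex}}_{n,x_1},s)\ge t^{1/2-\alpha}n^{2/(\gamma+1)}$. In other words, $Y$ makes at least $m:=t^{1/2-\alpha}n^{2/(\gamma+1)}$ visits to bases of teeth of height at least $s$.

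At each such visit, with probability at least $1/3$ the comb walk steps up into the tooth. By a gambler's ruin estimate, an excursion started from $(z,1)$ reaches height $s/2$ with probability of order $1/s$. Once there, it takes time of order $s^2$ to return to $0$ with probability bounded below. So each visit independently produces an excursion of duration at least $cs^2$ with probability at least $c'/s$.

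Conditionally on $Y$, the number of such long excursions therefore dominates $\mathrm{Binom}(m,c'/s)$, whose mean is
\[
\frac{c'm}{s}=c'\,t^{-\alpha}.
\]
A Chernoff bound shows that at least one long excursion occurs, with probability at least $1-\exp(-c\,t^{-\alpha})$. On that event,
\[
\tau_{x_1,n}\ \ge\ c\,s^2 = c\,t\,n^{4/(1+\gamma)}.
\]

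\textbf{Adjusting constants.} This bound is weaker than the target only by the constant $c$. To remove it, I apply the argument with $t$ replaced by $t/c$: the target event $\{\tau_{x_1,n}<tn^{4/(1+\gamma)}\}$ is then excluded, at the cost of an error
\[
C_6\exp\bigl(-c_6 (t/c)^{-\beta}\bigr)+\exp\bigl(-c\,(t/c)^{-\alpha}\bigr).
\]
For $t\ge c$ the bound is trivial after enlarging $C_7$. Taking $\widetilde\beta=\min(\alpha,\beta)$ gives~\eqref{eq:Main-exit-bound}.

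\textbf{Main obstacle.} The step needing most care is the uniformity. Proposition~\ref{prop:KeyLTEstimate} is quenched and holds for all $x_1\in\llbracket -N,N\rrbracket$ and $n\in\llbracket N^\delta,N\rrbracket$ once $N\ge N_1(\omega)$. The excursion estimates are uniform over teeth of height at least $s$, so the combined bound is uniform as required. One must also check that the projection $Y$ is exactly a simple random walk independent of the excursion lengths. This holds because the backbone transition from $(z,0)$ has the same probability to each neighbour, regardless of the tooth.
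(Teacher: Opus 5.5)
Your proposal is correct and follows essentially the same route as the paper. You couple the horizontal projection with $P^{\mathbb{Z}}_{x_1}$, use Proposition~\ref{prop:KeyLTEstimate} to obtain at least $t^{1/2-\alpha}n^{2/(\gamma+1)}$ visits to teeth of height at least $\sqrt{t}\,n^{2/(\gamma+1)}$, and let each visit independently produce a stay of order $t\,n^{4/(1+\gamma)}$ with probability at least $c\,t^{-1/2}n^{-2/(\gamma+1)}$, so that no such stay occurs with probability at most $e^{-ct^{-\alpha}}$. The differences are cosmetic: the paper absorbs your constant into the Bernoulli success probability instead of rescaling $t$, and your reduction from $(x_1,\ell)$ to $(x_1,0)$ and your choice $\widetilde\beta=\min(\alpha,\beta)$ make explicit what the paper leaves implicit.
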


\begin{proof} 
    For a fixed $\omega\in \Omega$, $n \in \mathbb{N}$, and $t \in (0,1]$ we have the stochastic domination, 
    \begin{equation}
    \label{eq:Stochastic-domination}
          tn^{4/(1+\gamma)} \sum_{k = 1}^{ L_N^{\mathrm{deep}}(T_{n,x}^{\mathrm{ex}}, \sqrt{t}n^{2/(\gamma+2)})} \xi_{k} \text{ (under $P^{\mathbb{Z}}_x \otimes \textbf{Q}$) } \preceq \tau_{x_1,n} \text{ (under $P^\omega_x$) }
    \end{equation}
    where $(\xi_k)_{k \in \mathbb{N}}$ is a sequence of Bernoulli random variables (defined on some auxiliary space $(S,\mathcal{S},\textbf{Q})$) with success parameter $\frac{c}{\sqrt{t} n^{2/(\gamma+1)}}$, for some $c \in (0,1)$, as we now explain. We construct an explicit coupling: Each time the random walk $(X_n)_{n \in \mathbb{N}_0}$ on $\mathrm{Comb}(\mathbb{Z},H^\omega)$ (under $P_x^\omega$) changes its horizontal position to a point $z \in \llbracket x-n,x+n\rrbracket$ with $H_z^\omega \geq \sqrt{t}n^{2/(\gamma+1)}$, we perform an `experiment' that succeeds when the walk remains in the same horizontal position for the following $\lfloor t n^{4/(1+\gamma)}\rfloor + 1$ time steps. The latter occurs with a probability at least $ct^{-1/2}n^{-2/(1+\gamma)}$ (for some $c \in (0,1)$) by a gambler's ruin calculation and a standard diffusive bound for a simple random walk on $\mathbb{Z}$, and independently of previous experiments. Since the horizontal steps of $(X_n)_{n \in \mathbb{N}_0}$ under $P_x^\omega$ have the law $P_x^{\mathbb{Z}}$ (by the strong Markov property),~\eqref{eq:Stochastic-domination} follows. Therefore, we obtain, for a fixed $\alpha \in (0,1/2)$,
	\begin{equation}
    \label{eq:After-the-domination-argument}
		\begin{split}
			P_x^\omega \left( \tau_{x_1,n} < t n^{4/(1+\gamma)} \right)
			& \stackrel{\eqref{eq:Stochastic-domination}}{\le} P_x^{\mathbb{Z}} \left( L^{\mathrm{deep}}_N(T^{\mathrm{ex}}_{n, x}, \sqrt{t}n^{2/(\gamma + 1)}) \le t^{\frac{1}{2}- \alpha}  n^{2/(\gamma + 1)} \right) \\
            &+ \textbf{Q}\left(\mathrm{Binom}\left( t^{1/2 - \alpha} n^{2/(\gamma + 1)}, \frac{c}{t^{1/2} n^{2/(\gamma + 1)}}\right) = 0 \right). 
		\end{split}
	\end{equation}
The first quantity on the right-hand side of~\eqref{eq:After-the-domination-argument} is bounded by $C_6 \exp\left(- c_6 \left( \frac{1}{t^{1/2-\alpha}} \right)^\beta\right)$ by Proposition~\ref{prop:KeyLTEstimate}, for all $n \geq N_1(\omega)$, $n \in \llbracket N^\delta,N\rrbracket$ on a set of $\textbf{P}$-probability one.
For the second term on the right-hand side of~\eqref{eq:After-the-domination-argument} one obtains (using that $1+x\leq e^x$ for any $x \in \mathbb{R}$),
	\begin{equation}
    \begin{split}
		\textbf{Q}\left( \mathrm{Binom}\left( t^{1/2 - \alpha} n^{2/(\gamma + 1)}, \frac{c}{t^{1/2} n^{2/(\gamma + 1)}}\right) = 0 \right) & \le \left(1  - \frac{c}{t^{1/2} n^{2/(\gamma + 1)}}\right)^{\lfloor t^{1/2 - \alpha} n^{2/(\gamma + 1)}\rfloor}
        \\ &\le C' e^{-c't^{-\alpha}}.
    \end{split}
	\end{equation}
Combining the two bounds and choosing $\widetilde{\beta} \coloneqq \beta(1/2-\alpha) \vee \alpha $, we obtain~\eqref{eq:Main-exit-bound}.
\end{proof}

\subsection{Heat-kernel estimates}\label{sect:HKRandom}

In this Subsection, we derive three heat-kernel estimates that will be instrumental for the application of a second moment method, for some $\varepsilon > 0$ to be chosen later:
\begin{enumerate}
	\item a lower bound for the heat kernel inside $D_{N}$.
	\item an upper bound  for the heat kernel inside $D^x_{n} \subseteq D_{N}$ for all $x \in \llbracket -N, N \rrbracket$ and all $n \ge N^{\varepsilon}$.
	\item an upper bound  for the heat kernel outside $D^x_{n} \subseteq D_{N}$ for all $x \in \llbracket -N, N \rrbracket$ and all $n \ge N^{\varepsilon}$.
\end{enumerate}
These heat kernel bounds will be valid for $\textbf{P}$-a.e.~environment $\omega \in \Omega$, for large enough $N$. The first lemma gives a lower bound on the on-diagonal heat kernel.

\begin{lem}\label{lem:LowerBoundDiagonalHK}
	For $\delta_2 > 0$ small enough, for all $\widetilde{b} >1$ large enough, there exists a constant $c_8 > 0$ such that for $\mathbf{P}$-a.e.~$\omega \in \Omega$, there exists $N_2(\omega) \in \mathbb{N}$ such that for all $N \geq N_2(\omega)$, and $n \ge N^{\delta_2}$, we have uniformly over $x = (x_1, x_2) \in \mathcal{R}_N(N)$ that
	\begin{equation}
		p_{2\lfloor n/2 \rfloor}^{\omega, \mathcal{R}^{x_1}_{\widetilde{b}N}((\widetilde{b}N)^{2/(\gamma+1)})} (x, x) \ge c_8 n^{-\frac{3-\gamma}{4}}.
	\end{equation}
\end{lem}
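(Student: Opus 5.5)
Set $m$ through $n = m^{4/(1+\gamma)}$, i.e.\ $m = n^{(1+\gamma)/4}$; this is the natural horizontal scale reached in time $n$ by Lemma~\ref{lemma:ExitCombExp}. The target bound $n^{-(3-\gamma)/4}$ is exactly $1/|\mathcal{R}^{x_1}_m(m^{2/(\gamma+1)})|$, since the expected volume of this rectangle is of order $m\cdot m^{\frac{2}{\gamma+1}(1-\gamma)} = m^{\frac{3-\gamma}{1+\gamma}} = n^{\frac{3-\gamma}{4}}$. The plan is therefore to apply~\eqref{eq:Exit-time-estimate} to the set $A = \mathcal{R}^{x_1}_{m}(K m^{2/(\gamma+1)})$ with a large constant $K$, intersected with the killing region (which it is contained in as long as $m \le \widetilde b N$). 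This gives
\begin{equation*}
p^{\omega,\mathcal{R}^{x_1}_{\widetilde{b}N}((\widetilde{b}N)^{2/(\gamma+1)})}_{2\lfloor n/2\rfloor}(x,x) \ge p^{\omega,A}_{2\lfloor n/2\rfloor}(x,x) \ge \frac{P^\omega_x(\tau_{A^c} > \lfloor n/2\rfloor)^2}{3|A|}.
\end{equation*}
The proof then reduces to two estimates: an upper bound $|A| \le C n^{(3-\gamma)/4}$, and a lower bound $P^\omega_x(\tau_{A^c} > n/2) \ge c > 0$, both uniform in $x$ and $n$.

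\textbf{Volume.} Corollary~\ref{cor:GoodVolumes}, summed over dyadic levels $\ell \le u(m)+\log_2 K$, gives $|A| \le C\sum_{\ell}m\,2^{\ell(1-\gamma)} \le C m\,(m^{2/(1+\gamma)})^{1-\gamma}$ for $\gamma<1$. To stay within the range of Lemma~\ref{lem:GoodBoxes} one needs $m \ge N^{\delta_1}$, which holds once $n \ge N^{\delta_2}$ and $\delta_1 \le \delta_2(1+\gamma)/4$. The lemma allows $\ell$ up to $u(m)$ only; the extra $\log_2 K$ levels are handled by rerunning the Borel--Cantelli argument with $u(k)+C_K$, or by applying the lemma at width $K'm$. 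One caveat: the disk $D^{x_1}_m$ need not lie inside $D_N$ when $m > N - |x_1|$. I would treat this either by applying Lemma~\ref{lem:GoodBoxes} with $N$ replaced by $(\widetilde b+1)N$, or, for $n$ so large that $m > N$, by simply taking $m = N$. In the latter case the killed kernel is bounded below by its value at time $\asymp N^{4/(1+\gamma)}$ via a monotonicity/spectral argument, and the claimed bound in $n$ is then presumably only needed up to that scale.

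\textbf{Exit.} The walk can leave $A$ horizontally or vertically. The horizontal exit is controlled by Lemma~\ref{lemma:ExitCombExp} with $t$ a small constant: for $m$ replaced by $c' m$, we get $P^\omega_x(\tau_{x_1,m} < n/2) \le C_7 e^{-c_7 t^{-\widetilde\beta}} \le 1/4$. For the vertical exit through height $K m^{2/(\gamma+1)} = K\sqrt n$, I would argue that the vertical coordinate, observed during the time the walk sits in teeth, is a reflected simple random walk run for at most $n$ steps. Its maximum therefore exceeds $K\sqrt n$ with probability at most $Ce^{-cK^2}$. This uses that $x_2 \le N$ and that $\sqrt n \gtrsim N^{\delta_2/2}$; when $x_2 > \sqrt n$, one instead centers the vertical window around $x_2$, using that the tooth through $x$ has length at least $x_2$. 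Choosing $K$ large makes this probability at most $1/4$, so $P^\omega_x(\tau_{A^c} > n/2) \ge 1/2$.

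\textbf{Main obstacle.} The volume bound is the delicate step when $x = (x_1,x_2)$ lies high in a tooth with $x_2 \gg \sqrt n$. There the relevant set should be a vertical segment of length $\asymp \sqrt n$ around $x$, and its volume $\sqrt n = n^{1/2} \le n^{(3-\gamma)/4}$ only helps, since $(3-\gamma)/4 > 1/2$. So the plan is to take $A$ to be the union of this vertical window with the rectangle. The exit estimate from this $A$ follows from gambler's ruin together with the horizontal bound once the walk reaches the base. The only remaining subtlety is checking that all almost-sure statements hold uniformly in $x$ and $n$, which is exactly what Lemma~\ref{lem:GoodBoxes} and Lemma~\ref{lemma:ExitCombExp} provide.
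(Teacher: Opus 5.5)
Your plan follows the paper's proof in its essentials. You apply \eqref{eq:Exit-time-estimate} to a rectangle around $(x_1,0)$ of width $\asymp n^{(\gamma+1)/4}$ and height $\asymp\sqrt n$. You bound its volume by Corollary~\ref{cor:GoodVolumes}; your ``apply the lemma at width $K'm$'' is exactly the paper's passage from $\mathcal{R}^{x_1}_k((ak)^{2/(\gamma+1)})$ to $\mathcal{R}^{x_1}_{ak}((ak)^{2/(\gamma+1)})$, so that levels up to $u(ak)$ suffice. You control the horizontal exit by Lemma~\ref{lemma:ExitCombExp}, taking the rectangle wide enough that $t$ is a small constant. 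Points high in a tooth you treat one-dimensionally; the paper simply notes $p_n(x,x)\ge cn^{-1/2}\ge cn^{-(3-\gamma)/4}$ for $n\le x_2^2$.

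The one genuine difference is the vertical exit. The paper bounds the probability of reaching the top of the rectangle before its sides by a ratio of effective resistances, via \eqref{eq:Escape-Prob-bound} and Corollary~\ref{cor:GoodRes}; this bound does not depend on time. You instead dominate the time-changed vertical coordinate by a reflected simple random walk, using that the height $Km^{2/(\gamma+1)}=K\sqrt n$ is a large multiple of the diffusive scale. Your route is more elementary, since it needs no resistance concentration. To make it rigorous you need two further ingredients:
\begin{enumerate}
\item a monotone coupling (reflection at tooth tops only pushes the walk down, and keeping the gap even prevents crossing);
\item when $x_2$ is comparable to $\sqrt n$, the strong Markov property at the first visit to the base, because after switching teeth your window has height only $K\sqrt n$.
\end{enumerate}
Your remark about rescaling $N$ to $(\widetilde b+1)N$, so that the disks stay inside the range of Lemma~\ref{lem:GoodBoxes}, is a correct repair of a point the paper glosses over.

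The step that is wrong is your treatment of $n$ with $m>N$. For a finite set $A$ one has $p^{\omega,A}_{2k}(x,x)=\sum_i\lambda_i^{2k}\phi_i(x)^2$ with all $|\lambda_i|<1$. This is non-increasing in $k$, so its value at time $\asymp N^{4/(\gamma+1)}$ bounds it from above, not from below. It also decays exponentially in $k$ for fixed $N$. Hence the statement as written, with only the condition $n\ge N^{\delta_2}$, cannot hold for all large $n$, and no argument can rescue that regime. The correct fix is to restrict to $N^{\delta_2}\le n\le c(\widetilde bN)^{4/(\gamma+1)}$, with $c$ depending on $K$ and your width constant so that $A$ fits inside the killing rectangle. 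This is exactly what the paper's proof delivers: its choice $k=\lfloor c_{***}n^{(\gamma+1)/4}\rfloor$ and the comparison with $\widetilde b>a$ silently require $ak\lesssim\widetilde bN$. It is also all that is used later, in Lemma~\ref{lem:LBHKOffDiag}, where $n\asymp N^{4/(\gamma+1)}$.
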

\begin{proof}
	We begin by showing the result for $x = (x_1, 0), x_1 \in \llbracket -N, N \rrbracket$ on the horizontal axis. We follow the a similar strategy as in the proof of Proposition~\ref{prop:LBHKReg}. Let $a \in \mathbb{N}$ be an integer constant whose value will be specified below. By~\eqref{eq:Exit-time-estimate} the quenched heat kernel fulfills
    \begin{equation}
    \label{eq:First-step-on-diagonal-random-HK}
        P^\omega_x\left(\tau_{\partial \mathcal{R}^{x_1}_{k}((ak)^{2/(\gamma+1)})} > n \right)^2 \le 3p^{\omega, \mathcal{R}^{x_1}_{k}((ak)^{2/(\gamma+1)})}_{2n}(x,x) |\mathcal{R}^{x_1}_{k}((ak)^{2/(\gamma+1)})|.
    \end{equation}
    We now bound the volume of the set $\mathcal{R}^{x_1}_{k}((ak)^{2/(\gamma+1)})$ from above. Firstly, we observe that $\mathcal{R}^{x_1}_{ak}((ak)^{2/(\gamma+1)}) \supseteq \mathcal{R}^{x_1}_{k}((ak)^{2/(\gamma+1)})$, and we provide a bound on the former. To that end, we note that for any $k \in \mathbb{N}$, 
    \begin{equation}
    \label{eq:Volume-bound-random-1}
        |\mathcal{R}_{ak}^{x_1}((ak)^{2/(\gamma+1)})| \le \sum_{\ell=0}^{u(ak)} \big|\mathbb{V}_{ak}^{x_1}(2^{\ell}) \setminus \mathbb{V}_{ak}^{x_1}(2^{\ell + 1})\big|.
    \end{equation}
    By taking expectations, we see that for any $k \geq \widetilde{c} (> 0)$, 
    \begin{equation}
        \label{eq:Volume-bound-random-2}
    \begin{split}
        \mathbf{E}\left[\sum_{\ell=0}^{u(ak)} \big|\mathbb{V}_{ak}^{x_1}(2^{\ell}) \setminus \mathbb{V}_{ak}^{x_1}(2^{\ell + 1})\big|\right] & \le 2 ak\int_1^{(ak)^{2/(\gamma+1)} + 1} Cs^{-\gamma} \mathrm{d} s \\
        & \le C_* k k^{\frac{2(1 - \gamma)}{\gamma+1}} \le C_* k^{(3-\gamma)/(\gamma+1)},
    \end{split}
    \end{equation}
    for some $C_* > 0$ depending on $a, \gamma$. By Corollary~\ref{cor:GoodVolumes} (recall $a \geq 1)$, one has $|\mathbb{V}_{ak}^{x_1}(2^{\ell}) \setminus \mathbb{V}_{ak}^{x_1}(2^{\ell + 1})|\leq  c_{\mathrm{vol}} \mathbf{E}[|\mathbb{V}_{ak}^{x_1}(2^{\ell}) \setminus \mathbb{V}_{ak}^{x_1}(2^{\ell + 1})|]$ whenever $k \ge N^{\delta_1}$, $N \ge N_0(\omega) \vee \widetilde{c}^{1/\delta_1}$, and $\ell \le u(k)$, and therefore (under the same conditions), 
    \begin{equation}
        |\mathcal{R}^{x_1}_{k}((ak)^{2/(\gamma+1)})|  \stackrel{\eqref{eq:Volume-bound-random-1}, \eqref{eq:Volume-bound-random-2}}{\le} c_{\mathrm{vol}} C_* k^{(3-\gamma)/(\gamma+1)}.
    \end{equation}
    
    Next, we provide an upper bound on the term on the left-hand side of~\eqref{eq:First-step-on-diagonal-random-HK}. To that end, we observe that for every $n \in \mathbb{N}$,
    \begin{equation}\label{eqn:twoEstimatesRes}
        P^\omega_x\left(\tau_{\partial \mathcal{R}^{x_1}_{k}((ak)^{2/(\gamma+1)})} \le n \right)  \le P^\omega_x\left(\tau_{\partial_u \mathcal{R}^{x_1}_{k}((ak)^{2/(\gamma+1)})} \le \tau_{\partial_s \mathcal{R}^{x_1}_{k}((ak)^{2/(\gamma+1)})} \right) + P^\omega_x\left(\tau_{x_1, k} \le n \right).
    \end{equation}
    To control the first term on the right-hand side of \eqref{eqn:twoEstimatesRes}, we use that by Corollary~\ref{cor:GoodRes}, for $\textbf{P}$-a.e.~$\omega \in \Omega$, $N \geq N_0(\omega)$, $k \geq N^{\delta_1}$, by \eqref{eq:Escape-Prob-bound} and the monotonicity~\eqref{eq:Monotonicity}, one has 
    \begin{equation}
    \begin{split}
      P^\omega_x\left(\tau_{\partial_u \mathcal{R}^{x_1}_{k}((ak)^{2/(\gamma+1)})} \le \tau_{\partial_s \mathcal{R}^{x_1}_{k}((ak)^{2/(\gamma+1)})} \right) & \stackrel{\eqref{eq:Monotonicity},\eqref{eq:Escape-Prob-bound}}{\le} \frac{R_{\mathrm{eff}}(x, \partial_s \mathcal{R}^{x_1}_{k}(k^{2/(\gamma+1)}))}{R_{\mathrm{eff}}(x, \mathcal{R}^{x_1}_{ak}((ak)^{2/(\gamma+1)})^c)} \\
      &\le \frac{k \widetilde{c}_{\mathrm{eff}}^2 c_{\mathrm{eff}}^2}{2ak} \le \frac{1}{4},
          \end{split}
    \end{equation}
    for an appropriately chosen constant $a \geq 1$. On the other hand, Lemma~\ref{lemma:ExitCombExp} implies that for $\textbf{P}$-a.e.\ $\omega \in \Omega$, $N \geq N_1(\omega)$, $k \geq N^\delta$, we have
    \begin{equation}
        P^\omega_x\left(\tau_{x_1, k} \le n  \right) \le \frac{1}{4},
    \end{equation}
    for all $n \le c k^{4/(\gamma+1)}$ for an appropriately chosen $c >0$.
    
    Hence, we obtain
    \begin{equation}
        P^\omega_x\left(\tau_{\partial \mathcal{R}^{x_1}_{k}((ak)^{2/(\gamma+1)})} > n \right) \ge \frac{1}{2}>0,
    \end{equation}
    for all $n \le c k^{4/(\gamma + 1)}$. Putting together these estimates one obtains for $N \geq N_0(\omega) \vee N_1(\omega)$, $k \geq N^\delta$,
    \begin{equation}
        p^{\omega, \mathcal{R}^{x_1}_{k}((ak)^{2/(\gamma+1)})}_{2\lfloor n/2 \rfloor}(x,x) \ge \frac{c_{**}}{c_*k^{(3-\gamma)/(\gamma+1)}},
    \end{equation}
    and for $\widetilde{b}>a$ we deduce 
    \begin{equation}
        p^{\omega, \mathcal{R}^{x_1}_{\widetilde{b}k}((\widetilde{b}k)^{2/(\gamma+1)})}_{2\lfloor n/2 \rfloor}(x,x) \ge \frac{c_{**}}{c_*k^{(3-\gamma)/(\gamma+1)}}.
    \end{equation}
    To obtain the result we can set $k = \lfloor c_{***}n^{(\gamma+1)/4} \rfloor$, for an appropriately chosen constant $c_{***}>0$. If $k \ge N^{\max\{\delta, \delta_1\}}$, we set $\delta_2 = 4\max\{\delta, \delta_1\}/(\gamma+1)$. We also remark that $\delta_1, \delta$ can be chosen arbitrarily small in Corollary~\ref{cor:GoodRes} in Lemma~\ref{lemma:ExitCombExp}.

    For $x = (x_1, x_2) \in \mathcal{R}_N(N)$, we first observe that for all $n \le x_2^2, n \in 2\mathbb{N}_0$ there exists a constant $c>0$ such that $p_n^\omega(x, x) \ge cn^{-1/2}$, hence we assume $n > x_2^2$. Note that, as a consequence, we can assume $c_{***}^{-2/(\gamma+1)}k^{2/(\gamma+1)} > x_2$.
    
    Notice that by union bound
    \begin{equation}
        P^\omega_x\left(\tau_{\partial \mathcal{R}^{x_1}_{k}((ak)^{2/(\gamma+1)})} \le n \right) \le P^\omega_{(x_1, 0)}\left(\tau_{\partial \mathcal{R}^{x_1}_{k}((ak)^{2/(\gamma+1)})} < n \right) +  P^I_{x_2}\left(\tau_{\lfloor (ak)^{2/(\gamma+1)} \rfloor}< n \right),
    \end{equation}
    where $I = \llbracket 0, (ak)^{2/(\gamma+1)}\rrbracket $ and $P^I$ denotes the law time of a simple symmetric random walk inside the interval $I$ and reflected at the boundary. The second summand on the right-hand side is bounded from above by standard estimates on $\mathbb{Z}$ by 
    \begin{equation}
        P^I_{x_2}\left(\tau_{(ak)^{2/(\gamma+1)}} < n \right) \le \exp\left(- c \frac{(ak)^{4/(\gamma+1)}}{n}\right) \le \frac{1}{8},
    \end{equation}
    for $a \geq 1$ large enough recalling that $k = \lfloor c_{***}n^{(\gamma+1)/4} \rfloor$.
    The first summand on the right hand side is dealt as in the case $x = (x_1, 0)$ as above applying Lemma~\ref{lemma:ExitCombExp}. Thus, we obtain the key exit time estimate
    \begin{equation}
        P^\omega_x\left(\tau_{\partial \mathcal{R}^{x_1}_{k}((ak)^{2/(\gamma+1)})} > n \right) \ge \frac{1}{2},
    \end{equation}
    and we can apply again the strategy we used for $x = (x_1, 0)$.
\end{proof}

We now apply the previous result to obtain an off-diagonal lower bound on the heat kernel.
 We define 
\begin{equation}
    \bar{\mathcal{R}}_N(N^{2/(\gamma+1)})\coloneqq \mathcal{R}_N(N^{2/(\gamma+1)})\cup \partial \mathcal{R}_N(N^{2/(\gamma+1)}).
\end{equation}
\begin{lem}\label{lem:LBHKOffDiag}
	There exists $c_9, c_l, c_u > 0$, and  $\widetilde{b}>1$ such that $\mathbf{P}$-a.s.\ for all $N \ge N_2(\omega)$ and all $n \in \llbracket c_l N^{4/(\gamma+1)}, c_u N^{4/(\gamma+1)}\rrbracket$ uniformly over $v = (v_1, v_2) \in \bar{\mathcal{R}}_N(N^{2/(\gamma+1)})$ and $w \in \mathcal{R}_N(N)$ with $n - d(v, w) \in 2 \mathbb{N}_0$
	\begin{equation}
		p_n^{\omega, \mathcal{R}_{\widetilde{b}N}((\widetilde{b}N)^{2/(\gamma+1)})} (v, w) \ge c_9 n^{-\frac{3-\gamma}{4}}.
	\end{equation}
\end{lem}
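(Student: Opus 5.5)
I will follow the scheme of Proposition~\ref{prop:LBHKReg}: transfer the on-diagonal bound of Lemma~\ref{lem:LowerBoundDiagonalHK} to the off-diagonal setting via the hitting inequality~\eqref{eqn:TrickUmbiDave}. Write $B \coloneqq \mathcal{R}_{\widetilde{b}N}((\widetilde{b}N)^{2/(\gamma+1)})$. Applying~\eqref{eqn:TrickUmbiDave} with target point $w$ gives
\begin{equation*}
    p_n^{\omega,B}(v,w) \ge P_v^\omega\bigl(\tau_w \le n\wedge \tau_{B^c}\bigr)\, p^{\omega,B}_{2\lfloor n/2\rfloor}(w,w),
\end{equation*}
modulo the parity adjustment handled as in Proposition~\ref{prop:LBHKReg}.

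For the second factor, I use Lemma~\ref{lem:LowerBoundDiagonalHK} at $w=(w_1,w_2)\in\mathcal{R}_N(N)$. That lemma is stated for the killing set $\mathcal{R}^{w_1}_{b'N}((b'N)^{2/(\gamma+1)})$. Since $|w_1|\le N$, this set is contained in $B$ once $\widetilde{b}\ge b'+1$. Monotonicity of killed heat kernels in the domain then yields $p^{\omega,B}_{2\lfloor n/2\rfloor}(w,w)\ge c_8 n^{-(3-\gamma)/4}$. This holds for all $n\ge N^{\delta_2}$, in particular for $n\asymp N^{4/(\gamma+1)}$, and for $N\ge N_2(\omega)$.

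It remains to show that $P_v^\omega(\tau_w\le n\wedge\tau_{B^c})\ge c>0$ uniformly. I split the complementary event as in~\eqref{eq:Decomposition-Proof-Case-I-b}:
\begin{equation*}
    P_v^\omega\bigl(\tau_w > n\wedge\tau_{B^c}\bigr)\le P_v^\omega\bigl(\tau_w>\tau_{B^c}\bigr)+P_v^\omega\bigl(\tau_w\wedge\tau_{B^c}>n\bigr).
\end{equation*}
\begin{itemize}
    \item \textbf{First term.} Lemma~\ref{lem:ProbandRes} bounds it by $1/\varrho(\widetilde{b})\le 1/4$ for $\widetilde{b}$ large. Points of $\partial\mathcal{R}_N(N^{2/(\gamma+1)})$ are handled by one extra step of the walk, or by repeating the proof of Lemma~\ref{lem:ProbandRes}, whose resistance and gambler's ruin bounds are insensitive to this boundary layer.
    \item \textbf{Second term.} Markov's inequality gives a bound by $E^\omega_v[\tau_w\wedge\tau_{B^c}]/n$. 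To bound this expectation, collapse $B^c$ to a single vertex and apply the commute time identity~\eqref{eq:Commute-time}; alternatively, use $E_v[\tau_{w}\wedge \tau_{B^c}]\le \sum_{u\in B} g^{B\setminus\{w\}}(v,u)\deg(u)\le 3|B|\,R_{\mathrm{eff}}(v,\{w\}\cup B^c)$. Now $R_{\mathrm{eff}}\le d(v,w)\le CN^{2/(\gamma+1)}$ by~\eqref{eq:Effective-resistance-graph-distance-bound}, since $v$ may lie high in a tooth. Also $|B|\le C N^{(3-\gamma)/(\gamma+1)}$ by Corollary~\ref{cor:GoodVolumes}, as in~\eqref{eq:Volume-bound-random-2}. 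Together this gives $E\le CN^{(5-\gamma)/(\gamma+1)}$, which exceeds $N^{4/(\gamma+1)}$.
\end{itemize}

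This mismatch in the second term is the main obstacle. I would refine the estimate in two stages. First, decompose according to the hitting time of $(v_1,0)$, as in Case~2 of Proposition~\ref{prop:LBHKReg}. Descending a tooth of height at most $N^{2/(\gamma+1)}$ takes time of order $N^{4/(\gamma+1)}$ with probability bounded below, by a diffusive estimate on an interval. Second, from $(v_1,0)$ the resistance to $w$ is at most $2N+w_2\le 3N$. The commute time bound then gives $E\le 3|B|\cdot 3N\le CN^{1+(3-\gamma)/(\gamma+1)}=CN^{4/(\gamma+1)}$. Choosing $c_u$ to be a large multiple of $c_l$, with $c_l$ itself large, makes both stages succeed with probability at least $1/2$ within the time window. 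The first stage uses half the time budget and the second uses the rest.

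Combining these via the strong Markov property gives $P_v^\omega(\tau_w\le n\wedge\tau_{B^c})\ge c$. Multiplying by the diagonal bound yields $p_n^{\omega,B}(v,w)\ge c_9 n^{-(3-\gamma)/4}$. Finally, I take the almost sure threshold to be the maximum of $N_0(\omega)$, $N_1(\omega)$ and $N_2(\omega)$.
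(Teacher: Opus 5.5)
Your proposal is correct and follows essentially the same route as the paper. The shared steps are the hitting inequality \eqref{eqn:TrickUmbiDave}, Lemma~\ref{lem:LowerBoundDiagonalHK} for the diagonal factor, Lemma~\ref{lem:ProbandRes} for escaping before hitting $w$, and a two-stage Markov/commute-time bound: first reach the base $(v_1,0)$, then finish within time $\lesssim |B|\cdot N \asymp N^{4/(\gamma+1)}$. You differ only in bounding the second stage through $R_{\mathrm{eff}}((v_1,0),w)\le 3N$ instead of the resistance to $B^c$ from Corollary~\ref{cor:GoodRes}, and in working directly with the rectangle centred at $0$.

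Two small bookkeeping fixes are needed. First, the tooth at $v_1$ may be much taller than $N^{2/(\gamma+1)}$, so stage one should be the hitting time of $(v_1,0)$ or of the top of $B$ in that tooth (the paper's set $L_{v_1,N}$). Second, each stage needs a small failure probability ($O(1/c_l)$ by Markov, with $\widetilde b$-dependent constants), not merely success probability at least $1/2$, since the escape term already costs up to $1/4$.
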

\begin{proof}
    We let $B = \mathcal{R}^{v_1}_{(\widetilde{b}-2)N}(((\widetilde{b}-2)N))^{2/(\gamma+1)})$ stand for the rectangle of side length $(\widetilde{b}-2)N$ and height $((\widetilde{b}-2)N)^{2/(\gamma+1)}$ about $(v_1, 0)$ and recall that $v$ is contained in $\mathcal{R}^{v_1}_{N}(N^{2/(\gamma+1)})$. Note that $B$ is contained in $\bar{\mathcal{R}}_{\widetilde{b}N}((\widetilde{b}N)^{2/(\gamma+1)})$.
    
	We employ the strategy already used in Proposition~\ref{prop:LBHKReg}. In particular, recall that for any $B \subseteq \mathrm{Comb}(\mathbb{Z},H^\omega)$ and vertices $v,w \in B$,
    \begin{equation}
        p_{n}^{\omega, B}(v, w) \ge P_v^\omega\left( \tau_w \le n \wedge \tau_{B^c}\right)p_{2\lfloor \frac{n}{2}\rfloor }^{\omega, B}(w,w).
    \end{equation}
    The second member of the product on the right-hand side of the last display is controlled by Lemma~\ref{lem:LowerBoundDiagonalHK}. We turn to the first member, and use the bound
    \begin{equation}
        P_v^\omega\left( \tau_w > n \wedge \tau_{B^c}\right) \le P_v^\omega\left( \tau_w \wedge \tau_{B^c} > n \right) + P_v^\omega\left( \tau_w > \tau_{B^c}\right).
    \end{equation} 

    Set $L_{v_1, N} \coloneqq \{((v_1, \lfloor (\widetilde{b}N)^{2/(\gamma+1)}\rfloor + 1), (v_1, 0)\}$, by union bound and Markov's inequality we obtain
    \begin{equation}
        P_v^\omega\left( \tau_w \wedge \tau_{B^c} > n \right) \le P_v^\omega\left( \tau_{L_{v_1, N}} > \frac{n}{2} \right) + P_{(v_1, 0)}^\omega\left( \tau_{B^c} > \frac{n}{2} \right).
    \end{equation}
    By the commute time identity \eqref{eq:Commute-time} and Markov's inequality we obtain that for $n \ge c_{l}N^{\frac{4}{\gamma+1}}$ 
    \begin{equation}
        P_v^\omega\left( \tau_{L_{v_1, N}} > \frac{n}{2} \right) \le \frac{2N^{\frac{4}{\gamma+1}} \widetilde{b}^{2/(\gamma+1)}}{c_{l}N^{\frac{4}{\gamma+1}}} \le \frac{2\widetilde{b}^{2/(\gamma+1)}}{c_l}.
    \end{equation}
    On the other hand by Corollary~\ref{cor:GoodRes} we find that
    \begin{equation}
        R_{\mathrm{eff}}\left( (v_1, 0), B^c \right) \le 16 c_{\mathrm{eff}} \widetilde{c}_{\mathrm{eff}} \widetilde{b}N.
    \end{equation}
    Moreover, the volume of $B$ is upper bounded by $c_{\mathrm{vol}}N^{(3-\gamma)/(\gamma+1)}$ for $c_{\mathrm{vol}}>0$ by Corollary~\ref{cor:GoodVolumes}. Hence, applying again \eqref{eq:Commute-time} and Markov's inequality, we obtain
    \begin{equation}
        P_{(v_1, 0)}^\omega\left( \tau_{B^c} > \frac{n}{2} \right) \le \frac{32 c_{\mathrm{eff}} \widetilde{c}_{\mathrm{eff}}\widetilde{b}c_{\mathrm{vol}}N^{\frac{3-\gamma}{\gamma+1}} N}{c_{l}N^{\frac{4}{\gamma+1}}} \le \frac{32c_{\mathrm{vol}} c_{\mathrm{eff}} \widetilde{c}_{\mathrm{eff}} \widetilde{b}}{c_{l}}.
    \end{equation}
    On the other hand, by Lemma~\ref{lem:ProbandRes} we have
    \begin{equation}
        P_v^\omega\left( \tau_w > \tau_{B^c}\right) \le \frac{1}{\varrho(\widetilde{b})}.
    \end{equation}
    Hence, one can choose first $\widetilde{b}> 0$ and hence $\varrho(\widetilde{b})$ and then $c_l$ (depending on $\widetilde{b}$) such that 
    \begin{equation}
        P_v^\omega\left( \tau_w > n \wedge \tau_{B^c}\right) \ge \frac{1}{2},
    \end{equation}
    the proof is concluded.
\end{proof}

We now move to the corresponding upper bounds on the heat kernel, we begin by giving a on-diagonal statement.

\begin{lem}\label{lem:OnDiagonalHKUB}
	For $\delta_3>0$, there exists a constant $c_{10}>0$ such that for $\mathbf{P}$-a.e.~$\omega \in \Omega$, for all $n \ge N^{\delta_3}$ uniformly over $x = (x_1, 0), x_1 \in \llbracket -N, N \rrbracket$ and $N \ge N_2(\omega)$,
	\begin{equation}
		p_n^\omega (x, x) \le c_{10} n^{-\frac{3-\gamma}{4}}.
	\end{equation}
\end{lem}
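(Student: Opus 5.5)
We plan to use the general bound \eqref{eq:Bound-HK-via-Green}: for $x=(x_1,0)$ and a suitable set $A$,
\[
p_n^\omega(x,x)\le \frac{2g^{A}(x,x)}{n\,P^\omega_x(\tau_{A^c}\ge n)} .
\]
We take $A=\mathcal{R}^{x_1}_{k}((ak)^{2/(\gamma+1)})$ with $k=\lfloor C n^{(\gamma+1)/4}\rfloor$, where $C$ is a large constant. Then $k\ge N^{\delta}$ as soon as $n\ge N^{\delta_3}$ with $\delta_3$ related to $\delta,\delta_1$ as in the proof of Lemma~\ref{lem:LowerBoundDiagonalHK}. If $k$ exceeds $N$, so that the rectangle leaves $D_N$, we replace $N$ by a larger scale, for instance $N'=k$. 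The environment estimates hold for all large scales, so this is harmless; alternatively, one can restrict to $n\le N^{4/(\gamma+1)}$ and apply the argument at scale $N^{\prime}$.

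For the numerator, since $A^c$ is hit only after passing through $\partial A$, we have $g^{A}(x,x)=\deg(x)^{-1}\cdot$(expected local time)$\asymp R_{\mathrm{eff}}(x,A^c)$. By~\eqref{eq:Effective-resistance-graph-distance-bound} this is at most $k+1$, so $g^A(x,x)\le Ck\asymp n^{(\gamma+1)/4}$.

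For the denominator we need $P^\omega_x(\tau_{A^c}\ge n)\ge c$, that is, exit from $A$ is slow. Exit through the side is controlled by Lemma~\ref{lemma:ExitCombExp}: we have $P^\omega_x(\tau_{x_1,k}<n)\le C_7\exp(-c_7 t^{-\widetilde\beta})$ with $t=n/k^{4/(1+\gamma)}=C^{-4/(\gamma+1)}$, which is small for $C$ large. Exit through the top requires some vertical excursion of length $(ak)^{2/(\gamma+1)}\gg n^{1/2}$ within time $n$. By a union bound over the at most $n$ excursions, each of duration at most $n$, combined with the gambler's-ruin/diffusive bound $P(\text{reach height } h \text{ in time } n)\le e^{-ch^2/n}$ applied per excursion, this probability is at most $n\,e^{-ca^{4/(\gamma+1)}C^{4/(\gamma+1)}}$. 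This crude bound fails because of the factor $n$.

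We therefore handle the top exit differently, and we expect this to be the main obstacle. Using the escape-probability bound \eqref{eq:Escape-Prob-bound} together with Corollary~\ref{cor:GoodRes}, exactly as in \eqref{eqn:twoEstimatesRes}, the probability of reaching $\partial_u$ before $\partial_s$ is at most $1/4$ once $a$ is large. Combining the two estimates gives $P^\omega_x(\tau_{A^c}\ge n)\ge 1/2$.

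Plugging in, we obtain
\[
p_n^\omega(x,x)\le \frac{Cn^{(\gamma+1)/4}}{n}=Cn^{-\frac{3-\gamma}{4}} .
\]
It remains to note that the estimates hold simultaneously for all $x_1\in\llbracket -N,N\rrbracket$ and all relevant $n$ for $N\ge N_2(\omega)$, since Lemma~\ref{lemma:ExitCombExp} and Corollary~\ref{cor:GoodRes} are already uniform on an almost sure event. Parity and small-$n$ issues are absorbed into $c_{10}$.
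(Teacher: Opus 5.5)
Your proof is correct and has the same skeleton as the paper's. Both apply \eqref{eq:Bound-HK-via-Green} to a set of horizontal radius $L\asymp n^{(\gamma+1)/4}$, bound the killed Green function by an effective resistance of order $L$, and use Lemma~\ref{lemma:ExitCombExp} to get $P^\omega_x(\tau_{A^c}\ge n)\ge 1/2$.

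The one real difference is the choice of $A$. The paper takes the full disk $A=D^{x_1}_{L}$ with $L=\widetilde b\, n^{(\gamma+1)/4}$, teeth included. Since teeth are dead ends, $A^c$ can only be entered through the two lateral points. Hence $g^{A}(x,x)=R_{\mathrm{eff}}(x,A^c)=(\lfloor L\rfloor+1)/2$ exactly, and the exit time is precisely $\tau_{x_1,L}$, so Lemma~\ref{lemma:ExitCombExp} alone gives the denominator bound.

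Truncating the teeth at height $(ak)^{2/(\gamma+1)}$, as you do, buys nothing for this upper bound. The volume of $A$ does not enter \eqref{eq:Bound-HK-via-Green}, and your resistance bound $k+1$ is the same as for the disk. It does, however, create the top-exit issue you flag as the main obstacle. You then have to resolve it with the resistance comparison from \eqref{eqn:twoEstimatesRes}, and hence with Corollary~\ref{cor:GoodRes} at scale $ak$, including its requirement that the enlarged interval sit inside $\llbracket -N,N\rrbracket$. Your resolution is valid, so the route works, but with an extra ingredient that the disk avoids entirely.

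On the other hand, your remark about passing to a larger scale $N'$ when $n^{(\gamma+1)/4}$ exceeds $N$ is a genuine point. Lemma~\ref{lemma:ExitCombExp} requires the radius to lie in $\llbracket N^{\delta},N\rrbracket$, while the statement allows arbitrarily large $n$. The paper's proof passes over this constraint silently.
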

\begin{proof}
   We will apply~\eqref{eq:Bound-HK-via-Green}. 
   Note that the Green's function of a simple random walk can be expressed in terms of the effective resistance. If $x \in \llbracket-N, N\rrbracket$, the effective resistance to the boundary of a ball of side length $\ell$ is $\ell/2$ can be calculated by the series and parallel laws, see~\cite{LPBook}. Furthermore, set $\ell = n^{(\gamma+1)/4}$, choose $\ell \ge N^{\delta_1}$ so that the hypotheses of Lemma~\ref{lemma:ExitCombExp} are in place, setting simultaneously $\delta_3\ge 4\delta_1/(\gamma+1)$. Hence, one can choose $\widetilde{b}>0$ such that
   \begin{equation}
   	P_x^\omega \left( \tau_{x, \widetilde{b} t^{\frac{\gamma+1}{4}}} \ge n \right) \ge \frac{1}{2}.
   \end{equation}
    Inserting these observations into the formula \eqref{eq:Bound-HK-via-Green} we obtain
    \begin{equation}
    	p_{n}^\omega (x, x) \le 4 \widetilde{b} n^{\frac{\gamma+1}{4} - 1} = 4\widetilde{b} n^{-\frac{3-\gamma}{4}},
    \end{equation}
    as required.
\end{proof}

Using the on-diagonal estimate, we now deduce a crucial off-diagonal upper bound for sites that are `far' from the starting point.

\begin{prop}\label{prop:HKUBOutsideBox}
	For $\delta_3>0$, there exists a constant $c_{11}>0$ such that for $\mathbf{P}$-a.e.~$\omega \in \Omega$, for all $N \ge N_2(\omega)$ we have that for all $n \ge N^{\delta_3}$ uniformly over $x = (x_1, 0), x_1 \in \llbracket-N, N\rrbracket$ and $y = (y_1, 0), y_1 \in \llbracket-N, N\rrbracket \text{ such that } |y_1-x_1| = k \ge n^{\frac{\gamma+1}{4}}$
	\begin{equation}
		p_n^\omega (x, y) \le c_{11} k^{-\frac{3-\gamma}{\gamma + 1}}.
	\end{equation}
\end{prop}
\begin{proof}
	We follow the lines of the proof of \cite[Lemma~4.10]{BPS}. One can write
    \begin{equation}\label{eq:TwoProbsExit}
        p_n(x, y) = P_x^\omega\left(X_t = y, \tau_{x_1, \lfloor k/2 \rfloor}\le n/2\right) + P_x^\omega\left(X_t = y, \tau_{x_1, \lfloor k/2 \rfloor} > n/2\right).
    \end{equation}
    By reversibility and the fact that $\mathrm{deg}(z) \leq 3$ for all $z \in \mathrm{Comb}(\mathbb{Z},H^\omega)$, one observes that 
    \begin{equation}
        P_x^\omega\left(X_n = y, \tau_{x_1, \lfloor k/2 \rfloor} > n/2\right) \le \frac{1}{9}P_y^\omega\left(X_n = y, \tau_{y_1, \lfloor k/2 \rfloor + 1} \le n/2\right).
    \end{equation}
    The two probabilities on the right-hand side of~\eqref{eq:TwoProbsExit} can therefore be bounded in a similar manner, and we will do so for the first term. Observe that 
    \begin{equation}
        P_x^\omega\left(X_n = y, \tau_{x_1, \lfloor k/2 \rfloor}< n/2\right) \le P_x^\omega\left(\tau_{x_1, \lfloor k/2 \rfloor}< n/2\right) \sup_{0 \le s \le k/2} p_{n-s}^\omega (\bar{x}, y).
    \end{equation}
    Here we denote $\bar{x}$ the unique point at distance $\lfloor k/2 \rfloor$ from $x$  on the shortest path between $x$ and $y$. By Lemma~\ref{lem:OnDiagonalHKUB} we have that for all $\bar{x}, y$ and $r \ge N^{\delta_3}$, 
    \begin{equation}
        p_{r}^\omega (z, z)\le c_{10} r^{-\frac{3-\gamma}{4}} \qquad z \in \{ \bar{x}, y\}.
    \end{equation}
    By a standard application of the Cauchy-Schwarz inequality (see \cite[Corollary~4.8]{BPS}) we obtain that 
    \begin{equation}\label{eqn:CauchySchwarz}
        p_{r}^\omega(\bar{x}, y) \le \sqrt{p_{r}^\omega(\bar{x}, \bar{x}) p_{r}^\omega(y, y)}\le c_{10} r^{-\frac{3-\gamma}{4}}.
    \end{equation}
    Therefore, by considering $r= n-s = \lfloor n/2 \rfloor, \dots, n $ we observe that the minimal value for the upper bound of $\sup_{0 \le s \le \lceil n/2 \rceil} p_{n-s}^\omega (\bar{x}, y)$ is achieved at $s =\lceil n/2 \rceil$.
    
    On the other hand one can apply Lemma~\ref{lemma:ExitCombExp} to the exit probability. Setting $n = k^{4/(\gamma+1)}/\eta$ one obtains, for $n \geq N^{\delta_3}$, $n \geq N_2(\omega)$,
    \begin{equation}
    \begin{split}
        P_x^\omega\left(X_n = y, \tau_{x_1, \lfloor k/2 \rfloor}< n/2\right) &\le c_{10} n^{-\frac{3-\gamma}{4}} C_7\exp\Big(- c_{7}\Big(\frac{k^{\frac{4}{\gamma+1}}}{n} \Big)^\beta\Big)\\
        & \le  C_7c_{10} k^{-\frac{3-\gamma}{\gamma+1}} \sup_{\eta>1} \eta^{3} \exp\left(- c_{7} \eta^\beta \right)\\
        & \le c_{11}k^{-\frac{3-\gamma}{\gamma+1}},
    \end{split}
    \end{equation}
    as required.
\end{proof}

Finally, we obtain an upper bound for the off-diagonal heat kernel for sites that are close to the starting point.

\begin{lem}\label{lem:UBHKInsideAllPoints}
    	For $\delta_4>0$, there exists $c_{12} >0$ such that for $\mathbf{P}$-a.e.~$\omega \in \Omega$, for all $N\ge N_2(\omega)$ we have that for all $n \ge N^{\delta_3}$ uniformly over $x = (x_1, x_2) \in \bar{\mathcal{R}}_N(N^{2/(\gamma+1)})$ and $y = (y_1, 0), y_1 \in \llbracket -N, N\rrbracket$,
	\begin{equation}
		p_n^\omega (x, y) \le c_{12} n^{-\frac{3-\gamma}{4}}.
	\end{equation}
\end{lem}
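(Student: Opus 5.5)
The plan is to reduce to the on-diagonal bound of Lemma~\ref{lem:OnDiagonalHKUB} via the Cauchy--Schwarz inequality, as in~\eqref{eqn:CauchySchwarz}. Symmetry of $p_n^\omega$ and bounded degrees give $p_n^\omega(x,y) \le \sqrt{p_{n}^\omega(x,x)\,p_{n}^\omega(y,y)}$ (up to parity, where we use $p_{2m}$ and the monotonicity of $m\mapsto p^\omega_{2m}(z,z)$, adjusting constants). Since $y=(y_1,0)$ lies on the axis with $y_1\in\llbracket -N,N\rrbracket$, Lemma~\ref{lem:OnDiagonalHKUB} applies directly and gives $p_n^\omega(y,y)\le c_{10}n^{-\frac{3-\gamma}{4}}$ for $n \ge N^{\delta_3}$. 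The remaining task is to bound $p_n^\omega(x,x)$ for $x=(x_1,x_2)$ lying possibly high up in the tooth over $x_1$, with $x_2\le N^{2/(\gamma+1)}+1$.

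For this, I would avoid proving an on-diagonal bound at $x$ and instead decompose at the hitting time of the base $(x_1,0)$. Any path from $x$ to $y$ must pass through $(x_1,0)$ unless $x_1=y_1$, in which case $y=(x_1,0)$ and the same statement holds trivially. By the strong Markov property,
\[
p_n^\omega(x,y)=\sum_{k=0}^{n} P^\omega_x(\tau_{(x_1,0)}=k)\,p^\omega_{n-k}((x_1,0),y).
\]
I then split the sum at $k\le n/2$ and $k>n/2$. For $k\le n/2$ we have $n-k\ge n/2\ge N^{\delta_3}/2$. Applying Cauchy--Schwarz to two axis points together with Lemma~\ref{lem:OnDiagonalHKUB} (after replacing $\delta_3$ by a slightly smaller exponent, which is harmless since $\delta_3$ is arbitrary) bounds $p^\omega_{n-k}((x_1,0),y)\le C n^{-\frac{3-\gamma}{4}}$, uniformly in $k$. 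Since the weights sum to at most $1$, this part contributes at most $Cn^{-\frac{3-\gamma}{4}}$.

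The main obstacle is the range $k>n/2$, where $n-k$ may be small and no decay in $n-k$ is available. Here I would use reversibility instead: write $p^\omega_n(x,y)=p^\omega_n(y,x)$ and decompose from $y$'s side at the last visit to $(x_1,0)$. Equivalently, for the path from $y$, split at time $n/2$: the walk from $y$ spends at least $n/2$ steps before the final excursion up the tooth $x_1$. Concretely, I would bound
\[
\sum_{k>n/2}P^\omega_x(\tau_{(x_1,0)}=k)\,p^\omega_{n-k}((x_1,0),y)
\]
using $p^\omega_{m}(y,(x_1,0))$ with $m\ge n/2$ in the reversed picture. This amounts to applying the Chapman--Kolmogorov identity
\[
p_n^\omega(x,y)=\sum_z p^\omega_{\lceil n/2\rceil}(x,z)\,p^\omega_{\lfloor n/2\rfloor}(z,y)\deg(z)
\]
and bounding $\sup_z p^\omega_{\lfloor n/2\rfloor}(z,y)$. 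That reduces the problem to a uniform bound $\sup_z p^\omega_m(z,y)\le Cm^{-\frac{3-\gamma}{4}}$ for all $z$ in the comb, with $y$ on the axis. This bound in turn follows from the first decomposition applied with the roles swapped: from $y$ the walk must reach $(z_1,0)$, and the vertical excursion up to $z$ does not reduce the kernel below the value at the base. The delicate points are the parity bookkeeping and the requirement that $z_1\in\llbracket -N,N\rrbracket$. For $z$ outside $D_N$, I would instead use Proposition~\ref{prop:HKUBOutsideBox}-type far bounds or crude estimates to cover it.
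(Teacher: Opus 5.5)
Your first-passage decomposition at the base $(x_1,0)$ is the right starting point, and it is also the paper's. Your treatment of the range $k\le n/2$ is correct: the weights sum to at most $1$, and $p^\omega_{n-k}((x_1,0),y)\le Cn^{-\frac{3-\gamma}{4}}$ by Cauchy--Schwarz between two axis points and Lemma~\ref{lem:OnDiagonalHKUB}.

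The gap is the range $k>n/2$, which carries the whole difficulty, and there your argument does not close.
\begin{itemize}
\item Time reversal does not help. Reversing the paths that first hit $(x_1,0)$ at time $k>n/2$ gives paths from $y$ whose \emph{last} visit to $(x_1,0)$ is at time $n-k<n/2$, followed by a $k$-step excursion up the tooth. So the factor $p^\omega_{n-k}(y,(x_1,0))$ is still evaluated at a small time, and your claim that the reversed picture involves $m\ge n/2$ is incorrect.
\item The Chapman--Kolmogorov bound $p^\omega_n(x,y)\le\sup_z p^\omega_{\lfloor n/2\rfloor}(z,y)$ is true but circular. It reduces the lemma to a stronger version of itself, uniform over all $z$ in the comb. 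Applying the first decomposition to $z$ then reproduces exactly the same bad range $\tau_{(z_1,0)}\approx m$.
\item The heuristic about the vertical excursion is not a proof, and as phrased it points the wrong way for an upper bound. Its rigorous form is the parabolic maximum principle in the tooth: $p^\omega_{j+1}(y,\cdot)$ is a neighbour average of $p^\omega_j(y,\cdot)$. This only gives $\sup_{h\ge1}p^\omega_m(y,(z_1,h))\le\max_{j<m}p^\omega_j(y,(z_1,0))$, and that maximum includes small $j$ with no decay at all.
\end{itemize}

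What is missing is a local estimate on the law of the hitting time of the base. This is the ballot-type bound the paper takes from \cite[(4.9)]{BPS}: with $H=\lfloor H^\omega_{x_1}\rfloor$ and $x_2=h$,
\[
P^\omega_{x}\big(\tau_{(x_1,0)}=s\big)\le c\,\frac{h}{s^{3/2}}e^{-\frac{h^2}{cs}}+c\,\frac{2H-h}{s^{3/2}}e^{-\frac{(2H-h)^2}{cs}},
\]
which in particular gives $\sup_h P^\omega_x(\tau_{(x_1,0)}=s)\le C/s$.

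With this, the range $k>n/2$ contributes at most $\frac{C}{n}\sum_{j<n/2}p^\omega_j((x_1,0),y)$, and you split once more:
\begin{itemize}
\item For $2\sqrt n\le j<n/2$, the axis bound gives $\sum_j j^{-\frac{3-\gamma}{4}}\le Cn^{\frac{1+\gamma}{4}}$, hence a contribution $Cn^{-\frac{3-\gamma}{4}}$.
\item For $j<2\sqrt n$, the crude bound $p^\omega_j\le C(j\vee1)^{-1/2}$ gives $Cn^{-1}n^{1/4}=Cn^{-3/4}\le Cn^{-\frac{3-\gamma}{4}}$.
\end{itemize}
The crude bound can only be afforded on a window of length $O(\sqrt n)$; on all of $j<n/2$ it would give just $n^{-1/2}$. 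So you need $N^{\delta_3}\le\sqrt n$, which is where the paper's $\delta_4=2\delta_3$ enters; equivalently, use the on-diagonal lemma with exponent $\delta_3/2$.

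This is the paper's proof. It additionally uses the ballot bound over all $s$ to retain the Gaussian factor $e^{-h^2/(cn)}$.
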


\begin{proof}
    We show the stronger result that for some $c_{12} >0$ and $x_2 = h, h \ge 0$
    \begin{equation}
		p_n^\omega (x, y) \le c_{12} n^{-\frac{3-\gamma}{4}} e^{-\frac{h^2}{c_{12} n}},
	\end{equation}
    uniformly over the rectangle $\mathcal{R}_N(N^{2/(\gamma+1)})$ and $n \ge N^{\delta_3}$.
    The proof follows the lines of \cite[Lemma~4.9]{BPS}, we give the main steps for completeness but refer to the latter for more details.

    We write $h(x_1) = \lfloor H_{x_1}^\omega  \rfloor$ for the height of the tooth based at $x_1$. Let $T_a^b$ be the hitting time of $a$ by a simple random walk on an interval $\llbracket a, b \rrbracket \cap \mathbb{Z}$. By \cite[(4.9)]{BPS}, employing a standard ballot theorem on an interval of $\mathbb{Z}$, one obtains that for $c>0$
    \begin{equation}\label{eq:Ballot}
        P^{\mathbb{Z}}_h\left( T_0^{h(x_1)} = s \right) \le c \frac{h}{s^{3/2}} e^{-\frac{h^2}{cs}} + c\frac{2h(x_1) - h}{s^{3/2}}e^{-\frac{(2h(x_1) - h)^2}{cs}}, \qquad s \in \mathbb{N}.
    \end{equation}
    By reversibility and \eqref{eqn:CauchySchwarz} we obtain that 
    \begin{equation}\label{eq:SplitFurther}
        p_n^\omega (x, y) = \sum_{s = 1}^{n - 1} P^{\mathbb{Z}}_h\left( T_0^{h(x_1)} = s \right) p_{n-s}^\omega (x, y).
    \end{equation}
    Observe that when $(n-s) \le N^{\delta_3} \le n^{1/2}$, where we set $\delta_4 = 2 \delta_3$, one has $s \ge n/2$ and hence $s^{-3/2} \le 4 n^{-3/2}$. We can split \eqref{eq:SplitFurther} into
    \begin{equation}
        \sum_{s = 1}^{n -2\lfloor n^{1/2} \rfloor} P^{\mathbb{Z}}_h\left( T_0^{h(x_1)} = s \right) c_{10} (n-s)^{-\frac{3-\gamma}{4}} +  \sum_{s = n -2\lfloor n^{1/2} \rfloor+1}^{n} P^{\mathbb{Z}}_h\left( T_0^{h(x_1)} = s \right) c_{10} (n-s)^{-1/2}.
    \end{equation}
    Inserting \eqref{eq:Ballot} into the right-hand side of the last display we obtain
    \begin{equation}
    \begin{split}
        & \sum_{s = n -2\lfloor n^{1/2} \rfloor+1}^{n} P^{\mathbb{Z}}_h\left( T_0^{h(x_1)} = s \right) c_{10} (n-s)^{-1/2}  \\
        & \qquad \qquad \le c h e^{-\frac{h^2}{ct}} \frac{1}{n^{5/4}} + c (2h(x) - h) e^{-\frac{(2h(x) - h)^2}{cn}} \frac{1}{n^{5/4}}.
    \end{split}
    \end{equation}
    
    Following the steps of \cite[Lemma~4.9]{BPS} we also have the bound
    \begin{equation}\label{eq:Dominate}
    \begin{split}
        &\sum_{s = 1}^{n - 1} P^{\mathbb{Z}}_h\left( T_0^{h(x_1)} = s \right) c_{10} (n-s)^{-\frac{3-\gamma}{4}}  \\
        & \qquad \qquad \le c h e^{-\frac{h^2}{ct}} \frac{1}{\sqrt{n} n^{\frac{3-\gamma}{4}}} + c (2h(x) - h) e^{-\frac{(2h(x) - h)^2}{cn}} \frac{1}{\sqrt{n} n^{\frac{3-\gamma}{4}}}. 
    \end{split}
    \end{equation}
    Observe that for all $\gamma>0$, we one has $\sqrt{n} n^{\frac{3-\gamma}{4}} \le n^{5/4}$. Hence, we only need to bound the right-hand side of \eqref{eq:Dominate}. To conclude one observes that $2h(x) - h \ge h$ and inserts in the last display the inequality $x e^{-\frac{x^2}{ct}} \le c' \sqrt{n}e^{-\frac{x^2}{cn}}$.
\end{proof}

\subsection{Second moment method}\label{sect:SecondMomentRandom}

We define a random variable counting the number of collisions inside an appropriately scaled rectangle. Let the two independent copies of the walk be $X^1 = (U^1, V^1)$, $X^2 = (U^2, V^2)$ defined as in Section~\ref{sec:Regularly-growing-comb}. Recall from \eqref{eq:Sets-on-Comb} that $\mathcal{R}^y_{N}(h) = \{v = (x, \ell) \colon |x-y|< N, \ell \le h \}$ furthermore, letting $\theta_N \coloneqq \theta_N^{X^1} \wedge \theta_N^{X^2}$ with $\theta_N^{X^j} \coloneqq \inf\{k \ge 0 \colon X^j_k \not \in \mathcal{R}_N(N^{2/(\gamma+1)})\}, j = 1, 2$, we set
\begin{equation}
	\mathcal{C}_n \coloneqq \left\{X_n^1 = X_n^2, n < \theta_{\widetilde{b}N}, U_n^1 \in \llbracket -N, N \rrbracket, V_n^1 = 0 \right\}.
\end{equation}
We now define
\begin{equation}
	\mathcal{H}_1^{N} \coloneqq \sum_{k =1}^T \mathds{1}_{\mathcal{C}_k},
\end{equation}
with $T\coloneqq \lfloor c_{13} N^{4/(1+\gamma)} \rfloor$ for some $c_{13} > 0$. Next, we show a lower bound for the first moment of the random variable $\mathcal{H}_1^{N}$.

\begin{lem}\label{lem:LBCollExp}
	There exists a constant $c_{14}>0$ such that for $\mathbf{P}$-a.e.~$\omega \in \Omega$, for all $N \ge N_1(\omega)$ we have that uniformly over all $v, w \in \bar{\mathcal{R}}_N(N^{2/(\gamma+1)})$  with $d(v, w)\in 2 \mathbb{N}_0$ for $\ell \le N/2$ 
	\begin{equation}
		E^\omega_{v, w}[\mathcal{H}_1^{N}] \ge c_{14}  N^{\frac{3\gamma-1}{\gamma +1}}.
	\end{equation}
\end{lem}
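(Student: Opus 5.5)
The plan is to write the first moment as a sum of products of killed heat kernels and then bound each factor from below by Lemma~\ref{lem:LBHKOffDiag}. Set $B \coloneqq \mathcal{R}_{\widetilde b N}((\widetilde bN)^{2/(\gamma+1)})$, where $\widetilde b$ is taken from Lemma~\ref{lem:LBHKOffDiag}. By independence of the two walks,
\begin{equation*}
E^\omega_{v,w}[\mathcal{H}^N_1] \ge \sum_{k=1}^{T}\sum_{z \in \llbracket -N,N\rrbracket} P^\omega_v\big(X_k=(z,0), \tau_{B^c}>k\big)\,P^\omega_w\big(X_k=(z,0), \tau_{B^c}>k\big) \ge \sum_{k=1}^{T}\sum_{|z|\le N} p^{\omega,B}_k(v,(z,0))\,p^{\omega,B}_k(w,(z,0)),
\end{equation*}
using $\mathrm{deg}\ge 1$. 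This requires the event $\{n<\theta_{\widetilde bN}\}$ to be read as both walks staying in $B$; if $\theta$ is defined with the unscaled rectangle, I would simply match the killing set to the one used in Lemma~\ref{lem:LBHKOffDiag}. From here on I keep only the summands with $|z|\le N/2$ (this is the restriction $\ell\le N/2$ in the statement) and with $k$ in the window $\llbracket c_l N^{4/(\gamma+1)}, c_u N^{4/(\gamma+1)}\rrbracket$. I also choose $c_{13}\ge c_u$, so that $T$ covers this window.

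Next I apply Lemma~\ref{lem:LBHKOffDiag} to each factor. Any $(z,0)$ with $|z|\le N$ lies in $\mathcal{R}_N(N)$, and $v,w\in\bar{\mathcal{R}}_N(N^{2/(\gamma+1)})$, so for $k$ in the window with $k-d(v,(z,0))\in 2\mathbb{N}_0$ the lemma gives $p_k^{\omega,B}(v,(z,0))\ge c_9 k^{-(3-\gamma)/4}\ge c N^{-(3-\gamma)/(\gamma+1)}$, and the same bound for $w$. Because $d(v,w)$ is even, $d(v,(z,0))$ and $d(w,(z,0))$ have the same parity, so both parity constraints hold at once for every second value of $k$. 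The window therefore contributes at least $c N^{4/(\gamma+1)}$ admissible times $k$ and at least $N/2$ admissible sites $z$. Combining these,
\begin{equation*}
E^\omega_{v,w}[\mathcal{H}^N_1]\ge c\, N^{\frac{4}{\gamma+1}}\cdot N\cdot N^{-\frac{2(3-\gamma)}{\gamma+1}} = c\,N^{\frac{4+\gamma+1-6+2\gamma}{\gamma+1}} = c\,N^{\frac{3\gamma-1}{\gamma+1}},
\end{equation*}
which is the claim. The bound holds for $\mathbf{P}$-a.e.\ $\omega$ and all $N\ge N_2(\omega)$, so the threshold $N_1(\omega)$ in the statement should be replaced by $N_1\vee N_2$, or $N_2$ should be understood as already dominating $N_1$.

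The computation itself is routine. The step I expect to need the most care is checking that Lemma~\ref{lem:LBHKOffDiag} applies with exactly the killing set appearing in $\mathcal{C}_n$, uniformly in $v$ (including points of the boundary $\partial\mathcal{R}_N$) and in every target $(z,0)$. In particular, the heights $v_2$ can be as large as about $N^{2/(\gamma+1)}$, which is where the $\bar{\mathcal{R}}$ version of the lemma is essential. If the killing sets do not match, I would enlarge $\widetilde b$ in the definition of $\theta$, which is harmless because $\widetilde b$ is a free constant, and adjust $c_{13}$ accordingly.
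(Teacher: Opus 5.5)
Your proposal is correct and follows the same route as the paper: restrict the first moment to times in $\llbracket c_l N^{4/(\gamma+1)}, c_u N^{4/(\gamma+1)}\rrbracket$, bound both killed heat kernels from below via Lemma~\ref{lem:LBHKOffDiag}, and count roughly $N^{4/(\gamma+1)}$ times and $N$ sites. The points you add are details the paper leaves implicit: the parity argument via bipartiteness, the choice $c_{13}\ge c_u$, and using the threshold $N_2(\omega)$ rather than $N_1(\omega)$. Your worry about the killing set is unnecessary, since by definition $\theta_{\widetilde b N}$ is exactly the exit time of $\mathcal{R}_{\widetilde bN}((\widetilde bN)^{2/(\gamma+1)})$, which is the set used in Lemma~\ref{lem:LBHKOffDiag}.
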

\begin{proof}
	We apply Lemma~\ref{lem:LBHKOffDiag}. Indeed, by summing the heat kernel between times $\lfloor c_l N^{\frac{4}{\gamma+1}}\rfloor$ and $\lfloor c_u N^{\frac{4}{\gamma+1}}\rfloor$ we obtain, under the hypotheses of Lemma~\ref{lem:LBHKOffDiag} that
    \begin{equation}
        \begin{split}
        E^\omega_{v, w}[\mathcal{H}_1^{N}] &\ge \sum_{n = \lfloor c_l N^{\frac{4}{\gamma+1}}\rfloor}^{\lfloor c_u N^{\frac{4}{\gamma+1}}\rfloor} \sum_{x \in \llbracket -N, N\rrbracket} p_n^\omega(v, (x, 0))p_n^\omega(w, (x, 0)) \ge \sum_{n = \lfloor c_l N^{\frac{4}{\gamma+1}}\rfloor}^{\lfloor c_u N^{\frac{4}{\gamma+1}}\rfloor}  c_9^2 \sum_{x \in \llbracket -N, N\rrbracket} N^{-2\frac{3 - \gamma}{\gamma+1}}\\
        & \ge c_{14} N^{\frac{4}{\gamma+1} + 1 -\frac{6- 2\gamma}{\gamma+1}} = c_{14} N^{\frac{3\gamma-1}{\gamma +1}}.
        \end{split}
    \end{equation}
    The proof is concluded.
\end{proof}

We now give the corresponding upper bound on the second moment for $\mathcal{H}_1^{N}$.

\begin{prop}\label{prop:UBCollSecondMoment}
	For all $\delta_5>0$, there exists a constant $C_{15}$ such that for $\mathbf{P}$-a.e.~$\omega \in \Omega$, for all $N \ge N_3(\omega)$ large enough we have, for all $v, w \in \bar{\mathcal{R}}_N(N^{2/(\gamma+1)})$,
	\begin{equation}\label{eq:SMUB}
		E^\omega_{v, w}\left[(\mathcal{H}_1^{N})^2\right] \le C_{15} \left( N^{\delta_5} + E_{v, w}[\mathcal{H}_1^{N}]^2 \right).
	\end{equation}
\end{prop}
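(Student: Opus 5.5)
We expand the square and apply the Markov property at the earlier collision time:
\begin{equation*}
E^\omega_{v,w}\big[(\mathcal{H}_1^N)^2\big] \le 2\sum_{1\le j\le k\le T} P^\omega_{v,w}(\mathcal{C}_j\cap\mathcal{C}_k)
\le 2\sum_{j=1}^T\sum_{z\in\llbracket -N,N\rrbracket} P^\omega_{v,w}\big(\mathcal{C}_j,\,X^1_j=(z,0)\big)\,E^\omega_{(z,0),(z,0)}\big[\mathcal{H}_1^N\big].
\end{equation*}
Here we used that on $\mathcal{C}_j$ both walks sit at a common base point $(z,0)$. We also used that the remaining count is dominated by the number of collisions on the axis within $\llbracket -N,N\rrbracket$ before time $T$, ignoring the killing. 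Summing the first factor over $j$ and $z$ gives exactly $E^\omega_{v,w}[\mathcal{H}_1^N]$. Hence it suffices to show, uniformly in $z\in\llbracket -N,N\rrbracket$,
\begin{equation*}
\sum_{m=0}^{T}\sum_{y\in\llbracket -N,N\rrbracket} p_m^\omega((z,0),(y,0))^2\le C\big(N^{\delta_5}+N^{\frac{3\gamma-1}{\gamma+1}}\big).
\end{equation*}
The right-hand side is at most $C'(N^{\delta_5}+E_{v,w}[\mathcal{H}_1^N])$ by Lemma~\ref{lem:LBCollExp}. Using $ab\le a^2+b^2$ we then get $2E[\mathcal{H}]\,(N^{\delta_5}+E[\mathcal{H}])\le C(N^{2\delta_5}+E[\mathcal{H}]^2)$, and renaming $\delta_5$ yields \eqref{eq:SMUB}.

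To bound the sum over $(m,y)$, we split the times into $m< N^{\delta_3}$ and $m\ge N^{\delta_3}$. For $m<N^{\delta_3}$ we use $\sum_y p_m(z,y)^2\le \max_y p_m(z,y)\cdot\sum_y p_m(z,y)\le 1$, so these times contribute at most $N^{\delta_3}$, which is of order $N^{\delta_5}$. For $m\ge N^{\delta_3}$ we split the points $y$ into $|y-z|< m^{(\gamma+1)/4}$ and $|y-z|=k\ge m^{(\gamma+1)/4}$.

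For the near points, Lemma~\ref{lem:UBHKInsideAllPoints} (or Lemma~\ref{lem:OnDiagonalHKUB} combined with the Cauchy--Schwarz bound \eqref{eqn:CauchySchwarz}) gives $p_m\le c\,m^{-(3-\gamma)/4}$. There are at most $2m^{(\gamma+1)/4}+1$ such $y$, so the contribution at time $m$ is at most $C m^{(\gamma+1)/4-(3-\gamma)/2}=Cm^{(3\gamma-5)/4}$.

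For the far points, Proposition~\ref{prop:HKUBOutsideBox} gives $p_m\le c_{11}k^{-(3-\gamma)/(\gamma+1)}$. Squaring and summing over $k\ge m^{(\gamma+1)/4}$ gives a convergent sum, since $2(3-\gamma)/(\gamma+1)>1$ for $\gamma<1$. It is dominated by its first term, of order $m^{(\gamma+1)/4}\cdot m^{-(3-\gamma)/2}$, the same as the near contribution.

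It remains to sum $m^{(3\gamma-5)/4}$ over $m\le T\asymp N^{4/(\gamma+1)}$. If $\gamma>1/3$, the exponent exceeds $-1$ and the sum is of order $T^{(3\gamma-1)/4}=N^{(3\gamma-1)/(\gamma+1)}$, matching the first moment. If $\gamma\le 1/3$ it is $O(\log N)\le N^{\delta_5}$. Both cases are covered by the right-hand side of the displayed estimate.

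The main obstacle is the uniformity of the heat kernel bounds. They hold only for $N\ge N_2(\omega)$, $m\ge N^{\delta_3}$, and base points in $\llbracket -N,N\rrbracket$. The walks may leave $D_N$ before time $T$; we discard that part by noting that $\mathcal{C}_k$ requires $U^1_k\in\llbracket -N,N\rrbracket$, so only $y$ in this range enters. If necessary we apply the estimates with $\widetilde bN$ in place of $N$, which is harmless since the constants are uniform. A further difficulty is the parity and laziness issue in \eqref{eqn:CauchySchwarz}, where the bound applies to $p_{2\lfloor m/2\rfloor}$. I would handle it as in \cite[Corollary~4.8]{BPS}, by comparing consecutive times using the bounded degree.
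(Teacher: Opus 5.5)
Your proposal is correct and follows essentially the same route as the paper. The Markov property at the earlier collision time reduces everything to a uniform-in-$z$ bound on $\sum_{m\le T}\sum_{|y|\le N} p_m^\omega((z,0),(y,0))^2$. You then split into near and far points via Lemma~\ref{lem:UBHKInsideAllPoints} and Proposition~\ref{prop:HKUBOutsideBox}, obtain $\sum_{m\le T} m^{(3\gamma-5)/4}\asymp N^{(3\gamma-1)/(\gamma+1)}$, and compare this with Lemma~\ref{lem:LBCollExp}, exactly as the paper does.

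The differences are minor. You bound the short times trivially (at most one collision per step) and keep the $N^{\delta_5}$ term explicitly, instead of absorbing it using $\gamma>1/3$. You also apply the Markov property to the whole future count, so the supremum over $z$ sits outside the time sum. That is the correct order; the paper's display interchanges them, which is harmless there because its bounds are uniform in $z$.
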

\begin{proof}
    By standard manipulations one obtains
    \begin{equation}
        E_{v, w}^\omega\left[(\mathcal{H}_1^{N})^2\right] = E^\omega_{v, w}\left[\mathcal{H}_1^{N}\right] + 2\sum_{n = 1}^{T} \sum_{k = n+1}^T P^\omega_{v, w} \left( \mathcal{C}_n \cap \mathcal{C}_{k} \right).
    \end{equation}
    Since the first term $E^\omega_{v, w}\left[\mathcal{H}_1^{N}\right]$ is always smaller than the right-hand side of \eqref{eq:SMUB}, we focus on the second term. 

    We observe that by the Markov property
    \begin{equation}
        P^\omega_{v, w} \left( \mathcal{C}_n \cap \mathcal{C}_{k} \right) \le P^\omega_{v, w} \left( \mathcal{C}_n\right) \sup_{z \in \llbracket -N, N \rrbracket} P^\omega_{z, z} \left(\mathcal{C}_{k-n} \right).
    \end{equation} 
    We further notice that 
    \begin{equation}
        E^\omega_{v, w}\left[\mathcal{H}_1^{N}\right] = \sum_{n = 1}^{T} P^\omega_{v, w} \left( \mathcal{C}_n\right),
    \end{equation}
    so it will be sufficient to show that 
    \begin{equation}
        \sum_{k = n+1}^T \sup_{z \in \llbracket -N, N \rrbracket} P^\omega_{z, z} \left(\mathcal{C}_{k-n} \right) \le \frac{c_{16}}{2}E^\omega_{v, w}\left[\mathcal{H}_1^{N}\right] \le \frac{c_{16}}{2} N^{\frac{3\gamma-1}{\gamma +1}},
    \end{equation}
    to conclude. We will now show that this estimate holds.

    Firstly, we observe that
    \begin{equation}
        \sum_{k = n+1}^T \sup_{z \in \llbracket -N, N \rrbracket} P^\omega_{z, z} \left(\mathcal{C}_{k-n} \right) \le \sum_{k = 1}^T \sup_{z \in \llbracket -N, N \rrbracket} P^\omega_{z, z} \left(\mathcal{C}_{k} \right) \le \sup_{z \in \llbracket -N, N \rrbracket}  \sum_{k = 1}^T \sum_{y \in \llbracket -N, N \rrbracket} p_{k}^\omega(z, (y, 0))^2.
    \end{equation}
    We split the sum according to Lemma \ref{lem:UBHKInsideAllPoints} into
    \begin{equation}
         \sum_{k = 1}^T \sum_{y \in  \llbracket -N, N \rrbracket} p_{k}^\omega(z, (y, 0))^2 \le  \sum_{k =1}^{\lfloor N^{4\delta_4}\rfloor} \sum_{y \in  \llbracket -N, N \rrbracket} p_{k}^\omega(z, (y, 0))^2 + \sum_{k = \lfloor N^{4\delta_4}\rfloor+1}^T \sum_{y \in  \llbracket -N, N \rrbracket} p_{k}^\omega(z, (y, 0))^2.
    \end{equation}
    We control the first summand. To that end, note that for any $\omega \in \Omega$, $k \in \mathbb{N}$, $p_{k}^\omega(z, (y, 0))^2 \le c_{17}k^{-1}$ (see, e.g.,~\cite[Corollary 4.4 (b)]{Barbook}), which yields 
    \begin{equation}
        \sum_{k =1}^{\lfloor N^{4\delta_4}\rfloor} \sum_{y \in \llbracket-N^{4\delta_4}, N^{4\delta_4}\rrbracket} p_{k}^\omega(z, (y, 0))^2 \le c_{17}N^{8\delta_4}.
    \end{equation}
    We will fix $\delta_4>0$ so that $c_{17}N^{8\delta_4} \le c_{16} N^{\frac{3\gamma-1}{\gamma +1}}/2$, which is always possible the parameter range $\gamma>1/3$. Note that the bound is uniform over all $z \in  \llbracket -N, N \rrbracket$.

    Hence, we turn to the remaining quantity
    \begin{equation}
        \sum_{k = \lfloor N^{4\delta_4}\rfloor}^T \sum_{y \in \llbracket -N, N \rrbracket} p_{k}^\omega(z, (y, 0))^2.
    \end{equation}
    Since $k \ge \lfloor N^{4\delta_4}\rfloor$, we have that $k^{(\gamma+1)/4} \ge N^\delta_4$. Hence, setting 
    \[I_z^k \coloneqq \llbracket-z - k^{(\gamma+1)/4}, z + k^{(\gamma+1)/4}\rrbracket,\]
    we further split the sum
    \begin{equation}
        \sum_{k = \lfloor N^{4\delta_4}\rfloor}^T \sum_{y \in \llbracket-N, N\rrbracket} p_{k}^\omega(z, (y, 0))^2 = \sum_{k = \lfloor N^{4\delta_4}\rfloor}^T \sum_{y \in I_z^k} p_{k}^\omega(z, (y, 0))^2 + \sum_{k = N^{4\varepsilon}}^T \sum_{y \in \llbracket-N, N\rrbracket \setminus I_z^k} p_{k}^\omega(z, (y, 0))^2.
    \end{equation}
    For the first term, we apply the upper bound at Lemma~\ref{lem:UBHKInsideAllPoints} and obtain
    \begin{equation}
    \begin{split}
        \sum_{k = \lfloor N^{4\delta_4}\rfloor}^T \sum_{y \in I_z^k} p_{k}^\omega(z, (y, 0))^2 &\le \sum_{k = \lfloor N^{4\delta_4}\rfloor}^T \sum_{y \in I_z^k} c_{12}^2 k^{-\frac{6 - 2\gamma}{4}}\\
        &\le 2 c_{12}^2\sum_{k = \lfloor N^{4\delta_4}\rfloor}^T k^{\frac{\gamma+1}{4}}  k^{-\frac{6 - 2\gamma}{4}} \\
        & \le 2 c_{12}^2 \sum_{k = \lfloor N^{4\delta_4}\rfloor}^T  k^{-\frac{5 - 3\gamma}{4}} \le \frac{c_{16}}{16} N^{\frac{3\gamma-1}{\gamma+1}},
            \end{split}
    \end{equation}
    where we adjust $c_{16}$ depending on all preceding constants but independently of $N$.

    Finally, we deal with the term 
    \begin{equation}
        \sum_{k = \lfloor N^{4\delta_4}\rfloor}^T \sum_{y \in \llbracket-N, N\rrbracket \setminus I_z^k} p_{k}^\omega(z, (y, 0))^2.
    \end{equation}
    We aim to apply Proposition~\ref{prop:HKUBOutsideBox}. We re-parametrize the points in terms of their distance $\ell$ from $z$, noting that for each $\ell > k^{\frac{\gamma+1}{4}}$ there are exactly two associated points. Hence, we obtain
    \begin{equation}
    \begin{split}
        \sum_{y \in \llbracket-N, N\rrbracket \setminus I_z^k} p_{k}^\omega(z, (y, 0))^2 &\le 2c_{11}^2\sum_{\ell = \lfloor k^{\frac{\gamma+1}{4}}\rfloor }^N \ell^{-2\frac{3-\gamma}{\gamma+1}} \\
        &\le \frac{c_{16}}{16} \left[ \ell^{\frac{-6+2\gamma}{\gamma+1} + 1}\right]_{k^{\frac{\gamma+1}{4}}}^N \le \frac{c_{16}}{16} k^{\frac{-5+3\gamma}{4}},
            \end{split}
    \end{equation}
    where the last bound uses that $\frac{-6+2\gamma}{\gamma+1} + 1 < 0$ for $\gamma  \in (1/3,1)$. We are left with
    \begin{equation}
        \sum_{k = \lfloor N^{4\delta_4}\rfloor}^T \frac{c_{16}}{16} k^{\frac{-5+3\gamma}{4}} \le \frac{c_{16}}{8} N^{\frac{3\gamma-1}{\gamma+1}},
    \end{equation}
    with the same bound we already employed and possibly adjusting $c_{16}$ again.

\end{proof}

\subsection{Conclusion for $\gamma \in (1/3, 1)$}
\label{subsec:gamma-greater-1/3}

We now prove the first part of our main result. 

\begin{proof}[Proof of Theorem~\ref{theo:MainRandomComb}-\emph{(1)}, $\gamma \in (1/3,1)$.]
We observe that by definition of $\mathcal{H}_1^N $ and the Paley-Zygmund inequality, uniformly over all $x, y \in \bar{\mathcal{R}}_N(N^{\frac{2}{\gamma+1}})$ with $d(x, y) \in 2 \mathbb{N}_0$, there exists $\rho>0$ such that for $\mathbf{P}$-a.e.~$\omega \in \Omega$, for all $N \geq N_1(\omega) \vee N_2(\omega)$
\begin{equation}
\begin{split}
\label{eq:Unif-lower-bound-random-case}
        P^\omega_{x, y} \left(\left\{ \exists n < \theta_{\widetilde{b}N} \colon X_n^1 = X_n^2\right\}\right) &\ge \frac{E_{x, y}^\omega[\mathcal{H}_1^N]^2}{E_{x, y}^\omega[(\mathcal{H}_1^N)^2]}\\
    &\ge \frac{c_{14}^2}{C_{15}} \ge \rho > 0.
    \end{split}
\end{equation}
We now define the sequence of events 
\begin{equation}
    \mathrm{Col}_m \coloneqq \left\{ \exists n \in [\theta_{\widetilde{b}^m}, \theta_{\widetilde{b}^{m+1}}) \cap \mathbb{N}_0 \colon X_n^1 = X_n^2\right\}.
\end{equation}
By the strong Markov property and the uniformity over the starting points in~\eqref{eq:Unif-lower-bound-random-case}, we obtain
\begin{equation}
    P^\omega_{0, 0} \left(\mathrm{Col}_m \mid \mathcal{F}_{\theta_{\widetilde{b}^m}}\right) \ge \min_{x \in \partial \mathcal{R}_{\widetilde{b}^m}(\widetilde{b}^{m2/(1+\gamma)})} \min_{\substack{y \in \bar{\mathcal{R}}_{\widetilde{b}^m}(\widetilde{b}^{2m/(1+\gamma)})  \\
         d(x, y) \in 2 \mathbb{N}_0 }} P^\omega_{x, y}\left(\mathcal{H}_1^{\widetilde{b}^{m}} \ge 1\right)\ge \rho,
\end{equation}
where $\mathcal{F}^2_{\theta_{\widetilde{b}^m}}$ is the sigma field generated by the two random walks up to the random time $\theta_{\widetilde{b}^m}$. By a standard conditional Borel-Cantelli argument this yields
\begin{equation}
    P^\omega_{0, 0} \left(\mathrm{Col}_m \text{ infinitely often}\right) = 1.
\end{equation}
We have thus established the infinite collision property.
\end{proof}

\subsection{Conclusion for $\gamma = 1$}\label{Sect:Gamma1}

This case is covered much more directly using the following lemma due to \cite{Klass1984}. We provide a proof for completeness.

\begin{lem}\label{lem:MaxSmall}
    Let $(X_n)_{n \in \mathbb{N}_0}$ be an~i.i.d.\ sequence of random variables on some probability space $(S,\mathcal{S},\mathbf{Q})$ such that for some $C>0$
    \begin{equation}
        \mathbf{Q}(X_1 \ge t) \sim \frac{C}{t}.
    \end{equation}
    For $M_n \coloneqq \max\{X_1, \dots, X_n\}$, one has 
    \begin{equation}
        \mathbf{Q}\left(\liminf_{n \to \infty} \frac{M_n}{n} \le 1 \right) = 1.
    \end{equation}
\end{lem}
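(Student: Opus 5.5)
We will show that the events $\{M_{n_k} \le n_k\}$ occur infinitely often along a suitable rapidly growing deterministic subsequence $(n_k)_{k\ge1}$. Since these events are not independent, we construct nearly independent substitutes and apply the second Borel--Cantelli lemma.

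Take $n_k \coloneqq \lfloor e^{k^2}\rfloor$, or more generally any sequence with $n_{k-1}/n_k \to 0$ sufficiently fast. Define
\[
A_k \coloneqq \Big\{ \max_{n_{k-1} < j \le n_k} X_j \le n_k \Big\}, \qquad B_k \coloneqq \Big\{ \max_{1 \le j \le n_{k-1}} X_j \le n_k \Big\}.
\]
Then $\{M_{n_k}\le n_k\} = A_k \cap B_k$. The events $(A_k)_{k\ge 1}$ involve disjoint blocks of the i.i.d.\ sequence and are therefore independent.

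First, we compute $\mathbf{Q}(A_k)$. By the tail assumption, $\mathbf{Q}(X_1 > n_k) = (C+o(1))/n_k$. Hence
\[
\mathbf{Q}(A_k) = \big(1 - (C+o(1))/n_k\big)^{n_k - n_{k-1}} \longrightarrow e^{-C} > 0,
\]
so $\sum_k \mathbf{Q}(A_k) = \infty$. By the second Borel--Cantelli lemma, $A_k$ occurs infinitely often almost surely.

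Second, we show that $B_k^c$ occurs only finitely often. A union bound gives
\[
\mathbf{Q}(B_k^c) \le n_{k-1}\,\mathbf{Q}(X_1 > n_k) \le 2C\, n_{k-1}/n_k
\]
for $k$ large. For $n_k=\lfloor e^{k^2}\rfloor$ we have $n_{k-1}/n_k \approx e^{-(2k-1)}$, which is summable. The first Borel--Cantelli lemma therefore yields that $B_k$ holds for all large $k$, almost surely.

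Combining the two steps, $A_k\cap B_k = \{M_{n_k}\le n_k\}$ holds for infinitely many $k$ almost surely. Along these $k$ we have $M_{n_k}/n_k \le 1$, so $\liminf_n M_n/n \le 1$ almost surely.

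The only delicate point is the choice of $(n_k)$. It must grow fast enough that $n_{k-1}/n_k$ is summable, so that the dependence through the early block is negligible. At the same time, the probabilities $\mathbf{Q}(A_k)$ must stay bounded below; this holds automatically, because they converge to $e^{-C}$ whenever $n_{k-1}=o(n_k)$. The $o(1)$ correction in the tail asymptotic is harmless, since it only enters inside the limit $e^{-C}$ and inside a constant factor in the union bound.
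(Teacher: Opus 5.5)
Your proof is correct and follows the same block and Borel--Cantelli scheme as the paper: the block events $A_k$ are independent with $\mathbf{Q}(A_k)$ bounded away from zero, and summability of $n_{k-1}/n_k$ makes the initial segment of length $n_{k-1}$ eventually irrelevant. The only difference is in handling that initial segment. The paper uses the exchangeability identity $\mathbf{Q}\bigl(\max_{i\le a_{k-1}}X_i \ge \max_{a_{k-1}<i\le a_k}X_i\bigr) = a_{k-1}/a_k$, which is exact only in the absence of ties. Your union bound against the fixed threshold $n_k$ uses the tail assumption once more but avoids the tie issue altogether.
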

\begin{proof}
    We choose a sequence $(a_n)_{n \ge 0} \subseteq (0,\infty)$ such that (by symmetry)
    \begin{equation}
        \mathbf{Q}\left( \max_{i \le a_{n-1}} X_i \ge  \max_{i \in \llbracket a_{n-1} + 1, a_n\rrbracket} X_i \right) = \frac{a_{n-1}}{a_n}
    \end{equation}
    is summable. By the Borel-Cantelli lemma, the event inside the probability $\mathbf{Q}$-a.s.~occurs finitely many times.
    For the same sequence $(a_n)_{n \ge 0}$ we have that
    \begin{equation}
        \mathbf{Q}\left( \max_{i \in \llbracket a_{n-1} + 1, a_n\rrbracket} X_i \le a_n \right) \ge \left( 1 - \frac{C}{2a_n}\right)^{a_n - a_{n-1}} \to e^{-C/2}, \qquad \text{ as }n \to \infty.
    \end{equation}
    The second Borel-Cantelli lemma implies that
    \begin{equation}
        \mathbf{Q}\left(\max_{i \in \llbracket a_{n-1} + 1, a_n\rrbracket} X_i \le a_n \text{ infinitely often}\right) = 1,
    \end{equation}
    and the claim follows. 
\end{proof}
\begin{proof}[Proof of Theorem~\ref{theo:MainRandomComb}-\emph{(1)}, $\gamma=1$]
    The result follows from Lemma~\ref{lem:MaxSmall} and the Green's kernel criterion \eqref{eq:GKC} applied to the random subsequence $(n_k(\omega))_{k \in \mathbb{N}_0}$ (which exists on an event of $\textbf{P}$-probability one) along which $\max\{H_1^\omega,...,H_{n_k}^\omega\} \le n_k(\omega)$. Indeed, on the subsequence $(n_k(\omega))_{k \in \mathbb{N}_0}$, for any point $y \in D_{n_k}$ by \cite[Theorem 2.6]{Barbook} we have
\begin{equation}
   g_{D_r}(y,y) = R_{\mathrm{eff}}(y, D_{n_k}^c) \le d(y, D_{n_k}^c) \le 2n_k \le 4 g_{D_r}(0,0). \qedhere
\end{equation}
\end{proof}

\subsection{The finite collision regime}\label{sect:FiniteCollision}

Throughout this Subsection, we assume that $(K_x)_{x \in \mathbb{Z}}$ is a sequence of i.i.d.~random variables defined on some 
$(\Omega,\mathcal{G},\textbf{P})$ with
\begin{equation}
\label{eq:Tail-LB-Assumption}
    \mathbf{P}\left( K_0 > t \right) \ge Ct^{-\gamma},  \qquad t \in (0,\infty),
\end{equation}
for some $\gamma \in (0, 1/3)$. The main result of this Subsection is as follows. 

\begin{thm}\label{theo:FinitePrecise}
    For $\textnormal{\textbf{P}}$-a.e.~$\omega \in \Omega$, the comb graph $\mathrm{Comb}(\mathbb{Z},K^\omega)$ has the finite collision property.
\end{thm}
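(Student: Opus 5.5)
The plan is to use a first moment argument organised along a random sequence of very long teeth. Assumption~\eqref{eq:Tail-LB-Assumption} makes such teeth abundant. Each one traps a walk that enters it for a long time, and during that time the other walk has only a small chance to meet it. We will show that, for $\textbf{P}$-a.e.~$\omega$, the expected number of collisions is finite after a suitable random time. Combined with the $0$-$1$ law of \cite[Proposition 2.1]{BPS}, this gives the finite collision property.

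\textbf{Step 1: environment.} Fix $\gamma<1/3$ and a small $\epsilon>0$. By \eqref{eq:Tail-LB-Assumption} and Borel--Cantelli over dyadic scales $2^j$, for a.e.~$\omega$ and all large $j$ the following holds. Every interval of length $2^{j(\gamma+\epsilon)}$ inside $\llbracket -2^{j+1},2^{j+1}\rrbracket$ contains a site $x$ with $K_x\ge 2^{j}$. The failure probability is at most $2^{j}\exp(-c2^{j\epsilon})$, which is summable. Hence, whenever the horizontal coordinate of a walk travels a distance $2^{j(\gamma+\epsilon)}$ at scale $2^j$, it passes the base of a tooth of height at least $2^j$.

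\textbf{Step 2: the walks escape, and trapping is frequent.} The horizontal projection is a time change of simple random walk on $\mathbb{Z}$ (see the coupling in Lemma~\ref{lemma:ExitCombExp}). Each visit to the base of a tooth of height $2^j$ is followed, with probability about $2^{-j}$, by an excursion of length about $2^{2j}$. Using Step 1, we estimate how much time each walk spends horizontally in $\llbracket -2^{j+1},2^{j+1}\rrbracket$. We then bound the expected number of collisions in the space-time block where both walks live at scale $2^j$ by
\[
\sum_n \sum_{v} p^\omega_n(X^1_0,v)\,p^\omega_n(X^2_0,v)\le \sum_n \max_v p_n^\omega(v,v).
\]
The range of $n$ here is the time the walks spend at scale $2^j$. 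The aim is an on-diagonal upper bound of the form $p_n^\omega(v,v)\le n^{-\kappa}$ with $\kappa>1$ when $\gamma<1/3$, holding uniformly over the region visited.

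We derive this bound from \eqref{eq:Bound-HK-via-Green} with $A=D_r$. The resistance gives $g^{D_r}(v,v)\le r$ for $v$ on the backbone. Meanwhile $P_v(\tau_{D_r^c}\ge n)\ge 1/2$ holds once $n$ is of order $r^{2}\cdot r^{2/\gamma}$, since the volume of $D_r$ is then about $r^{1/\gamma}$ by the heavy tail. Solving for $r$ gives $p_n(v,v)\lesssim n^{-1+\gamma/(2+2\gamma)}$ on the backbone. For vertices up a tooth, the same ballot-type decomposition as in Lemma~\ref{lem:UBHKInsideAllPoints} applies.

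\textbf{Step 3 (expected obstacle): summability.} The bound from Step 2 alone decays too slowly; its sum is of order $n^{\gamma/(2+2\gamma)}$. So the first moment must use that most time is spent deep in long teeth, where the two walks are in different teeth. We therefore split collisions into two types. The first type occurs at height above $n^{1/2-\epsilon}$ in some tooth at time $n$. This needs both walks in the same long tooth. By independence and the volume fraction, its probability is bounded by the sum over teeth of the squared occupation, which is of order $n^{-1/2}\cdot n^{-1/2}\cdot(\#\text{relevant teeth})^{-1}$. The second type occurs near the backbone. There we use the exponential factor $e^{-h^2/cn}$ from Lemma~\ref{lem:UBHKInsideAllPoints}, which restricts to about $n^{1/2}\cdot r$ vertices, each with heat kernel squared.

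Balancing these with $r\approx n^{\gamma/(1+\gamma)}$ should yield a summable bound $n^{-1-\eta}$ exactly when $3\gamma<1$. This is the step I expect to be delicate, since the uniformity over the random environment must be secured by Step 1. Once we have $\sum_n P^\omega_{0,0}(X^1_n=X^2_n)<\infty$, Borel--Cantelli gives finitely many collisions $P^\omega_{0,0}$-a.s.
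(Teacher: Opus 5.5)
\textbf{The goal of your argument cannot be reached.} Your proof ends by establishing $\sum_n P^\omega_{0,0}(X^1_n=X^2_n)<\infty$ and applying Borel--Cantelli. This series diverges for every $\omega$ and every $\gamma$, so no refinement of Steps 2--3 can produce it. Indeed, $\mathrm{Comb}(\mathbb{Z},K^\omega)$ is a recurrent bipartite tree, and by reversibility
\[
P^\omega_0(X_{2n}=0)=\sum_{v}P^\omega_0(X_n=v)\,P^\omega_v(X_n=0)=\deg(0)\sum_v\frac{P^\omega_0(X_n=v)^2}{\deg(v)}\le 3\,P^\omega_{0,0}(X^1_n=X^2_n).
\]
Hence $\sum_n P^\omega_{0,0}(X^1_n=X^2_n)\ge \frac13\sum_n P^\omega_0(X_{2n}=0)=\infty$. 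The same computation from any two sites at even distance shows that the expected number of collisions after any stopping time is also infinite, so a random starting time does not help.

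In your decomposition, the divergence sits in the collisions high up in the teeth. Their probability at time $n$ is of order $n^{-1/2}m^{-1}$, where $m$ is the number of relevant teeth; your extra factor $n^{-1/2}$ is spurious. This is of order $n^{-(3-\gamma)/4}$, which is not summable.

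The missing idea is that collisions come in clusters. After a meeting at height $h\in\llbracket \ell,2\ell\rrbracket$ inside a tooth, the walks meet of order $\ell$ more times before leaving $\llbracket \ell/2,4\ell\rrbracket$, by a Green's function estimate on a segment. The paper therefore works with the event $A_{n,\ell}$ of at least one collision at times in $\llbracket n,2n\rrbracket$ and heights in $\llbracket \ell,2\ell\rrbracket$. It bounds its probability by
\[
\frac{E^\omega_{0,0}[W_{n,\ell}]}{E^\omega_{0,0}[W_{n,\ell}\mid A_{n,\ell}]}\lesssim \frac{\ell\, n^{-\alpha_3}}{\ell^{\eta}}
\]
(Proposition~\ref{cor:ExpectationUBFromOrigin}). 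It then sums over dyadic $n$ and $\ell\lesssim n^{1/(2\eta)}$, and discards heights above $Cn^{1/2}\log n$ using the Gaussian factor in the heat kernel. Dividing by the cluster size is exactly what turns your divergent first moment into a convergent series.

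\textbf{Step 2 also fails.} A large volume of $D_r$ gives only a large \emph{expected} exit time, via the commute time identity. That expectation is dominated by rare deep excursions. It does not give $P^\omega_v(\tau_{D_r^c}\ge n)\ge 1/2$ for $n\asymp r^{2+2/\gamma}$. The walk makes about $r^2$ visits to bases in $D_r$, and with high probability it never reaches the top of the tallest tooth, whose height is about $r^{1/\gamma}$. When the tail is exactly $Cz^{-\gamma}$, the exit time is of order $r^{4/(1+\gamma)}\ll r^{2+2/\gamma}$; this is the space-time scaling of Section~\ref{sec:Double-collisions-random}. Consequently your bound $p_n(v,v)\lesssim n^{-1+\gamma/(2+2\gamma)}$ is strictly smaller than the true order $n^{-(3-\gamma)/4}$ given by Lemma~\ref{lem:LowerBoundDiagonalHK}.

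Under the mere lower bound \eqref{eq:Tail-LB-Assumption}, the paper proceeds differently. It uses the density of teeth of height $n^{3/2+\varepsilon/12}$ to get exit times from $D^x_n$ of order at least $n^{3+\delta}$ (Lemma~\ref{lemma:ExitCombExpHeavy}). This requires $2/(1+\gamma)>3/2$, i.e.\ $\gamma<1/3$. The resulting bound $p_n\lesssim n^{-(2/3+\alpha_1)}$ is what yields $E^\omega_{0,0}[W_{n,\ell}]\lesssim \ell n^{-\alpha_3}$.
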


 Theorem~\ref{theo:FinitePrecise} implies Theorem~\ref{theo:MainRandomComb}-\emph{(2)}. A variant of the former (that already implies Theorem~\ref{theo:MainRandomComb}-\emph{(2)}) was proved in \cite{Koops} under the additional assumption that $\mathbf{P}\left( H_0 > t \right) \le C't^{-1/M}$ for arbitrary large but fixed $M>0$, $t \in \mathbb{R}_{\geq 0}$. To remove this condition, we count the collisions in space-time sets (as opposed to considering sets indexed by space as in~\cite{BPS, Koops}). \medskip 

We begin by deriving an exit time estimate for the random walk. Although we do not believe it are sharp (as in the previous Sections), it suffices for our purpose. 

\begin{lem}\label{lemma:ExitCombExpHeavy}
	There exist constants $C_{\mathrm{tail}}, c_{\mathrm{tail}},C_{18}, c_{18}, \beta'>0$ such that for all $\delta>0$ small enough, there exists a family of random variables $\{\rho_x\}_{x \in \mathbb{Z}}$ with the property that
    \begin{equation}
        \mathbf{P}\left(\rho_x > s\right) \le C_{\mathrm{tail}}e^{- c_{\mathrm{tail}} s^{\beta'}},
    \end{equation}
    such that the following holds: For $t<1$ and $n \ge \rho_x$,
	\begin{equation}
		P_{0}^\omega \left( \tau_{x, n} \le t n^{3 + \delta} \right) \le C_{18}\exp\left\{- c_{18} \left( \frac{1}{t} \right)^{\beta'}\right\}.
	\end{equation}
\end{lem}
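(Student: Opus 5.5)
Under the lower tail assumption~\eqref{eq:Tail-LB-Assumption}, a $\gamma<1/3$ tail makes teeth of height $n^{2+\delta'}$ appear with density roughly $n^{-\gamma(2+\delta')}\gg n^{-1}$ in any interval of length $n$. The natural time scale for exiting $D^x_n$ is therefore at least $n^{3+\delta}$: the walk visits the base of such a tooth and spends time of order (height)$^2$ inside it with probability of order $1/\text{height}$. I would reuse the coupling from Lemma~\ref{lemma:ExitCombExp}, but replace the delicate local time estimate of Proposition~\ref{prop:KeyLTEstimate} by a cruder one that only needs one deep tooth per sub-interval. The exponent $3+\delta$ is not sharp, and this slack is what removes any need for an upper tail bound.

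\emph{Environment step.} Set $h(n,t)\coloneqq t^{1/2-\alpha}n^{(3+\delta)/2}$ (so that $h^2\gtrsim t n^{3+\delta}$ up to $t^{-2\alpha}$, which helps). Split $\llbracket x-n,x+n\rrbracket$ into sub-intervals of length $m=\lfloor n^{1-\delta''}\rfloor$ with $\delta''$ small. A sub-interval contains no site $z$ with $K_z\ge n^{(3+\delta)/2}$ with probability at most
\begin{equation*}
\exp\bigl(-C m\, n^{-\gamma(3+\delta)/2}\bigr)\le \exp(-n^{c}),
\end{equation*}
since $\gamma(3+\delta)/2<1-\delta''$ for $\gamma<1/3$ and $\delta,\delta''$ small. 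Define $\rho_x$ as the smallest $n_0$ such that for all $n\ge n_0$ every such sub-interval of $\llbracket x-n,x+n\rrbracket$ (and, to handle the starting point $0$, those of $\llbracket -n, n\rrbracket$ intersecting it) contains such a deep site. A union bound over $n\ge s$ and $O(n^{\delta''})$ sub-intervals gives $\mathbf{P}(\rho_x>s)\le \sum_{n\ge s} Cn\,e^{-n^{c}}\le C_{\mathrm{tail}}e^{-c_{\mathrm{tail}}s^{\beta'}}$.

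\emph{Walk step.} Fix $n\ge\rho_x$. The horizontal projection is a simple random walk on $\mathbb{Z}$. Before leaving $\llbracket x-n,x+n\rrbracket$ it must cross $\gtrsim n^{\delta''}$ consecutive sub-intervals, and it has probability at least $1/2$ to hit the deep site of each interval it crosses. More efficiently, as in the ``second estimate'' of Proposition~\ref{prop:KeyLTEstimate}, a coarse-grained walk on a lattice of spacing $3m$ needs at least $n^{2\delta''}\cdot t^{-c}$ steps to exit with probability $1-e^{-ct^{-c'}}$ when $t\ge n^{-\varepsilon}$. Each step forces a visit to a deep base. At each such visit, the probability of entering the tooth and staying in it for time $\ge tn^{3+\delta}$ is at least $c\,(tn^{3+\delta})^{-1/2}\ge c\, n^{-(3+\delta)/2}t^{-1/2}$, by gambler's ruin to height $\sqrt{t n^{3+\delta}}$, which is at most $n^{(3+\delta)/2}$, followed by a diffusive return bound. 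These trials are independent by the strong Markov property. The failure probability is then bounded by
\begin{equation*}
\bigl(1-c\,t^{-1/2}n^{-(3+\delta)/2}\bigr)^{\#\text{visits}} .
\end{equation*}

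\emph{Main obstacle.} I expect the main difficulty to be producing enough visits, of order $n^{(3+\delta)/2}t^{1/2-\alpha}$, rather than merely $n^{2\delta''}$ distinct crossings. Distinct crossings are too few, so I would count local time at the deep bases instead. Each time the walk hits a deep base $z$, its local time at $z$ before exiting $\llbracket z-m/2,z+m/2\rrbracket$ is $\mathrm{Geom}(c/m)$, hence at least $m/4$ with probability bounded below. A binomial argument over the coarse-grained crossings then gives total deep local time $\gtrsim n^{2\delta''}t^{-c}\cdot n^{1-\delta''}=n^{1+\delta''}t^{-c}$ with stretched-exponential failure probability. Matching this against $n^{(3+\delta)/2}$ is the issue: it requires the stronger count of crossings, roughly $(n/m)^2=n^{2\delta''}$, which combined with the local time gives $n^{1+\delta''}$, while we need $\ge n^{(3+\delta)/2}$ only up to factors. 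I would therefore lower the threshold height to $K_z\ge n^{1+\delta''/2}$, still present in every sub-interval because $\gamma(1+\delta''/2)<1-\delta''$, so that each trial succeeds with probability $\gtrsim n^{-1-\delta''/2}$ and there are about $n^{1+\delta''}$ trials. A success keeps the walk in a tooth for time $n^{2+\delta''}$. This falls short of $n^{3+\delta}$, so the argument needs a second layer: the exit time is a sum of many successes, about $n^{\delta''/2}$ of them, each lasting $n^{2+\delta''}$. Balancing the exponents so that the total exceeds $tn^{3+\delta}$ is the computation I expect to be the delicate point. The case $t<n^{-\varepsilon}$ is handled as in Case~2 of Proposition~\ref{prop:KeyLTEstimate}, by monotonicity in $t$.
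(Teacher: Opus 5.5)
Your architecture (coarse-grained walk on a lattice of spacing $3m$, local time at the bases of deep teeth, independent trapping trials as in Lemma~\ref{lemma:ExitCombExp}) is the one the paper uses. However, the counting has a genuine gap, and the repair you propose does not close it.

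With sub-intervals of length $m=n^{1-\delta''}$, the local time accumulated at deep bases before leaving $D^x_n$ is of order $(n/m)^2\cdot m=n^2/m=n^{1+\delta''}$. One trial succeeds only with probability about $t^{-1/2}n^{-(3+\delta)/2}$. So for $t$ of order one you need about $n^{(3+\delta)/2}\gg n^{1+\delta''}$ trials.

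Your second layer (threshold $n^{1+\delta''/2}$, about $n^{\delta''/2}$ successes of duration $n^{2+\delta''}$) gives a total trapping time of about $n^{2+3\delta''/2}$. For small $\delta''$ this is far below $n^{3+\delta}$. Pushing $\delta''\ge 2/3$ to reach exponent $3$ is compatible with your environment constraint $\gamma(1+\delta''/2)<1-\delta''$ only when $\gamma<1/4$.

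A telltale sign is how the hypothesis enters. You use $\gamma<1/3$ only through $\gamma(3+\delta)/2<1-\delta''$, which holds for $\gamma$ up to roughly $2/3$. But for tails $\sim Ct^{-\gamma}$ with $\gamma>1/3$ the exit scale is $n^{4/(1+\gamma)}\ll n^3$, so the statement fails there. The hypothesis therefore has to enter sharply somewhere, and in your argument it does not.

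The missing idea is to make the sub-intervals as \emph{short} as the density of deep teeth permits, since the deep local time scales like $n^2/m$. The paper keeps the threshold at $n^{3/2+\varepsilon/12}$ and takes sub-intervals of length $\tilde\ell(n)\approx n^{3\gamma/2+\varepsilon\gamma/6}$, just above the typical spacing of such teeth. Each sub-interval is empty of deep teeth with probability at most $\exp(-cn^{\varepsilon\gamma/12})$. There are only polynomially many of them, which still gives the stretched-exponential tail of $\rho_x$.

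The Case~2 argument of Proposition~\ref{prop:KeyLTEstimate} then produces deep local time of order $n^2/\tilde\ell(n)\approx n^{2-3\gamma/2-\varepsilon\gamma/6}$. This exceeds $n^{3/2+\varepsilon/12}$ precisely because $2-3\gamma/2>3/2$ is equivalent to $\gamma<1/3$; this is where the hypothesis is used sharply.

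With that many trials, a single successful excursion into a tooth of height about $\sqrt{t}\,n^{(3+\delta)/2}$ already holds the walk for time $tn^{3+\delta}$, exactly as in Lemma~\ref{lemma:ExitCombExp}. No summation over several successes is needed.

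A minor point: the coarse-grained walk needs at least $t^{c}(n/m)^2$ steps with probability $1-e^{-ct^{-c'}}$, not $t^{-c}(n/m)^2$. The same sign slip appears in your local-time count.
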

\begin{proof}
    This bound follows by the same argument given in Proposition~\ref{prop:KeyLTEstimate} in the case $t \le n^{-\varepsilon}$ as $\frac{2}{\gamma + 1} > \frac{2}{1/3 + 1} = \frac{3}{2}$ for all $\gamma \in (0, 1/3)$. We only explain the necessary adaptations. 
    
    Pick $\varepsilon>0$ small enough so that $\frac{3}{2} + \varepsilon < \frac{2}{\gamma + 1} $. Then the set $\mathrm{Deep}(n)$ (defined as in~\eqref{eq:Deep-n}, but replacing $N$ by $n$ and $H$ by $K$) contains the indices $x \in \llbracket-n, n\rrbracket$ with  $K_x \geq n^{\frac{3}{2} + \frac{\varepsilon}{12}}$. 

    Then, we split the interval $\llbracket-n, n\rrbracket$ in sub-intervals of length $\tilde{\ell}(n) \coloneqq \lfloor\frac{1}{3}n^{\frac{3\gamma}{2} + \frac{\varepsilon\gamma}{6}}\rfloor$. Similarly as in the proof of Proposition~\ref{prop:KeyLTEstimate}, we can compute
    \begin{equation}
        \mathbf{P}\left(I_{0, 1} \cap \mathrm{Deep}(n) = \varnothing\right) \le \exp\left( - \frac{1}{6} n^{-\frac{\varepsilon\gamma}{12}}\right).
    \end{equation}
    We can define the random variable $\rho_x$ as
    \begin{equation}
        \rho_x \coloneqq \sup\{k \ge 0 \colon \mathds{1}_{\hat{O}_k(x)} = 1\},
    \end{equation}
    with
    \begin{equation}
\label{eq:O-hat-n-def}
    \hat{O}_n(x) \coloneqq \bigcap_{k = - \lfloor n/\tilde{\ell}(n)\rfloor}^{ \lfloor n/\tilde{\ell}(n)\rfloor} \{I_{k,x} \cap \mathrm{Deep}(n) \neq \varnothing \}.
\end{equation}
    The rest of the proof follows as that of Proposition~\ref{prop:KeyLTEstimate} and Lemma~\ref{lemma:ExitCombExp} with the necessary adaptations in the exponents, substituting $\frac{2}{\gamma+1}$ by $3/2$.
\end{proof}

We now state an upper bound for the heat kernel. 

\begin{prop}\label{eqn:HKUBFromOrigin}
    For all $\alpha_1 > 0$ small enough, there exists $c_{19},c_{20} >0$ such that $\mathbf{P}$-a.e.~$\omega \in \Omega$ there exists $n_1 = n_1(\omega) >0$ such that for all $n \ge n_1(\omega)$ and all $x = (k, h) \in \mathrm{Comb}(\mathbb{Z},K^\omega)$ we have  
    \begin{equation}
        p_n(0, (k, h)) \le c_{19} n^{-(\frac{2}{3} + \alpha_1)} e^{-c_{20}\frac{h^2}{n}}.
\end{equation}
\end{prop}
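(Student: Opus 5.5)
The plan is to prove the bound first on the horizontal axis, uniformly over all base points that can be reached by time $n$, and then to add the vertical Gaussian factor by decomposing at the base of the tooth. The decay exponent comes from the Green kernel bound \eqref{eq:Bound-HK-via-Green} combined with the exit time estimate of Lemma~\ref{lemma:ExitCombExpHeavy}. That lemma says the walk needs time at least of order $r^{3+\delta}$ to leave a disk of horizontal radius $r$. Plugging this in gives decay $n^{-1+1/(3+\delta)} = n^{-\frac{2+\delta}{3+\delta}}$, and
$$\frac{2+\delta}{3+\delta} = \frac23 + \frac{\delta}{3(3+\delta)}.$$
So we take $\alpha_1 \coloneqq \frac{\delta}{3(3+\delta)}$ with $\delta>0$ small. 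Since any smaller $\alpha_1$ is then also admissible, this covers all small $\alpha_1>0$.

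\textbf{Step 1: uniform control of the random radii.} The walk started at $0$ cannot reach $(k,h)$ in $n$ steps when $|k|>n$, so only $|k|\le n$ matters. Fix a small $\varepsilon'>0$. A union bound with the tails of $\rho_k$ from Lemma~\ref{lemma:ExitCombExpHeavy} gives
$$\mathbf{P}\Big(\max_{|k|\le n}\rho_k>n^{\varepsilon'}\Big)\le (2n+1)C_{\mathrm{tail}}e^{-c_{\mathrm{tail}}n^{\varepsilon'\beta'}},$$
which is summable in $n$. By Borel--Cantelli, for $\mathbf{P}$-a.e.~$\omega$ there is $n_1(\omega)$ such that $\rho_k\le n^{\varepsilon'}$ for all $|k|\le n$ and all $n\ge n_1(\omega)$. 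I read Lemma~\ref{lemma:ExitCombExpHeavy} as an estimate for the walk started at $(x,0)$ and exiting $D^x_r$; this is how its proof, transported from Proposition~\ref{prop:KeyLTEstimate}, applies.

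\textbf{Step 2: on-diagonal bound at base points.} Take $y=(k,0)$ with $|k|\le n$ and set $r\coloneqq \lfloor (n/t_0)^{1/(3+\delta)}\rfloor$, where $t_0<1$ is fixed so that $C_{18}e^{-c_{18}t_0^{-\beta'}}\le 1/2$. Since $r\ge n^{\varepsilon'}\ge \rho_k$ for large $n$, Lemma~\ref{lemma:ExitCombExpHeavy} gives $P^\omega_y(\tau_{k,r}\ge n)\ge 1/2$. Next, $g^{D^k_r}(y,y)$ is comparable to $R_{\mathrm{eff}}(y,(D^k_r)^c)$, and by \eqref{eq:Effective-resistance-graph-distance-bound} this is at most $r+1$. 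Then \eqref{eq:Bound-HK-via-Green} yields
$$p_n^\omega(y,y)\le C\,r/n\le C n^{-(\frac23+\alpha_1)}.$$
The same bound holds for every time $m\in[n/4,n]$, possibly after enlarging $n_1$ by a constant factor. Cauchy--Schwarz, as in \eqref{eqn:CauchySchwarz}, then gives
$$p^\omega_m(0,(k,0))\le C m^{-(\frac23+\alpha_1)}\qquad\text{for all } |k|\le n,\ m\in[n/4,n].$$

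\textbf{Step 3: vertical Gaussian factor.} For $x=(k,h)$ with $h>0$, use reversibility and decompose at the first hitting time $s$ of $(k,0)$ by the walk started at $x$, exactly as in \eqref{eq:SplitFurther}:
$$p_n^\omega(0,x)\le C\sum_{s}P^{\mathbb{Z}}_h\big(T_0^{h(k)}=s\big)\,p^\omega_{n-s}\big((k,0),0\big).$$
Split the sum into two ranges.
\begin{itemize}
\item For $s\le 3n/4$, apply Step 2 to the factor $p^\omega_{n-s}$, bound the hitting probabilities by summing them, and keep the Gaussian factor through the tail $P_h(T_0\le n)\le Ce^{-ch^2/n}$.
\item For $s>3n/4$, use the ballot bound \eqref{eq:Ballot}, so that $P(T_0=s)\le C h s^{-3/2}e^{-h^2/(cs)}\le Cn^{-1}e^{-c h^2/n}$ after absorbing a factor $h/\sqrt n$ into the exponential. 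Sum it against the crude bound $p_m\le Cm^{-1/2}$; this contributes at most $Cn^{-1/2}\cdot n^{-1/2}\cdot e^{-ch^2/n}$, which is smaller than the target.
\end{itemize}
The reflected part of \eqref{eq:Ballot} is handled by $2h(k)-h\ge h$, as in Lemma~\ref{lem:UBHKInsideAllPoints}.

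\textbf{Expected main obstacle.} The delicate point is Step 2: it needs the exit estimate of Lemma~\ref{lemma:ExitCombExpHeavy} uniformly over all base points $|k|\le n$, at the relatively small scale $r=n^{1/(3+\delta)}$. This is why the tails of $\rho_k$ must be stretched-exponential, so that they survive the union bound. I would also check that the lemma's parameter $\delta$ can be taken small but positive independently of $\varepsilon'$, since positivity of $\delta$ is exactly what makes the exponent strictly larger than $2/3$.
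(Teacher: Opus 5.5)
Your overall route is the one the paper intends. Its proof simply points to Lemma~\ref{lem:OnDiagonalHKUB} and Lemma~\ref{lem:UBHKInsideAllPoints}: on-diagonal decay from \eqref{eq:Bound-HK-via-Green} together with Lemma~\ref{lemma:ExitCombExpHeavy}, Cauchy--Schwarz off the diagonal, and a first-passage decomposition at the base of the tooth to produce the Gaussian factor. Steps~1 and~2 are fine, with one small correction: take $r=\lceil (n/t_0)^{1/(3+\delta)}\rceil$ instead of the floor, so that $t_0r^{3+\delta}\ge n$.

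\textbf{The gap is in the second bullet of Step~3.} For $s\in(3n/4,n]$ you have $P_h(T_0=s)\le Cn^{-1}e^{-ch^2/n}$. Combining this with $p_m\le Cm^{-1/2}$ gives
\[
Cn^{-1}e^{-ch^2/n}\sum_{m\le n/4} m^{-1/2}\asymp n^{-1/2}e^{-ch^2/n},
\]
not $n^{-1}e^{-ch^2/n}$. Keeping $h$ explicit instead of absorbing it does not help. You then get $C(h/n)e^{-ch^2/n}$, which exceeds $n^{-(\frac23+\alpha_1)}$ whenever $n^{1/3-\alpha_1}\ll h\lesssim\sqrt n$. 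So the crude $m^{-1/2}$ bound cannot close the estimate on the whole late-hitting window.

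\textbf{How to fix it.} Feed the improved decay into the convolution for moderately small $m$ as well. This is exactly what Lemma~\ref{lem:UBHKInsideAllPoints} does, following \cite[Lemma~4.9]{BPS}: it uses the crude bound only in the last $O(\sqrt n)$ steps.
\begin{enumerate}
\item You need Step~2, and hence the Cauchy--Schwarz bound $p_m(0,(k,0))\le Cm^{-(\frac23+\alpha_1)}$, for all $m\ge n^{\kappa}$ uniformly in $|k|\le n$, not only for $m\in[n/4,n]$.
\item Your Step~1 already provides this if $\kappa>(3+\delta)\varepsilon'$. In that case $(m/t_0)^{1/(3+\delta)}\ge n^{\varepsilon'}\ge\rho_k$ for every $m\ge n^{\kappa}$ and every $|k|\le n$.
\item The late-hitting range then contributes
\[
Cn^{-1}e^{-ch^2/n}\Big(\sum_{m<n^{\kappa}}m^{-1/2}+\sum_{n^{\kappa}\le m\le n/4}m^{-(\frac23+\alpha_1)}\Big)\le C\big(n^{-1+\kappa/2}+n^{-(\frac23+\alpha_1)}\big)e^{-ch^2/n},
\]
which is of the required order once $\kappa$ is small.
\end{enumerate}
With this change the rest of your argument goes through.
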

\begin{proof}
    The proof follows using the arguments in Lemma~\ref{lem:OnDiagonalHKUB} and Lemma~\ref{lem:UBHKInsideAllPoints} (which originate from \cite[Lemma~4.9]{BPS}), we omit the details.
\end{proof}

\begin{cor}\label{cor:HKUBFromOrigin}
    For all $\alpha_2>0$ small enough there exists $c_{21}>0$ such that $\mathbf{P}$-a.s.\ there exist $k_0 = k_0(\omega)$ such that for all $n \ge n_1$, all $k \ge k_0$ and all $x = (k, h) \in \mathrm{Comb}(\mathbb{Z},K^\omega)$ we have that
    \begin{equation}
        p_n(0, (k, h)) \le \begin{cases}
            c_{21} n^{-(\frac{2}{3} + \alpha_1)} & n \ge |k|^{3 + \alpha_2}\\
            c_{21} |k|^{-(2 + \alpha_1)} & n \le |k|^{3 + \alpha_2}.
        \end{cases}
    \end{equation}
\end{cor}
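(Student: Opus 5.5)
The plan is to deduce the corollary from Proposition~\ref{eqn:HKUBFromOrigin} and Lemma~\ref{lemma:ExitCombExpHeavy}, splitting according to whether $n$ is large or small compared with $|k|^{3+\alpha_2}$.

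\emph{Regime $n \ge |k|^{3+\alpha_2}$.} Here nothing needs to be done. Proposition~\ref{eqn:HKUBFromOrigin} gives
\[
p_n(0,(k,h)) \le c_{19} n^{-(2/3+\alpha_1)},
\]
after discarding the Gaussian factor, which is at most one. So $c_{21} \ge c_{19}$ suffices.

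\emph{Regime $n \le |k|^{3+\alpha_2}$.} To reach $(k,h)$ at time $n$, the walk must first exit the disk $D^0_{|k|-1}$, since the horizontal coordinate must travel distance $|k|$. We decompose at the exit time $\tau \coloneqq \tau_{0,\lfloor |k|/2\rfloor}$, exactly as in the proof of Proposition~\ref{prop:HKUBOutsideBox}. By the strong Markov property,
\[
p_n(0,(k,h)) \le P_0(\tau \le n)\,\sup_{s\le n}\,\sup_{z} p_{n-s}(z,(k,h)),
\]
where $z$ ranges over the lateral exit points $(\pm\lfloor |k|/2\rfloor \pm 1,0)$. We bound the two factors separately.

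\emph{The exit probability.} Take $m = \lfloor |k|/2 \rfloor$ in Lemma~\ref{lemma:ExitCombExpHeavy}, with $t = n/m^{3+\delta}$. Choosing $\delta<\alpha_2$ is not useful here. Instead, take $\delta$ larger than $\alpha_2$, so that $t \le 2^{3+\delta}|k|^{\alpha_2-\delta}$ is polynomially small in $|k|$. The lemma then gives
\[
P_0(\tau\le n) \le C_{18}\exp\bigl(-c|k|^{\beta'(\delta-\alpha_2)}\bigr),
\]
which is super-polynomially small. The lemma requires $m \ge \rho_x$. Since the tails of the $\rho_x$ are stretched exponential, $\sum_k \mathbf{P}(\rho_0 > |k|/2) < \infty$, and by Borel--Cantelli this holds for all $|k| \ge k_0(\omega)$. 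This is where $k_0$ enters.

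\emph{The heat kernel after exit.} Bound $p_{n-s}(z,(k,h))$ crudely. Either use the uniform bound $p_r(z,w)\le c\,r^{-1/2}$, valid for all $r \ge 1$ on any comb (degree bounded, one-dimensional lower volume growth), or for $r=0$ simply use $1$. The crude factor is therefore at most $1$, and the whole product is at most
\[
\exp\bigl(-c|k|^{\beta'(\delta-\alpha_2)}\bigr) \le c\,|k|^{-(2+\alpha_1)}
\]
for $|k|\ge k_0$, after enlarging $k_0$ if necessary.

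The main obstacle is the compatibility of exponents, namely the choice of $\delta$ relative to $\alpha_2$. Lemma~\ref{lemma:ExitCombExpHeavy} only controls times up to $t\,m^{3+\delta}$ with $t<1$, so it yields smallness only when $n \ll |k|^{3+\delta}$. Taking $\alpha_2<\delta$, with $\delta$ still small enough for the lemma, handles this. For $n$ between $|k|^{3+\alpha_2}$ and $|k|^{3+\delta}$ the first case already applies. One must also check that $\rho$ for the shifted centre used in the lemma does not interfere: the starting point is $0$, so only $\rho_0$, taken at scale $m$, matters, and the a.s. statement ``$\rho_0 \le m$ for large $m$'' suffices.
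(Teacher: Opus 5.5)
Your proof is correct, and it is more economical than the route the paper points to. The paper defers to the argument of Proposition~\ref{prop:HKUBOutsideBox}, which has four ingredients:
\begin{enumerate}
\item split at the exit from $D_{\lfloor k/2\rfloor}$;
\item bound the post-exit kernel by Cauchy--Schwarz and an on-diagonal estimate;
\item multiply by the exit bound of Lemma~\ref{lemma:ExitCombExpHeavy};
\item optimize the product over $n$.
\end{enumerate}
That template does not care how $\delta$ compares with $\alpha_2$. Transplanted here, however, it would need on-diagonal bounds at points other than the origin, since Proposition~\ref{eqn:HKUBFromOrigin} only controls $p_n(0,\cdot)$. It would also need a time split that keeps the post-exit time of order $n$.

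You bypass both requirements. Bounding the post-exit factor by $1$ amounts to $p_n(0,(k,h))\le P^\omega_0(X_n=(k,h))\le P^\omega_0(\tau_{0,\lfloor |k|/2\rfloor}\le n)$, so the strong Markov decomposition is not even needed. Taking the lemma's $\delta$ strictly above $\alpha_2$ then makes this probability super-polynomially small in $|k|$. This is exactly where ``$\alpha_2$ small enough'' enters, and it also shows that the second-case bound is far from sharp.

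Two minor remarks.
\begin{itemize}
\item You only need $\rho_0<\infty$ almost surely, which follows from its tail bound, so Borel--Cantelli is superfluous.
\item Your comment that $\delta<\alpha_2$ is ``not useful'' holds only for the exit-only argument. For $|k|^3\le n\le |k|^{3+\alpha_2}$, Proposition~\ref{eqn:HKUBFromOrigin} directly gives $n^{-(2/3+\alpha_1)}\le |k|^{-(2+3\alpha_1)}$. For $n<|k|^3$, your exit argument works with any admissible $\delta$, since then $t\lesssim |k|^{-\delta}$. So no constraint linking $\alpha_2$ and $\delta$ is actually required.
\end{itemize}
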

\begin{proof}
    The first part is a direct consequence of Proposition~\ref{eqn:HKUBFromOrigin}. The second one, follows from the same proposition and Lemma~\ref{lemma:ExitCombExpHeavy} with the argument given in the proof of Proposition~\ref{prop:HKUBOutsideBox}, we omit repeating it.
\end{proof}
Similarly as in~\cite{KP}, we introduce the quantities
\begin{equation}
    \begin{split}
        O_{n, \ell} &\coloneqq |\{k \in \{n, 2n\} \colon X_n^1 = X_n^2 = v = (v_1, v_2)\colon v_2 \in \llbracket\ell, 2\ell\rrbracket \}|,\\
        A_{n, \ell}&\coloneqq \{O_{n, \ell} > 0\},\\
        W_{n, \ell} &\coloneqq \sum_{k \in \{ \frac{\ell}{2}, \ell, 2\ell \}} O_{n, k} + \sum_{k \in \{ \frac{\ell}{2}, \ell, 2\ell \}} O_{2n, k}.
    \end{split}
\end{equation}
We will utilize these quantities, and in particular the events $A_{n, \ell}$ to show the finite collision property on the graph $\mathrm{Comb}(\mathbb{Z},K^\omega)$. To that end, we give bounds on expectations and conditional expectations involving the event $A_{n, \ell}$ and the quantity $W_{n, \ell}$. We remark that the terms $O_{j, k}, j =n, 2n, k \in \{ \ell/2, \ell\}$ are present in the definition of $W_{n, \ell}$ because on one side they do not affect the upper bound on the expectation while giving room (in time and space) for the lower bound of $E^\omega_{0, 0}\left[W_{n, \ell} \mid A_{n, \ell}\right]$.
\begin{prop}\label{cor:ExpectationUBFromOrigin}
        For all $\alpha_3>0$ small enough there exist constants $C_{22}, c_{23}, C_{24}>0$ such that $\mathbf{P}$-a.s.\ for all $\eta<1$ and all $n \ge n_1$ 
        \begin{equation}\label{eqn:ExpUB}
            E^\omega_{0, 0}\left[W_{n, \ell}\right] \le C_{22}\frac{\ell}{n^{\alpha_3}},
        \end{equation}
        and 
        \begin{equation}\label{eqn:CondExpLB}
            E^\omega_{0, 0}\left[W_{n, \ell} \mid A_{n, \ell}\right] \ge c_{23}\ell^{\eta} \quad \text{for } \ell \le 2(2n)^{\frac{1}{2\eta}}.
        \end{equation}
        As a direct consequence of these two facts
        \begin{equation}\label{eqn:ProbUBFromOrigin}
            P^\omega_{0, 0}(A_{n, \ell}) \le C_{24}\ell^{1 - \eta}n^{-\alpha_3}\quad \text{for } \ell \le 2(2n)^{\frac{1}{2\eta}}.
        \end{equation}
\end{prop}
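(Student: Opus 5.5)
The plan is to prove the two estimates \eqref{eqn:ExpUB} and \eqref{eqn:CondExpLB} separately, and then read off \eqref{eqn:ProbUBFromOrigin} from the identity
\[
E^\omega_{0,0}[W_{n,\ell}] \geq E^\omega_{0,0}[W_{n,\ell}\mathds{1}_{A_{n,\ell}}] = P^\omega_{0,0}(A_{n,\ell})\,E^\omega_{0,0}[W_{n,\ell}\mid A_{n,\ell}].
\]
Dividing the bound in \eqref{eqn:ExpUB} by the bound in \eqref{eqn:CondExpLB} gives $P^\omega_{0,0}(A_{n,\ell}) \le (C_{22}/c_{23})\,\ell^{1-\eta} n^{-\alpha_3}$.

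\textbf{Upper bound \eqref{eqn:ExpUB}.} I read $O_{n,\ell}$ as the number of times $k\in\llbracket n,2n\rrbracket$ at which the two walks collide at a vertex $(v_1,v_2)$ with $v_2\in\llbracket \ell,2\ell\rrbracket$. Then
\[
E^\omega_{0,0}[O_{n,\ell}] = \sum_{k=n}^{2n}\sum_{v_1}\sum_{v_2=\ell}^{2\ell} p_k(0,(v_1,v_2))^2\,\mathrm{deg}(v)^2 .
\]
I would bound this sum using Corollary~\ref{cor:HKUBFromOrigin}, splitting the horizontal sum at $|v_1|\approx k^{1/(3+\alpha_2)}$.

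\begin{itemize}
\item For $|v_1|\le k^{1/(3+\alpha_2)}$, use $p_k\le c\,k^{-(2/3+\alpha_1)}$. The contribution is at most
\[
\ell\cdot n\cdot n^{1/(3+\alpha_2)}\cdot n^{-4/3-2\alpha_1}\le \ell\, n^{-2\alpha_1+\alpha'},
\]
where $\alpha'$ is the small gain coming from $\alpha_2$, since $1+1/3-4/3=0$.
\item For $|v_1|> k^{1/(3+\alpha_2)}$, use $p_k\le c\,|v_1|^{-(2+\alpha_1)}$. Summing $|v_1|^{-4-2\alpha_1}$ over this range gives $k^{-3/(3+\alpha_2)}$ up to constants, so the contribution is about $\ell\, n\, n^{-1+\alpha''}$. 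This is borderline, so I would take the point-by-point minimum of the two bounds to get an $n^{-\alpha_3}$ gain.
\item Points with $|v_1|<k_0(\omega)$ are finitely many and are absorbed into the first case.
\end{itemize}

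The same estimate applies to each of the six terms in $W_{n,\ell}$, because the heights $\ell/2,\ell,2\ell$ and times $n,2n$ change only constants. I also need to choose $\alpha_1,\alpha_2$ so that the net exponent is $-\alpha_3<0$. I expect this balance to be delicate but achievable, since Proposition~\ref{eqn:HKUBFromOrigin} contains a genuine $+\alpha_1$ gain.

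\textbf{Lower bound \eqref{eqn:CondExpLB}, the main obstacle.} Let $\kappa$ be the first time $k\in\llbracket n,2n\rrbracket$ at which $X^1_k=X^2_k=v$ with $v_2\in\llbracket\ell,2\ell\rrbracket$; this time exists on $A_{n,\ell}$. By the strong Markov property at $\kappa$, I only need to bound from below the expected number of collisions of two walks both started at $v$ over the next $\ell^{2\eta}\le 4(2n)^{\cdots}$ steps, which still fall into $\llbracket n,4n\rrbracket$. This is why $W_{n,\ell}$ includes the $O_{2n,\cdot}$ terms. Those collisions should stay at heights in $\llbracket \ell/2,4\ell\rrbracket$, which is why $W_{n,\ell}$ includes the heights $\ell/2$ and $2\ell$.

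Until either walk leaves the tooth segment $\llbracket \ell/2, 4\ell\rrbracket$, both walks move as independent simple random walks on a segment of $\mathbb{Z}$. I would therefore invoke the one-dimensional local CLT. The difference walk returns to $0$ about $\sqrt{m}$ times in $m$ steps, with probability bounded below. The probability that either walk exits the segment within $m=\ell^{2\eta}$ steps is small when $\eta<1$, by a Gaussian bound, since $\ell^{2\eta}\ll\ell^2$.

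This gives $E[W_{n,\ell}\mid A_{n,\ell}]\ge c\,\ell^{\eta}$. The constraint $\ell\le 2(2n)^{1/(2\eta)}$ is exactly what ensures $\ell^{2\eta}\lesssim n$, so that the time window fits. The delicate points are two. First, if the tooth through $v$ has length $K_{v_1}<4\ell$, I would use reflection at the top, which only increases collisions. Second, parity must be handled by restricting to even times.
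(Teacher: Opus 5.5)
Your upper bound and final division step match the paper. The paper also splits the horizontal sum at $|v_1|\approx n^{1/(3+\alpha_2)}$, uses the two regimes of Corollary~\ref{cor:HKUBFromOrigin}, and then uses $E^\omega_{0,0}[W_{n,\ell}]\ge P^\omega_{0,0}(A_{n,\ell})\,E^\omega_{0,0}[W_{n,\ell}\mid A_{n,\ell}]$. However, you mis-summed the tail: $\sum_{|v_1|>k^{1/(3+\alpha_2)}}|v_1|^{-(4+2\alpha_1)}\asymp k^{-(3+2\alpha_1)/(3+\alpha_2)}$, not $k^{-3/(3+\alpha_2)}$. Dropping the $2\alpha_1$ throws away exactly the gain you need. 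Keeping it, the tail contributes $C\ell\, n^{1-(3+2\alpha_1)/(3+\alpha_2)}$, which decays once $\alpha_2<2\alpha_1$; this is the paper's choice. So the term is not borderline, and no pointwise-minimum device is needed, although that device would also work.

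For the conditional lower bound you take a genuinely different route. The paper argues that after meeting, the walks make in expectation at least $\ell/4$ collisions before one of them leaves the tooth segment $\llbracket \ell/2,4\ell\rrbracket$, citing the killed Green's function/resistance estimate \cite[Lemma~3.2]{BPS}. That is a one-line argument and gives the stronger bound $\ell/4$. But those collisions are spread over a time of order $\ell^2$. For $\ell$ near $2(2n)^{1/(2\eta)}$ this far exceeds the window $\llbracket n,4n\rrbracket$ that $W_{n,\ell}$ actually counts, and the paper's argument does not address this.

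Your strong Markov plus local CLT argument, run over $\asymp\ell^{2\eta}$ steps after the first meeting time $\kappa$, keeps every counted collision inside both the height range and the time window. It also explains why the bound is $\ell^{\eta}$ and why the restriction $\ell\le 2(2n)^{1/(2\eta)}$ appears. Take the window to be $c\,\ell^{2\eta}$ with $c$ small: $\ell^{2\eta}$ can be as large as $8n$, while only $2n$ steps are guaranteed after $\kappa\le 2n$. Parity is not actually an issue. Two walks started at the same vertex always have even height difference, and the lazy difference walk returns to $0$ with probability of order $j^{-1/2}$ at every time $j$, so you do not need to restrict to even times.
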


\begin{proof}
    Fix any $t \in \llbracket n, 2n\rrbracket$ and $h \in \llbracket \ell, 2\ell\rrbracket$. We write, comparing sum and integral in a standard way
    \begin{equation}
    \begin{split}
        E^\omega_{0, 0}\left[W_{n, \ell}\right] &\le n \ell \Big( \sum_{|k| \le n^{\frac{1}{3+ \alpha_2}} } p_n(0, (k, h)) + \sum_{|k| > n^{\frac{1}{3+ \alpha_2}} } p_n(0, (k, h)) \Big)\\
        &\le n \ell \left( n^{\frac{1}{3+ \alpha_2}} n^{-\frac{4}{3} - 2 \alpha_1} + 2\int_{n^{\frac{1}{3+ \alpha_2}}}^\infty k^{-(4 + 2\alpha_1)} dk \right)\\
        & \le \ell n^{-\alpha_3} + C\ell n^{-\frac{3 + 2 \alpha_1}{3+\alpha_2} + 1} \le C_{22} \ell n^{-\alpha_3}.
            \end{split}
    \end{equation}
    Note that in the last step we choose $\alpha_2$ small enough compared to $2\alpha_1$, which is justified by Corollary~\ref{cor:HKUBFromOrigin}.

    For the second statement we follow the lines of \cite[Lemma~2.2]{KP}. We observe that on $A_{n, \ell}$ the two walks meet on some segment at height $\llbracket\ell, 2\ell\rrbracket$ before time $n$. Hence, we can lower bound $W_{n, \ell}$ by the number of collisions that the two walks make before exiting the segment $\llbracket \ell/2, 4\ell\rrbracket$. Then by the resistance estimate given in \cite[Lemma~3.2]{BPS} (stated there in terms of killed Green's function), we have that 
    \begin{equation}
        E^\omega_{0, 0}\left[W_{n, \ell} \mid A_{n, \ell}\right] \ge \frac{1}{2} \frac{\ell}{2},
    \end{equation}
    as the resistance on a segment is linear in its length. We remark that in a random tooth the site of the meeting $h \in \llbracket\ell, 2\ell\rrbracket$ can be the tip of said tooth. However, the segment $\llbracket\ell/2, \ell\rrbracket$ is always non-empty if a meeting happened at height $h \in \llbracket\ell, 2\ell\rrbracket$.

    The third fact follows from the first two and the inequality
    \begin{equation}\label{eqn:Conditioning}
        P^\omega_{0, 0}(A_{n, \ell}) \le \frac{E^\omega_{0, 0}\left[W_{n, \ell}\right]}{E^\omega_{0, 0}\left[W_{n, \ell} \mid A_{n, \ell}\right]}.
    \end{equation}
    We observe that for sites at height $0, 1$ (where the lower bound does not work as smoothly), we always have probability at least $1/9$ that the next step after a collision is a collision, this is sufficient for our purpose.
\end{proof}

We are now in position to conclude the proof of Theorem~\ref{theo:FinitePrecise}.

\begin{proof}[Proof of Theorem~\ref{theo:FinitePrecise}.]

We follow~\cite{KP}. Note that almost surely there are finitely many collisions before the finite random time $n_1$, when our bounds become effective. Let $r, \ell$ satisfy $\ell \le 2(2n)^{\frac{1}{2\eta}}$ and $2^r \ge n_1$, then summing over dyadic scales one obtains
\begin{equation}
\begin{split}
    \sum_{r = 0}^\infty \sum_{k = 0}^{\lceil 1 + \frac{r+1}{2\eta}\rceil} P^\omega_{0, 0}(A_{2^r, 2^k}) &\le \sum_{r = 0}^\infty \sum_{k = 0}^{\lceil 1 + \frac{r+1}{2\eta}\rceil} C_{24}\frac{2^{k(1 - \eta)}}{2^{r\alpha_3}} \le \sum_{r = 0}^\infty C\frac{2^{r\frac{(1 - \eta)}{2\eta}}}{2^{r\alpha_3}} < \infty,
\end{split}    
\end{equation}
provided we choose $\eta$ such that
\begin{equation}
    \frac{(1 - \eta)}{2\eta} < \alpha_3 \iff \eta(1 + 2\alpha_3) > 1 \iff \eta > \frac{1}{1 + 2\alpha_3}.
\end{equation}
Since $\frac{1}{1 + 2\alpha_3} < 1$, we can choose $\eta \in (0,1)$ as required. Furthermore, we observe that $\frac{1}{2\eta}>\frac{1}{2}$, hence we need to exclude the collisions at heights higher than $2(2n)^{\frac{1}{2\eta}}$. We claim that the set
\begin{equation}
    \{n \colon V^{1}_n > 2(2n)^{\frac{1}{2\eta}} \text{ or }V^{2}_n > 2(2n)^{\frac{1}{2\eta}}\},
\end{equation}
contains finitely many elements almost surely. Indeed, recall the result of Proposition~\ref{eqn:HKUBFromOrigin}, then for all $(k, h) \colon h \ge 2(2n)^{\frac{1}{2\eta}}$
\begin{equation}
        p_n(0, (k, h)) \le c_{19} n^{-(\frac{2}{3} + \alpha_1)} e^{-c_{20}\frac{h^2}{n}}.
\end{equation}
However, if we pick $C, C'>0$ large enough, we obtain that 
\begin{equation}\label{eqn:EndDouble}
    P_{0}^\omega\left( V_{n}^1 \ge n^{1/2}C \log(n) \right) \le c_{19} n^2 e^{-c_{20}\frac{(Cn^{1/2} \log(n)))^2}{n}} \le C' n^{-3},
\end{equation}
and the same holds for $V^2$. Indeed, this is easy to obtain for a fixed $(k, h)$ with $h \geq 2(2n)^{\frac{1}{2\eta}}$ and then a union bound suffices as in $n$ steps there is at most a polynomial number of sites that are reachable by the random walk (deterministically less than $n^2$). Finally, integrating \eqref{eqn:EndDouble} over $n$ yields the event $\{V^j_{n} \ge n^{1/2}C \log(n)\}, j=1,2$ happens only finitely many times almost surely by applying the first Borel-Cantelli lemma. We also observe that for all $n$ large enough $Cn^{1/2} \log(n) \le n^{\frac{1}{2\eta}}$ so that eventually
\begin{equation}
    \{n \colon V^{1}_n > 2(2n)^{\frac{1}{2\eta}} \text{ or }V^{2}_n > 2(2n)^{\frac{1}{2\eta}}\} \subseteq \{n \colon V^{1}_n > Cn^{1/2}\log(n) \text{ or }V^{2}_n > Cn^{1/2}\log(n)\}.
\end{equation}
This finishes the proof.
\end{proof}

\section{Triple collisions in the random comb}
\label{sec:Triple-collisions-random}

In this Section we analyze triple collisions on the random comb model, with teeth generated by a distribution as in \eqref{eq:Heavy-tailed-def}. Throughout the Section, we use the same convention on notation and abbreviations as in Section~\ref{sec:Double-collisions-random} (see above Subsection~\ref{sect:EnvEstimates}).

We recall the result~\eqref{eq:Chen-Chen-triple-coll} from \cite{Chenchen} that a typical realization $\mathrm{Comb}(\mathbb{Z},H^\omega)$ of the comb with an i.i.d.\ tooth profile whose random length has finite first moment has the infinite triple collision property. The first result of this Section is essentially derived from \cite{CroydonDeAmbroggio}. Together with our heat-kernel estimates, it immediately yields that for all $\gamma \in (0,1)$ the random comb has the finite triple collision property almost surely. The bound is proved as the penultimate step in the proof of~\cite[Lemma~2.1]{CroydonDeAmbroggio}.

\begin{lem}\label{lem:Umbi}
    On any connected graph $G = (V,E)$ of with $\sup_{x \in V} \mathrm{deg}(x) \leq 3$ and $o \in V$, we have
    \begin{equation}
        P^G_{o, o, o}\left( X_n^1 = X_n^2 = X_n^3 \right) \le 9 \sup_{x \in B(o, n)} \sqrt{p_{2\lfloor n/2 \rfloor}^G(o, o)p^G_{2\lceil n/2 \rceil}(x, x) } p^G_{2n}(o, o).
    \end{equation}
\end{lem}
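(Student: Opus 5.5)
By independence of the three walks, the triple collision probability is a sum over meeting points. Writing $\pi(y)=\deg(y)$, we have
\begin{equation*}
P^G_{o,o,o}(X^1_n=X^2_n=X^3_n)=\sum_{y\in B(o,n)} P_o(X_n=y)^3=\sum_{y \in B(o,n)}\deg(y)^3 p_n(o,y)^3 .
\end{equation*}
The sum runs only over $B(o,n)$ because the walk cannot reach farther in $n$ steps.

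The plan is to factor $p_n(o,y)^3=p_n(o,y)\cdot p_n(o,y)^2$. The first factor will be bounded pointwise, uniformly in $y$. The quadratic part is handled by the Chapman--Kolmogorov identity and reversibility. Since $p_k(x,z)$ is symmetric and $\deg(y)p_n(o,y)=P_o(X_n=y)$, we get
\begin{equation*}
\sum_y \deg(y)p_n(o,y)^2=\sum_y p_n(o,y)\deg(y)p_n(y,o)=p_{2n}(o,o).
\end{equation*}
Hence
\begin{equation*}
\sum_y \deg(y)^3p_n(o,y)^3\le \Big(\sup_{y\in B(o,n)}\deg(y)^2 p_n(o,y)\Big)\, p_{2n}(o,o)\le 9\sup_{y\in B(o,n)}p_n(o,y)\,p_{2n}(o,o),
\end{equation*}
using $\deg\le 3$.

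For the pointwise bound, I would split $n=2\lfloor n/2\rfloor+(n-2\lfloor n/2\rfloor)$ asymmetrically when $n$ is odd, so that both pieces are even-time kernels. For $n=a+b$, Chapman--Kolmogorov and Cauchy--Schwarz with weights $\deg(z)$ give
\begin{equation*}
p_{a+b}(o,y)=\sum_z \deg(z)p_a(o,z)p_b(z,y)\le \Big(\sum_z\deg(z)p_a(o,z)^2\Big)^{1/2}\Big(\sum_z\deg(z)p_b(y,z)^2\Big)^{1/2}.
\end{equation*}
The right-hand side equals $\sqrt{p_{2a}(o,o)p_{2b}(y,y)}$. This is the bound of \cite[Corollary 4.8]{BPS}, already used in \eqref{eqn:CauchySchwarz}.

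Taking $a=\lfloor n/2\rfloor$ and $b=\lceil n/2\rceil$ yields $p_n(o,y)\le\sqrt{p_{2\lfloor n/2\rfloor}(o,o)\,p_{2\lceil n/2\rceil}(y,y)}$. Inserting this into the previous display gives the claim with $x=y$ ranging over $B(o,n)$.

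The only delicate point is bookkeeping. The degree weights must be placed correctly so that the constant comes out as $9$: the exponent $2$ on $\deg(y)$ remaining after extracting $p_{2n}$ gives $3^2$. One must also check that the parity split matches the indices $2\lfloor n/2\rfloor$ and $2\lceil n/2\rceil$ in the statement. No other obstacle is expected.
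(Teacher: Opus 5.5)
Your argument is correct, and it is the standard one the paper defers to (citing the penultimate step of \cite[Lemma~2.1]{CroydonDeAmbroggio}): you split the cube into a pointwise supremum times $\sum_y \deg(y)p_n(o,y)^2=p_{2n}(o,o)$, and control the supremum by the Cauchy--Schwarz bound \eqref{eqn:CauchySchwarz}, which yields the constant $9$ exactly. One cosmetic slip: the split you actually use is $n=\lfloor n/2\rfloor+\lceil n/2\rceil$, not $n=2\lfloor n/2\rfloor+(n-2\lfloor n/2\rfloor)$, and the even indices $2\lfloor n/2\rfloor$, $2\lceil n/2\rceil$ come from the time-doubling in Cauchy--Schwarz rather than from the split itself.
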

We state the result concerning the triple collision property of random comb graphs with $\gamma \in (0, 1)$.
\begin{cor}
\label{cor:Triple-coll-gamma-smaller-1}
    For all $\gamma \in (0,1)$ and $\mathbf{P}$-a.e. $\omega \in \Omega$, the graph $\mathrm{Comb}(\mathbb{Z},H^\omega)$ has the finite triple collision property.
\end{cor}
\begin{proof}
    A minor modification of Lemma~\ref{lem:OnDiagonalHKUB} yields that for $\textbf{P}$-a.e.~$\omega \in \Omega$ there exists $c> 0$ and $n_0(\omega) \in \mathbb{N}$  such that for $n \ge n_0(\omega)$, 
    \begin{equation}
        p^\omega_n(0, 0) \le c n^{-\frac{3 - \gamma}{4}}.
    \end{equation}
    Since $\frac{3 - \gamma}{4} > \frac{1}{2}$, we can use Lemma~\ref{lem:Umbi} to see that there exists $\delta>0$ such that for $n \geq n_0(\omega)$,
    \begin{equation}
        P^\omega_{0, 0, 0}\left( X_n^1 = X_n^2 = X_n^3 \right) \le C n^{-1 - \delta}.
    \end{equation}
    It follows that the expected number of triple collisions under $P_{0,0,0}^\omega$ is finite, whence the number of triple collisions is $P_{0,0,0}^\omega$-a.s.\ finite.
\end{proof}

We now focus on the more challenging case $\gamma = 1$. Since $|B(0, n)|$ is the sum of $n$ i.i.d.\ random variables with tail $\mathbf{P}(H>t) \sim Ct^{-1}$, one expects that $|B(0, n)| \approx n \log(n)$. This in turn suggests a heat-kernel decay at $0$ of the form $n^{-1/2}\log(n)^{-1/2}$. 
This corresponds to the `boundary case' between the finite and infinite triple collisions in \cite{CroydonDeAmbroggio}. There, the underlying graph is a wedge comb with logarithmic growth profile, and three independent random walks collide infinitely many times almost surely. Despite the fact that the heat kernels for the wedge comb $\mathrm{Comb}(\mathbb{Z},J^{(1)})$ with growth profile $J^{(1)}(k) = \log(|k| \vee 1)$ and a typical realization of the random comb $\mathrm{Comb}(\mathbb{Z},H^\omega)$ with tail parameter $\gamma = 1$ turn out to be comparable, we will show that finitely many triple collisions occur for this choice almost surely. The main reason causing this contrasting behavior is that the volumes are achieved with very different geometries in the two cases. This highlights that collisions properties are extremely sensitive to the geometry of graphs, and fine information on the heat kernel does not appear to be sufficient for a qualitative classification. We may also compare our set-up to~\cite{croydon2026collision}, where the infinite triple collision property is established for a typical realization of the trace of the four-dimensional random walk. Their result is obtained through a very fine analysis of the random graph, its geometry and corresponding heat-kernel estimates. The heat-kernel estimate of order $n^{-1/2}\log(n)^{-1/2}$ also appears there (whenever a set is good in a suitable sense). We again believe that the different behavior is a consequence of the very different way in which the volume is distributed in the comb graph in comparison to the trace of a random walk.

Here is the main result of the present Section.

\begin{thm}\label{theo:MainRandomCombTrip}
	Let $\gamma = 1$. For $\mathbf{P}$-a.e.\ $\omega \in \Omega$,  $\mathrm{Comb}(\mathbb{Z},H^\omega)$ has the finite triple collision property.
\end{thm}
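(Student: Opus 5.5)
The plan is a first moment argument: I will show that the expected number of triple collisions is finite after splitting space-time into two families of sets. The paper's overview says as much for $\gamma = 1$: ``the counting of collisions is then split into two families of (convenient) space-time sets, and proceeds via a slightly enhanced first moment method.''

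The first step is a quenched on-diagonal bound $p_n^\omega(0,0) \le c\, n^{-1/2}\log^{-1/2}(n)$ for large $n$. I would obtain it as in Lemma~\ref{lem:OnDiagonalHKUB}, via \eqref{eq:Bound-HK-via-Green} with $A=D_r$, $r \approx (n/\log n)^{1/2}$. Here $g^{D_r}(0,0)\le R_{\mathrm{eff}}(0,D_r^c)\lesssim r$, and the exit-time estimate of Lemma~\ref{lemma:ExitCombExp} must be adapted to $\gamma=1$. That adaptation uses that the volume $|D_r \cap \{\ell\le \sqrt n\}|$ is typically of order $r\log n$, which holds almost surely by the concentration argument of Lemma~\ref{lem:GoodBoxes} for truncated teeth. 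The same bound at $x\in B(0,n)$ only needs uniformity over $|x_1|\le n$, which the union bounds give. Plugging this into Lemma~\ref{lem:Umbi} gives $P^\omega_{0,0,0}(X^1_n=X^2_n=X^3_n)\lesssim n^{-1}\log^{-1}(n)$, which is \emph{not} summable. So the crude bound fails exactly at the boundary, and the volume geometry has to be used.

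To use the geometry, I split the triple collisions at time $n$ by the height $\ell$ of the collision site. Family one is collisions with $\ell \le L_n$, where $L_n = n^{1/2}\log^{-K}(n)$ (or with $\ell$ below a height scale at which the typical tooth profile is sparse). Family two is collisions with $\ell > L_n$. For family one, I compute $\sum_v p_n(0,v)^3 \deg(v)^3$ directly rather than through Lemma~\ref{lem:Umbi}. For $v=(x,\ell)$, I expect a refined bound $p_n(0,(x,\ell))\lesssim n^{-1/2}\log^{-1/2} n$ on the relevant window $|x|\lesssim (n/\log n)^{1/2}$. The number of sites at height at most $L$ is about $(n/\log n)^{1/2}\log L$, since teeth shorter than $L$ contribute $\sum \min(H_x,L)$ and $\mathbf E[\min(H,L)]\approx\log L$. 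Hence family one contributes about $(n/\log n)^{1/2}\log(L_n)\,n^{-3/2}\log^{-3/2}n = n^{-1}\log^{-2}(n)\log L_n$. That is still of order $n^{-1}\log^{-1}n$, so this naive split is not enough either.

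The real gain must come from the fact that the mass $\log n$ is spread over dyadic height scales. At height $\ell\in[2^j,2^{j+1})$ the heat kernel is not $n^{-1/2}\log^{-1/2}n$ uniformly. The walk reaches height $2^j$ inside one of only about $r2^{-j}$ teeth, and the probability of being there is the fraction of volume, about $1/\log n$, per scale. Equivalently, for a site $v$ in a tooth of length $H\approx 2^j$, I would prove $p_n(0,v)\lesssim n^{-1/2}\log^{-1/2}(n)\cdot \min(1, \phi_j)$ with a gain $\phi_j$ coming from the escape probability into that specific tooth. The intended route is to write $p_n(0,v)$ as a first passage to the tooth base convolved with the reflected-interval kernel of length $H$ (the ballot bound \eqref{eq:Ballot}), which gives $p_n(0,(x,\ell))\lesssim p^{\text{base}}\cdot \min(1,\sqrt n/H)\cdot \frac{1}{H}$-type weights. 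The cube then sums as $\sum_j (r 2^{-j})2^j\cdot(\text{weight}_j)^3$, and the aim is to get $\sum_j \text{weight}_j^3 \ll (\log n)^{-2}$, which would give $n^{-1}\log^{-2} n$ plus corrections, a summable total. Family two ($\ell>L_n$, comparable to $\sqrt n$) is handled by the Gaussian factor $e^{-ch^2/n}$ from Lemma~\ref{lem:UBHKInsideAllPoints}'s analogue and by the rarity of teeth that long in the window.

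The main obstacle is this refined heat kernel bound that sees the location within the tooth, uniformly over a typical environment. The tail $\mathbf P(H>t)\sim C/t$ has no concentration at the top scales, since the maximal tooth in $D_r$ is of order $r$, comparable to $\sqrt n$. So I would first try to control $\sum_v p_n(0,v)^3$ via $\max_v p_n \cdot \sum_v p_n(0,v)^2 = \max_v p_n\cdot p_{2n}(0,0)$ restricted to each height scale. I would bound $\max$ over scale $j$ by $n^{-1/2}\log^{-1/2}n \cdot (\text{mass at scale } j)$. If needed, I would use a Borel--Cantelli argument along dyadic $n$ with a Lemma~\ref{lem:MaxSmall}-type control on exceptional times.
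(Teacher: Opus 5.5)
There is a genuine gap. Your plan is to prove that the expected number of triple collisions is finite. For a typical environment this expectation is in fact infinite, so the refined per-site heat-kernel bound you are looking for cannot exist.

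To see this, apply H\"older's inequality: for any finite set $S$,
\[
P^\omega_{0,0,0}(X^1_n=X^2_n=X^3_n)=\sum_v P^\omega_0(X_n=v)^3\ \ge\ \frac{P^\omega_0(X_n\in S)^3}{|S|^2}.
\]
Take for $S=S_n$ the box of horizontal radius $C(n/\log n)^{1/2}$ and height $C\sqrt n$. Its volume is $\asymp (n\log n)^{1/2}$, and it carries mass bounded below at time $n$; this is the localization at scale $(n/\log n)^{1/2}$ underlying the bound $n^{-1/2}\log^{-1/2}(n)$. You then get a lower bound of order $(n\log n)^{-1}$ for the triple-collision probability at time $n$, and the sum over $n$ diverges.

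Your own heuristic says the same thing. A mass of order $1/\log n$ at each dyadic height scale, spread over about $r$ sites, gives per-site values of order $1/(r\log n)=n^{-1/2}\log^{-1/2}(n)$. So there is no gain $\phi_j$ at all, and the proposed bound ``max at scale $j$ $\lesssim n^{-1/2}\log^{-1/2}(n)\cdot$(mass at scale $j$)'' is off by a factor $\log n$. This is the same divergence as for the wedge comb $\mathrm{Comb}(\mathbb{Z},J^{(1)})$, which does have infinitely many triple collisions, so no first-moment computation alone can separate the two cases.

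Two smaller points:
\begin{enumerate}
\item With a polynomial cutoff $L_n$ you have $\log L_n\asymp \log n$, so the height split gives no volume saving.
\item In Lemma~\ref{lem:Umbi} the supremum of $p(x,x)$ over $B(0,n)$ is of order $n^{-1/2}$, not $n^{-1/2}\log^{-1/2}(n)$, because $B(0,n)$ contains sites high up in teeth of length $\gtrsim\sqrt n$.
\end{enumerate}

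The missing idea is that triple collisions come in clusters. You should bound the probability of at least one triple collision in each dyadic time block, not the expected number. The paper does this with a \emph{subpolynomial} height cutoff $h_n=\lfloor 2^{\log_2^{1/3}(n)}\rfloor$.

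\emph{Below the cutoff.} The window $|x|\le 2\sqrt n$ contains only $\lesssim n^{1/2}\log^{1/3}(n)$ sites. The plain first moment over $[n,2n]$ is therefore $\lesssim n\cdot n^{1/2}\log^{1/3}(n)\cdot n^{-3/2}\log^{-3/2}(n)=\log^{-7/6}(n)$, plus a far-field term handled by Lemma~\ref{lem:HKUBFromOriginGamma1}.

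\emph{Above the cutoff.} Essentially all the mass sits here, and the first moment per block is only bounded by $\log^{-3/4}(n)$, which is not summable along $n=2^k$. However, conditionally on a triple meeting at height $\ge h_n$, the walks are in a tooth segment of length $\gtrsim h_n$. Like three walks on $\mathbb{Z}$, they then meet $\gtrsim\log h_n\asymp\log^{1/3}(n)$ more times before leaving it, by \cite[Lemma~A.3]{CroydonDeAmbroggio}. Hence
\[
P^\omega_{0,0,0}(A_n^{\mathrm{up}})\le \frac{E^\omega_{0,0,0}[W_n^{\mathrm{up}}]}{E^\omega_{0,0,0}[W_n^{\mathrm{up}}\mid A_n^{\mathrm{up}}]}\lesssim\log^{-13/12}(n),
\]
which is summable along $n=2^k$.

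Borel--Cantelli applied to the events $A^{\dagger}_{2^k}$ (Proposition~\ref{prop:KeyTriple}) then concludes the proof. Without this conditional step your argument cannot close.
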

The remainder of this Section is concerned with the proof of Theorem~\ref{theo:MainRandomCombTrip}, which will require several intermediate steps. Recall that $B(0, n)$ denotes the graph metric ball around $0$ and the set $D_k$ is defined in \eqref{eq:Sets-on-Comb}.

Let us start by introducing some notation. Let $h_n \coloneqq \lfloor 2^{\log_2^{1/3}(n)}\rfloor$, and define
\begin{equation}
    B^{\mathrm{up}}(0, n) \coloneqq \{ v = (x, y) \in B(0, n) \colon y \ge h_n\}.
\end{equation}
We also write $B^{\mathrm{down}}(0, n) \coloneqq B(0, n) \setminus B^{\mathrm{up}}(0, n)$. For $\dagger \in \{\mathrm{up}, \mathrm{down}\}$ we introduce the events
\begin{equation}
    \begin{split}
        O_{n}^{\dagger} &\coloneqq |\{k \in \{n, 2n\} \colon X_k^1 = X_k^2 = X_k^3 = v, v \in B^{\dagger}(0, 2n) \}|,\\
        A_n^{\dagger}&\coloneqq \{O_{n}^{\dagger} > 0\},\\
        W_n^{\dagger} &\coloneqq O_{n}^{\dagger} + O_{2n}^{\dagger}.
    \end{split}
\end{equation}
We now state the main auxiliary result before proving Theorem~\ref{theo:MainRandomCombTrip}.
\begin{prop}\label{prop:KeyTriple}
    There exist constants $C, \delta>0$ such that for $\mathbf{P}$-a.e.\ $\omega \in \Omega$ and $n$ large enough, for $\dagger \in \{\mathrm{up}, \mathrm{down}\}$
    \begin{equation}
        P_{0, 0, 0}^\omega\left( A_n^{\dagger} \right) \le C \log(n)^{-(1+\delta)}.
    \end{equation}
\end{prop}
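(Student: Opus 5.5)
The plan follows the scheme already used for Proposition~\ref{cor:ExpectationUBFromOrigin}. For each $\dagger \in \{\mathrm{up},\mathrm{down}\}$ we bound
\[
P^\omega_{0,0,0}(A_n^\dagger) \le \frac{E^\omega_{0,0,0}[W_n^\dagger]}{E^\omega_{0,0,0}[W_n^\dagger \mid A_n^\dagger]},
\]
so that an upper bound on the numerator and a lower bound on the denominator combine into $C\log(n)^{-(1+\delta)}$. Throughout we interpret $O_n^\dagger$ as counting times $k\in\llbracket n,2n\rrbracket$, as in the double-collision section.

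\textbf{Step 1: heat kernel bound for $\gamma=1$.} We redo Lemma~\ref{lem:OnDiagonalHKUB} via \eqref{eq:Bound-HK-via-Green}.
\begin{itemize}
\item Lemma~\ref{lem:MaxSmall} cannot serve here, since it only gives a subsequence. Instead we use truncated-mean estimates: for a.e.\ $\omega$ and all large $k$, the volume of $\mathcal{R}_k(M)$ is of order $k\log M$ whenever $M\le k^{1+o(1)}$.
\item Combined with the local-time argument of Proposition~\ref{prop:KeyLTEstimate} and Lemma~\ref{lemma:ExitCombExp} (taking exponent $2/(\gamma+1)=1$), this yields exit times of order $k^2\log k$.
\item This gives $p^\omega_m(x,x)\le C m^{-1/2}\log(m)^{-1/2}$ uniformly over base points $x=(x_1,0)$ with $|x_1|\le n^{1/2+o(1)}$ and all $m\ge n^{\delta}$.
\item Lemma~\ref{lem:UBHKInsideAllPoints} then extends this to general points, with the Gaussian factor $e^{-h^2/(cm)}$ in the height $h$.
\end{itemize}

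\textbf{Step 2: first moment.} We write
\[
E^\omega_{0,0,0}[W_n^\dagger]\le \sum_{k\in\llbracket n,4n\rrbracket}\ \sum_{v\in B^\dagger(0,4n)} p_k(0,v)^3\deg(v)^3 .
\]
\begin{itemize}
\item \emph{Down part.} We bound $\sum_{v} p_k(0,v)^3 \le \max_{v\in B^{\mathrm{down}}} p_k(0,v)^2 \cdot \sum_v p_k(0,v)$. The maximum is at most $(k\log k)^{-1}$. The crucial gain is that $\sum_{v\in B^{\mathrm{down}}}p_k(0,v)$ is only the probability that the walk sits below height $h_n$. This probability is bounded by (volume below $h_n$)$\times$(heat kernel), roughly
\[
k^{1/2}\log(k)^{-1/2}\cdot \log h_n \cdot (k\log k)^{-1/2}\approx \log(n)^{1/3-1},
\]
using $\log h_n\approx \log^{1/3}n$. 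Summing over $k\in\llbracket n,4n\rrbracket$ gives $E[W_n^{\mathrm{down}}]\lesssim \log(n)^{-5/3}$.
\item \emph{Up part.} We bound $\sum_{v\in B^{\mathrm{up}}} p_k(0,v)^3 \le \max_v p_k(0,v)^2 \le (k\log k)^{-1}$, so $E[W^{\mathrm{up}}_n]\lesssim \log(n)^{-1}$.
\end{itemize}

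\textbf{Step 3: conditional expectation.}
\begin{itemize}
\item \emph{Up part.} Following \cite[Lemma~2.2]{KP} and \cite[Lemma~3.2]{BPS}, once the three walks meet at height $\ge h_n$ inside a tooth, the expected number of further triple collisions before leaving the segment $\llbracket h_n/2,2h_n\rrbracket$ is $\gtrsim \sum_{m\le h_n^2} m^{-1}\approx \log h_n\approx \log^{1/3} n$. This is the one-dimensional triple-collision count on a segment. Hence $P(A_n^{\mathrm{up}})\lesssim \log(n)^{-4/3}$.
\item \emph{Down part.} The trivial bound $E[W\mid A]\ge 1$ already gives $\log(n)^{-5/3}$.
\item In both cases we take $\delta=1/3$. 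The times near $2n$ are covered by including $O_{2n}^\dagger$ in $W_n^\dagger$.
\end{itemize}

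\textbf{Main obstacle.} The hardest step is the bound on $P_0(V_k\le h_n)$ in the down part. It requires \emph{uniform} on-diagonal heat-kernel estimates at $\gamma=1$ with the $\log^{-1/2}$ gain, and the regime $\gamma=1$ was excluded from Subsection~\ref{sect:EnvEstimates}. I would first try to establish volume concentration $|\mathcal{R}^z_k(M)|\asymp k\log M$ by applying Lemma~\ref{lem:GoodBoxes} scale-by-scale, controlling the slowly varying truncated mean with a Bernstein bound, and restricting to $M\le k^{1+o(1)}$.
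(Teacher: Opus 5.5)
Your argument has the same architecture as the paper's proof, and it is sound apart from one step. The shared skeleton is:
\begin{itemize}
\item the up/down split;
\item $P(A)\le E[W]/E[W\mid A]$ for the up part, with the in-tooth count $E[W_n^{\mathrm{up}}\mid A_n^{\mathrm{up}}]\gtrsim\log h_n\asymp\log^{1/3}n$ (the paper invokes \cite[Lemma~A.3]{CroydonDeAmbroggio});
\item for the down part, a plain first moment in which the region below $h_n$ has volume only (width)$\times\log^{1/3}n$, against $p^\omega_k(0,\cdot)\lesssim(k\log k)^{-1/2}$. Your $\max_v p^2\cdot\sum_v p$ is the same as the paper's $\max_v p^3\times$volume.
\end{itemize}

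The difference, and the weak spot, is Step~1.
\begin{itemize}
\item Proposition~\ref{prop:KeyLTEstimate} and Lemma~\ref{lemma:ExitCombExp} are proved only for $\gamma<1$. The Bernstein step needs \eqref{eq:gamma-observation}, which becomes an equality at $\gamma=1$.
\item Even formally at $\gamma=1$, they give exit times of order $k^2$, not $k^2\log k$, because they see only one scale of deep teeth. The $\log k$ gain you need in \eqref{eq:Bound-HK-via-Green} would require a separate multi-scale argument.
\item The paper avoids exit times altogether. Proposition~\ref{prop:HKTriple} applies $p_{2\lfloor n/2\rfloor}(0,0)\le 4r/\lfloor n/2\rfloor+2/|B(0,r)|$ from \cite[(2.2)]{CroydonDeAmbroggio} with $r=n^{1/2}\log^{-1/2}n$. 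This needs only the volume lower bound $|B(w,r)|\ge c_{\mathrm{v}}\,r\log r$ of Lemma~\ref{lem:EnvGamma1}, followed by Cauchy--Schwarz and the hitting decomposition of Lemma~\ref{lem:UBHKInsideAllPoints}. So the obstacle you flag is unnecessary.
\item The same applies to your cutoff $k^{1/2}\log^{-1/2}k$ in the down part. The paper cuts at $|x|\le 2n^{1/2}$ and controls the rest with the crude SRW-coupling exit bound behind Lemma~\ref{lem:HKUBFromOriginGamma1}. That gives $\log^{-7/6}n$ instead of your $\log^{-5/3}n$, which is enough.
\end{itemize}

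In the other direction, your up-part estimate is better than the paper's. Your bound $\sum_vP_0(X_k=v)^3\le\max_vP_0(X_k=v)^2$, combined with the uniform estimate of Proposition~\ref{prop:HKTriple}, gives $E[W_n^{\mathrm{up}}]\lesssim\log^{-1}n$. The paper only gets $\log^{-3/4}n$, from Lemma~\ref{lem:Umbi} with the crude bound $p_n(x,x)\le Cn^{-1/2}$ inside the supremum. So you get $\delta=1/3$ where the paper gets $\delta=1/12$.

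There is one detail that both you and the paper leave implicit. The follow-up collisions should be counted around the actual meeting height $y\ge h_n$, which may be far above $2h_n$, and within time $y^2\wedge n$. Collisions at heights below $h_n$, or below $h_{2n}$ after time $2n$, are not part of $W_n^{\mathrm{up}}$ as defined. Your first-moment bound does not depend on the region, so the simplest fix is to use the single threshold $h_n/2$ in both pieces of $W_n^{\mathrm{up}}$.
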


Before proceeding with the proof of this result we show how to deduce Theorem~\ref{theo:MainRandomCombTrip} from it. 

\begin{proof}[Proof of Theorem~\ref{theo:MainRandomCombTrip}]
    Observe that if there are finitely many triple collisions almost surely in $B^{\mathrm{up}}(0, 2n)$ and $B^{\mathrm{down}}(0, 2n)$, then there are finitely many triple collisions altogether as they exhaust the possibilities. Then, by assuming the result of Proposition~\ref{prop:KeyTriple}, we see that $\mathbf{P}$-a.s.\ for $\dagger \in \{\mathrm{up}, \mathrm{down}\}$, 
    \begin{equation}
        \sum_{k = 1}^\infty P_{0, 0, 0}^\omega\left( A_{2^k}^{\dagger} \right) \le C \sum_{k = 1}^\infty k^{-(1+\delta)}<\infty.
    \end{equation}
    This shows that 
    \begin{equation}
         P_{0, 0, 0}^\omega\left(X_n^1 = X_n^2 = X_n^3 = v, v \in B^{\dagger}(0, n), \text{ infinitely often}\right) \le \sum_{k = 1}^\infty P_{0, 0, 0}^\omega\left( A_{2^k}^{\dagger} \right) < \infty,
    \end{equation}
    finishing the proof.
\end{proof}

We split the proof of Proposition~\ref{prop:KeyTriple} into two cases, each dealing with to one of the events $A_n^{\mathrm{up}}$ and $A_n^{\mathrm{down}}$.

\subsection{Environment estimates and heat kernel}

\begin{lem}\label{lem:EnvGamma1}
    There exist constants $c_{\mathrm{v}}, C_{\mathrm{v}}$ and, for every $w = (w_1, 0)$, there exists a $\mathcal{G}$-measurable random variable $\eta_{w_1}$ with the property that
    \begin{equation}\label{eq:StrechedExp}
        \mathbf{P}\left(\eta_{w_1} \ge n \right) \le Ce^{-cn^{1/3}},
    \end{equation}
    such that the following holds. For all $n \ge \eta_{w_1}$ and for all $\ell \in \llbracket 0,  \tfrac{1}{2}\log_2(n)  \rrbracket$ we have that 
    \begin{equation}\label{eq:GoodVolLog}
        |B(w, n)| \ge c_{\mathrm{v}} n \log(n), \qquad \text{and} \qquad |\{v = (x, y) \colon v \in B(0, n), y \in \llbracket 2^\ell, 2^{\ell+1}\rrbracket\}| \le C_{\mathrm{v}} n.
    \end{equation}
\end{lem}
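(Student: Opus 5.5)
We will prove both bounds in \eqref{eq:GoodVolLog} through binomial tail estimates for fixed $n$ and $w_1$. We then set $\eta_{w_1}$ to be the last $n$ at which one of them fails, and estimate $\mathbf{P}(\eta_{w_1}\ge n)$ by a union bound over all $m \ge n$. The starting point is the geometry of the graph-distance ball. For $w=(w_1,0)$ we have
\[
B(w,n) = \{(x,y) : |x-w_1| + y \le n,\ y \le H_x\},
\]
so that
\[
|B(w,n)| = \sum_{|x-w_1|\le n} \bigl( (n-|x-w_1|)\wedge \lfloor H_x\rfloor + 1\bigr).
\]
Likewise, for each $\ell$, the number of points of $B(0,n)$ at height in $\llbracket 2^\ell,2^{\ell+1}\rrbracket$ is at most $(2^\ell+1)\,|\{|x|\le n : H_x\ge 2^\ell\}|$.

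\textbf{Upper bound (second inequality).} We follow the proof of Lemma~\ref{lem:GoodBoxes}. For $\gamma=1$ we have $|\{|x|\le n: H_x\ge 2^\ell\}| \sim \mathrm{Binom}(2n+1, q_\ell)$ with $q_\ell \asymp 2^{-\ell}$, so its mean is of order $n2^{-\ell}$. Since $\ell \le \frac12\log_2 n$, this mean is at least $c\sqrt n$. The exponential bound $\mathbf{Q}(\mathrm{Binom}(m,q)>4mq)\le e^{-2mq}$ therefore shows that the count is at most $Cn2^{-\ell}$, except with probability $e^{-c\sqrt n}$. Multiplying by $2^{\ell}+1$ gives the bound $C_{\mathrm v} n$. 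A union bound over the $O(\log n)$ values of $\ell$ keeps the failure probability below $Ce^{-c\sqrt n}$, which is far better than what we need. This bound is stated for the ball around $0$, as in the lemma; the same argument works for any centre.

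\textbf{Lower bound (first inequality).} Restrict the sum to $|x-w_1|\le n/2$. Each such $x$ then contributes at least $\min(H_x, n/2)$. Truncate at the level $\sqrt n$ by setting $Y_x := H_x\wedge \sqrt n$. Since $\mathbf{P}(H>t)\sim C/t$, the variables $Y_x$ have mean at least $c\log n$ and are bounded by $\sqrt n$. Bernstein's inequality (or Hoeffding's inequality, since the variance is at most $\sqrt n\,\mathbf E[Y]\lesssim \sqrt n\log n$) gives
\[
\mathbf{P}\Bigl(\sum_{|x-w_1|\le n/2} Y_x \le \tfrac12\, n\, \mathbf E[Y]\Bigr) \le \exp\Bigl(-c\,\frac{(n\log n)^2}{n\sqrt n\log n + \sqrt n\cdot n\log n}\Bigr) \le e^{-c\sqrt n}.
\]
Hence $|B(w,n)|\ge c_{\mathrm v} n\log n$ outside an event of probability $e^{-c\sqrt n}$. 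The \textbf{main subtlety} is the choice of the truncation level. It has to be high enough that the truncated mean still carries the $\log n$ factor; any level $n^{a}$ with $a>0$ achieves this. At the same time it has to be low enough that the concentration is stretched-exponential. With level $n^{a}$ the failure probability is roughly $e^{-cn^{1-a}}$, so the choice $a=1/2$ (or any $a\le 2/3$) is compatible with the claimed exponent $1/3$.

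\textbf{Conclusion.} Define $\eta_{w_1}$ as $1$ plus the largest $n$ at which either inequality in \eqref{eq:GoodVolLog} fails for some admissible $\ell$. Then
\[
\mathbf{P}(\eta_{w_1}\ge n)\le \sum_{m\ge n-1} C\log m\; e^{-c\sqrt m} \le Ce^{-cn^{1/3}}.
\]
The tail bound \eqref{eq:StrechedExp} holds with room to spare, and $\eta_{w_1}$ is $\mathcal G$-measurable, since it is a function of $(H_x)_{x \in \mathbb{Z}}$.
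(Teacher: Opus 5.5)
Your proof is correct, and its skeleton is the same as the paper's: fixed-$n$ binomial concentration, a union bound over the $O(\log n)$ admissible $\ell$, $\eta_{w_1}$ defined as the last failure time, and a tail sum. Your upper bound on the layer counts is essentially the paper's argument.

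The genuine difference is in the lower bound $|B(w,n)|\ge c_{\mathrm v}n\log n$. The paper derives it from the same layer-wise events. It uses lower-deviation bounds $|\{|x|\le n: H_x\in[2^\ell,2^{\ell+1})\}|\ge c\,2^{-\ell}n$ for every $\ell\le\frac12\log_2 n$. Each of the $\asymp\log n$ dyadic layers then contributes $2^\ell\cdot c\,2^{-\ell}n\asymp n$ points of $\mathcal R^0_{n/2}(n/2)\subseteq B(0,n)$. You instead apply a single Bernstein inequality to the truncated sum $\sum_{|x-w_1|\le n/2}(H_x\wedge\sqrt n)$. Its mean carries the $\log n$, which comes from integrating the $1/t$ tail up to $\sqrt n$.

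The paper's route has the advantage that one family of two-sided layer estimates yields both inequalities at once. It also matches the dyadic bookkeeping used later for $\tilde B^{\mathrm{down}}$. Your route makes the source of the logarithm transparent and needs only one concentration step, with some freedom in the truncation level.

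One caution: drop the parenthetical ``or Hoeffding's inequality''. The range-only Hoeffding bound for variables in $[0,\sqrt n]$ gives only $\exp(-c(n\log n)^2/(n\cdot n))=\exp(-c\log^2 n)$. That is not stretched-exponential and would not give the claimed tail $Ce^{-cn^{1/3}}$ for $\eta_{w_1}$. What produces $e^{-c\sqrt n\log n}$ in your display is the variance term $\mathbf E[(H\wedge\sqrt n)^2]\lesssim\sqrt n$ in Bernstein's (or Bennett's) inequality.
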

\begin{proof}
    The proof follows the same strategy as the those of Lemma~\ref{lem:GoodBoxes} and Lemma~\ref{cor:GoodVolumes}. Let $w_1=0$ throughout the proof. We observe that, for any $k \ge 1$,
    \begin{equation}
      \mathcal{R}^0_{k/2}(k/2) \subseteq   B(0, k) \subseteq \mathcal{R}^0_k(k).
    \end{equation}
    Therefore, proving the lemma with $\mathcal{R}^0_{n}(n)$ automatically gives the result for $B(0, n)$.

    We have that 
    \begin{equation}
		|\{x \in \llbracket -n, n \rrbracket \colon H_x \in [2^{\ell}, 2^{\ell+1})\}| \sim \mathrm{Binom}\left( 2n+1, C2^{-(\ell+1) } \right).
	\end{equation}
    Thus, there exists a constant $\mu>0$ such that
    \begin{equation}
    	\mathbf{P}\left(|\{x \in \llbracket -n, n \rrbracket \colon H_x \in [2^{\ell}, 2^{\ell+1})\}| > 4 \mathbf{E}\left[|\{x \in [-k, k] \colon H_x \in [2^{\ell}, 2^{\ell+1})\}|\right] \right) \le e^{-\mu n 2^{-\ell}}.
    \end{equation}
    The same bound holds for the lower tail. We observe that over the range considered,
    \begin{equation}
        \inf_{\ell \le \lfloor\frac{1}{2}\log_2(n) \rfloor} \mu n 2^{-\ell} \ge \mu n^{1/3}.
    \end{equation}
    We define
    \begin{equation}
        A_k \coloneqq \left\{ \frac{|\{x \in \llbracket -k, k \rrbracket \colon H_x \in [2^{\ell}, 2^{\ell+1})\}|}{\mathbf{E}\left[|\{x \in \llbracket -k, k \rrbracket\colon H_x \in [2^{\ell}, 2^{\ell+1})\}|\right] } \in [c_{\mathrm{vol}}^{-1}, c_{\mathrm{vol}}], \text{ for all } \in \llbracket 0,  \tfrac{1}{2}\log_2(n) \rrbracket  \right\}.
    \end{equation}
    We now set $\eta_0 \coloneqq \sup\{k \ge 0 \colon  \mathds{1}_{A_k} = 0\}$ and observe that
    \begin{equation}
        \mathbf{P}\left(\eta_0 \ge n\right) \le \sum_{k = n}^\infty \mathbf{P}\left(A_k^c\right) \le \sum_{k = n}^\infty e^{-\mu k^{1/3}} \le C e^{-\mu' n^{1/3}},
    \end{equation}
    for some constants $C, \mu'>0$.

    A Borel-Cantelli argument implies the second display in \eqref{eq:GoodVolLog} and also that there exists $c'_{\mathrm{v}}>0$ such that almost surely 
    \begin{equation}
        |\{x \in \llbracket -n, n \rrbracket \colon H_x \in [2^{\ell}, 2^{\ell+1})]\}| \ge c'_{\mathrm{v}} 2^{-\ell} n.
    \end{equation}
    The volume estimate follows from 
    \begin{equation}
    \begin{split}
        |B(0, n)| &\ge \sum_{\ell = 1}^{\lfloor \tfrac{1}{2}\log_2(n)\rfloor}|\{v = (x, y) \colon v \in B(0, n), y \in \llbracket 2^\ell, 2^{\ell+1}-1\rrbracket\}| 2^{\ell}\\
        &\ge  c'_{\mathrm{v}} \sum_{\ell = 1}^{\lfloor \tfrac{1}{2}\log(n)\rfloor} n \ge \frac{c'_{\mathrm{v}}}{4} \log(n).
        \end{split}
    \end{equation}
    This concludes the proof for $w_1= 0$. The translation invariance of the environment implies the statement for all other values of $w_1 \in \mathbb{Z}$.
\end{proof}

We are now in position to deduce a heat-kernel estimate.

\begin{prop}\label{prop:HKTriple}
    There exists a constant $C_{\mathrm{up}}^*>0$ such that for $\mathbf{P}$-a.e. $\omega \in \Omega$, there exists $n_0 = n_0(\omega) > 0$ such that for all $n \ge n_0$
    \begin{equation}
        p_{2\lfloor n/2\rfloor}^\omega(0, x) \le C_{\mathrm{up}}^* n^{-1/2}\log^{-1/2}(n) \qquad \text{for all } x = (x_1, x_2).
    \end{equation}
\end{prop}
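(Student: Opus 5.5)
We plan to prove the on-diagonal bound $p^\omega_{2\lfloor n/2\rfloor}(0,0)\le C n^{-1/2}\log^{-1/2}(n)$ first, and then pass to arbitrary $x$. For the off-diagonal step we use the Cauchy--Schwarz inequality \eqref{eqn:CauchySchwarz} in the form $p_{2m}(0,x)\le\sqrt{p_{2m}(0,0)\,p_{2m}(x,x)}$. This route requires an on-diagonal bound at every vertex $x$ reachable in $n$ steps. Such a bound is again available from \eqref{eq:Bound-HK-via-Green} combined with a volume argument, because Lemma~\ref{lem:EnvGamma1} holds for every base point $w_1$. A union bound over $|w_1|\le n$, using the stretched exponential tail \eqref{eq:StrechedExp}, shows that $\mathbf{P}$-a.s.\ $\eta_{w_1}\le n^{1/2}$, say, for all $|w_1|\le n$ and $n$ large. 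For $x=(x_1,x_2)$ with $x_2$ large, the walk from $x$ either stays in its tooth, which gives the faster decay $n^{-1/2}$ times a Gaussian factor, or it returns to $(x_1,0)$. As in Lemma~\ref{lem:UBHKInsideAllPoints}, we decompose at the hitting time of $(x_1,0)$ and combine the ballot estimate \eqref{eq:Ballot} with the bound at base points. An even cleaner route avoids off-diagonal pieces: for any $x$, monotonicity of the even-time return probability together with a volume argument yields the bound directly at each vertex.

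For the on-diagonal bound at a base point $w=(w_1,0)$ we apply \eqref{eq:Bound-HK-via-Green} with $A=D^{w_1}_r$ and $r\asymp \sqrt{n/\log n}$. The Green function $g^{A}(w,w)=R_{\mathrm{eff}}(w,A^c)$ is at most $r$ by \eqref{eq:Effective-resistance-graph-distance-bound}. It then suffices to show $P_w(\tau_{A^c}\ge n)\ge c$, since this gives $p_n(w,w)\lesssim r/n\asymp n^{-1/2}\log^{-1/2}n$.

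To obtain the exit time lower bound $P_w(\tau_{w_1,r}\ge n)\ge 1/2$, we use the time spent in the teeth. Consider the walk during $\tau_{w_1,r}$ restricted to the rectangle $\mathcal{R}^{w_1}_r(r)$. The horizontal projection makes order $r^2$ backbone steps before exiting, each with local time of order $r$ per site. Every visit to a base $z$ with $H_z\ge 2^\ell$ and $2^\ell\le r$ launches a vertical excursion, and these excursions are of three kinds: with probability of order $2^{-\ell}$ the excursion reaches height $2^\ell$ and lasts of order $4^\ell$ steps, with expected duration at least of order $2^\ell$. Summing over dyadic $\ell\le \log_2 r$ and using the lower volume count $|\{x:H_x\in[2^\ell,2^{\ell+1})\}|\ge c\,2^{-\ell}r$ from Lemma~\ref{lem:EnvGamma1}, the expected time spent in teeth is of order $r^2\sum_\ell 2^{-\ell}\cdot 2^\ell\approx r^2\log r$. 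The same mechanism appears in Lemma~\ref{lemma:ExitCombExp}, where excursions are counted through $L^{\mathrm{deep}}$, and here all scales $\ell$ contribute equally.

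The main obstacle is converting this expectation into a lower bound in probability, since the excursion durations are heavy-tailed. We plan to handle it in one of two ways. The first is to truncate at $\ell\le \tfrac12\log_2 r$ and apply a second-moment computation for the truncated sum, exploiting the independence of excursions given the backbone local times. The second is to use the commute time identity \eqref{eq:Commute-time} in reverse: an upper bound $|B|\le C r\log r$ from the second display in \eqref{eq:GoodVolLog} controls $E_w[\tau]$ from above, and a Paley--Zygmund argument then bounds $P(\tau\ge c r^2\log r)$ from below. We would choose $r$ so that $r^2\log r\asymp n$, which gives $r\asymp\sqrt{n/\log n}$ and completes the proof.
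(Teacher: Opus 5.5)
Your architecture matches the paper's: an on-diagonal bound at base points, Cauchy--Schwarz \eqref{eqn:CauchySchwarz} between base points, and, for points inside teeth, a decomposition at the hitting time of $(x_1,0)$ combined with \eqref{eq:Ballot}. The key on-diagonal step, however, is done differently.

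\textbf{The on-diagonal step.} The paper uses the resistance--volume inequality $p^\omega_{2\lfloor n/2\rfloor}(0,0)\le 4r/\lfloor n/2\rfloor+2/|B(0,r)|$ from \cite[(2.2)]{CroydonDeAmbroggio} with $r=n^{1/2}\log^{-1/2}(n)$. Its only input is the lower volume bound $|B(0,r)|\ge c_{\mathrm{v}}\,r\log r$ from \eqref{eq:GoodVolLog}, so no exit-time information is needed and this step takes a few lines. You instead use \eqref{eq:Bound-HK-via-Green} with $g^{D^{w_1}_r}(w,w)\le r+1$. This forces you to prove the stronger statement $P^\omega_w(\tau_{w_1,r}\ge c\,r^2\log r)\ge c$, a $\gamma=1$ analogue of Lemma~\ref{lemma:ExitCombExp}. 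It costs a genuine second-moment argument, but it yields a quantitative exit-time estimate as a by-product and makes explicit that every dyadic tooth scale contributes $r^2$.

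\textbf{Your first option works, with these ingredients.} The second moment must cover the backbone local times as well as the excursions. Let $S_\ell$ be the bases in $\llbracket w_1-r,w_1+r\rrbracket$ with $H_z\in[2^\ell,2^{\ell+1})$, and set $Y=\sum_{\ell\le\frac12\log_2 r}2^\ell\sum_{z\in S_\ell}L_z$. Then $E[Y]\ge c\,r^2\log r$; for this use $E[L_z]\ge r/2$ when $|z-w_1|\le r/2$, together with the lower count of Lemma~\ref{lem:EnvGamma1} at scale $r/2$ (the count at scale $r$ alone is not enough). Also $\|Y\|_2\le C r^2\log r$, from $\|L_z\|_2\le Cr$ and the upper count. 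So Paley--Zygmund applies to $Y$. Given the horizontal path, the excursions are independent with second moment of order $2^{3\ell}$. Hence the expected conditional variance of the tooth time is $O(r^3)=o(r^4\log^2 r)$.

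\textbf{Uniformity over base points.} You need $\eta_{w_1}\le r/2\asymp\sqrt{n/\log n}$ for all $|w_1|\le n$, so $\eta_{w_1}\le n^{1/2}$ is not enough. Take, e.g., $\eta_{w_1}\le n^{1/3}$, which follows immediately from \eqref{eq:StrechedExp}.

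\textbf{Two side remarks are wrong and should be dropped.}

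First, the Cauchy--Schwarz route neither needs nor admits an on-diagonal bound at every reachable vertex. Take $x=(x_1,x_2)$ with $x_2\ge\sqrt n\log n$ in a tooth of height at least $2x_2$; such teeth occur in $\llbracket -n/2,n/2\rrbracket$ for all large $n$. With high probability the walk from $x$ does not reach the base by time $n$, so $p^\omega_{2\lfloor n/2\rfloor}(x,x)\asymp n^{-1/2}$. This is \emph{larger} than the target: staying in the tooth gives slower, not faster, decay. So your ``cleaner route'' fails. Tooth points must be handled by the hitting-time decomposition of $p_n(x,0)$, where the in-tooth part contributes nothing.

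Second, your alternative via \eqref{eq:Commute-time} and Paley--Zygmund does not work as stated. Paley--Zygmund needs a lower bound on $E_w[\tau]$ and an upper bound on $\sup_y E_y[\tau]$; an upper bound on $E_w[\tau]$ is the wrong direction. The needed supremum bound fails for $D^{w_1}_r$: $\max_{|z|\le r}H_z/(r\log r)$ is $\mathbf{P}$-a.s.\ unbounded, and starting at the top of the tallest tooth costs at least its squared height. The second display of \eqref{eq:GoodVolLog} only controls heights up to $\sqrt r$, and truncating to a rectangle introduces exits through tall teeth.
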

\begin{proof}
    We will proceed by establishing the on-diagonal bound on the event $\eta_0 \le n_0^{1/2}\log^{-1/2}(n_0)$ of Lemma~\ref{lem:EnvGamma1} and then deduce the off-diagonal bound using Cauchy-Schwarz. 

    Recall from~\cite[(2.2)]{CroydonDeAmbroggio} that for all $r \ge 0$,
    \begin{equation}
        p_{2\lfloor n/2\rfloor}^\omega(0, 0) \le \frac{4r}{\lfloor n/2 \rfloor} + \frac{2}{|B(0, r)|}.
    \end{equation}
    If we choose $r = n^{1/2}\log^{-1/2}(n)$, by applying Lemma~\ref{lem:EnvGamma1} on the event $\{\eta_0 \le r\}$ and the fact that $\log(n^{1/2}\log^{-1/2}(n)) \ge \log(n)/4$ we obtain, for a suitably chosen $C_{\mathrm{up}}^{**}>0$,
    \begin{equation}\label{eqn:HKUBGamma1}
        p_{2\lfloor n/2\rfloor}^\omega(0, 0) \le C_{\mathrm{up}}^{**} n^{-1/2} \log^{-1/2}(n).
    \end{equation}
    Furthermore, by a standard Borel-Cantelli argument and \eqref{eq:StrechedExp} there exists $K_0 = K_0(\omega)>0$ such that, for all $k \ge K_0$, we have $\{\eta_{\pm k} \le k/2 \}$. Thus, on these events, for all $n \ge \eta_k$ (and symmetrically for $\eta_{-k}$) we obtain
    \begin{equation}\label{eqn:HKUBGamma1Bis}
        p_{2\lfloor n/2\rfloor}^\omega((k, 0), (k, 0)) \le C_{\mathrm{up}}^{**} n^{-1/2} \log^{-1/2}(n).
    \end{equation}
    For the off-diagonal bound, we observe that $p_{2\lfloor n/2\rfloor}(0, x) = 0$ if $n \le k$ and $x= (\pm k, 0)$. For all $n \ge k/2 \vee \eta_0$ we obtain, by applying the Cauchy-Schwarz inequality as in \eqref{eqn:CauchySchwarz},
    \begin{equation}
        p_{2\lfloor n/2\rfloor}^\omega(0, x) \le C_{\mathrm{up}}^{**} n^{-1/2} \log^{-1/2}(n).
    \end{equation}
    For all points $x =(x_1, x_2)$ one can obtain the bound 
    \begin{equation}
        p_{2\lfloor n/2\rfloor}^\omega(0, x) \le C_{\mathrm{up}}^{*} n^{-1/2} \log^{-1/2}(n) e^{- c\frac{x_2^2}{n}},
    \end{equation}
    which directly implies the statement of the lemma. The argument follows analogously to the one given in Lemma~\ref{lem:UBHKInsideAllPoints}, we omit repeating it.
\end{proof}

We now deduce a heat-kernel bound for sites that are far from the starting point.

\begin{lem}\label{lem:HKUBFromOriginGamma1}
    There exists $C_{\mathrm{out}}>0$ such that, for all $n \ge \eta_0$ and all $x = (k, h) \in \mathrm{Comb}(\mathbb{Z},H^\omega)$ for which $|k|\ge K_0 \vee \sqrt{n}$, we have 
    \begin{equation}
        p_n(0, (k, h)) \le 
            C_{\mathrm{out}} |k|^{-1}\log(|k|)^{-1/2}.
    \end{equation}
\end{lem}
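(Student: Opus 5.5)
The plan is to mimic the proof of Proposition~\ref{prop:HKUBOutsideBox}, which follows \cite[Lemma~4.10]{BPS}: split according to whether the walk exits a horizontal disk of radius $\lfloor k/2\rfloor$ early, and combine an on-diagonal bound at the intermediate point with a volume-based control of the exit probability. The target bound is $|k|^{-1}\log^{-1/2}(|k|)$, which is exactly the on-diagonal bound of Proposition~\ref{prop:HKTriple} evaluated at time $k^2$. So the heuristic is the following. If $n\ge k^2$, we may simply invoke Proposition~\ref{prop:HKTriple}, since $n^{-1/2}\log^{-1/2}(n)\le |k|^{-1}\log^{-1/2}(|k|)$. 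Hence the real content lies in the regime $n<k^2$. The statement's hypothesis $|k|\ge\sqrt n$ places us in that regime, with the boundary case absorbed into the constant.

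In the regime $n\le k^2$, the walk must travel horizontal distance $|k|$ within $n$ steps. Write $p_n(0,(k,h))$ as the sum of the contributions from $\{\tau_{0,\lfloor k/2\rfloor}\le n/2\}$ and its complement. Reversibility (all degrees are at most $3$) turns the second contribution into the first one started from $(k,h)$, exactly as in~\eqref{eq:TwoProbsExit}. For the first contribution, apply the strong Markov property at the exit time, reaching a point $\bar x=(\pm\lfloor k/2\rfloor+\cdot,0)$. Then bound $p_{n-s}(\bar x,(k,h))$ with $n-s\ge n/2$ by Cauchy--Schwarz~\eqref{eqn:CauchySchwarz} together with the on-diagonal bounds~\eqref{eqn:HKUBGamma1}, \eqref{eqn:HKUBGamma1Bis}. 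These bounds are valid at $\bar x$ and at $(k,0)$ because $|k|\ge K_0$ guarantees $\eta_{\pm k}\le k/2$, while for $h>0$ we use the Gaussian-in-height version established at the end of Proposition~\ref{prop:HKTriple}. This yields
\[
p_n(0,(k,h))\lesssim n^{-1/2}\log^{-1/2}(n)\,\Big(P_0(\tau_{0,k/2}\le n/2)+\sup_{y}P_{y}(\tau_{y_1,k/2}\le n/2)\Big).
\]
It then remains to show that the exit probability decays fast enough to compensate $(k^2/n)^{1/2}$, including the logarithmic factors.

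The main obstacle is therefore an exit time estimate for $\gamma=1$, for which Lemma~\ref{lemma:ExitCombExp} is not available. I would obtain a crude version directly from the fact that the horizontal motion alone is a simple random walk on $\mathbb{Z}$ time-changed by excursions into teeth. Since the number of horizontal steps is at most $n$, the standard bound gives $P_0(\tau_{0,k/2}\le n/2)\le C e^{-ck^2/n}$. Then
\[
n^{-1/2}\log^{-1/2}(n)\,e^{-ck^2/n}\le C\,k^{-1}\log^{-1/2}(n)\,\sup_{u\ge1}\sqrt u\,e^{-cu}\le C k^{-1}\log^{-1/2}(n).
\]
The remaining difficulty is to replace $\log(n)$ by $\log|k|$ when $n\ll k^2$. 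In that case $e^{-ck^2/n}$ is tiny: for $n\le k^2/\log^2 k$ the factor $e^{-ck^2/n}\le k^{-10}$ dominates everything. For $k^2/\log^2 k\le n\le k^2$ we have $\log n\ge \log|k|$, up to constants. Combining the two ranges gives the claimed bound with a suitable $C_{\mathrm{out}}$, and the conditions $n\ge\eta_0$ and $|k|\ge K_0$ are exactly what is needed to apply the on-diagonal bounds at all the relevant points.
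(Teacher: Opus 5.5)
Your proposal is correct and follows essentially the same route as the paper. Like the paper, you use the decomposition of Proposition~\ref{prop:HKUBOutsideBox}: early exit from a disk of half the horizontal distance, reversibility, and Cauchy--Schwarz with the on-diagonal bounds \eqref{eqn:HKUBGamma1}, \eqref{eqn:HKUBGamma1Bis}, combined with the exit estimate $Ce^{-ck^2/n}$ from comparing the horizontal motion with a simple random walk on $\mathbb{Z}$. Your split at $n\approx k^2/\log^2 k$, which lets you replace $\log n$ by $\log|k|$, spells out a step the paper leaves implicit.
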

\begin{proof}

    The argument follows from the heat kernel bound given in \eqref{eqn:HKUBGamma1} and goes along the same lines as Proposition~\ref{prop:HKUBOutsideBox}. The key exit time estimate
    \begin{equation}
        P_0^\omega\left(\tau_{D_k^c} \le n \right) \le C\exp\left(- c \frac{k^2}{n}\right),
    \end{equation}
    follows in a straightforward manner by coupling the random walk on the comb and simple random walk on an interval.
\end{proof}

\subsection{The events $A_n^{\mathrm{up}}$}

The following lemma implies immediately Proposition~\ref{prop:KeyTriple} for the event $A_n^{\mathrm{up}}$.

\begin{lem}
    There exists two constants $c_{\mathrm{up}}, C_{\mathrm{up}} >0$ such that $\mathbf{P}$-a.s.\ for all $n$ large enough
    \begin{equation}
        \begin{split}
            E_{0, 0, 0}^\omega[W_n^{\mathrm{up}}] &\le  C_{\mathrm{up}}\log^{-3/4}(n), \\
            E_{0, 0, 0}^\omega[W_n^{\mathrm{up}} \mid A_n^{\mathrm{up}} ] &\ge c_{\mathrm{up}}\log^{1/3}(n).
        \end{split}
    \end{equation}
\end{lem}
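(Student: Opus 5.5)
The plan is to prove the two bounds separately and then combine them, as in \eqref{eqn:Conditioning}: $P^\omega_{0,0,0}(A_n^{\mathrm{up}})\le E[W_n^{\mathrm{up}}]/E[W_n^{\mathrm{up}}\mid A_n^{\mathrm{up}}]\le C\log^{-13/12}(n)$. This gives Proposition~\ref{prop:KeyTriple} for $\dagger=\mathrm{up}$ with $\delta=1/12$.

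\emph{Upper bound on the first moment.} I write
\[
E^\omega_{0,0,0}[W_n^{\mathrm{up}}]\le \sum_{t\in\llbracket n,4n\rrbracket}\ \sum_{v\in B^{\mathrm{up}}(0,4n)} p_t^\omega(0,v)^3\,\deg(v)^3 .
\]
Here I use the bound $p_t^\omega(0,v)\le C n^{-1/2}\log^{-1/2}(n)e^{-cv_2^2/n}$ from Proposition~\ref{prop:HKTriple}, including its Gaussian refinement in the vertical coordinate, and Lemma~\ref{lem:HKUBFromOriginGamma1} for horizontal coordinates $|k|\ge\sqrt n$. The factor $p_t^3$ splits as $p_t^2\cdot p_t$, and $\sum_v p_t(0,v)\deg(v)=1$.

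Applying this naively gives $n\cdot n^{-1}\log^{-1}(n)=\log^{-1}(n)$, which is too crude since it does not exploit the restriction $v_2\ge h_n$. So instead I bound the mass:
\[
\sum_{v\in B^{\mathrm{up}}} p_t^3\le \sup_{v\in B^{\mathrm{up}}}p_t(0,v)^2\cdot P_0(V_t\ge h_n).
\]
The key claim is that $P_0^\omega(V_t\ge h_n)$ is bounded by the fraction of the volume seen by the walk that lies above $h_n$. By the second half of \eqref{eq:GoodVolLog}, each dyadic height layer $[2^\ell,2^{\ell+1}]$ carries volume at most $C_{\mathrm v}n$, while the total is at least $c_{\mathrm v}n\log n$. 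Heuristically the walk is spread evenly over the $\tfrac12\log_2 n$ layers, so this alone gives no gain for the up part. Hence the gain must come from the fact that $h_n=2^{\log_2^{1/3}n}$ makes the up part contain essentially all layers.

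I therefore aim instead for $\sup_{v_2\ge h_n}p_t(0,v)^2\lesssim n^{-1}\log^{-1}(n)\cdot\log^{-\epsilon}$. I would derive this via the Ballot decomposition in Lemma~\ref{lem:UBHKInsideAllPoints}: a point at height $h\ge h_n$ in a tooth of length $L$ is reached through its base. Along the base the local time is diluted by the factor coming from $\sum_\ell 1$, giving an extra $\log^{-1/4}$ after Cauchy--Schwarz with the estimate of Lemma~\ref{lem:EnvGamma1}, which restricts to $\ell\le\frac12\log_2 n$. Combined, this yields $\log^{-3/4}(n)$. This step, extracting the extra $\log^{-3/4}$ rather than $\log^{-1}$, is the main obstacle. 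I would try to obtain it by splitting heights into $h\le\sqrt n$, where volume regularity holds, and $h>\sqrt n$, where the Gaussian factor kills the contribution.

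\emph{Lower bound on the conditional expectation.} I condition on the first time in $\{n,\dots,2n\}$ at which a triple collision occurs at some $v\in B^{\mathrm{up}}(0,2n)$, and apply the strong Markov property. Since $v_2\ge h_n$, the three walks stay inside the vertical segment $\llbracket v_2-h_n/2,v_2+h_n/2\rrbracket\cap\llbracket0,H_{v_1}\rrbracket$ for time of order $h_n^2$ with probability bounded below. The segment is nonempty below $v$ even if $v$ is a tooth tip, as argued in Proposition~\ref{cor:ExpectationUBFromOrigin}.

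Three independent simple random walks on $\mathbb Z$ started together have an expected number of triple collisions up to time $s$ of order $\sum_{r\le s}r^{-1}\asymp\log s$, by the local CLT on a segment with reflection. Taking $s\asymp h_n^2$ gives
\[
E[W_n^{\mathrm{up}}\mid A_n^{\mathrm{up}}]\ge c\log(h_n^2)\asymp\log^{1/3}(n),
\]
and these collisions fall within times $\le 4n$ and balls $B^{\mathrm{up}}(0,4n)$ because $h_n^2\ll n$ and the collisions remain at height $\ge h_n/2$. I would handle the mismatch between $h_n/2$ and $h_n$ by adjusting $h_n$ in the definition by a constant factor, or by including neighbouring scales as was done for $W_{n,\ell}$.
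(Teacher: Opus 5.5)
Your lower bound on $E^\omega_{0,0,0}[W_n^{\mathrm{up}}\mid A_n^{\mathrm{up}}]$ is essentially the paper's argument. After a triple collision at height at least $h_n$, the three walks live on a tooth segment of length of order $h_n$. The expected number of triple collisions before one of them leaves it is of order $\log h_n\asymp\log^{1/3}(n)$; the paper just cites \cite[Lemma~A.3]{CroydonDeAmbroggio} for this. Note that what you need is the expected number of collisions \emph{before the first exit}, e.g.\ via a killed local CLT. A probability of staying bounded below does not simply multiply with an unrestricted count $\log s$. Your remarks on the height threshold and the time window are, if anything, more careful than the paper's.

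The problem is in your first moment bound. The computation you dismiss as naive already proves the claim:
\[
E^\omega_{0,0,0}[W_n^{\mathrm{up}}]\le 2\sum_{t=n}^{4n}\sum_{v}P^\omega_0(X_t=v)^3\le 2\sum_{t=n}^{4n}\max_v P^\omega_0(X_t=v)^2\le C\,n\cdot n^{-1}\log^{-1}(n).
\]
Since $\log^{-1}(n)\le\log^{-3/4}(n)$, this is stronger than required; you read the comparison backwards. Proposition~\ref{prop:HKTriple} is stated for even times; for odd $t$ use $P_0^\omega(X_t=v)=\sum_{u\sim v}p^\omega_{t-1}(0,u)$.

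Everything you propose after that is unnecessary, and it is also not an argument. You note yourself that the volume-fraction heuristic for $P^\omega_0(V_t\ge h_n)$ gives no gain. The ``extra $\log^{-1/4}$ from dilution along the base'' via the ballot estimate and Cauchy--Schwarz is never derived. So, as written, the route you settle on does not establish the first bound. Drop it and keep the naive estimate.

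For comparison, the paper does not use the restriction $v_2\ge h_n$ in the first moment either. It inserts $p^\omega(0,0)\le Cn^{-1/2}\log^{-1/2}(n)$ and the universal bound $p^\omega(x,x)\le Cn^{-1/2}$ into Lemma~\ref{lem:Umbi}, which gives $n^{-1}\log^{-3/4}(n)$ per time step. The exponent $-3/4$ is an artifact of that crude bound at $x$, not something that has to be extracted. The paper's route needs only on-diagonal information. Yours uses the uniform off-diagonal statement of Proposition~\ref{prop:HKTriple} and gives the better exponent $-1$, hence $P^\omega_{0,0,0}(A_n^{\mathrm{up}})\le C\log^{-4/3}(n)$ rather than $C\log^{-13/12}(n)$.
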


\begin{proof}
    To show the first statement, we aim to apply Proposition~\ref{prop:HKTriple}. Inserting the on-diagonal estimate into the bound stated in Lemma~\ref{lem:Umbi} and recalling that $p_n^\omega(x, x) \le C_{\mathrm{up}}^{**} n^{-1/2}$ for any $\omega \in \Omega$ (see, e.g.,~\cite[Corollary 4.4 (b)]{Barbook}), we obtain
    \begin{equation}
        E_{0, 0, 0}^\omega[W_n^{\mathrm{up}}] \le 4nC_{\mathrm{up}} n^{-1/4} n^{-1/4} \log^{-1/4}(n) n^{-1/2}\log^{-1/2}(n) \le C_{\mathrm{up}}\log^{-3/4}(n).
    \end{equation}
    The second statement follows from the fact that, after colliding in $B^{\mathrm{up}}(0, n)$, the three random walks are at height larger than $2^{\lfloor \log_2^{1/3}(n) \rfloor}$ and hence (using \cite[Lemma~A.3]{CroydonDeAmbroggio}) the collisions before exiting the tooth can be lower bounded by
    \begin{equation}
    c_{\mathrm{up}}'\log_2(2^{\lfloor \log_2^{1/3}(n) \rfloor}) = c_{\mathrm{up}}\log^{1/3}(n),
        \end{equation}
    for some $c_{\mathrm{up}}', c_{\mathrm{up}} > 0$.
\end{proof}

\subsection{The events $A_n^{\mathrm{down}}$}

The following lemma immediately implies Proposition~\ref{prop:KeyTriple} for the event $A_n^{\mathrm{down}}$.

\begin{lem}
    There exists a constant $C_{\mathrm{down}} >0$ such that $\mathbf{P}$-a.s.\, for all $n$ large enough
    \begin{equation}
            E_{0, 0, 0}^\omega[O_n^{\mathrm{down}}] \le  C_{\mathrm{down}}\log^{-1-\delta}(n).
    \end{equation}
\end{lem}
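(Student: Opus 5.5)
We bound $E_{0,0,0}^\omega[O_n^{\mathrm{down}}]$ by summing, over the two times $k\in\{n,2n\}$ and over sites $v=(x,y)\in B^{\mathrm{down}}(0,2n)$, the triple-collision probability $p_k^\omega(0,v)^3\deg(v)^3$. We then split $B^{\mathrm{down}}(0,2n)$ according to the horizontal coordinate $x$.

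\textbf{Near region: $|x|\le \sqrt{n}\log(n)$.} We use the uniform bound of Proposition~\ref{prop:HKTriple}, $p_k^\omega(0,v)\le C n^{-1/2}\log^{-1/2}(n)$. We keep one factor $p_k^\omega(0,v)$ unbounded and use $\sum_v p_k^\omega(0,v)\deg(v)\le 1$ only for the part of the mass actually sitting at height below $h_n$. The key point is that the heights are restricted: the set of $v$ with $|x|\le \sqrt n\log n$ and $y< h_n$ has cardinality at most $\sqrt{n}\log(n)\, h_n$. Using two uniform factors and summing the remaining factor over this set gives
\[
  E_{0,0,0}^\omega[O_n^{\mathrm{down}}] \lesssim n^{-1}\log^{-1}(n)\cdot \min\Bigl\{1,\; n^{-1/2}\log^{-1/2}(n)\,\sqrt{n}\log(n)\,h_n\Bigr\}.
\]
This is far too crude, because $h_n=2^{\log_2^{1/3}n}$ grows faster than any power of $\log n$. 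So we must instead show that the probability that a walk at time $k$ sits below height $h_n$ is small, namely of order $\log^{-1+o(1)}(n)$. This is the main step.

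\textbf{Main obstacle: the walk spends little time low.} The volume of $B(0,n)$ is of order $n\log n$, and by the second display of \eqref{eq:GoodVolLog} it is spread roughly evenly over the $\tfrac12\log_2 n$ dyadic height layers. Heuristically, the stationary-like occupation of the layer $y<h_n$ is then $\log_2^{1/3}(n)/\log(n)$. To make this rigorous we would use the uniform heat-kernel bound together with the upper layer-count $C_{\mathrm v}n$ in each dyadic layer:
\[
  P_0^\omega\bigl(V_k<h_n,\ |U_k|\le \sqrt k\log k\bigr) \le \sum_{\ell\le \log_2 h_n} C_{\mathrm v}\sqrt{k}\log(k)\, k^{-1/2}\log^{-1/2}(k).
\]
This bound is lossy by a factor $\log k$ coming from the box width, so we would instead use the exact horizontal window of size $\sqrt{k}/\sqrt{\log k}$. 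That window is natural, since the walk's horizontal spread at time $k$ should be $(k/\log k)^{1/2}$ (the effective volume is $n\log n$). This yields
\[
  P\bigl(V_k<h_n\bigr)\lesssim \log_2^{1/3}(n)\,\log^{-1}(n),
\]
up to lower-order terms. We then combine this with two uniform factors for the other two walks. This is where a Hölder split is needed: we write
\[
  \sum_v p^3 \le \bigl(\sup_v p\bigr)^2\sum_{v\ \mathrm{low}} p,
\]
which gives, per time $k$,
\[
  n^{-1}\log^{-1}(n)\cdot \log^{1/3}(n)\log^{-1}(n)=n^{-1}\log^{-5/3}(n).
\]

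\textbf{Far region: $|x|>\sqrt n\log n$.} Here we use Lemma~\ref{lem:HKUBFromOriginGamma1}, or simply the exit estimate $P_0^\omega(\tau_{D_k^c}\le n)\le Ce^{-ck^2/n}$, which makes this contribution superpolynomially small.

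\textbf{Summation.} Since $O_n^{\mathrm{down}}$ only counts the two times $n$ and $2n$, the total is at most
\[
  C\, n^{-1}\log^{-5/3}(n)\cdot \bigl(\text{number of times}\bigr).
\]
If instead the intended object counts all $k\in[n,2n]$, the factor $n$ cancels $n^{-1}$ and leaves $\log^{-5/3}(n)$. Either way the bound is $\le C\log^{-1-\delta}(n)$ with $\delta=2/3$. The delicate point is justifying the horizontal spread $(k/\log k)^{1/2}$, which amounts to an upper bound on the horizontal displacement. We would try to obtain it from an exit-time lower bound via the commute-time identity, using the volume lower bound $c_{\mathrm v} n\log n$ of Lemma~\ref{lem:EnvGamma1}.
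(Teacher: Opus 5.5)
There is a genuine gap, and it sits exactly at the step you flag as delicate. Your near-region estimate requires the walk at time $k$ to be horizontally confined to a window of width $\asymp\sqrt{k/\log k}$ with probability $1-O(\log^{-\delta}n)$. That is a sub-diffusive bound on the horizontal displacement, and nothing available for $\gamma=1$ provides it. The only exit estimate at hand is the diffusive one, $P_0^\omega(\tau_{D_k^c}\le n)\le Ce^{-ck^2/n}$, from the proof of Lemma~\ref{lem:HKUBFromOriginGamma1}. It confines the walk only to $|x|\lesssim\sqrt n$ times a growing factor, which is exactly why you had to take the window $\sqrt n\log n$.

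The commute-time identity does not close this. It controls expected hitting times, whereas you need a lower-tail statement: $P_0^\omega(\tau_{(D_r)^c}\le k)$ is small for $r\asymp\sqrt{k/\log k}$. Getting that requires environment control uniform over many intervals plus a chaining argument, as in Proposition~\ref{prop:KeyLTEstimate} and Lemma~\ref{lemma:ExitCombExp}, which the paper carries out only for $\gamma<1$. So, as written, your bound $P(V_k<h_n)\lesssim\log^{1/3}(n)\log^{-1}(n)$, and with it $\delta=2/3$, remains a heuristic.

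You do not need this confinement; the missing idea is a correct volume count on a narrower strip. Your first count, $\sqrt n\log(n)\,h_n$, ignores that most teeth are short. Only about $L2^{-\ell}$ teeth over $\llbracket -L,L\rrbracket$ reach height $2^\ell$, so by the second bound in \eqref{eq:GoodVolLog} each dyadic height layer contributes $O(L)$ vertices. Below $h_n$ there are only $\log_2 h_n=\log_2^{1/3}(n)$ such layers.

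Cut at $|x|\le 2\sqrt n$ rather than $\sqrt n\log n$. The low part of this strip then has at most $C\sqrt n\log^{1/3}(n)$ vertices. Three uniform factors from Proposition~\ref{prop:HKTriple}, with no H\"older split, give
$\sum_{k=n}^{2n}\sum_z p_k^\omega(0,z)^3\lesssim n\cdot n^{-3/2}\log^{-3/2}(n)\cdot\sqrt n\log^{1/3}(n)=\log^{-7/6}(n)$.

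For $|x|\ge 2\sqrt n$, do not discard this band via the diffusive exit bound. Instead use the polynomially decaying off-diagonal bound $p_k^\omega(0,(j,h))\le C_{\mathrm{out}}|j|^{-1}\log^{-1/2}|j|$ of Lemma~\ref{lem:HKUBFromOriginGamma1}. Combine it with the dyadic count of at most $C2^{\ell}\ell^{1/3}$ low vertices having $|j|\in\llbracket 2^\ell,2^{\ell+1}\rrbracket$. This yields $n\sum_{2^\ell\ge\sqrt n}2^{-2\ell}\ell^{-7/6}\lesssim\log^{-7/6}(n)$, i.e.\ the statement with $\delta=1/6$, which is the paper's argument.

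The same off-diagonal bound would also repair your own scheme in the band $\sqrt n\le|x|\le\sqrt n\log n$, at the cost of a harmless $\log\log n$ factor. Your hedge about the two-time reading is moot. The paper's proof sums over $k\in\llbracket n,2n\rrbracket$, and the Borel--Cantelli step in Theorem~\ref{theo:MainRandomCombTrip} needs exactly that. Under the literal reading $k\in\{n,2n\}$ the bound would be trivial.
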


\begin{proof}
    We write $\tilde{B}^{\mathrm{down}}(0, 2n) \coloneqq \{v = (x, y) \in B^{\mathrm{down}}(0, 2n) \colon |x|\le 2n^{1/2}\}$. 
    We begin by splitting
    \begin{equation}
    \begin{split}
        E_{0, 0, 0}^\omega[O_n^{\mathrm{down}}] &= \sum_{k = n}^{2n} \sum_{z \in B^{\mathrm{down}}(0, 2n)} p^\omega_k(0, z)^3\\
        & \le \sum_{k = n}^{2n} \sum_{z \in \tilde{B}^{\mathrm{down}}(0, 2n)} p^\omega_k(0, z)^3 + \sum_{k = n}^{2n} \sum_{z \in B^{\mathrm{down}}(0, 2n) \setminus \tilde{B}^{\mathrm{down}}(0, 2n)} p^\omega_k(0, z)^3.
        \end{split}
    \end{equation}
    We will treat the two terms separately, and apply repeatedly Proposition~\ref{prop:HKTriple}. Let us begin with the first one. We have
    \begin{equation}\label{eq:DownComps}
    \begin{split}
        \sum_{k = n}^{2n} \sum_{z \in \tilde{B}^{\mathrm{down}}(0, 2n)} p^\omega_k(0, z)^3  &\le C_{\mathrm{up}}^* \sum_{k = n}^{2n} \sum_{z \in \tilde{B}^{\mathrm{down}}(0, 2n)} n^{-3/2} \log^{-3/2}(n)\\
        & \le C_{\mathrm{up}}^* C_{\mathrm{v}} n n^{1/2} \log^{1/3}(n) n^{-3/2} \log^{-3/2}(n) \\
        & \le \frac{C_{\mathrm{down}}}{2}\log^{-1-\delta}(n),
            \end{split}
    \end{equation}
    for some $\delta>0$. To obtain \eqref{eq:DownComps} we used the fact that \eqref{eq:GoodVolLog} implies the volume estimate 
    \begin{equation}
        \begin{split}|\tilde{B}^{\mathrm{down}}(0, 2n)| & \le \sum_{\ell = 1}^{\lceil \log_2(h_n)\rceil} |\{v = (x, y) \colon v \in B(0, n), |x|\le n^{1/2}, y \in \llbracket 2^\ell, 2^{\ell+1}\rrbracket\}| \\
        & \le 4C_{\mathrm{v}} n^{1/2}\log^{1/3}(n).    \end{split}
    \end{equation}
    We now focus on the second term. By Lemma~\ref{lem:HKUBFromOriginGamma1}, 
    \begin{equation}
    \begin{split}
        \sum_{k = n}^{2n} \sum_{z \in B^{\mathrm{down}}(0, 2n) \setminus \tilde{B}^{\mathrm{down}}(0, 2n)} p^\omega_k(0, z)^3
        &\le \sum_{k = n}^{2n} \sum_{j = \lfloor 2n^{1/2} \rfloor}^{2n} \sum_{h = 0}^{\lfloor H_j\rfloor}  p^\omega_k(0, (j, h))^3\\
        &\le 2n  C_{\mathrm{out}} \sum_{j = \lfloor 2n^{1/2} \rfloor}^{2n} \sum_{h = 0}^{\lfloor H_j\rfloor \wedge h_n}|j|^{-3}\log(|j|)^{-3/2}.
            \end{split}
    \end{equation}
    Thus, we only need to show that 
    \begin{equation}
        \sum_{j = \lfloor 2n^{1/2} \rfloor}^{2n} \sum_{h = 0}^{\lfloor H_j\rfloor \wedge h_n}|j|^{-3}\log(|j|)^{-3/2} \le \frac{C_{\mathrm{down}}}{2} n^{-1} \log^{-1 - \delta}(n).
    \end{equation}
    To proceed in this direction we split the sites in $B^{\mathrm{down}}(0, 2n) \setminus \tilde{B}^{\mathrm{down}}(0, 2n)$ according to their horizontal coordinate along a dyadic scale. Then the number of sites in $B^{\mathrm{down}}(0, 2n) \setminus \tilde{B}^{\mathrm{down}}(0, 2n)$ with horizontal coordinate in $\llbracket 2^\ell + 1, 2^{\ell+1}\rrbracket$ for $\ell \in \llbracket\log_2(n^{1/2}), \lceil\log_2(2n)\rceil\rrbracket$ is at most $2^{\ell} C_{\mathrm{v}} \log_2^{1/3}(2^\ell)$, by applying Lemma~\ref{lem:EnvGamma1}. Thus, we obtain
    \begin{equation}
    \begin{split}
        \sum_{j = \lfloor 2n^{1/2} \rfloor}^{2n} \sum_{h = 0}^{\lfloor H_j\rfloor \wedge h_n}|j|^{-3}\log(|j|)^{-3/2}&\le 2 C_{\mathrm{v}} \sum_{\ell = \lfloor \log(n^{1/2})\rfloor }^{\lceil \log(2n)\rceil} 2^{\ell} 2^{-3\ell} \ell^{-3/2 + 1/3} \\
        & \le 2 C_{\mathrm{v}} \sum_{\ell = \lfloor \log(n^{1/2})\rfloor}^{\lceil\log(2n)\rceil} 2^{-2\ell} \ell^{-3/2 + 1/3}\\
        & \le \frac{C_{\mathrm{down}}}{2} n^{-1} \log^{-1 - \delta}(n),
            \end{split}
    \end{equation}
    with $\delta = 1/6$ and $C_{\mathrm{down}}>0$ chosen appropriately. 
\end{proof}

\noindent \textbf{Acknowledgments.} MN was partially supported by Hong Kong RGC grants ECS 26301824 and GRF 16303825. The authors are grateful to Noam Berger and Manuel Cabezas for helpful discussions.

\appendix

\section{Some integral bounds}

\label{}

We compute basic integral bounds that are useful in our proofs. Let $k > 1, \beta > 1$
\begin{align}
    \int_k^\infty \frac{1}{x^4\log^{2\beta}(x)} \mathrm{d} x^3\log^\beta(x) &= \int_k^\infty \frac{3}{x^2\log^{\beta}(x)} \mathrm{d} x+ \int_k^\infty \frac{\beta}{x^2\log^{\beta + 1}(x)} \mathrm{d} x.
\end{align}
Observe that for large $k$, the first term on the right hand side dominates. To bound it from above, we pull out a factor $\log^\beta(k)$ and obtain for some $C>0$
\begin{equation}\label{Int:squuared}
    \int_k^\infty \frac{1}{x^4\log^{2\beta}(x)} \mathrm{d} x^3\log^\beta(x) \le \frac{C}{k \log^\beta(k)}.
\end{equation}
Let $M>1$ and $\alpha \in (0, 2]$, the second integral we compute is for any $b>1$ and some $C(\alpha, b)>0$
\begin{equation}\label{Int:xlog(x)}
    \int_b^M \frac{1}{x\log^{\alpha}(x)}\mathrm{d}x \stackrel{u=\log(x)}{=} \int_{\log(b)}^{\log(M)} u^{-\alpha} \mathrm{d}u \stackrel{M \to \infty}{\sim} \begin{cases}
        \frac{\log(M)^{1-\alpha}}{1-\alpha} \vee C(\alpha, b) & \alpha \in (0, 1) \cup (1, 2],\\
        \log(\log(M)) & \alpha = 1.
    \end{cases}
\end{equation}
These asymptotics are used throughout Section~\ref{sec:Regularly-growing-comb}.
\bibliographystyle{plain}
\bibliography{biblio}

\end{document}